        \crefname{subsection}{Subsection}{Subsections}
\patchcmd{\subsection}{\normalfont}{\boldmath}{}{}
\newtheorem{theorem}{Theorem}[section]
\newtheorem{proposition}[theorem]{Proposition}
\newtheorem{lemma}[theorem]{Lemma}
\newtheorem{corollary}[theorem]{Corollary}
\newtheorem*{bhconj}{Boone--Higman conjecture}
\newtheorem*{main:bh_hyp}{Theorem~A}
\theoremstyle{definition}
\newtheorem{definition}[theorem]{Definition}
\newtheorem{question}[theorem]{Question}
\newtheorem{remark}[theorem]{Remark}
\newtheorem{example}[theorem]{Example}
\newcommand{\newword}[1]{\textbf{#1}}
\newcommand{\Z}{\mathbb{Z}}
\newcommand{\N}{\mathbb{N}}
\newcommand{\R}{\mathcal{R}}
\newcommand{\Nuc}{\mathcal{N}}
\newcommand{\Rel}{\mathrm{Rel}}
\newcommand{\C}{\mathfrak{C}}
\newcommand{\A}{\mathcal{A}}
\newcommand{\Homeo}{\mathrm{Homeo}}
\newcommand{\Classes}{\mathrm{Classes}}
\newcommand{\Fix}{\mathrm{Fix}}
\newcommand{\Stab}{\mathrm{Stab}}
\newcommand{\Aut}{\mathrm{Aut}}
\newcommand{\F}{\mathrm{F}}
\newcommand{\GL}{\mathrm{GL}}
\newcommand{\class}{\mathrm{class}}
\newcommand{\Cones}{\mathrm{Cones}}
\newcommand{\oF}{\mkern 3.75mu\overline{\mkern-3.75mu F}}
\newcommand{\od}{\mkern 3mu\overline{\mkern-3mu d\mkern 1.5mu}\mkern-1.5mu}
\newcommand{\of}{\mkern 3.5mu\overline{\mkern-3.5mu f\mkern0.75mu}\mkern-0.75mu}
\newcommand{\og}{\mkern 2mu\overline{\mkern-2mu g\mkern0.5mu}\mkern-0.5mu}
\newcommand{\oh}{\mkern 2mu\overline{\mkern-2mu h\mkern-0.25mu}\mkern 0.25mu}
\newlength\bshft
\def\fakebold#1{\ThisStyle{\ooalign{$\SavedStyle#1$\cr%
  \kern-\bshft$\SavedStyle#1$\cr%
  \kern\bshft$\SavedStyle#1$}}}
\newcommand{\listlabel}[1]{%
  \item[\quad(#1)]\hypertarget{#1}{}\global\expandafter\def\csname #1\endcsname{\hyperlink{#1}{(#1)}}}
\begin{document}

\title{Hyperbolic groups satisfy the Boone--Higman conjecture}

\date{\today}
\subjclass[2020]{Primary 20F65;   
                 Secondary 20F67, 
                           20E32, 
                           20F10. 
                 } 

\keywords{Boone--Higman conjecture, simple group, word problem, finite presentability, hyperbolic group, horofunction boundary, Thompson group, R\"over--Nekrashevych group, subshift of finite type, rational group, self-similar group, full group}

\author[J.~Belk]{James Belk}
\address{University of Glasgow, Glasgow, Scotland}
\email{jim.belk@glasgow.ac.uk}

\author[C.~Bleak]{Collin Bleak}
\address{University of St Andrews, St Andrews, Scotland}
\email{collin.bleak@st-andrews.ac.uk}

\author[F.~Matucci]{Francesco Matucci}
\address{Universit\`{a} di Milano--Bicocca, Milan, Italy}
\email{francesco.matucci@unimib.it}

\author[M.~C.~B.~Zaremsky]{Matthew C.~B.~Zaremsky}
\address{Department of Mathematics and Statistics, University at Albany (SUNY), Albany, NY}
\email{mzaremsky@albany.edu}

\begin{abstract}
The 1973 Boone--Higman conjecture predicts that every finitely generated group with solvable word problem embeds in a finitely presented simple group. In this paper, we show that hyperbolic groups satisfy this conjecture, that is, each hyperbolic group embeds in some finitely presented simple group. This shows that the conjecture holds in the ``generic'' case for finitely presented groups. Our key tool is a new family of groups, which we call \emph{rational similarity groups (RSGs)}, that is interesting in its own right. We prove that every hyperbolic group embeds in a full, contracting RSG, and every full, contracting RSG embeds in a finitely presented simple group, thus establishing the result. Another consequence of our work is that all contracting self-similar groups satisfy the Boone--Higman conjecture.
\end{abstract}

\maketitle
\tableofcontents
\thispagestyle{empty}

\section{Introduction}\label{sec:intro}

The Boone--Higman conjecture, posed by William Boone and Graham Higman in 1973 \cite{boone73,boonehigman}, predicts the following group theoretic equivalent condition to a finitely generated group having solvable word problem:

\begin{bhconj}
A finitely generated group has solvable word problem if and only if it embeds as a subgroup of some finitely presented simple group.
\end{bhconj}

Recall that a group has \newword{solvable word problem} if there exists an algorithm to determine whether a given word in the generating set represents the identity element of the group. Historically, the word problem for groups was one of the foundational issues in the development of combinatorial and geometric group theory, being first studied by Max Dehn in the early 20th century, along with the conjugacy and isomorphism problems; see \cite{dehn12}. A primary motivation for the Boone--Higman conjecture is that it would establish a very straightforward, purely group theoretic, equivalent condition for finitely generated groups to have solvable word problem, namely, being a subgroup of a finitely presented simple group. We will often refer to an injective homomorphism from a group to a finitely presented simple group as a \newword{Boone--Higman embedding}.

The ``if'' direction of the Boone--Higman conjecture is straightforward. First, any finitely generated subgroup of a finitely generated group with solvable word problem itself has solvable word problem. Moreoever, finitely presented simple groups are easily seen to have solvable word problem, as was first observed by Alexander Kuznetsov in 1958 \cite{kuznetsov}. The ``only if'' direction, however, has been open for fifty years. Boone and Higman did prove a weaker version of this direction, namely, every finitely presented group with solvable word problem embeds in a simple subgroup of a finitely presented group \cite{boonehigman}. This was improved by Thompson to include that the simple subgroup can be taken to be finitely generated \cite{thompson80}. Also, it suffices to prove the conjecture in the finitely presented case, since every finitely generated group with solvable word problem embeds in a finitely presented group with solvable word problem \cite{clapham}. The Boone--Higman conjecture is a relative of the famous Higman embedding theorem, which states that a finitely generated group is computably presented if and only if it embeds in a finitely presented group \cite{higman61}. Both the Boone--Higman conjecture and the Higman embedding theorem are thus of the form ``a finitely generated group has a certain algorithmic property if and only if it embeds in a certain kind of group.'' The survey \cite{bbmz_survey} contains more history and background around the Boone--Higman conjecture.

Perhaps the most robust partial result toward the Boone--Higman conjecture, historically speaking, is that the groups $\GL_n(\Z)$ (which have solvable word problem) satisfy the conjecture -- this follows from work of Elizabeth Scott \cite{scott84}. Her proof involves (what would in modern terminology be called) finding a faithful self-similar action of $\Z^n\rtimes \GL_n(\Z)$ on an appropriate tree and then taking what is now known as the R\"over--Nekrashevych group. As a consequence, all groups embeddable into some $\GL_n(\Z)$ also satisfy the conjecture, such as right-angled Artin groups, Coxeter groups, and virtually nilpotent groups. See \cite{bbmz_survey} for a more comprehensive list of prominent groups that are known to satisfy the Boone--Higman conjecture. To be clear, whenever we say that a finitely generated group, ``satisfies the Boone--Higman conjecture,'' we are implicitly asserting that the group is known to have solvable word problem, and that moreover it has been proven to embed in a finitely presented simple group.

\subsection{Statement of results and background}\label{ssec:statement}

The main result of this paper is the following, which establishes that hyperbolic groups satisfy the Boone--Higman conjecture.

{\renewcommand*{\thetheorem}{\Alph{theorem}}
\begin{theorem}[Boone--Higman for hyperbolic groups]\label{thrm:bh_hyp}
Every hyperbolic group embeds as a subgroup of a finitely presented simple group.
\end{theorem}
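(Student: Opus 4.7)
The proof is a composition of two embedding results flagged in the abstract. Given a hyperbolic group $G$, the plan is to first embed $G$ into a full, contracting rational similarity group $\Gamma$, and then embed $\Gamma$ into a finitely presented simple group $S$, yielding the desired Boone--Higman embedding $G \hookrightarrow \Gamma \hookrightarrow S$. Thus the theorem reduces to establishing these two intermediate embeddings separately, with \Cref{thrm:bh_hyp} following by composition.

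For the first embedding, I would construct a faithful action of $G$ on a Cantor-type space equipped with a prefix address system indexed by finite words over some finite alphabet, such that every element of $G$ acts by a homeomorphism realizable by a finite state transducer; this supplies the ``rational'' part of the RSG structure. The natural candidate, suggested by the keywords, is the horofunction boundary of $G$ with respect to a well-chosen word metric, coded as a subshift of finite type via an automatic or combable normal form on $G$ (hyperbolic groups being biautomatic). One then upgrades this faithful rational action to a \emph{full} RSG by adjoining sufficiently many prefix-swap homeomorphisms in the spirit of Thompson's $V$, and verifies the \emph{contracting} property by exploiting thin triangles: geodesic rays that start far apart fellow-travel after bounded time, which should translate into finiteness of the nucleus of states that arise under restriction of group elements to deep cones.

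For the second embedding, the target is a general construction that takes an arbitrary full, contracting RSG and produces a finitely presented simple overgroup. Here I would follow the R\"over--Nekrashevych template: assemble the Thompson-like group generated by $\Gamma$ together with the canonical prefix-exchange homeomorphisms on the address space, extract finite presentability from the contracting condition (the nucleus cutting generators and relators down to a finite list), and finally pass to a commutator-type subgroup which fullness forces to be simple and to still contain $\Gamma$.

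The main obstacle will be the first embedding. Previously known self-similar constructions --- for instance Scott's treatment of $\GL_n(\Z)$ or the classical self-similar groups of finite type --- all rely on rigid rooted tree structures that come for free from the algebra, but a generic hyperbolic group admits no such structure on its Cayley graph or its Gromov boundary. Finding an address system on which all of $G$ acts by finite state transducers, while simultaneously arranging for fullness and, crucially, verifying contraction from the coarse geometry of $G$, is the genuinely new ingredient; this is presumably why the RSG framework, which relaxes the tree rigidity of classical self-similar groups to the more flexible setting of rational homeomorphisms on symbolic spaces, is introduced in the first place.
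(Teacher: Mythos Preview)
Your two-step decomposition matches the paper exactly, and your sketch of the first embedding is essentially right: the paper codes the horofunction boundary as a cone in a subshift of finite type via the ``tree of atoms'' (closely related to, but not literally, an automatic structure), proves rationality and contraction from thin-triangle estimates (the contracting lemma, \cref{lem:contractinglemma}), and takes the full closure $[[\,G\mid\partial_h G\,]]$. One technical point you miss is that the paper first replaces $G$ by $G*\Z$ to guarantee the action on $\partial_h G$ is faithful, the boundary has no isolated points, and the type graph has an irreducible core---these are not known for arbitrary non-elementary hyperbolic $G$.

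The genuine gap is in your second step. The full contracting RSG already \emph{is} the Thompson-like group (it contains $V_{\Gamma,E}$ by \cref{prop:RSGsAndV}), and the paper does prove it finitely presented (\cref{thrm:fin_pres}), but it need not be simple: for instance $[[\,F_2\mid\partial_h F_2\,]]$ has abelianization $\Z^2$. Your proposal to pass to a commutator-type subgroup fails on two counts: there is no reason such a subgroup is finitely presented, and no reason the hyperbolic group $G$ lands inside it. The paper instead goes \emph{upward}: it embeds the full contracting RSG into a twisted Brin--Thompson group $SV_G$, which is always simple. Finite presentability of $SV_G$ comes from a criterion (\cref{thrm:action_to_simple}) requiring the RSG to be finitely presented, to act oligomorphically on some orbit (automatic for full groups, \cref{prop:HighlyTransitiveAction}), and to have finitely generated stabilizers of finite subsets. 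This last condition is nontrivial and uses that groups of germs at rational points are virtually cyclic (\cref{prop:CyclicStabilizers}), which in turn relies on the finite nucleus---so the contracting hypothesis is doing double duty, feeding both finite presentability and the stabilizer condition.
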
}

Recall that a geodesic metric space is \newword{hyperbolic} if there exists $\delta>0$ such that for any triangle of geodesic paths, each side lies in the $\delta$-neighborhood of the other two. A finitely generated group $G$ is called \newword{hyperbolic} if some (equivalently any) Cayley graph of $G$ with respect to a finite generating set is hyperbolic under the path metric. Hyperbolic groups are among the most prominent objects of study in geometric group theory. They were first introduced by Gromov in \cite{Gromov1987}, although in some sense they were already present in Dehn's early 20th century work, since it turns out a group is hyperbolic if and only if it admits a so-called Dehn presentation \cite[Theorem~III.$\Gamma$.2.6]{BrHa}. Hyperbolic groups are always finitely presented, and indeed have type~$\F_{\!\infty}$; see Proposition~III.$\Gamma$.2.2 and Corollary~III.$\Gamma$.2.26 of \cite{BrHa}. Recall that a group has type~$\F_n$ if it has a classifying space with finite $n$-skeleton, and type~$\F_{\!\infty}$ means type~$\F_n$ for all $n$. Since coming to prominence thanks to Gromov's work, hyperbolic groups have inspired an incredible amount of research. Prominent examples of hyperbolic groups include free groups, free products of finite groups, groups acting properly discontinuously cocompactly by isometries on a hyperbolic metric space (e.g.\ fundamental groups of compact hyperbolic manifolds), many Coxeter groups,  and groups satisfying certain small cancellation conditions. The class of hyperbolic groups is also closed under taking free products. Prominent examples of groups that are not hyperbolic include $\Z^2$, Baumslag--Solitar groups $BS(m,n)$, and groups containing any of these as subgroups. See, e.g.\ \cite[Chapter~III]{BrHa} for an overview of hyperbolic groups.

Hyperbolic groups are characterized by having word problem solvable in linear time (see, e.g.\ \cite[Section~III.$\Gamma$]{BrHa}), and so are a natural test case for the Boone--Higman conjecture. Moreover, they are ubiquitous among finitely presented groups; indeed, a ``random'' finitely presented group is hyperbolic with probability approaching $1$ \cite{Champetier,Ol'shanskiiYu,Gromov1993}. Our result therefore establishes that the Boone--Higman conjecture holds in the ``generic'' case for finitely presented groups. While several examples of hyperbolic groups were previously known to embed into finitely presented simple groups, such as all virtually special hyperbolic groups, to the best of our knowledge this result is new outside these examples. In particular it is new for any non-linear hyperbolic groups. Non-linear hyperbolic groups exist, for example the first construction was given by Michael Kapovich in \cite[Section~8]{kapovich05}, and see \cite{canary19} for some more recent examples.

\medskip

The finitely presented groups that we use to find Boone--Higman embeddings of hyperbolic groups come from the extended family of ``Thompson-like'' groups. This is a loosely defined class of groups that generalize the classical Thompson groups $F$, $T$, and $V$, introduced by Richard Thompson; see \cite{CFP} for background on Thompson's groups. Groups that are considered Thompson-like often have a combination of good finiteness properties like finite presentability together with good simplicity properties like having simple commutator subgroup, and so are good candidates for targets of Boone--Higman embeddings. In this paper, the finitely presented simple Thompson-like groups that we use are so called twisted Brin--Thompson groups, introduced by the first and fourth authors in \cite{BelkZaremskyTwisted} following the construction of the Brin--Thompson groups by Matthew Brin in \cite{brin04}. Thanks to a criterion due to the fourth author in \cite{zaremskyTaste} (see \cref{thrm:action_to_simple}), we do not need to work directly with twisted Brin--Thompson groups here, but rather embed hyperbolic groups into a new family of finitely presented groups that (may or may not be simple but) admit certain actions, ensuring their embeddability into finitely presented simple twisted Brin--Thompson groups.

This new family of groups is the main focus of this paper. We call them ``rational similarity groups'', or ``RSGs'' for short. They arise as natural subgroups of the rational group of a subshift of finite type, and many existing examples of interesting groups turn out to be RSGs. Besides consisting of elements that act rationally on the subshift, the key property of an RSG is that it can realize every ``canonical similarity'' between compatible cones in the subshift. See \cref{def:rsg} for the formal definition of an RSG. An important family of examples of RSGs is the R\"over--Nekrashevych groups of self-similar groups, as in \cref{ex:rn}, and to some extent RSGs can be viewed as an ``asynchronous'' analog of R\"over--Nekrashevych groups. However, we should emphasize that the R\"over--Nekrashevych groups are only a very special case of RSGs, as will be made clear at various points.

\subsection{Outline of the proof}\label{ssec:outline}

Let us give a few more details of how we prove \cref{thrm:bh_hyp}. One of the most difficult steps is the following finite presentability result, which is \cref{thrm:fin_pres}.

{\renewcommand*{\thetheorem}{\Alph{theorem}}
\begin{theorem}\label{thrm:FPTheorem}
Every full, contracting RSG is finitely presented.
\end{theorem}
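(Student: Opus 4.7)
The plan is to build, in the spirit of the Stein--Farley construction for $F$, $T$, $V$, the Brin--Thompson groups and the R\"over--Nekrashevych groups, a contractible simplicial (or cubical) complex $X$ on which the full contracting RSG $G$ acts with the following properties: the stabilizer of every cell is finite, and the action on the $2$-skeleton has only finitely many orbits of cells. Finite presentability of $G$ then follows from Brown's finiteness criterion. The vertices of $X$ should be ``admissible partitions'' of the underlying subshift into cones (i.e.\ collections of pairwise disjoint cones whose union is the whole subshift), and $G$ should act by sending a partition $\mathcal{P}$ to its image under a representative element. The key is then to define higher cells so that $X$ becomes both contractible and well-behaved under the $G$-action.

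First I would set up the poset of admissible cone-partitions ordered by refinement, observe that this poset is directed (any two admissible partitions have a common refinement), and hence has a contractible order complex. To obtain a useful $G$-action one then passes to a Stein--Farley style subcomplex consisting of ``elementary intervals'' $[\mathcal{P},\mathcal{Q}]$ where $\mathcal{Q}$ is obtained from $\mathcal{P}$ by a single round of legal expansions. Contractibility of $X$ would follow by adapting the standard discrete Morse argument on the partition poset, which only requires the existence of a plentiful supply of common refinements, a property that follows from the subshift-of-finite-type structure together with the fact that $G$ is full.

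For the group-theoretic finiteness, the contracting property gives us a finite nucleus $\Nuc$ of rational maps such that every element of $G$ is, away from a ``shallow'' part of the subshift, a disjoint union of translates of elements of $\Nuc$ acting between cones. This has two consequences that I would exploit in turn. Cocompactness of the action on the $2$-skeleton: any vertex (partition) has a canonical ``depth'' beyond which its combinatorics is dictated only by $\Nuc$, and only finitely many shallow configurations occur, so finitely many $G$-orbits of vertices exist at each depth, and this bound upgrades to edges and $2$-cells by an analogous argument applied to elementary expansions. Finiteness of stabilizers: the stabilizer of an admissible partition $\mathcal{P}$ consists of permutations of its cones together with elementwise self-similarities of each cone, and because $G$ is contracting these self-similarities are controlled by the finite nucleus, giving a finite group.

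The main obstacle will be showing that the Stein--Farley complex $X$ really is simply connected when the underlying space is a genuine subshift of finite type rather than a Cantor space coming from a regular rooted tree. In the tree case (R\"over--Nekrashevych groups) every pair of admissible partitions has an obvious common refinement built from cylinder sets, and the standard Morse argument goes through almost verbatim. For a subshift of finite type the admissible partitions are constrained by the transition rule, and I would have to prove a ``commuting expansions'' lemma: whenever two elementary expansions of the same partition exist, they can be completed to a square of elementary expansions inside the complex. Establishing this compatibility, together with checking that the full and contracting hypotheses give enough flexibility to perform such completions, is where the bulk of the work lies; once it is in place, the remainder of the argument is the standard Brown-criterion proof of finite presentability.
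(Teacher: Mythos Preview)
Your proposal has a genuine gap in the analysis of cell stabilizers. You assert that the stabilizer of a partition $\mathcal{P}=\{\C_{\alpha_1},\ldots,\C_{\alpha_k}\}$ is finite because the ``self-similarities of each cone'' are controlled by the finite nucleus. This is not so: since $G$ is full and contains $V_{\Gamma,E}$, the subgroup of $G$ preserving each $\C_{\alpha_i}$ setwise already contains a copy of $V_{\Gamma,\C_{t(\alpha_i)}}$ for each~$i$, and in fact the pointwise stabilizer of $\mathcal{P}$ is a direct product of full RSGs on the pieces, each of the same kind as~$G$. Thus cell stabilizers are as complicated as $G$ itself, and Brown's criterion becomes circular: to conclude that $G$ is finitely presented you would need the vertex stabilizers to be finitely presented, which is essentially the statement you are trying to prove. (Your cocompactness claim is also problematic as stated, since partitions can have arbitrarily many cones and hence lie in infinitely many $G$-orbits; this could in principle be repaired with a filtration and a descending-link argument, but the stabilizer obstacle cannot.)

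Even in the special case of R\"over--Nekrashevych groups $V_n(H)$, where there is an underlying self-similar group~$H$, the vertex stabilizers in a Stein--Farley complex are finite extensions of direct powers of~$H$, and contracting self-similar groups are not known to be finitely presented in general (the Grigorchuk group is contracting but not finitely presented). In the general RSG setting the situation is worse: there is no self-similar group at all, since the nucleus consists of typically non-invertible injections, so there is no candidate group whose finiteness properties could stand in for those of the stabilizers. The paper itself remarks, in the open-questions portion of the introduction, that the authors attempted a topological proof along these lines and that it does not go through.

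The paper's actual proof is purely algebraic. It introduces finitely many ``model nuclear generators'' built from the finite nucleus, writes down an explicit infinite presentation over $V_{\Gamma,E}$ together with all conjugates of these generators, uses a class of words called ``normalish forms'' to verify that the relations suffice, and then reduces to a finite presentation by Tietze transformations. The key external inputs are the finite presentability of $V_{\Gamma,E}$ (which uses the irreducible-core hypothesis) and the finite generation of a certain commutative monoid of cycles attached to~$\Nuc$.
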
}

See \cref{def:full} and \cref{def:contracting} for the definitions of full and contracting. To prove \cref{thrm:FPTheorem}, we write down an explicit infinite presentation, and use a family of words that we call ``normalish forms'' (\cref{def:normalish}) to prove that it presents the group. Then we reduce the presentation to an explicit finite one. Crucial to all of this is that a certain group $V_{\Gamma,E}$ is finitely presented (\cref{cor:V_fp}); this is already interesting in its own right, and generalizes work of Matui from \cite{Matui15}.

Combining \cref{thrm:FPTheorem} with the aforementioned \cref{thrm:action_to_simple}, we conclude:

{\renewcommand*{\thetheorem}{\Alph{theorem}}
\begin{theorem}\label{thrm:rsg_to_simple}
Every contracting RSG embeds into a finitely presented simple group, and hence satisfies the Boone--Higman conjecture.
\end{theorem}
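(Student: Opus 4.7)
The plan is threefold: first reduce to the full case, then invoke \cref{thrm:FPTheorem} to get finite presentability, and finally apply the criterion \cref{thrm:action_to_simple} to embed the result into a finitely presented simple twisted Brin--Thompson group.

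For the reduction, I would start with a contracting RSG $G$ acting on its subshift of finite type $X$, and enlarge $G$ to the group $\widehat{G}$ generated by $G$ together with \emph{every} canonical similarity between compatible cones of $X$. By construction $\widehat{G}$ is full. The rational-homeomorphism structure is manifestly preserved, so $\widehat{G}$ is again an RSG, and one obtains an embedding $G \hookrightarrow \widehat{G}$. The more delicate point is that $\widehat{G}$ remains contracting: a canonical similarity has essentially trivial restriction behavior, so adjoining such elements should only enlarge the nucleus by finitely many additional states, keeping the nucleus finite. This is the first step I expect to be fiddly, and would require unfolding \cref{def:contracting} against the concrete description of canonical similarities to check that iterated state-restrictions of products of $G$-elements with canonical similarities still fall into a finite set.

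With $\widehat{G}$ in hand, \cref{thrm:FPTheorem} immediately yields that $\widehat{G}$ is finitely presented. Then the natural action of $\widehat{G}$ on $X$ --- by rational homeomorphisms, with the fullness condition providing enough canonical similarities to witness whatever transitivity-type richness is needed --- ought to satisfy the hypotheses of \cref{thrm:action_to_simple}. Applying that criterion produces an embedding of $\widehat{G}$ into a finitely presented simple twisted Brin--Thompson group $V$. Composing $G \hookrightarrow \widehat{G} \hookrightarrow V$ gives the desired Boone--Higman embedding, and the ``hence'' clause follows because twisted Brin--Thompson groups are known to be finitely presented and simple, so in particular have solvable word problem, forcing $G$ to as well.

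The main obstacle I foresee is the interaction between the two hypotheses of \cref{thrm:action_to_simple}: it presumably demands both finite presentability \emph{and} a sufficiently rich action on a Cantor-like space, and the fullness condition in \cref{thrm:FPTheorem} is almost certainly calibrated precisely to make both hypotheses available simultaneously. So the real work is the fullification step --- producing $\widehat{G}$ while preserving contraction --- after which the rest of the argument is a direct application of the two cited results. No separate treatment of hyperbolic groups is needed here; that will be handled downstream by embedding hyperbolic groups into full, contracting RSGs and then quoting this theorem.
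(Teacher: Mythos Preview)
Your overall strategy---fullify, then invoke \cref{thrm:FPTheorem} for finite presentability, then feed the result into \cref{thrm:action_to_simple}---matches the paper's. But two steps in your execution are genuinely broken.

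First, your fullification does nothing. By \cref{def:rsg}, an RSG $G$ \emph{already} realizes every canonical similarity between compatible cones; that is precisely what distinguishes an RSG from an arbitrary rational subgroup. So the group $\widehat{G}$ you describe is just $G$ again, and in particular need not be full (cf.\ \cref{ex:T}). The correct move is to pass to the full closure $[[\,G\mid E\,]]$. The content you are missing is that this full closure has the \emph{same} finite nucleus: by \cref{prop:NucleusHasProperties} the nucleus $\Nuc_G$ of any RSG is a nucleus of injections, and then \cref{thrm:RSGCharacterization} identifies the unique full RSG with that nucleus as $\{f\in\R_{\Gamma,E}\mid \Nuc_f\subseteq\Nuc_G\}$, which contains $G$ and is still contracting. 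No ad hoc argument about ``adjoining only finitely many states'' is needed, and your version of that argument would not go through anyway since the group you wrote down is not enlarged at all.

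Second, you wave your hands at the hypotheses of \cref{thrm:action_to_simple}. That criterion requires not just an oligomorphic action on some set $S$ but also that stabilizers of finite subsets of $S$ be finitely generated. This is not automatic and occupies most of \cref{sec:ending}: one takes $S$ to be the orbit of a rational point, shows the group of germs at any such point is virtually cyclic (\cref{prop:CyclicStabilizers}, using finiteness of the nucleus in an essential way), and then leverages this together with a further application of \cref{thrm:fin_pres} to deduce that stabilizers of finite sets are finitely generated (\cref{prop:fin_gen_stabs}). Oligomorphicity itself is the easy part (\cref{prop:HighlyTransitiveAction}), but the stabilizer condition does not come for free from fullness and is where the contracting hypothesis is used a second time.
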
}

One consequence of this is the following, which is \cref{cor:RN_BH}.

{\renewcommand*{\thetheorem}{\Alph{theorem}}
\begin{corollary}\label{cor:ss_embed}
Every contracting self-similar group embeds into a finitely presented simple group, and hence satisfies the Boone--Higman conjecture.
\end{corollary}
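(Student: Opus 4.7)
The plan is to realize any contracting self-similar group $G$ as a subgroup of its Röver--Nekrashevych group $V(G)$, verify that $V(G)$ is a contracting RSG, and then invoke \cref{thrm:rsg_to_simple}. By hypothesis $G$ acts faithfully by automorphisms on the regular rooted tree $\mathcal{T}_d$ (equivalently on the one-sided full shift on $d$ letters), and the self-similarity and contracting assumptions are precisely what is needed to make $V(G)$ an RSG whose nucleus is finite.

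First, I would recall the standard construction of the Röver--Nekrashevych group $V(G)$: its elements are prefix-exchange homeomorphisms of the boundary $\partial\mathcal{T}_d$ whose restrictions to cones are given by elements of $G$. The natural inclusion $G\hookrightarrow V(G)$, sending an element of $G$ to the global homeomorphism it induces, is injective because $G$ acts faithfully on $\partial\mathcal{T}_d$. This reduces the corollary to showing that $V(G)$ itself satisfies the Boone--Higman conjecture.

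Second, I would verify that $V(G)$ is a full, contracting RSG. As indicated in \cref{ex:rn}, the class of Röver--Nekrashevych groups sits inside the class of RSGs, with the underlying subshift of finite type being the full one-sided shift on $d$ symbols. Every canonical similarity between cones of $\partial\mathcal{T}_d$ is, by construction, realized by some element of $V(G)$ (for instance by the elementary prefix exchanges generating the Higman--Thompson group $V_d$), so $V(G)$ is full in the sense of \cref{def:full}. The contracting condition on $G$ as a self-similar group says that iterated section maps eventually land in a finite set (the nucleus of $G$); this translates directly into the contracting condition on $V(G)$ as an RSG in the sense of \cref{def:contracting}, since the local actions of elements of $V(G)$ at deep enough vertices are given by elements of the nucleus of $G$ together with identity rearrangements.

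Finally, applying \cref{thrm:rsg_to_simple} to the full, contracting RSG $V(G)$ yields an embedding of $V(G)$, and hence of $G$, into a finitely presented simple group. In particular, $G$ has solvable word problem (inherited from the finitely presented simple overgroup, or more directly from its contracting self-similar structure) and satisfies the Boone--Higman conjecture. The only genuinely nontrivial step here is matching the two notions of ``contracting,'' but the definitions are set up compatibly; everything else is bookkeeping around the well-known embedding $G\hookrightarrow V(G)$.
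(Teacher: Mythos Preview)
Your proposal is correct and follows essentially the same route as the paper: embed $G$ into its R\"over--Nekrashevych group, observe (via \cref{ex:rn}) that this is a full RSG whose contracting property matches that of $G$, and then apply \cref{thrm:rsg_to_simple}. The paper treats this as immediate from \cref{ex:rn} and \cref{prop:contr_to_simple}, without spelling out the details you provide.
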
}

Finally, in order to prove \cref{thrm:bh_hyp}, it remains to prove the following, which is \cref{thrm:hyp_to_contracting}.

{\renewcommand*{\thetheorem}{\Alph{theorem}}
\begin{theorem}\label{thrm:hyp_embed}
Every hyperbolic group embeds in a full, contracting RSG.
\end{theorem}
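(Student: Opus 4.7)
The plan is to build a subshift of finite type $X$ from the horofunction boundary of $G$, realize $G$ as a subgroup of rational homeomorphisms of $X$, and then enlarge the image by adjoining every canonical similarity between compatible cones; the resulting group should be the desired full, contracting RSG. First I would fix a finite symmetric generating set $S$ for $G$ and pass to the horofunction (Busemann) boundary $\partial_h G$ of the associated Cayley graph. Using Cannon's theorem that a hyperbolic group has only finitely many cone types, one encodes $\partial_h G$ as the space of infinite paths in a finite directed graph $\Gamma$ with edge set $E$ whose vertices are decorated by cone types and whose transitions record the admissible successions of cone types along horoball-geodesic rays. This produces a subshift of finite type $X = X_{\Gamma,E}$, and each $g \in G$ acts on $X$ by an asynchronous finite-state transducer: applying $g$ shifts a cone-type address by a bounded amount, and the effect on the remaining tail is determined by the image cone type, of which there are finitely many. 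This yields an injective homomorphism $G \hookrightarrow R(\Gamma,E)$ into the rational group of the subshift. This first half of the argument largely retraces prior work of the authors on rational embeddings of hyperbolic groups, specialized to this particular subshift.

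Next I would let $G^*$ be the subgroup of $R(\Gamma,E)$ generated by the image of $G$ together with every canonical similarity between compatible cones of $X$. By construction $G^*$ is full and contains $G$, so $G^*$ is a full RSG. The remaining task is to show $G^*$ is contracting, i.e.\ to exhibit a finite nucleus $\Nuc$. The nucleus will be assembled from two sources. First, the canonical similarities themselves contribute only finitely many restriction states, since a canonical similarity is determined by its source and target cone types, there are finitely many cone types, and the restriction of a canonical similarity to a subcone is again a canonical similarity of this combinatorial form. Second, the restrictions of elements of $G$ to cones based deep in the Cayley graph are controlled by the $\delta$-thin-triangle condition: restricting the action of $g$ to a cone based far from $1_G$ depends only on bounded combinatorial data near the defining geodesic, and this data lies in a finite set because cone types do. Finally, alternating products of group elements and canonical similarities can be rewritten so that each cone-restriction is a canonical similarity composed with a bounded-displacement $G$-restriction, so iterated restriction does not leave the finite union of these two families.

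The main obstacle will be the contractingness verification, specifically making precise that adjoining all canonical similarities does not inflate the set of reachable restriction states. The combinatorics here are delicate and will likely be handled via a normal-form presentation of elements of $G^*$ of the kind developed in \cref{def:normalish}, allowing one to factor any cone-restriction as a canonical similarity postcomposed with the restriction of a single element of $G$, which then lies in the finite $G$-nucleus once the cone is deep enough. A secondary, smaller issue is the degenerate situation in which the hyperbolic group has finite boundary, in particular the finite and virtually cyclic cases, which falls outside the Cantor-space framework; such groups can be handled separately, for example by first embedding them into a non-elementary hyperbolic group (such as a free product with a free group of rank two) before applying the main construction.
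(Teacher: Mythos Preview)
There are several genuine gaps in the proposal.

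First, the construction of $G^*$ is not well-formed: canonical similarities $L_\beta L_\alpha^{-1}\colon \C_\alpha\to\C_\beta$ are partial homeomorphisms between cones, not elements of $\Homeo(E)$, so one cannot literally adjoin them as group elements. More importantly, even if you adjoin elements of $V_{\Gamma,E}$ that realize these similarities, the resulting group need not be full; being an RSG and being full are independent properties (compare the circular Thompson groups in \cref{ex:T}, which are non-full RSGs). The paper instead takes the full closure $[[\,G\mid\partial_h G\,]]$ directly, and then uses the classification theorem for full RSGs (\cref{thrm:RSGCharacterization}) to deduce that this full closure has exactly the same nucleus as $G$ itself. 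This reduces the entire contracting question to showing that the hyperbolic group $G$ has finite nucleus, with no need to analyze products involving canonical similarities.

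Second, your contracting argument is aimed at the wrong target. The normalish-form machinery of \cref{def:normalish} is used exclusively for the finite-presentability proof in \cref{sec:fp}; it plays no role in establishing contracting. The actual proof that $G$ has finite nucleus is the geometric \cref{lem:contractinglemma}: for any $g\in G$ and any sufficiently deep atom $A$, the image $gA$ lies in an atom $A'$ whose nearest-neighbor set $N(A')$ sits in a \emph{uniform} neighborhood of $g\,N(A)$. One then shows that the local action $g|_\alpha$ is determined up to a finite ambiguity by the signature of the triple $(g,A,A')$, and there are only finitely many signature classes. Your sketch of ``bounded combinatorial data'' gestures at this, but the key uniform bound (independent of $g$) is the heart of the matter and requires the thin-triangle arguments of \cref{ssec:contractinglemma}. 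Also note that the encoding of $\partial_h G$ uses \emph{atom types}, not Cannon's cone types; these are related but genuinely different.

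Third, you underestimate the scope of the free-product trick. The paper passes to $G*\Z$ for \emph{every} hyperbolic group, not only the elementary ones. This is because three hypotheses must be verified before the machinery applies: $\partial_h G$ has no isolated points, $G$ acts faithfully on $\partial_h G$, and the type graph has an irreducible core. Faithfulness can fail whenever $G$ has a nontrivial finite normal subgroup, and the other two conditions are open even for non-elementary hyperbolic groups (see \cref{rmk:horofunction_problems}). \cref{thrm:FreeProductBoundary} establishes all three for $G*\Z$, which is why the embedding $G\hookrightarrow G*\Z$ is the first step.
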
}

We prove \cref{thrm:hyp_embed} by using a group denoted $[[\,G\mid \partial_h G\,]]$, for $G$ a given hyperbolic group. This group arises from the action of $G$ on its horofunction boundary $\partial_h G$, as defined by Gromov.  The horofunction boundary of a hyperbolic group is a totally disconnected, compact metrizable space that maps onto the more familiar Gromov boundary $\partial G$ by a finite-to-one map \cite{WeWi,perego2023rationality}. The group $[[\,G\mid\partial_h G\,]]$ consists of all homeomorphisms of $\partial_h G$ that locally agree with elements of the hyperbolic group.  That is, a homeomorphism $f\colon \partial_h G\to \partial_h G$ lies in $[[\,G\mid \partial_h G\,]]$ if and only if there exists a partition of $\partial_h G$ into finitely many clopen sets such that $f$ agrees with some element of $G$ on each set of the partition.

These groups $[[\,G\mid \partial_h G\,]]$ are ``Thompson-like'' in the sense that their action shares many properties with the usual action of Thompson's group $V$ or a R\"over--Nekrashevych group on the Cantor set. Indeed, in the framework established by Matui and Matsumoto \cite{Matui12,Matui13,MatuiMatsumoto14,Matsumoto15,Matui15,MatuiMatsumoto17}, the groups $[[\,G\mid \partial_h G\,]]$ can be viewed as topological full groups of certain \'etale groupoids.  For most hyperbolic groups $G$, this groupoid is purely infinite and minimal.  Then $[[\,G\mid \partial_h G\,]]$ is the topological full group associated to the \'etale groupoid of all germs of elements of $G$ acting on $\partial_h G$. Note that the construction of $[[\,G\mid X\,]]$ makes sense whenever $G$ is a group acting on a topological space $X$, though the resulting group is not always Thompson-like.  For example, if the action of $G$ on $X$ preserves some measure, then the action of $[[\,G\mid X\,]]$ on $X$ will too.

In \cite{BelkBleakMatucci}, the first three authors described an assignment of addresses to $\partial_h G$ with respect to which the action of $G$, for most hyperbolic groups $G$, is faithful and asynchronous rational, as defined by Grigorchuk, Nekrashevych, and Sushchanski\u{\i} \cite{GNS}.  Specifically, $\partial_h G$ can be viewed as a clopen subset of a subshift of finite type, and $G$ acts by finite-state transducers. It follows that the action of $[[\,G\mid \partial_h G\,]]$ on $\partial_h G$ is faithful and rational. The group $[[\,G\mid \partial_h G\,]]$ also contains the topological full group $V_{\Gamma,E}$  for the associated clopen subset $E$ of a subshift $\Sigma_\Gamma$, which ensures that it is an RSG. Finally, we prove that it is contracting; the proof of this involves some intricate geometric arguments, and is the part of the overall argument that utilizes hyperbolicity most essentially; see \cref{lem:contractinglemma} for the culmination of these technical arguments.

All of these steps work for hyperbolic groups $G$ outside some low-complexity cases; in particular they work whenever $G$ has a proper $\Z$ free factor, and so every hyperbolic group embeds into a hyperbolic group $G$ for which $[[\,G\mid \partial_h G\,]]$ is a full, contracting RSG, thus establishing \cref{thrm:hyp_embed}.

\subsection{Open questions}\label{ssec:open}

We end the introduction with some questions that naturally arise.

\setcounter{theorem}{0}
\begin{question}
Is every non-elementary hyperbolic group isomorphic to a contracting RSG?\end{question}

We show in \cref{sec:hyp} that every hyperbolic group $G$ that acts faithfully on its Gromov boundary is isomorphic to an RSG. Moreover, if $G$ has $\Z$ as a proper free factor then the RSG is contracting. The main impediment to obtaining the contracting property outside this case is that we do not know whether $\Sigma_\Gamma$ always has an irreducible core.

\begin{question}\label{quest:F_infty}
Do full, contracting RSGs have type $\F_{\!\infty}$? Even for R\"over--Nekrashevych groups of contracting self-similar groups, this remains open, and was conjectured to be true by Nekrashevych; see \cite{NekraFP}.
\end{question}

A first step toward answering \cref{quest:F_infty} would be finding a topological proof of finite presentability of full, contracting RSGs. An attempt that the authors made, following a variant of the ``standard'' approach to deducing type $\F_{\!\infty}$ for Thompson-like groups (see, e.g.\ \cite{BelkMatucci,ZaremskyF}), was unsuccessful, so it seems that some new ideas are needed.

One could also ask whether finite presentability of the full group of a hyperbolic group $G$ acting on its horofunction boundary could be deduced in some manner making direct use of the finite presentability of $G$, rather than the contracting property. More generally, one can ask:

\begin{question}\label{quest:all_fp_subgroups_of_rat}
Does every finitely presented subgroup of the rational group $\R_{\Gamma,E}$ embed in a finitely presented subgroup whose action on some orbit in $E$ is oligomorphic and has finitely generated stabilizers of finite subsets? (And hence embed in a finitely presented (simple) twisted Brin--Thompson group?)
\end{question}

Scott in \cite[Theorem~2]{scott84} showed that (what is now called the) R\"over--Nekrashevych group $V_d(G)$ of a finitely presented self-similar group $G$ is finitely presented, and Skipper, Witzel, and the fourth author extended this result to higher finiteness properties in \cite[Theorem~4.15]{SWZ}.  We can ask the following analogous question about RSGs.

\begin{question}
If $G$ is a finitely presented RSG, then must the full closure of $G$ also be finitely presented?
\end{question}

The following more ambitious question is a concrete approach to trying to prove the full Boone--Higman conjecture:

\begin{question}
Does every finitely presented group with solvable word problem embed in a finitely presented oligomorphic group with finitely generated stabilizers of finite subsets?
\end{question}

If the answer to this question is ``Yes'', then we would have that every finitely presented group with solvable word problem embeds in a finitely presented (simple) twisted Brin--Thompson group, thus proving the Boone--Higman conjecture. In a similar vein, one can wonder whether twisted Brin--Thompson groups might always serve as the target of a Boone--Higman embedding:

\begin{question}\label{quest:always_twisted_bt}
Does every finitely presented simple group embed in a finitely presented (simple) twisted Brin--Thompson group?
\end{question}

Finally, another obvious question is whether one could try to extend the techniques here to prove the Boone--Higman conjecture for other classes of groups that are related to hyperbolic groups, such as CAT(0) groups, automatic groups, relatively hyperbolic groups with nice peripheral subgroups, and so forth. A starting point would have to be an interesting action on a Cantor space (e.g.\ the horofunction boundary in our case), which is not an issue in and of itself, but in order for our techniques here to work it turns out that the group of germs at any point must be virtually cyclic (see \cref{prop:CyclicStabilizers}), and this is an impediment for several potential approaches. Let us also mention another obvious family of groups related to hyperbolic groups, namely acylindrically hyperbolic groups; it turns out there exist acylindrically hyperbolic groups with unsolvable word problem \cite[Theorem~7.7]{MinasyanOsin} (in fact, one can just take $G*\Z$, for any $G$ with unsolvable word problem), so in fact they do not all embed into finitely presented simple groups, and it is not clear whether assuming acylindrical hyperbolicity should help with finding Boone--Higman embeddings.

\medskip

This paper is organized as follows. We begin by defining rational similarity groups (RSGs) in \cref{sec:the_groups}. In \cref{sec:fp} we prove that full, contracting RSGs are finitely presented. In \cref{sec:hyp} we show that every hyperbolic group embeds into a full, contracting RSG. Finally, in \cref{sec:ending} we prove that full, contracting RSGs embed into finitely presented simple groups, and hence so do all hyperbolic groups.

\medskip

\noindent\textbf{Acknowledgments.} Thanks are due to Martin Bridson, Matt Brin, Francesco Fournier-Facio, Anthony Genevois, James Hyde, Ilya Kapovich, Davide Perego, Shayo Olukoya, Lorna Richardson, Rachel Skipper, Owen Tanner, Slobodan Tanushevski, Matteo Tarocchi, and Xiaolei Wu for a variety of helpful discussions and pointers to references. We would particularly like to thank James Hyde for getting all of us interested in the Boone--Higman conjecture in the first place, and for contributing some of the ideas in the proof of \cref{prop:fin_gen_stabs}. We would also like to thank several anonymous referees for many helpful suggestions and comments. The first and second authors would like to thank the Universit\`a degli Studi di Milano--Bicocca (FA projects 2020-ATE-0006 and 2021-ATE-0033 ``Strutture Algebriche'') for travel support, as well as Anna Maria Savi and Patrizio Matucci for graciously hosting us in Florence.  They also wish to gratefully acknowledge partial support from EPSRC grant EP/R032866/1 during the creation of this paper. The third author is a member of the Gruppo Nazionale per le Strutture Algebriche, Geometriche e le loro Applicazioni (GNSAGA) of the Istituto Nazionale di Alta Matematica (INdAM) and of the PRIN 2022 ``Group theory and its applications'' research group and gratefully acknowledges the support of the PRIN project 2022-NAZ-0286 and of the Funda\c{c}\~ao para a Ci\^encia e a Tecnologia  (CEMAT-Ci\^encias FCT projects UIDB/04621/2020 and UIDP/04621/2020) and of the Universit\`a degli Studi di Milano--Bicocca (FA projects 2020-ATE-0006 and 2021-ATE-0033 ``Strutture Algebriche''). The fourth author is supported by grant \#635763 from the Simons Foundation.

\section{Rational similarity groups (RSGs)}\label{sec:the_groups}

In this section we construct a new family of groups called rational similarity groups (RSGs), see \cref{def:rsg}. These are certain groups of homeomorphisms, with two key defining properties: they consist of ``rational'' homeomorphisms of a subshift of finite type $\Sigma_\Gamma$, and they can achieve any ``canonical similarity'' between compatible cones in~$\Sigma_\Gamma$. Rational similarity groups seem to be fundamental to the study of asynchronous rational groups in the same way that self-similar groups are fundamental to the study of synchronous rational groups, in that they emerge as a unifying property common to most examples of interest in the literature. This will become clear in the course of constructing the groups and detailing many examples.

In \cref{ssec:subshifts} we establish notation and terminology for subshifts of finite type, and in \cref{ssec:rational} we develop a theory of rational homeomorphisms of such subshifts.  In Subsections~\ref{ssec:thompson} and~\ref{ssec:full_irred} we recall the Thompson group $V_{\Gamma,E}$ associated to a subshift and summarize its known properties. We introduce the class of RSGs in \cref{ssec:RSGs} and discuss several existing examples in the literature.  Finally, we introduce the nucleus for an RSG together with the property of an RSG being contracting in \cref{ssec:nuclei}, and in \cref{ssec:construct_groups} we use nuclei to prove a classification theorem for full RSGs.

\subsection{Subshifts of finite type}\label{ssec:subshifts}

In this subsection we briefly recall the definition of a subshift of finite type and establish relevant notation.

Throughout this subsection, let $\Gamma$ be a finite directed graph. Note that we allow $\Gamma$ to have both loops and multiple edges.  If $e$ is an edge of $\Gamma$, we let $o(e)$ denote the origin of $e$, and $t(e)$ denote the terminus of $e$. A \newword{directed path} in $\Gamma$ is a (finite or infinite) sequence $\{e_i\}$ of edges such that $t(e_i)=o(e_{i+1})$ for each~$i$.  A finite directed path $e_1\cdots e_n$ has an origin and terminus defined by
 \[
 o(e_1\cdots e_n)=o(e_1)\qquad\text{and}\qquad t(e_1\cdots e_n)=t(e_n).
 \]
An infinite directed path $e_1e_2\cdots$ has an origin $o(e_1)$ but no terminus. We will often just say ``path'' with ``directed'' being implicitly understood.
 
We will often think of the set of edges of $\Gamma$ as a finite alphabet, with finite directed paths being words.  In particular, an initial segment of a path $\alpha$ will be called a \newword{prefix} of $\alpha$ and any final segment of a path $\alpha$ will be called a \newword{suffix} of $\alpha$. For convenience, we also regard each node $v$ in $\Gamma$ as a path of length $0$ with $o(v)=t(v)=v$, where $v$ is considered to be a prefix of every path that has origin~$v$.  Finally, there is a \newword{null path} $\varnothing$ of length~$0$, which is a prefix of every path.

\begin{definition}[Subshift of finite type]
The \newword{subshift of finite type} associated to $\Gamma$ is the set $\Sigma_\Gamma$ of all infinite directed paths $e_1e_2 \cdots$ in $\Gamma$.
\end{definition}

This coincides with the terminology in~\cite{Matui15}, though ``subshift of finite type'' often refers to a slightly more general kind of sequence space (cf.\ \cite[Definition~2.1.1]{LindMarcus}). In that context, the subshifts $\Sigma_\Gamma$ defined above are sometimes called ``edge shifts'' or ``topological Markov shifts'', and up to topological conjugacy there is no distinction \cite{LindMarcus}.

If $\mathcal{E}$  is the set of edges of $\Gamma$, then we can view $\Sigma_\Gamma$ as a closed subspace of the infinite product space~$\mathcal{E}^{\N}$, where $\mathcal{E}$ has the discrete topology. With respect to this topology, $\Sigma_\Gamma$ is compact, totally disconnected, and metrizable.  Indeed, there is a \newword{standard ultrametric} on $\Sigma_\Gamma$, for which the distance between two distinct infinite paths $\omega$ and $\omega'$ is $1/2^n$, where $n$ is the length of the greatest common prefix of $\omega$ and $\omega'$.

\begin{example}
If $\Gamma$ has a single node and $n\ge 2$ (loop) edges, then $\Sigma_\Gamma$ is the usual $n$-ary Cantor space $C_n=\{0,\dots,n-1\}^\N$.
\end{example}

If $\alpha$ is a finite path in $\Gamma$, the associated \newword{cone} is the set $\C_\alpha\subseteq \Sigma_\Gamma$ of all infinite directed paths that have $\alpha$ as a prefix.  Note that if $v$ is a node in $\Gamma$, then $\C_v$ is the set of all infinite directed paths with origin~$v$. In addition, the cone $\C_\varnothing$ is the entire subshift~$\Sigma_\Gamma$. Each cone is a clopen subset of~$\Sigma_\Gamma$, and these form a basis for the topology.  In particular, a subset of~$\Sigma_\Gamma$ is clopen if and only if it is a disjoint union of finitely many cones.

\begin{remark}
If $\Sigma_\Gamma$ is a subshift of finite type, then the set of non-null finite paths in $\Gamma$ has the structure of a forest of finitely many rooted trees, with one tree for each node of~$\Gamma$.  The subshift $\Sigma_\Gamma$ is precisely the Gromov boundary of this forest, i.e.\ the space of leaves at infinity.
\end{remark}

\begin{remark}
We will largely be concerned with subshifts $\Sigma_\Gamma$ with the following two properties:
\begin{enumerate}
    \item The subshift $\Sigma_\Gamma$ has no isolated points; and\smallskip
    \item There are no empty cones, i.e.\ $\C_\alpha\ne\emptyset$ for every finite directed path~$\alpha$.
\end{enumerate}
If the first property holds, then the topological space $\Sigma_\Gamma$ is a Cantor space.  Together, these properties are equivalent to saying that for every node $v$ of $\Gamma$ there is a node $w$ of $\Gamma$ such that there is a directed path from $v$ to $w$ and $w$ has at least two outgoing edges.
\end{remark}

If $\alpha$ and $\beta$ are directed paths with $t(\alpha)=o(\beta)$, let $\alpha\cdot \beta$ denote the concatenation of $\alpha$ and $\beta$, where $v\cdot\alpha=\alpha$ for any directed path $\alpha$ with origin $v$ and $\varnothing \cdot \alpha=\alpha$ for every directed path~$\alpha$. If $\alpha$ is any finite path, observe that
\[
\C_\alpha = \{\alpha\cdot \omega \mid \omega\in \C_{t(\alpha)}\}\text{.}
\]
This holds for $\alpha=\varnothing$ as well if we adopt the convention that $t(\varnothing)=\varnothing$. 
The \newword{canonical similarity}
\[
L_\alpha \colon \C_{t(\alpha)}\to \C_\alpha
\]
is the homeomorphism defined by $L_\alpha(\omega) = \alpha\cdot \omega$.  More generally, if $\alpha$ and $\beta$ are finite paths with $t(\alpha)=t(\beta)$, the homeomorphism $L_\beta\circ L_\alpha^{-1}$ is the \newword{canonical similarity} $\C_\alpha\to \C_\beta$, sending $\alpha\cdot\omega$ to $\beta\cdot\omega$ for all $\omega\in \C_\alpha$.

\subsection{Rational homeomorphisms}\label{ssec:rational}

In this subsection we define rational homeomorphisms on subshifts of finite type, and we briefly develop the corresponding theory.  Rational homeomorphisms on full shifts were defined by Grigorchuk, Nekrashevych, and Sushchanski\u{\i} in~\cite{GNS}.

Throughout this subsection, let $\Sigma_\Gamma$ be a subshift of finite type without isolated points or empty cones. 
A \newword{nondegenerate map} on $\Sigma_\Gamma$ is a map $f\colon E\to \Sigma_\Gamma$, where $E$ is any nonempty clopen subset of $\Sigma_\Gamma$, with the property that no cone in $E$ maps to a single point.  For example, since $\Sigma_\Gamma$ has no isolated points and hence no one-point cones, any injective map on $\Sigma_\Gamma$ is nondegenerate.

If $E$ is a nonempty clopen subset of $\Sigma_\Gamma$, let $\Cones(E)$ denote the set of all finite paths $\alpha$ for which $\C_\alpha\subseteq E$. For a nondegenerate map $f\colon E\to \Sigma_\Gamma$, let
\[
\of\colon \Cones(E) \to \Cones(\Sigma_\Gamma)
\]
be the function that sends each $\alpha\in \mathrm{Cones}(E)$ to $\beta$, where $\C_\beta$ is the smallest cone in $\Sigma_\Gamma$ that contains $f(\C_\alpha)$.  Equivalently, $\of(\alpha)$ is the greatest common prefix of all points in $f(\C_\alpha)$ (which could be null). Note that this is well-defined since $f(\C_\alpha)$ has at least two points.

\begin{definition}[Local Action]
Let $f\colon E\to \Sigma_\Gamma$ be a nondegenerate map on $\Sigma_\Gamma$. If $\alpha\in \Cones(E)$, the \newword{local action} of $f$ at $\alpha$ is the map $f|_\alpha \colon \C_{t(\alpha)} \to \C_{t(\of(\alpha))}$ defined by
\[
f(\alpha\cdot \omega) = \of(\alpha)\cdot f|_{\alpha}(\omega)
\]
for all $\omega\in \C_{t(\alpha)}$.
\end{definition}

That is, $f|_\alpha$ is the map that fits into a commutative diagram
\[
\xymatrix@R=0.5in{
\C_{t(\alpha)} \ar_{f|_\alpha}[d] \ar^{L_\alpha}[r] & 
\C_{\alpha} \ar^{f}[d] \\ 
\C_{t(\of(\alpha))}\ar_{L_{\of(\alpha)}}[r] & \C_{\of(\alpha)}
}
\]
where $L_\alpha$ and $L_{\of(\alpha)}$ are canonical similarities.  Note then that two local actions $f|_\alpha$ and $g|_\beta$ are equal if and only if there is a commutative diagram
\[
\xymatrix@R=0.5in{
\C_{\alpha} \ar_{f}[d] \ar^{\phi}[r] & 
\C_{\beta} \ar^{g}[d] \\ 
\C_{\of(\alpha)}\ar_{\psi}[r] & \C_{\og(\beta)}
}
\]
where $\phi$ and $\psi$ are canonical similarities.
Moreover, note that if
$\alpha$ and $\beta$ are two finite directed paths with $f|_{\alpha}=g|_{\beta}$, then necessarily $t(\alpha)=t(\beta)$. This is because the functions $f|_{\alpha}$ and $g|_{\beta}$ have the same domain, made of infinite directed paths necessarily starting from the same node of $\Gamma$.

\begin{definition}[Rational map]
A nondegenerate map $f\colon E\to \Sigma_\Gamma$ is \newword{rational} if it only has finitely many distinct local actions.
\end{definition}

\begin{remark}
The local actions $f|_\alpha$ correspond to what Grigorchuk, Nekrashevych, and Sushchanski\u{\i} refer to as ``restrictions'' in~\cite{GNS}.  In the context of self-similar groups, they are also often referred to as ``states'', since they correspond to the states of a transducer, and rational maps are said to be ``finite-state''.

Though we do not use transducers here, there is no particular obstacle to writing a rational map on a subshift of finite type as a transducer.  The input and output alphabet would be the set of edges of $\Gamma$, and the transducer would have the property that it can only input or output valid words in the subshift.
\end{remark}

Let us collect some lemmas about local actions, which among other things will establish that the property of being rational is closed under compositions and inverses.

\begin{lemma}\label{lem:finite_to_one}
Let $f\colon E\to \Sigma_\Gamma$ be a nondegenerate map.  Then for each $\beta\in\Cones(\Sigma_\Gamma)$, the set $\of^{-1}(\beta)$ is finite.  If $f$ is rational and injective with exactly $k$ distinct local actions, then more precisely we have $\bigl|\of^{-1}(\beta)\bigr|\leq k$.
\end{lemma}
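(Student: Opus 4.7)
The plan is to handle the two claims separately.

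For the general finiteness statement, I would argue by contradiction. Assume $\of^{-1}(\beta)$ is infinite. Because $\Gamma$ has finitely many edges, the rooted forest of finite directed paths in $\Gamma$ is locally finite, so the prefix-closure $T$ of $\of^{-1}(\beta)$ in this forest is an infinite, locally finite subtree. By K\"onig's lemma, $T$ contains an infinite branch, giving an infinite path $\omega\in\Sigma_\Gamma$ each of whose finite prefixes lies in $T$. For each $n$, pick $\alpha_n\in\of^{-1}(\beta)$ extending the length-$n$ prefix of $\omega$; then $\C_{\alpha_n}$ is contained in the cone on that prefix and has ultrametric diameter at most $2^{-n}$. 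Using continuity of $f$, the diameter of $f(\C_{\alpha_n})$ also tends to $0$. But $\of(\alpha_n)=\beta$ forces $f(\C_{\alpha_n})$ to meet at least two distinct child cones of $\C_\beta$, yielding a uniform lower bound of $2^{-|\beta|}$ on that diameter, a contradiction.

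For the sharper bound, assume $f$ is rational and injective with exactly $k$ distinct local actions. I would show that $\alpha\mapsto f|_\alpha$ is injective on $\of^{-1}(\beta)$, which immediately gives $|\of^{-1}(\beta)|\leq k$. Suppose $\alpha_1,\alpha_2\in\of^{-1}(\beta)$ satisfy $f|_{\alpha_1}=f|_{\alpha_2}$. The observation following the definition of local action gives $t(\alpha_1)=t(\alpha_2)$, and unwinding the defining equation produces $f(\alpha_1\cdot\omega)=f(\alpha_2\cdot\omega)$ for every $\omega\in\C_{t(\alpha_1)}$. If $\alpha_1,\alpha_2$ are prefix-incomparable, then $\alpha_1\cdot\omega\neq\alpha_2\cdot\omega$, violating injectivity of $f$. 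Otherwise one is a proper prefix of the other, say $\alpha_2=\alpha_1\cdot\gamma$ for a nontrivial loop $\gamma$ at $t(\alpha_1)$; injectivity then gives $\omega=\gamma\cdot\omega$ for all $\omega\in\C_{t(\alpha_1)}$, which forces every element of $\C_{t(\alpha_1)}$ to equal the unique $\gamma$-periodic point, contradicting the absence of isolated points in $\Sigma_\Gamma$.

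The main obstacle I expect is in the first part, where careful coordination of K\"onig's lemma, compactness, and continuity of $f$ is needed to collapse the diameters of $f(\C_{\alpha_n})$ against the lower bound imposed by $\of(\alpha_n)=\beta$. The second part is essentially a clean case analysis on the prefix relation between $\alpha_1$ and $\alpha_2$ once one unpacks what equality of local actions says about $f$.
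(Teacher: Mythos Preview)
Your proof is correct. For the second claim it is essentially the paper's argument, just organized as an explicit case split on prefix-comparability rather than the paper's trick of choosing two distinct tails $\omega,\omega'$ and noting that at most one of them can satisfy $\alpha\cdot\omega=\alpha'\cdot\omega$.

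For the first claim your route differs from the paper's. The paper invokes uniform continuity of $f$ on the compact set $E$ directly: there is a $\delta>0$ such that any set of diameter $<\delta$ maps into a set of diameter $<2^{-(|\beta|+1)}$, hence into a proper subcone of $\C_\beta$; thus every $\alpha\in\of^{-1}(\beta)$ must satisfy $\mathrm{diam}(\C_\alpha)\geq\delta$, and there are only finitely many such cones. This is shorter and gives an explicit bound on the lengths of the $\alpha$'s. Your K\"onig's lemma argument is correct too (the infinite branch produces a point $\omega\in E$ since $E$ is closed, and continuity at $\omega$ suffices to shrink the images), but it is more circuitous: you are effectively reproving uniform continuity along one sequence rather than using it outright. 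Both approaches exploit the same tension between the diameter of $f(\C_\alpha)$ shrinking and the lower bound $2^{-|\beta|}$ forced by $\of(\alpha)=\beta$.
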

\begin{proof}
Let $n$ be the length of $\beta$.  Under the standard ultrametric, the cone $\C_\beta$ has diameter $2^{-n}$, and any subset of $\C_\beta$ of diameter $2^{-(n+1)}$ or less is contained in a proper subcone of $\C_\beta$. Since $f$ is uniformly continuous by virtue of $E$ being compact, there exists $\delta>0$ so that
\[
d(p,q) < \delta \quad\Rightarrow\quad d\bigl(f(p),f(q)\bigr) < 2^{-(n+1)}
\]
for all $p,q\in E$.  It follows that any $\alpha\in \of^{-1}(\beta)$ must have $\mathrm{diam}(\C_\alpha) \geq \delta$, and there are only finitely many such cones.

Now suppose that $f$ is rational and injective with exactly $k$ distinct local actions, and suppose to the contrary that $\bigl|\of^{-1}(\beta)\bigr|>k$.  By the pigeonhole principle, there must exist distinct $\alpha,\alpha'\in \of^{-1}(\beta)$ so that $f|_\alpha=f|_{\alpha'}$. Since $\Sigma_{\Gamma}$ has no isolated points, if $v$ is a node of $\Gamma$ such that $\Gamma$ admits an infinite path starting from $v$, then it must be the case that $\Gamma$ admits at least two (and in fact, infinitely many) distinct paths starting at $v$. As $t(\alpha)=t(\alpha')$ is a node of $\Gamma$ admitting an infinite path $\omega$ based at $t(\alpha)$, then it must admit another distinct path $\omega'$ based at $t(\alpha)$, and it follows that either $\alpha\cdot\omega\neq \alpha'\cdot\omega$ or $\alpha\cdot\omega'\neq \alpha'\cdot\omega'$.  Assume without meaningful loss of generality that $\alpha\cdot\omega\neq \alpha'\cdot\omega$. But now we have
\[
f(\alpha\cdot\omega) = \beta\cdot f|_{\alpha}(\omega) = \beta\cdot f|_{\alpha'}(\omega) = f(\alpha'\cdot \omega)
\]
and so $f$ is not injective, a contradiction.
\end{proof}

\begin{lemma}\label{lem:restrict_twice}
Let $f\colon E\to \Sigma_\Gamma$ be a nondegenerate map.  Then for finite paths $\alpha,\beta$ in\/ $\Gamma$ for which $\alpha\cdot\beta$ is defined, we have $(f|_\alpha)|_\beta = f|_{\alpha\cdot\beta}$.
\end{lemma}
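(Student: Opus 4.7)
The plan is to unwind both sides of the equation using the defining relation $f(\alpha\cdot\omega) = \of(\alpha)\cdot f|_\alpha(\omega)$ and then match the ``prefix'' and ``local'' pieces. First, I would note that since $\alpha\cdot\beta$ is defined, $\beta$ starts at $t(\alpha)$, which is precisely the condition for $\beta\in \Cones(\C_{t(\alpha)})$, the domain of $f|_\alpha$. Moreover $\C_{\alpha\cdot\beta}\subseteq \C_\alpha\subseteq E$, so both local actions $(f|_\alpha)|_\beta$ and $f|_{\alpha\cdot\beta}$ are well-defined; both have domain $\C_{t(\beta)}$.

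Next, for any $\tau\in \C_{t(\beta)}$, I would compute $f(\alpha\cdot\beta\cdot\tau)$ in two ways. Applying the definition of $f|_\alpha$ to $\omega = \beta\cdot\tau$, and then applying the definition of $(f|_\alpha)|_\beta$ to $\tau$, gives
\[
f(\alpha\cdot\beta\cdot\tau) \;=\; \of(\alpha)\cdot f|_\alpha(\beta\cdot\tau) \;=\; \of(\alpha)\cdot\overline{f|_\alpha}(\beta)\cdot (f|_\alpha)|_\beta(\tau).
\]
On the other hand, treating $\alpha\cdot\beta$ as a single path,
\[
f(\alpha\cdot\beta\cdot\tau) \;=\; \of(\alpha\cdot\beta)\cdot f|_{\alpha\cdot\beta}(\tau).
\]

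The key intermediate step is the prefix identity $\of(\alpha\cdot\beta) = \of(\alpha)\cdot\overline{f|_\alpha}(\beta)$. To prove this I would argue directly from the characterization of $\of$ as the greatest common prefix: since $f(\C_\alpha)\subseteq \C_{\of(\alpha)}$, we can strip $\of(\alpha)$ off of every element of $f(\C_{\alpha\cdot\beta})\subseteq f(\C_\alpha)$, and what remains is $f|_\alpha(\C_\beta)$, whose greatest common prefix is by definition $\overline{f|_\alpha}(\beta)$. Concatenating $\of(\alpha)$ back on yields the claimed identity. In particular $t(\of(\alpha\cdot\beta)) = t(\overline{f|_\alpha}(\beta))$, so the codomains of $(f|_\alpha)|_\beta$ and $f|_{\alpha\cdot\beta}$ also agree.

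Combining the two expressions for $f(\alpha\cdot\beta\cdot\tau)$ and canceling the common prefix $\of(\alpha)\cdot\overline{f|_\alpha}(\beta) = \of(\alpha\cdot\beta)$ yields $(f|_\alpha)|_\beta(\tau) = f|_{\alpha\cdot\beta}(\tau)$ for every $\tau\in \C_{t(\beta)}$, completing the proof. There is no real obstacle here; the only mild subtlety is verifying the prefix identity $\of(\alpha\cdot\beta) = \of(\alpha)\cdot\overline{f|_\alpha}(\beta)$ cleanly, since the rest of the argument is bookkeeping once that identification is in hand.
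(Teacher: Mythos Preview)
Your proposal is correct and follows essentially the same approach as the paper: compute $f(\alpha\cdot\beta\cdot\omega)$ in two ways, reduce to the prefix identity $\of(\alpha\cdot\beta)=\of(\alpha)\cdot\overline{f|_\alpha}(\beta)$, and verify that identity via the greatest-common-prefix characterization of $\of$. The only cosmetic difference is that the paper states the prefix identity after the two computations rather than before, but the logic is identical.
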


\begin{proof}
Note that, since $t(\beta)=t(\alpha\cdot\beta)$, the maps $(f|_\alpha)|_\beta$ and $f|_{\alpha\cdot\beta}$ have the same domain $\C_{t(\beta)}$.  Furthermore, if $\omega\in \C_{t(\beta)}$, then
\begin{multline*}
\of(\alpha\cdot\beta)\cdot f|_{\alpha\cdot\beta}(\omega)
=
f(\alpha\cdot\beta\cdot\omega)
\\
=
\of(\alpha)\cdot f|_\alpha(\beta\cdot\omega)
=
\of(\alpha)\cdot \overline{f|_\alpha}(\beta)\cdot (f|_\alpha)|_\beta(\omega)\text{,}
\end{multline*}
so it suffices to prove that $\of(\alpha\cdot\beta)=\of(\alpha)\cdot \overline{f|_\alpha}(\beta)$.

Recall that $\overline{f|_\alpha}(\beta)$ is the greatest common prefix of $f|_\alpha(\C_\beta)$, or equivalently the greatest common prefix of all words $f|_\alpha(\beta\cdot \omega)$ for $\omega \in \C_{t(\beta)}$.  Then $\of(\alpha)\cdot \overline{f|_\alpha}(\beta)$ is the greatest common prefix of all words $\of(\alpha)\cdot f|_\alpha(\beta\cdot\omega)$ for $\omega\in \C_{t(\beta)}$.  But this is  precisely the set of words $f(\alpha\cdot\beta\cdot\omega)$ for $\omega\in \C_{t(\beta)}$, and the greatest common prefix of this set is $\of(\alpha\cdot\beta)$.  We conclude that $\of(\alpha\cdot\beta)=\of(\alpha)\cdot\overline{f|_\alpha}(\beta)$, and hence $(f|_\alpha)|_\beta = f|_{\alpha\cdot\beta}$.
\end{proof}

\begin{lemma}\label{lem:restrict_composition}
Let $E,E'\subseteq \Sigma_\Gamma$ be clopen, let $f\colon E'\to \Sigma_{\Gamma}$ and $g\colon E\to E'$ be nondegenerate maps, and suppose $f\circ g$ is nondegenerate. Then for any $\alpha\in\Cones(E)$,
\[
(f\circ g)|_\alpha = (f|_{\og(\alpha)} \circ g|_\alpha)|_{t(\alpha)}.
\]
\end{lemma}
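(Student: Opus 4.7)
The plan is to verify the identity pointwise on $\C_{t(\alpha)}$ by expanding $(f\circ g)(\alpha\cdot\omega)$ in two different ways and then appealing to the uniqueness of the ``greatest-common-prefix times remainder'' decomposition that defines the local action.

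First, I would apply the defining equation of the local action to $f\circ g$ itself to write
\[
(f\circ g)(\alpha\cdot\omega) = \overline{f\circ g}(\alpha)\cdot (f\circ g)|_\alpha(\omega)
\]
for every $\omega\in \C_{t(\alpha)}$. On the other hand, I would expand the composition a step at a time: the defining equation for $g|_\alpha$ gives $g(\alpha\cdot\omega)=\og(\alpha)\cdot g|_\alpha(\omega)$, and then the defining equation for $f|_{\og(\alpha)}$ applied to the point $g|_\alpha(\omega)\in \C_{t(\og(\alpha))}$ gives
\[
(f\circ g)(\alpha\cdot\omega) = \of(\og(\alpha))\cdot (f|_{\og(\alpha)}\circ g|_\alpha)(\omega).
\]
Writing $h = f|_{\og(\alpha)}\circ g|_\alpha$, a map $\C_{t(\alpha)}\to \C_{t(\of(\og(\alpha)))}$, I would then apply the defining equation of local action to $h$ at the length-$0$ path $t(\alpha)$, producing $h(\omega) = \oh(t(\alpha))\cdot h|_{t(\alpha)}(\omega)$. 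Substituting into the two expansions yields the single equation
\[
\overline{f\circ g}(\alpha)\cdot (f\circ g)|_\alpha(\omega) = \of(\og(\alpha))\cdot\oh(t(\alpha))\cdot h|_{t(\alpha)}(\omega).
\]

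The conclusion then follows from uniqueness of greatest common prefixes. By definition, $\overline{f\circ g}(\alpha)$ is the greatest common prefix of $(f\circ g)(\C_\alpha)$. Since $(f\circ g)(\C_\alpha) = \of(\og(\alpha))\cdot h(\C_{t(\alpha)})$, and prepending a fixed finite path does not alter the depth at which elements of a set first diverge, the path $\of(\og(\alpha))\cdot\oh(t(\alpha))$ is also the greatest common prefix of $(f\circ g)(\C_\alpha)$. Hence the two prefixes agree; cancelling them on both sides leaves $(f\circ g)|_\alpha(\omega) = h|_{t(\alpha)}(\omega)$ for every $\omega\in \C_{t(\alpha)}$, which is the desired identity.

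The one point that requires mild care is the well-definedness of $f|_{\og(\alpha)}$, since $\og(\alpha)$ need not a priori lie in $\Cones(E')$. However, the only values of $f|_{\og(\alpha)}$ that enter the argument are those at points in the image of $g|_\alpha$, and for each such point $g|_\alpha(\omega)$ the concatenation $\og(\alpha)\cdot g|_\alpha(\omega) = g(\alpha\cdot\omega)$ lies in $E'$, so the defining relation for $f|_{\og(\alpha)}$ is unambiguous on this image. This is a bookkeeping detail rather than a genuine obstacle; the substance of the proof lies in the two-way computation together with the uniqueness of greatest common prefixes.
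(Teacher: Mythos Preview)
Your proof is correct and follows essentially the same route as the paper's: both expand $(f\circ g)(\alpha\cdot\omega)$ in two ways, set $h=f|_{\og(\alpha)}\circ g|_\alpha$, reduce to the prefix identity $\overline{f\circ g}(\alpha)=\of(\og(\alpha))\cdot\oh(t(\alpha))$, and verify it by observing that prepending a fixed word preserves greatest common prefixes. Your final paragraph on the well-definedness of $f|_{\og(\alpha)}$ is a reasonable extra remark that the paper leaves implicit.
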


\begin{proof}
Let $h=f|_{\og(\alpha)}\circ g|_\alpha$. 
 We must prove that $(f\circ g)|_\alpha = h|_{t(\alpha)}$.  Since $t(t(\alpha))=t(\alpha)$,
 both of these maps have the same domain~$\C_{t(\alpha)}$.  Furthermore, if $\omega\in \C_{t(\alpha)}$, then
\begin{multline*}
\overline{f\circ g}(\alpha)\cdot (f\circ g)|_\alpha(\omega) = (f\circ g)(\alpha\cdot\omega)
= f\bigl(\og(\alpha)\cdot g|_\alpha(\omega)\bigr) \\ = (\of\circ\og)(\alpha) \cdot h(\omega) = (\of\circ\og)(\alpha) \cdot h(t(\alpha)\cdot\omega) \\ = (\of\circ\og)(\alpha) \cdot \oh(t(\alpha)) \cdot h|_{t(\alpha)}(\omega)
\end{multline*}
so it suffices to prove that $\overline{f\circ g}(\alpha) = (\of\circ\og)(\alpha)\cdot \oh(t(\alpha))$.  (This will ensure that the maps agree, and also that they have the same codomain.)

Recall that $\oh(t(\alpha))$ is the greatest common prefix of the words $h(\omega)$ for $\omega\in \C_{t(\alpha)}$.  Then $(\of\circ\og)(\alpha)\cdot \oh(t(\alpha))$ is the greatest common prefix of the words $(\of\circ\og)(\alpha)\cdot h(\omega)$ for $\omega\in \C_{t(\alpha)}$.  By the calculation above, this is precisely the set of words $(f\circ g)(\alpha\cdot\omega)$ for $\omega\in \C_{t(\alpha)}$, and the greatest common prefix of this set is $\overline{f\circ g}(\alpha)$.  We conclude that $\overline{f\circ g}(\alpha) = (\of\circ\og)(\alpha)\cdot \oh(t(\alpha))$, and therefore  $(f\circ g)|_\alpha = h|_{t(\alpha)}
$.
\end{proof}

\begin{lemma}\label{lem:inverses_rational}
Let $E,E'\subseteq \Sigma_\Gamma$ be clopen.  If $f\colon E\to E'$ is a rational homeomorphism, then $f^{-1}$ is rational.
\end{lemma}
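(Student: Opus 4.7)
The plan is to show that the set $\{f^{-1}|_\beta : \beta \in \Cones(E')\}$ of local actions of $f^{-1}$ is finite. Fix $\beta\in\Cones(E')$. Since $f$ is a homeomorphism, $f^{-1}(\C_\beta)$ is clopen in $E$ and decomposes as a finite disjoint union of cones $\C_{\alpha_1}\sqcup\cdots\sqcup\C_{\alpha_k}$ whose greatest common prefix is $\gamma := \overline{f^{-1}}(\beta)$. Write $\alpha_j = \gamma\cdot\delta_j$ and $\of(\alpha_j) = \beta\cdot\epsilon_j$; because $\C_{\beta\cdot\epsilon_j}\subseteq\C_\beta\subseteq E'$, the finite path $\beta\cdot\epsilon_j$ lies in $\Cones(E')$, which lets me apply \cref{lem:restrict_composition} to $f^{-1}\circ f=\mathrm{id}_E$ at $\alpha_j$ (together with \cref{lem:restrict_twice}) to derive the piecewise formula
\[
f^{-1}|_\beta(\epsilon_j\cdot\eta) \;=\; \delta_j\cdot (f|_{\alpha_j})^{-1}(\eta) \qquad\text{for } \eta\in f|_{\alpha_j}\bigl(\C_{t(\alpha_j)}\bigr).
\]
Since $f\bigl(f^{-1}(\C_\beta)\bigr)=\C_\beta$, the clopen pieces $\epsilon_j\cdot f|_{\alpha_j}(\C_{t(\alpha_j)})$ partition $\C_{t(\beta)}$, so $f^{-1}|_\beta$ is determined entirely by the finite tuple $\bigl(t(\beta),\,\{(\delta_j,\epsilon_j,f|_{\alpha_j})\}_{j=1}^k\bigr)$.

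It remains to show that this tuple ranges over a finite set as $\beta$ varies over $\Cones(E')$. The maps $f|_{\alpha_j}$ lie in the finite set of local actions of $f$ by hypothesis, and the count $k$ is bounded uniformly by \cref{lem:finite_to_one} applied to $f$. The delicate step is uniformly bounding the offset lengths $|\delta_j|$ and $|\epsilon_j|$; I would derive this from uniform continuity of $f$ on $E$ and of $f^{-1}$ on $E'$ (both compact), combined with the pigeonhole of \cref{lem:finite_to_one}: if these offsets were unbounded, one could produce arbitrarily long runs of the finite-state transducer realizing $f$ with the same local action but unboundedly imbalanced input/output lengths, contradicting uniform continuity of one of $f$ or $f^{-1}$.

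The main obstacle is precisely this uniform offset bound: it is the asynchronous counterpart of the classical fact that a finite-state transducer computing a homeomorphism can itself be inverted to a finite-state transducer, with the input lag and output lag along any run remaining bounded. Once this bound is in hand, the combinatorial tuple parameterizing $f^{-1}|_\beta$ lies in a finite set independent of $\beta$, and so $f^{-1}$ has only finitely many local actions and is therefore rational.
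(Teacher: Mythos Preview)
Your piecewise formula for $f^{-1}|_\beta$ is correct, but the proof has a genuine gap precisely where you flag ``the main obstacle'': the uniform bound on the offsets $|\delta_j|$ and $|\epsilon_j|$. The appeal to uniform continuity and ``unboundedly imbalanced input/output lengths'' is a restatement of what must be shown, not an argument. There is also a secondary gap: \cref{lem:finite_to_one} bounds $|\of^{-1}(\beta)|$, which is not your $k$ (the number of maximal cones in $f^{-1}(\C_\beta)$). One can in fact bound $k$ and $|\epsilon_j|$ by observing that the output length $|\overline{f|_{\alpha}}(e)|$ per input edge is uniformly bounded over the finitely many local actions of $f$; this forces the parent $\alpha_j'$ of each maximal $\alpha_j$ to satisfy $|\beta|-|\of(\alpha_j')|\le M$ for a fixed constant $M$, and then \cref{lem:finite_to_one} does apply to bound the number of such parents. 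But bounding $|\delta_j|=|\alpha_j|-|\gamma|$ further requires controlling the spread of the lengths $|\alpha_j|$ relative to their common prefix, and you have not done this. As written, the proposal is an outline rather than a proof.

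The paper's argument is structurally different and sidesteps the offset bookkeeping entirely. Proving the symmetric form (assume $f^{-1}$ rational, show $f$ rational), it partitions $\Cones(E)$ by the relation $\alpha\sim\alpha'\iff f^{-1}|_{\of(\alpha)}=f^{-1}|_{\of(\alpha')}$, which has finitely many classes since $f^{-1}$ is rational. Within each class, fix a representative $\gamma$; the commutative square of canonical similarities expressing $f^{-1}|_{\of(\alpha)}=f^{-1}|_{\of(\gamma)}$ carries $\C_\alpha$ to a cone $\C_\beta$ with $f|_\alpha=f|_\beta$ and $\of(\beta)=\of(\gamma)$, and now \cref{lem:finite_to_one} applies \emph{directly} to bound the number of such $\beta$. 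No control over the geometry of the decomposition of $f^{-1}(\C_\beta)$ is ever needed.
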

\begin{proof}
For simplicity, we suppose instead that $f^{-1}$ is rational, and we prove that $f$ is rational.  Put an equivalence relation on $\Cones(E)$ by $\alpha\sim\beta$ if $f^{-1}|_{\of(\alpha)}=f^{-1}|_{\of(\beta)}$.  Since $f^{-1}$ is rational, there are only finitely many equivalence classes.  Let $\mathcal{C}$ be such an equivalence class.  It suffices to prove that $f$ has only finitely many distinct local actions at elements of~$\mathcal{C}$.

Fix a $\gamma\in \mathcal{C}$.  By the first part of \cref{lem:finite_to_one}, there are only finitely many $\beta\in\mathcal{C}$ so that $\of(\beta)=\of(\gamma)$.  Therefore, it suffices to prove that for every $\alpha\in\mathcal{C}$ there exists $\beta\in\mathcal{C}$ such that $f|_\alpha=f|_\beta$ and $\of(\beta)=\of(\gamma)$.

Let $\alpha\in \mathcal{C}$.  Since $f^{-1}|_{\of(\alpha)}=f^{-1}|_{\of(\gamma)}$ we have a commutative diagram
\[
\xymatrix@R=0.5in{
\C_{\of(\alpha)} \ar_{f^{-1}}[d] \ar^{\psi}[r] & 
\C_{\of(\gamma)} \ar^{f^{-1}}[d] \\ 
\C_{\alpha'}\ar_{\phi}[r] & \C_{\gamma'} }
\]
where $\C_{\alpha'}$ and $\C_{\gamma'}$ are the smallest cones that contain $f^{-1}(\C_{\of(\alpha)})$ and $f^{-1}(\C_{\of(\gamma)})$, respectively, and $\phi$ and $\psi$ are canonical similarities.  
Since $f(\C_\alpha)\subseteq \C_{\of(\alpha)}$, we know that $\C_\alpha\subseteq f^{-1}(\C_{\of(\alpha)})\subseteq \C_{\alpha'}$, so $\alpha'$ is a prefix of $\alpha$.  Thus, $\phi(\C_\alpha)$ must be some cone $\C_\beta\subseteq \C_{\gamma'}$.  Since $f(\C_\beta)=\psi(f(\C_\alpha))$ and $f(\C_\alpha)$ is not contained in any proper subcone of $\C_{\of(\alpha)}$, the set $f(\C_\beta)$ is not contained in any proper subcone of $\C_{\of(\gamma)}$, and therefore $\of(\beta)=\of(\gamma)$.  We also have the commutative diagram
\[
\xymatrix@R=0.5in{
\C_{\of(\alpha)} \ar^{\psi}[r] & 
\C_{\of(\gamma)} \\ 
\C_{\alpha}\ar_{\phi}[r]\ar^{f}[u] & \C_{\beta}\ar_{f}[u] }
\]
which proves that $f|_\alpha=f|_\beta$. This finishes the proof that $f$ is rational.
\end{proof}

We have proven the following.

\begin{proposition}
Let\/ $\Sigma_\Gamma$ be a subshift of finite type without isolated points or empty cones, and let $E\subseteq \Sigma_\Gamma$ be a nonempty clopen set.  Then the set\/ $\R_{\Gamma,E}$ of rational homeomorphisms $E\to E$ forms a group under composition.\qed
\end{proposition}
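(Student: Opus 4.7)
The plan is to verify that $\R_{\Gamma,E}$ is nonempty, closed under composition, and closed under inversion; associativity is inherited from composition of functions on the set $E$. For the identity, the map $\mathrm{id}_E\colon E\to E$ is a homeomorphism and is nondegenerate because $\Sigma_\Gamma$ has no isolated points, so every cone contains more than one point. For every $\alpha\in\Cones(E)$, the greatest common prefix of $\mathrm{id}_E(\C_\alpha)=\C_\alpha$ is $\alpha$ itself, so $\overline{\mathrm{id}_E}(\alpha)=\alpha$, and the local action $\mathrm{id}_E|_\alpha$ is just the identity on $\C_{t(\alpha)}$. The distinct local actions are therefore indexed by the finite set $\{t(\alpha): \alpha\in\Cones(E)\}$ of nodes of $\Gamma$, so $\mathrm{id}_E\in\R_{\Gamma,E}$.

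Closure under inversion is immediate from \cref{lem:inverses_rational}: if $f\in\R_{\Gamma,E}$ then $f^{-1}\colon E\to E$ is a homeomorphism and rational.

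The central step is closure under composition. Given $f,g\in\R_{\Gamma,E}$, the composition $f\circ g\colon E\to E$ is a homeomorphism, hence injective, hence nondegenerate. By \cref{lem:restrict_composition}, for every $\alpha\in\Cones(E)$,
\[
(f\circ g)|_\alpha \;=\; \bigl(f|_{\og(\alpha)}\circ g|_\alpha\bigr)\big|_{t(\alpha)}.
\]
Since $f$ and $g$ are rational, the collections $\{f|_\beta\}$ and $\{g|_\gamma\}$ of distinct local actions are both finite, so the set of compositions $h_\alpha := f|_{\og(\alpha)}\circ g|_\alpha$ arising as $\alpha$ ranges over $\Cones(E)$ is finite. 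For each such $h_\alpha$, the node $t(\alpha)$ is recovered as the origin of the domain of $h_\alpha$, so the further restriction $h_\alpha|_{t(\alpha)}$ is determined by $h_\alpha$ alone. Hence $f\circ g$ has only finitely many distinct local actions and is rational, so $f\circ g\in\R_{\Gamma,E}$.

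There is no serious obstacle: the substantive content has been packaged into \cref{lem:restrict_composition} and \cref{lem:inverses_rational}, and the proposition is essentially their assembly, together with the observation that $\mathrm{id}_E$ has finitely many local actions by the finiteness of the vertex set of $\Gamma$.
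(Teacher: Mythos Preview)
Your proof is correct and follows the same approach as the paper: the proposition there is stated with a \qed immediately after Lemmas~\ref{lem:restrict_composition} and~\ref{lem:inverses_rational}, with the preceding sentence ``We have proven the following'' indicating that the assembly you carry out is left to the reader. Your explicit verification of the identity and your observation that $t(\alpha)$ is recoverable from the domain of $h_\alpha$ (so that the finitely many composites $f|_{\og(\alpha)}\circ g|_\alpha$ yield finitely many local actions of $f\circ g$) are exactly the details the paper suppresses.
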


The group $\R_{\Gamma,E}$ is the \newword{rational group} associated to $E$.  In the case where $E$ is the whole subshift $\Sigma_{\Gamma}$, we write $\R_{\Gamma}$ for $\R_{\Gamma,E}$.  A \newword{rational representation} of a group $G$ is any homomorphism $G\to \R_{\Gamma,E}$.

\begin{remark}\label{rem:FullShift}
If $\Gamma$ consists of a single node and $n\geq 2$ (loop) edges, then the corresponding subshift $\Sigma_\Gamma$ is known as the \newword{full shift} with an alphabet of size~$n$, and is the usual $n$-ary Cantor space $\C_n$. In this case, the rational group $\R_\Gamma$ is the same as the rational group $\R_n$ defined by Grigorchuk, Nekrashevych, and Sushchanski\u\i~\cite{GNS}.  \end{remark}

\subsection{Thompson groups on subshifts}\label{ssec:thompson}

Matsumoto introduced the topological full group associated to a subshift of finite type $\Sigma_\Gamma$ in 2015~\cite{Matsumoto15}. Here we give a brief definition of Matsumoto's groups that does not use the language of \'{e}tale groupoids. Specifically, we define one group $V_{\Gamma,E}$ for each clopen set $E\subseteq \Sigma_\Gamma$, which we refer to as the ``Thompson group associated to~$E$''.  In the case where $E=\Sigma_\Gamma$, we write $V_\Gamma$ instead of~$V_{\Gamma,E}$. To define the groups $V_{\Gamma,E}$, recall that any two cones $\C_\alpha,\C_\beta\subseteq E$ with $t(\alpha)=t(\beta)$ have a corresponding canonical similarity $\alpha\cdot\omega \mapsto \beta\cdot\omega$.

\begin{definition}[Thompson group]\label{def:thomp}
Let $\Sigma_\Gamma$ be a subshift of finite type and $E\subseteq \Sigma_\Gamma$ a nonempty clopen set. The \newword{Thompson group associated to $\boldsymbol{E}$} is the group of all homeomorphisms $f\colon E\to E$ satisfying the following property: there exist two partitions $\C_{\alpha_1},\ldots,\C_{\alpha_n}$ and $\C_{\beta_1},\ldots,\C_{\beta_n}$ of $E$ into cones, such that $t(\alpha_i)=t(\beta_i)$ for each $i$, and $f$ maps each $\C_{\alpha_i}$ to $\C_{\beta_i}$ by the canonical similarity.
\end{definition}

It is not difficult to prove that the set $V_{\Gamma,E}$ of all such homeomorphisms really does form a group.

\begin{example}[Higman--Thompson groups]\label{ex:V} If $\Sigma_\Gamma$ is the full shift on an alphabet of size $n$ (see \cref{rem:FullShift}), then $V_\Gamma$ is the well-known Higman--Thompson group $V_{n,1}$.  More generally, if $E$ is any nonempty clopen subset of $\Sigma_{\Gamma}$, then $V_{\Gamma,E}$ is isomorphic to one of the Higman--Thompson groups $V_{n,r}$.
\end{example}

In the case where $\Sigma_\Gamma$ has no isolated points or empty cones, the following proposition shows that the Thompson group $V_{\Gamma,E}$ is a subgroup of the corresponding rational group $\R_{\Gamma,E}$.

\begin{proposition}
Suppose $\Sigma_\Gamma$ has no isolated points or empty cones, and let $E\subseteq \Sigma_\Gamma$ be a nonempty clopen set.  Then $V_{\Gamma,E}$ is precisely the group of all $f\in \R_{\Gamma,E}$ with the property that $f|_{\alpha}$ is the identity on $\C_{t(\alpha)}$ for all but finitely many $\alpha\in\Cones(E)$.
\end{proposition}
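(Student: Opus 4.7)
The plan is to prove both containments directly. For the forward direction, take $f\in V_{\Gamma,E}$ with associated partitions $\C_{\alpha_1},\dots,\C_{\alpha_n}$ and $\C_{\beta_1},\dots,\C_{\beta_n}$ as in \cref{def:thomp}. Since cones are always nested or disjoint, any $\gamma\in\Cones(E)$ either satisfies $\C_\gamma\subseteq \C_{\alpha_i}$ for some $i$, in which case $\gamma=\alpha_i\cdot\delta$ and unwinding the canonical similarity $f(\alpha_i\cdot\omega)=\beta_i\cdot\omega$ gives $\of(\gamma)=\beta_i\cdot\delta$ with $f|_\gamma=\mathrm{id}$; or else $\C_\gamma$ properly contains some $\C_{\alpha_i}$, in which case $\gamma$ is a proper prefix of one of the $\alpha_i$. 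The latter set is finite, so $f$ has at most finitely many local actions that are not identity maps on the various $\C_{t(\gamma)}$. Since the identity maps on the finitely many $\C_v$ (for $v$ a node of $\Gamma$) also contribute only finitely many local actions, $f$ is rational with the desired property.

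For the reverse direction, let $f\in\R_{\Gamma,E}$ and call $\gamma\in\Cones(E)$ \emph{good} if $f|_\gamma$ is the identity on $\C_{t(\gamma)}$. By \cref{lem:restrict_twice}, good-ness is inherited by every extension $\gamma\cdot\delta$ in $\Cones(E)$, so the bad paths form a prefix-closed subset of $\Cones(E)$. By hypothesis this set is finite. Given any $p\in E$, the prefixes of $p$ give a nested neighborhood basis of $p$ by cones that are eventually contained in~$E$, and only finitely many of these can be bad, so some arbitrarily small cone around $p$ is good. Hence the collection of good cones covers~$E$.

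By compactness, finitely many good cones cover $E$; keeping only the maximal ones yields a partition $\C_{\alpha_1},\dots,\C_{\alpha_n}$ of $E$ by good cones. Setting $\beta_i=\of(\alpha_i)$, the fact that $f|_{\alpha_i}\colon \C_{t(\alpha_i)}\to\C_{t(\beta_i)}$ is the identity forces $t(\alpha_i)=t(\beta_i)$, and the commutative square defining $f|_{\alpha_i}$ identifies $f|_{\C_{\alpha_i}}$ with the canonical similarity $\C_{\alpha_i}\to\C_{\beta_i}$. Since $f$ is a homeomorphism of $E$, the cones $\C_{\beta_i}$ also partition $E$, placing $f$ in $V_{\Gamma,E}$. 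The main obstacle is the middle step of the reverse direction: exploiting prefix-closure of badness together with compactness to extract a finite, pairwise disjoint good cover of~$E$. Once that is in hand, the remainder is routine bookkeeping with canonical similarities.
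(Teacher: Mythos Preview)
Your proof is correct and follows essentially the same approach as the paper's. The paper's argument is terser: it notes that $f|_\alpha$ is the identity on $\C_{t(\alpha)}$ if and only if $f$ restricts to a canonical similarity on $\C_\alpha$, then for the forward direction observes that all but finitely many cones lie inside one of the partition pieces, and for the reverse direction invokes compactness to extract a finite disjoint cover by good cones. Your version supplies the details the paper leaves implicit---in particular the prefix-closure of the bad set via \cref{lem:restrict_twice} and the explicit extraction of a partition from a finite cover by passing to maximal cones---but the underlying ideas are identical.
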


\begin{proof}
Note that $f|_\alpha$ is the identity on $\C_{t(\alpha)}$ if and only if $f$ maps $\C_\alpha$ to some cone $\C_\beta$ by a canonical similarity.  If $f\in V_{\Gamma,E}$ has domain partition $\C_{\alpha_1},\ldots,\C_{\alpha_n}$, then $f$ acts as a canonical similarity on any cone that is contained in one of the $\C_{\alpha_i}$, and all but finitely many cones have this property.  Conversely, if $f$ acts as a canonical similarity on all but finitely many cones, then by compactness we can find a finite cover of $E$ by cones on which $f$ acts as a canonical similarity, and any such cover has a subcover whose cones are disjoint.
\end{proof}

Matui proved that $V_{\Gamma,E}$ is finitely presented for a certain class of subshifts~$\Sigma_\Gamma$.

\begin{definition}[Irreducible]
A subshift $\Sigma_\Gamma$ is  \newword{irreducible} if the following conditions are satisfied:
\begin{enumerate}
    \item $\Gamma$ is \newword{strongly connected}, i.e.\ for any two nodes $v,w$ of $\Gamma$ there is a directed path with origin $v$ and terminus $w$.\smallskip
    \item $\Gamma$ is not a directed cycle.
\end{enumerate}
\end{definition}

Note that if $\Sigma_\Gamma$ is irreducible, then it has no isolated points, and is therefore a Cantor space. Also, $\Sigma_\Gamma$ has no empty cones, so the rational group $\R_{\Gamma,E}$ is defined for any nonempty clopen $E\subseteq \Sigma_\Gamma$. The following is proven in \cite[Theorem~6.21]{Matui15}.

\begin{theorem}[Matui] 
If\/ $\Sigma_\Gamma$ is irreducible and $E\subseteq \Sigma_\Gamma$ is a nonempty clopen set, then $V_{\Gamma,E}$ is finitely presented.  Indeed, $V_{\Gamma,E}$ has type\/~$\F_{\!\infty}$.\qed
\end{theorem}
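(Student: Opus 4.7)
The plan is to follow the now-standard template for establishing higher finiteness properties of Thompson-like groups, namely to construct a contractible CW-complex on which $V_{\Gamma,E}$ acts nicely and then apply Brown's criterion. The natural object to build is a ``Stein--Farley style'' complex whose vertices are partitions of $E$ into cones $\{\C_{\alpha_1},\dots,\C_{\alpha_n}\}$, with a simplex $[P_0 < P_1 < \cdots < P_k]$ whenever each $P_i$ refines $P_{i-1}$ by subdividing a single cone $\C_\alpha$ into $\{\C_{\alpha\cdot e}\}_{o(e)=t(\alpha)}$. The group $V_{\Gamma,E}$ acts on this complex by sending a partition to the partition obtained by applying the element's domain/range cones; the stabilizer of a partition $P=\{\C_{\alpha_1},\dots,\C_{\alpha_n}\}$ is the (infinite) direct product of the Thompson groups $V_{\Gamma, \C_{t(\alpha_i)}}$, reducing the finiteness property question to the same question on smaller pieces, together with a ``bookkeeping'' finite symmetric group permuting isomorphic factors.

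After constructing the complex, the first substantive step is to show contractibility (and more generally, that an appropriate filtration is highly connected). I would follow Brown's discrete Morse approach: use the size $|P|$ of a partition as a Morse function, and analyze descending links, which encode the ways a partition can be \emph{unrefined} by collapsing children of a common internal node back to their parent cone. The descending link of $P$ then decomposes as a join of smaller links indexed by the internal nodes of the forest representing $P$, and classical connectivity estimates for matching-style complexes give the desired connectivity bounds as $|P| \to \infty$. This, combined with the inductive description of stabilizers, lets us conclude type $\F_{\!\infty}$ by an Abels--Brown/Bestvina--Brady style argument, provided the stabilizers are already known to be type $\F_{\!\infty}$ by induction on a suitable complexity.

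The role of irreducibility enters in exactly the step where one must show that descending links are eventually highly connected. The strong connectivity of $\Gamma$ guarantees that every cone $\C_v$ admits further cone refinements using at least two distinct outgoing paths (since $\Gamma$ is not a cycle), so that the ``branching'' required to make descending matching complexes highly connected is available uniformly across all nodes $v$ of $\Gamma$. Without irreducibility one could get stuck in a sink or a single cycle where no genuine branching occurs and the Morse argument collapses. I also expect irreducibility to be what ensures that the orbit space is eventually finite in each dimension: the strong connectivity lets one move any cone to any other by the group action, so that $V_{\Gamma,E}$ has only finitely many orbits of partitions of each size.

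The main obstacle, as usual for this family of arguments, will be the connectivity estimate for descending links: one needs a clean combinatorial model of the link, typically as a simplicial complex of ``elementary collapses'' on the forest of cones, and then a bound on its connectivity that tends to infinity with $|P|$. This is the technically delicate step, and where the details of $\Gamma$ (number of outgoing edges at each node, the global structure forced by irreducibility) have to be leveraged carefully; everything else -- the stabilizer analysis, the cocompactness, and the application of Brown's criterion -- is then essentially formal.
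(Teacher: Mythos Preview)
The paper does not prove this theorem at all: it is stated with attribution to Matui \cite[Theorem~6.21]{Matui15} and marked with a terminal \textbackslash qed, so there is no proof in the paper to compare against. Your outline is a reasonable sketch of the standard Stein--Farley/Brown's-criterion approach that underlies Matui's argument (and the many variants in the Thompson-like literature), but since the paper simply imports the result as a black box, there is nothing further to say by way of comparison.
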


We will need a slight generalization of Matui's theorem.

\begin{definition}[Irreducible core]
A subshift $\Sigma_\Gamma$ has an \newword{irreducible core} if there exists an induced subgraph $\Gamma_0$ of $\Gamma$ (the \newword{core}) such that:
\begin{enumerate}
    \item $\Sigma_{\Gamma_0}$ is irreducible.\smallskip
    \item For every node $v$ of $\Gamma$, there is a directed path in $\Gamma$ from $v$ to a node of~$\Gamma_0$.\smallskip
    \item There exists $N\geq 0$ so that every directed path in $\Gamma$ of length $N$ (with any origin) has terminus in~$\Gamma_0$.
\end{enumerate}
\end{definition}

Note that this is still enough to ensure that $\Sigma_\Gamma$ has no isolated points or empty cones.

Given clopen subsets $E,E'\subseteq \Sigma_\Gamma$,  a homeomorphism $h\colon E\to E'$ is \newword{Thompson-like} if there exists a partition of $E$ into cones $\C_{\alpha_1},\dots,\C_{\alpha_n}$ and a partition of $E'$ into cones $\C_{\beta_1},\dots,\C_{\beta_n}$ such that $h$ maps each $\C_{\alpha_i}$ to $\C_{\beta_i}$ by a canonical similarity.  For example, $V_{\Gamma,E}$ is the group of all Thompson-like homeomorphisms from $E$ to itself.

\begin{proposition}[Pushing into the core]\label{prop:push_into_core}
Suppose that\/ $\Sigma_\Gamma$ has an irreducible core\/ $\Gamma_0$. Let $E\subseteq \Sigma_\Gamma$ be a nonempty clopen set.  Then there exists a clopen set $E_0\subseteq \Sigma_{\Gamma_0}$ and a Thompson-like homeomorphism $h\colon E\to E_0$ such that $hV_{\Gamma,E}h^{-1} = V_{\Gamma_0,E_0}$.
\end{proposition}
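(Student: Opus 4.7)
The strategy is to construct $h$ as a Thompson-like homeomorphism by matching an appropriate cone partition of $E$ with disjoint cones in $\Sigma_{\Gamma_0}$. Since $E$ is clopen, we may write $E = \bigsqcup_{i=1}^n \C_{\alpha_i}$, and by subdividing each $\C_{\alpha_i}$ into the subcones $\C_{\alpha_i \cdot \gamma}$ for $\gamma$ ranging over all length-$N$ continuations at $t(\alpha_i)$, we may further assume $|\alpha_i|\geq N$ for every $i$. Condition (iii) of the definition of irreducible core then forces $t(\alpha_i)\in \Gamma_0$ for all $i$.

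A crucial observation is that $\Gamma_0$ is in fact \emph{forward-invariant} in $\Gamma$: if $v\in \Gamma_0$ and $e\colon v\to w$ is any edge of $\Gamma$, then $w\in \Gamma_0$. Indeed, since $\Sigma_{\Gamma_0}$ is irreducible there is a nontrivial cycle $c$ in $\Gamma_0$ based at $v$; traversing $c$ sufficiently many times and then taking $e$ produces a path of length $\geq N$ in $\Gamma$ terminating at $w$, so condition (iii) applied to its length-$N$ suffix forces $w\in \Gamma_0$. Two consequences follow: first, for any finite path $\sigma$ in $\Gamma_0$ we have $\C_\sigma^\Gamma = \C_\sigma^{\Gamma_0}$ as subsets of $\Sigma_\Gamma$; second, any cone of $\Sigma_\Gamma$ contained in $\Sigma_{\Gamma_0}$ must be of this form. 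In particular each $\C_{\alpha_i} = \alpha_i \cdot \C_{t(\alpha_i)}^{\Gamma_0}$.

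Next, I would choose finite paths $\beta_1,\ldots,\beta_n$ in $\Gamma_0$ with $t(\beta_i)=t(\alpha_i)$ and pairwise disjoint cones $\C_{\beta_i}^{\Gamma_0}$, constructed inductively: fix a node $u\in \Gamma_0$, initialize a workspace $R:=\C_u^{\Gamma_0}$, and at step $i$ choose any subcone $\C_{\sigma_i}^{\Gamma_0}\subseteq R$, use strong connectedness of $\Gamma_0$ to append a path $\rho_i$ from $t(\sigma_i)$ to $t(\alpha_i)$ in $\Gamma_0$ (optionally precomposing a nontrivial cycle of $\Gamma_0$ to ensure properness), and set $\beta_i := \sigma_i\cdot \rho_i$; then update $R := R \setminus \C_{\beta_i}^{\Gamma_0}$. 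Since $\Gamma_0$ is not a cycle and $\Sigma_{\Gamma_0}$ is a Cantor space, $R$ remains nonempty throughout. Setting $E_0 := \bigsqcup_i \C_{\beta_i}^{\Gamma_0}$ and defining $h$ on each $\C_{\alpha_i}$ by the canonical similarity $\alpha_i\cdot\omega\mapsto \beta_i\cdot\omega$ yields a Thompson-like homeomorphism $h\colon E\to E_0$. This cone-matching is the main technical hurdle, as it requires threading prescribed termini, disjointness, and non-exhaustion of the workspace through a single inductive construction.

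Finally, for the identity $h V_{\Gamma,E} h^{-1} = V_{\Gamma_0,E_0}$: a composition of Thompson-like maps within $\Sigma_\Gamma$ is Thompson-like via common refinement of partitions, so for any $f\in V_{\Gamma,E}$ the conjugate $hfh^{-1}$ is a Thompson-like self-homeomorphism of $E_0 \subseteq \Sigma_{\Gamma_0}$. By the second forward-invariance consequence, its defining cones are cones of $\Sigma_{\Gamma_0}$, placing $hfh^{-1}$ in $V_{\Gamma_0,E_0}$. The reverse inclusion is entirely symmetric, using $h^{-1}$ in place of $h$.
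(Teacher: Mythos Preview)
Your proof is correct and follows essentially the same approach as the paper's: partition $E$ into cones with termini in $\Gamma_0$, observe forward-invariance of $\Gamma_0$, find matching disjoint cones in $\Sigma_{\Gamma_0}$ by irreducibility, and let $h$ be the resulting canonical map. The paper is considerably terser---it simply asserts that such disjoint $\C_{\beta_i}$ are ``not difficult to find'' and that $h$ ``has the desired properties''---whereas you spell out both the inductive cone-placement and the conjugation identity $hV_{\Gamma,E}h^{-1}=V_{\Gamma_0,E_0}$ explicitly.
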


\begin{proof}
Let $N\geq 0$ be such that every directed path in $\Gamma$ of length $N$ has terminus in~$\Gamma_0$.  Note that every finite directed path of length greater than $N$ also has terminus in~$\Gamma_0$, and therefore every directed edge in $\Gamma$ whose origin lies in $\Gamma_0$ also has its terminus in $\Gamma_0$.  In particular $\Sigma_{\Gamma_0}$ is precisely the union of the cones $\C_v$ in $\Sigma_\Gamma$ as $v$ ranges over all nodes in~$\Gamma_0$.

Let $\alpha_1,\ldots,\alpha_k$ be all the elements of $\Cones(E)$ of length~$N$. Then $\C_{\alpha_1},\ldots, \C_{\alpha_k}$ is a partition of $E$ into cones, and each $t(\alpha_i)$ is a node in~$\Gamma_0$.  Since $\Gamma_0$ is irreducible, it is not difficult to find disjoint cones $\C_{\beta_1},\ldots,\C_{\beta_k}$ in $\Sigma_{\Gamma_0}$ such that $t(\beta_i)=t(\alpha_i)$ for all $i$.  Let $E_0=\C_{\beta_1}\cup \cdots\cup \C_{\beta_k}$. Then the homeomorphism $h\colon E\to E_0$ that maps each $\C_{\alpha_i}$ to $\C_{\beta_i}$ by the canonical similarity has the desired properties.
\end{proof}

Now our generalization of Matui's theorem is immediate.

\begin{corollary}\label{cor:V_fp}
If\/ $\Sigma_\Gamma$ has an irreducible core and $E\subseteq \Sigma_\Gamma$ is a nonempty clopen set, then $V_{\Gamma,E}$ is finitely presented.  Indeed, it has type\/~$\F_{\!\infty}$.\qed
\end{corollary}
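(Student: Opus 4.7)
The plan is to combine Matui's theorem (the irreducible case) with the pushing-into-the-core proposition (Proposition~\ref{prop:push_into_core}) to reduce the statement to the case already known. There is essentially no content beyond this reduction — the author's remark that the corollary is ``immediate'' signals that the hard work has already been done in \cref{prop:push_into_core}.

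Concretely, I would begin by applying \cref{prop:push_into_core} to obtain a clopen subset $E_0\subseteq \Sigma_{\Gamma_0}$ and a Thompson-like homeomorphism $h\colon E\to E_0$ such that conjugation by $h$ carries $V_{\Gamma,E}$ onto $V_{\Gamma_0,E_0}$. This conjugation is a group isomorphism $V_{\Gamma,E}\cong V_{\Gamma_0,E_0}$. Since $\Gamma_0$ is irreducible by definition of an irreducible core, Matui's theorem (cited just before \cref{prop:push_into_core}) applies to $V_{\Gamma_0,E_0}$, yielding that it is finitely presented and of type $\F_{\!\infty}$.

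Finally, I would invoke the basic fact that both finite presentability and type~$\F_{\!\infty}$ are invariants of the abstract group (they are preserved by isomorphism, since a classifying space with finite $n$-skeleton for one group is also a classifying space with finite $n$-skeleton for any isomorphic group). Hence $V_{\Gamma,E}$ inherits these properties from $V_{\Gamma_0,E_0}$, completing the argument. There is no real obstacle here: the only nontrivial step, constructing $h$ and verifying $hV_{\Gamma,E}h^{-1}=V_{\Gamma_0,E_0}$, has already been handled in \cref{prop:push_into_core}, and the appeal to Matui's theorem is black-box.
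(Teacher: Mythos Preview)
Your proposal is correct and matches the paper's approach exactly: the corollary is stated with a \qed and no written proof, since the paper considers it immediate from \cref{prop:push_into_core} and Matui's theorem, precisely via the conjugation isomorphism $V_{\Gamma,E}\cong V_{\Gamma_0,E_0}$ that you describe.
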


\begin{remark}
If $\Gamma$ and $\Gamma'$ both have irreducible cores, then $\R_{\Gamma,E}\cong \R_{\Gamma',E'}$ for any $E$ and $E'$; in particular all such groups are isomorphic to~$\R_2$, the rational group on the usual binary Cantor space. This follows from \cite[Proposition~2.22]{BelkBleakMatucci} together with the proof of \cite[Theorem~2.16]{BelkBleakMatucci}. We should also mention that this group is simple, and is not finitely generated \cite{Belk-Hyde-Matucci-1}.  Note, however, that these isomorphisms do not map $V_{\Gamma,E}$ to $V_{\Gamma',E'}$.
\end{remark}

\begin{example}\label{ex:houghton}
The groups $V_{\Gamma,E}$ are much less well-behaved when $\Gamma$ does not have an irreducible core. For example, as observed by Matteo Tarocchi \cite{Tarocchi}, if $\Gamma$ is the graph with nodes $u,v_1,\ldots,v_n$, a loop at each node, and a directed edge from each $v_i$ to $u$, then one can check that $V_{\Gamma}$ is isomorphic to the Houghton group~$H_n$, which has type $\F_{\!n-1}$ but not type $\F_{\!n}$~\cite{Brown}. (See also \cite[Proposition~3.7.4]{cornulier14} for a similar construction.) In particular, none of these examples have type~$\F_{\!\infty}$, in contrast to \cref{cor:V_fp}.  It would be interesting to determine the finiteness properties of $V_{\Gamma,E}$ for all finite directed graphs~$\Gamma$.
\end{example}

\subsection{Full groups, flexible groups, and classes }\label{ssec:full_irred}

In this subsection we define full groups and flexible groups, and we discuss the abelian group of classes associated to any full flexible group. The ``full and flexible'' terminology was introduced in \cite{Belk-Hyde-Matucci-1}, and the group of classes is essentially due to Matui~\cite[Section~6.2]{Matui15}, though our description follows~\cite[Section~3]{BleakElliottHyde}.

\begin{definition}[Full group]\label{def:full}
For $X$ a topological space and $G$ a group of homeomorphisms of~$X$, say that a homeomorphism $h$ of $X$ \newword{locally agrees with $G$} if for all $x\in X$ there exists an open neighborhood $U$ of $x$ and an element $g\in G$ such that $h(u)=g(u)$ for all $u\in U$. The \newword{full group induced by $G$}, denoted $[[\,G\mid X\,]]$, is the group of all homeomorphisms that locally agree with $G$. Call $G$ \newword{full} if $[[\,G\mid X\,]] = G$.
\end{definition}

For example, the Thompson group $V_{\Gamma,E}$ is full for any $\Sigma_\Gamma$ and any nonempty clopen set~$E$.  

This terminology and notation is inspired by the theory of \'{e}tale groupoids, where $[[\mathfrak{G}]]$ denotes the topological full group of an \'{e}tale groupoid~$\mathfrak{G}$ (see \cite{Matui12,Matui13,Matsumoto15,Matui15,MatuiMatsumoto17}).  In particular, if $X$ is a Cantor space then $[[\,G\mid X\,]]$ is the topological full group of the \'{e}tale groupoid of all germs of elements of~$G$.  Note that if $X$ is compact, then $h$ locally agrees with $G$ if and only if there exists a finite covering of $X$ by (basic, if desired) open sets $U_1,\dots,U_n$ and elements $g_1,\dots,g_n\in G$ such that $h(u)=g_i(u)$ for all $i$ and all $u\in U_i$.

\begin{definition}[Flexible group]
If $X$ is a Cantor space, a group $G$ of homeomorphisms of $X$ is \newword{flexible} if, for every pair $E_1,E_2$ of proper, nonempty clopen subsets of $X$, there exists $g\in G$ such that $g(E_1)\subseteq E_2$.
\end{definition}

\begin{remark}
Flexible groups are closely related to the vigorous groups defined by the second author, Elliott, and Hyde~\cite{BleakElliottHyde}, and are precisely the CO-transitive groups of homeomorphisms of Cantor spaces as defined by Kim, Koberda, and Lodha \cite{KKL,KimKoberdaBook}.  Moreover, the class of full flexible groups is the same as the class of topological full groups of essentially principal, minimal, purely infinite  \'etale groupoids with unit space a Cantor space (see~\cite{Matui15}), and Matui has proven that all such groups have simple commutator subgroup.
\end{remark}

If $G$ is a full flexible group of homeomorphism of a Cantor space~$X$ and $E$ is a proper, nonempty clopen subset of $X$, the \newword{class of $\boldsymbol{E}$}, denoted $\class(E)$, is the collection of all clopen sets in the same $G$-orbit as~$E$.  Let $\Classes(G)$ be the collection of such classes.  There is a natural binary operation $+$ on $\Classes(G)$ defined by
\[
\class(E_1) + \class(E_2) = \class(E_1\cup E_2)
\]
for every pair $E_1,E_2$ of disjoint, nonempty clopen sets whose union $E_1\cup E_2$ is not all of~$X$.  Under this operation, $\Classes(G)$ forms an abelian group (see \cite[Section~6.2]{Matui15} or~\cite[Section~3]{BleakElliottHyde}).  The key thing is that, since $G$ is flexible, up to choosing different representatives the sum $\class(E_1)+\class(E_2)$ makes sense for any proper, nonempty $E_1$ and $E_2$.  From this it is straightforward to see that it is a group operation, since for any $E_1$ and $E_2$ there exists $E_3$ with $\class(E_1)+\class(E_3)=\class(E_2)$ (just use flexibility to move $E_1$ into $E_2$ and then take $E_3$ to be the complement).  The group $\Classes(G)$ is precisely the $0$th homology group of the associated \'etale groupoid as defined by Crainic and Moerdijk~\cite{CrainicMoerdijk} and described concretely by Matui \cite{Matui12}.  

By convention, we can also assign a class to the whole space~$X$, namely the sum $\class(E_1)+\class(E_2)$ for any partition $\{E_1,E_2\}$ of $X$ into nonempty clopen sets.  It is easy to show that this does not depend on the chosen partition.

Matui describes the group $\Classes(V_{\Gamma,E})$ in the irreducible case, where it has an especially nice group presentation. See \cite[Section~6.1]{Matui15} or \cite[Theorem~4.14]{Matui12}, where it is proven that $\Classes(V_{\Gamma,E})$ is isomorphic to the $K_0$ group for the associated reduced groupoid \mbox{$C^*$-algebra}.  These $C^*$-algebras are in the family of Cuntz--Krieger algebras \cite{CuntzKrieger}, and their \mbox{K-theory} was computed by Cuntz~\cite{Cuntz}.

\begin{theorem}[Matui]
\label{thm:ClassesVGammaE}
If\/ $\Sigma_\Gamma$ is irreducible and $E\subseteq \Sigma_\Gamma$ is a nonempty clopen set, then $V_{\Gamma,E}$ is flexible.  If $v$ is a node of\/ $\Gamma$, then all cones\/ $\C_\alpha$ in $E$ with $t(\alpha)=v$ lie in the same class\/ $[v]$, and such classes generate\/ $\Classes(V_{\Gamma,E})$.  Indeed, $\Classes(V_{\Gamma,E})$ has a presentation with these generators and one relation
\[
[v] = [w_1] + \cdots + [w_n]
\]
for each node $v$ of\/ $\Gamma$, where $w_1,\ldots,w_n$ are the termini of all edges $e_1,\ldots,e_n$ in\/ $\Gamma$ that have origin~$v$.\qed
\end{theorem}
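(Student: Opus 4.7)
The plan is to establish the three assertions of the theorem---flexibility of $V_{\Gamma,E}$, generation of $\Classes(V_{\Gamma,E})$ by the classes $[v]$, and completeness of the stated relations---in that order.

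For flexibility, let $E_1, E_2 \subseteq E$ be proper nonempty clopen subsets, and decompose $E_1$ as a finite disjoint union of cones $\C_{\alpha_1}, \ldots, \C_{\alpha_n}$. Because $\Gamma$ is strongly connected and is not a single directed cycle, for every node $v$ one can find arbitrarily many pairwise disjoint cones with terminus $v$ inside any nonempty clopen subset of $E$ (by following arbitrarily long paths to nodes of out-degree at least two and then to $v$). Using this, I can choose disjoint cones $\C_{\beta_1}, \ldots, \C_{\beta_n}\subseteq E_2$ with $t(\beta_i)=t(\alpha_i)$ for each $i$. Setting $F_1 = E \setminus E_1$ and $F_2 = E \setminus \bigsqcup_i \C_{\beta_i}$, I partition each of $F_1, F_2$ into cones and then progressively refine both partitions---splitting any cone $\C_\gamma$ into the subcones $\C_{\gamma e}$ indexed by outgoing edges $e$ at $t(\gamma)$---until the multisets of termini on the two sides agree, yielding a bijection of cones matched by terminus. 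Combined with the pairings $\alpha_i\leftrightarrow\beta_i$, this assembles into a Thompson-like self-homeomorphism of $E$, hence an element of $V_{\Gamma,E}$ carrying $E_1$ into $E_2$.

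For the second assertion, any proper nonempty clopen $F \subseteq E$ is a finite disjoint union of cones, and any two cones in $E$ sharing a terminus lie in the same $V_{\Gamma,E}$-orbit (by applying the flexibility construction to build an involution swapping them). Hence $\class(\C_\alpha)$ depends only on $t(\alpha)$, and additivity of $+$ in $\Classes(V_{\Gamma,E})$ gives $\class(F) = [t(\alpha_1)]+\cdots+[t(\alpha_m)]$ for any cone decomposition $F = \C_{\alpha_1}\sqcup\cdots\sqcup \C_{\alpha_m}$. The relation $[v]=[w_1]+\cdots+[w_n]$ is immediate from the decomposition $\C_v = \C_{e_1}\sqcup\cdots\sqcup\C_{e_n}$ along the outgoing edges at $v$.

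The main obstacle is proving completeness of the stated relations. Let $A$ be the abstract abelian group on the given presentation, and let $\pi\colon A\twoheadrightarrow \Classes(V_{\Gamma,E})$ be the obvious surjection sending each formal generator $[v]$ to the corresponding class. To produce an inverse, I would define $\Psi\colon \Classes(V_{\Gamma,E}) \to A$ by sending $\class(\C_\alpha) \mapsto [t(\alpha)]$ for each cone $\C_\alpha\subseteq E$ and extending additively to finite disjoint unions of cones. Well-definedness amounts to two checks. First, independence of the cone decomposition: any two such decompositions of a clopen set have a common refinement obtained by iterated splittings $\C_\gamma \rightsquigarrow \bigsqcup_e \C_{\gamma e}$, and each such splitting preserves the value in $A$ precisely by the defining relation. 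Second, invariance under the $V_{\Gamma,E}$-action: a generic element of $V_{\Gamma,E}$ maps a cone to a cone by a canonical similarity, which preserves termini and hence the image in $A$. Together these give that $\Psi$ is a well-defined homomorphism and a two-sided inverse of $\pi$, completing the proof.
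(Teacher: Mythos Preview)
The paper does not give its own proof of this theorem; it is stated as a result of Matui with a citation and a terminal \qed.  So there is no in-paper argument to compare against, and your attempt should be judged on its own.

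Your argument for the presentation of $\Classes(V_{\Gamma,E})$ (the map $\Psi$ to the abstract group $A$, well-definedness via common refinements, and inverse to $\pi$) is correct and is essentially how one packages this computation.

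The substantive gap is in your flexibility argument, and it recurs in your proof that cones with the same terminus lie in the same orbit.  You write that you will ``progressively refine both partitions \ldots\ until the multisets of termini on the two sides agree'', but you give no reason why such refinements exist.  Concretely: you need that two nonempty clopen sets $F_1,F_2$ whose termini-multisets have the same image in the abstract group $A$ can be expanded (via $v\rightsquigarrow w_1+\cdots+w_n$) to literally equal multisets.  Equivalently, you need that the commutative monoid of nonempty node-multisets modulo the expansion relation injects into its Grothendieck group~$A$.  This is not automatic: the monoid is not cancellative (already in the one-node, $n$-loop case the empty multiset and the multiset of size $n-1$ both map to $0\in\Z/(n-1)\Z$ but are inequivalent), and while nonemptiness rules out that particular failure, the general multi-node statement still requires an argument that genuinely uses irreducibility of~$\Gamma$.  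This lemma is exactly the combinatorial core of Matui's computation; once you have it, flexibility, the orbit statement for cones, and the presentation all follow in the order you outline.  Without it, your flexibility proof and your ``involution swapping two cones'' step are both circular, since you are assuming you can complete a partial terminus-matched bijection to a global Thompson-like homeomorphism---which is precisely what is at stake.
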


In the above theorem, the relation $[v]=[w_1]+\cdots +[w_n]$ comes from the decomposition of a cone $\C_{\alpha}$ with $t(\alpha)=v$ into the disjoint union $\C_{\alpha e_1}\cup \cdots \cup \C_{\alpha e_n}$, where each $t(\alpha e_i)=w_i$.  Note that $e_1,\ldots,e_n$ are distinct but $w_1,\ldots,w_n$ need not be.

Note that Matui's description of $\Classes(V_{\Gamma,E})$ does not actually depend on the clopen set~$E$.  Indeed, if $E\subseteq \Sigma_\Gamma$ is any nonempty clopen set, the inclusion $E\to \Sigma_{\Gamma}$ induces a canonical isomorphism $\Classes(V_{\Gamma,E})\to \Classes(V_\Gamma)$.

\begin{example}\label{ex:ClassesInHigmanThompson}
In the Higman--Thompson case where $\Gamma$ is a single node with $n\geq 2$ loops (see \cref{ex:V}) the group $\Classes(V_{\Gamma,E})$ is isomorphic to $\Z/(n-1)\Z$. In this case, each $V_{\Gamma,E}$ is isomorphic to one of the Higman--Thompson groups $V_{n,r}$ for $1\leq r\leq n$, where $r$ depends on the class of~$E$.
\end{example}

\begin{example}
If $\Gamma$ has two nodes $v$ and $w$, each with two loops, and with one edge going from $v$ to $w$ and one edge going from $w$ to $v$, then
\[
\Classes(V_{\Gamma,E}) \cong \langle [v],[w] \mid [v]=2[v]+[w],[w]=[v]+2[w]\rangle \cong \Z.
\]
In particular $\Classes(V_{\Gamma,E})$ can be infinite.
\end{example}

\begin{corollary}[Irreducible core case]\label{cor:IrreducibleCoreFlexible}
If\/ $\Sigma_\Gamma$ has an irreducible core\/~$\Gamma_0$ and $E\subseteq \Sigma_{\Gamma}$ is a nonempty clopen set, then $V_{\Gamma,E}$ is flexible.  Moreover, if\/ $\C_\alpha$ and\/ $\C_\beta$ are cones in $E$ and $t(\alpha)=t(\beta)$, then\/ $\class(\C_{\alpha})=\class(\C_\beta)$.
\end{corollary}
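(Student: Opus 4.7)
The plan is to transfer both assertions from the irreducible setting to the irreducible-core setting by conjugating through the Thompson-like homeomorphism supplied by \cref{prop:push_into_core}, and then invoking \cref{thm:ClassesVGammaE}. I would begin by applying \cref{prop:push_into_core} to produce a Thompson-like homeomorphism $h\colon E \to E_0$, where $E_0\subseteq\Sigma_{\Gamma_0}$ is a nonempty clopen set and $hV_{\Gamma,E}h^{-1} = V_{\Gamma_0,E_0}$. Conjugation by $h$ is a group isomorphism, and the bijection $A\mapsto h^{-1}(A)$ on clopen subsets preserves inclusions, disjoint unions, and $V$-orbits; hence it descends to an isomorphism $\Classes(V_{\Gamma_0,E_0}) \to \Classes(V_{\Gamma,E})$ of abelian groups.

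For the flexibility assertion, suppose $E_1,E_2 \subseteq E$ are proper nonempty clopen sets. Then $h(E_1), h(E_2) \subseteq E_0$ are also proper nonempty and clopen. Since $\Sigma_{\Gamma_0}$ is irreducible, \cref{thm:ClassesVGammaE} tells us $V_{\Gamma_0,E_0}$ is flexible, so there exists $g \in V_{\Gamma_0,E_0}$ with $g(h(E_1)) \subseteq h(E_2)$; then $h^{-1}gh \in V_{\Gamma,E}$ sends $E_1$ into $E_2$, establishing flexibility of $V_{\Gamma,E}$.

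For the class statement, set $v = t(\alpha) = t(\beta)$ and let $N$ be the integer from the definition of irreducible core. Let $\Pi$ be the set of directed paths of length $N$ with origin $v$; this is finite and nonempty because $\Sigma_\Gamma$ has no empty cones. The disjoint decompositions $\C_\alpha = \bigsqcup_{\pi\in\Pi} \C_{\alpha\cdot\pi}$ and $\C_\beta = \bigsqcup_{\pi\in\Pi} \C_{\beta\cdot\pi}$ use the same indexing set $\Pi$ thanks to $t(\alpha)=t(\beta)$. For each $\pi\in\Pi$, the path $\alpha\cdot\pi$ has length at least $N$, so its length-$N$ prefix is one of the partition paths used to define $h$ in the proof of \cref{prop:push_into_core}; therefore $h$ acts on $\C_{\alpha\cdot\pi}$ by a canonical similarity onto a cone in $E_0\subseteq\Sigma_{\Gamma_0}$ with terminus $t(\pi)\in\Gamma_0$, and likewise for $\C_{\beta\cdot\pi}$. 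Applying \cref{thm:ClassesVGammaE} to the irreducible $\Sigma_{\Gamma_0}$ gives $\class(h(\C_{\alpha\cdot\pi})) = [t(\pi)] = \class(h(\C_{\beta\cdot\pi}))$ in $\Classes(V_{\Gamma_0,E_0})$, and transporting back along the isomorphism above and then summing over $\pi\in\Pi$ yields $\class(\C_\alpha)=\class(\C_\beta)$. The argument has no real obstacle; its only mild subtlety is arranging the refinement to use a common index set $\Pi$ on the $\alpha$ and $\beta$ sides, which is forced precisely by the hypothesis $t(\alpha)=t(\beta)$, together with the harmless bookkeeping required when $\C_\alpha$ or $\C_\beta$ happens to exhaust $E$, handled by the convention defining $\class$ on the whole space.
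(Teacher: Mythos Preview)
Your proposal is correct and follows essentially the same strategy as the paper: conjugate through the Thompson-like homeomorphism $h$ from \cref{prop:push_into_core}, deduce flexibility from that of $V_{\Gamma_0,E_0}$, and for the class statement refine $\C_\alpha$ and $\C_\beta$ into matching subcones on which $h$ acts by canonical similarities, then apply \cref{thm:ClassesVGammaE} to the images and sum.

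The only noteworthy difference is in how the refinement is produced. The paper appeals only to the \emph{statement} of \cref{prop:push_into_core}---specifically, that $h$ is Thompson-like---to assert the existence of a decomposition $\C_\alpha=\bigsqcup \C_{\alpha_i}$ on which $h$ acts canonically on both $\C_{\alpha_i}$ and its image under the canonical similarity $f\colon\C_\alpha\to\C_\beta$. You instead reach into the \emph{proof} of \cref{prop:push_into_core} and use the explicit length-$N$ partition to write down the refinement directly via the common index set $\Pi$. Your version is more concrete and makes the matching between the $\alpha$ and $\beta$ sides very transparent; the paper's version is slightly cleaner in that it treats \cref{prop:push_into_core} as a black box. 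One small caution: your argument tacitly assumes that the $N$ used to build $h$ is large enough that the length-$N$ cones actually partition $E$ (so that the length-$N$ prefix of $\alpha\cdot\pi$ really is one of the partition paths). This is the same implicit assumption present in the proof of \cref{prop:push_into_core} itself and is harmless once noted, but you should align your $N$ with the one used there rather than the minimal $N$ from the definition of irreducible core.
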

\begin{proof}
By \cref{prop:push_into_core} there is a Thompson-like homeomorphism $h\colon E\to E_0$, where $E_0$ is a clopen subset of $\Sigma_{\Gamma_0}$.  Then $h$ conjugates $V_{\Gamma,E}$ to $V_{\Gamma_0,E_0}$, and the latter is flexible by \cref{thm:ClassesVGammaE}, so $V_{\Gamma,E}$ is flexible as well.  Furthermore, observe that $h$ induces an isomorphism $\Classes(V_{\Gamma,E})\to \Classes(V_{\Gamma_0,E_0})$.

Now suppose $\C_\alpha$ and $\C_\beta$ are cones in $E$ with $t(\alpha)=t(\beta)$, and let $f\colon \C_\alpha\to \C_\beta$ be the canonical similarity.  Then there exists a decomposition of $\C_{\alpha_1},\ldots,\C_{\alpha_n}$ of $\C_{\alpha}$ into cones so that $h$ acts as a canonical similarity on both $\C_{\alpha_i}$ and $\C_{\of(\alpha_i)}$ for each~$i$.  Since $t(\of(\alpha_i)) = t(\alpha_i)$, we have $t(\overline{hf}(\alpha_i))=t(\oh(\alpha_i))$ for each~$i$, so $\class(h(\C_{\alpha_i}))=\class(h(\C_{\of(\alpha_i)}))$. Since $h$ induces an isomorphism on classes, it follows that $\class(\C_{\alpha_i})=\class(\C_{\of(\alpha_i)})$ for each~$i$, so
\[
\class(\C_{\alpha})=\sum_{i=1}^n \class(\C_{\alpha_i}) = \sum_{i=1}^n \class(\C_{\of(\alpha_i)}) = \class(\C_\beta).\qedhere
\]
\end{proof}

We will often make use of the following corollary.

\begin{corollary}[Mapping cones with $V_{\Gamma,E}$]\label{cor:MappingConesWithV}
Suppose $\Sigma_\Gamma$ has an irreducible core, let $E\subseteq \Sigma_\Gamma$ be a nonempty clopen set, and let\/ $\C_{\alpha_1}\,\ldots,\C_{\alpha_n}$ and\/ $\C_{\beta_1}\,\ldots,\C_{\beta_n}$ be collections of pairwise disjoint cones in~$E$.  If $t(\alpha_i)=t(\beta_i)$ for each $i$ and neither of the unions\/ $\bigcup_{i=1}^n \C_{\alpha_i}$ and\/ $\bigcup_{i=1}^n \C_{\beta_i}$ is equal to $E$, then there exists $f\in V_{\Gamma,E}$ that maps each\/ $\C_{\alpha_i}$ to\/ $\C_{\beta_i}$ by a canonical similarity.
\end{corollary}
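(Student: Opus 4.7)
The plan is to construct $f$ explicitly by piecing together the prescribed canonical similarities on the given cones with an appropriate element of $V_{\Gamma,E}$ on the leftover region. Set $A = \bigcup_{i=1}^n \C_{\alpha_i}$, $B = \bigcup_{i=1}^n \C_{\beta_i}$, $A' = E \setminus A$, and $B' = E \setminus B$; by hypothesis, both $A'$ and $B'$ are nonempty clopen subsets of $E$, and in particular each is a finite disjoint union of cones.

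First I would argue that $A'$ and $B'$ lie in the same $V_{\Gamma,E}$-orbit, using the class group machinery from \cref{ssec:full_irred}. Since $t(\alpha_i) = t(\beta_i)$ and $\Sigma_\Gamma$ has an irreducible core, \cref{cor:IrreducibleCoreFlexible} yields $\class(\C_{\alpha_i}) = \class(\C_{\beta_i})$ for each $i$. Summing in the abelian group $\Classes(V_{\Gamma,E})$ gives $\class(A) = \class(B)$, and from $\class(A) + \class(A') = \class(E) = \class(B) + \class(B')$ we cancel to conclude $\class(A') = \class(B')$. By the definition of $\Classes(V_{\Gamma,E})$ as the orbit set, there exists $g \in V_{\Gamma,E}$ with $g(A') = B'$.

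Next I would assemble the map. Since $A'$ and $B'$ are each finite disjoint unions of cones, I can refine the domain and range cone-partitions witnessing $g \in V_{\Gamma,E}$ so that both partitions subdivide $A'$ and $B'$ into their cone components; then $g|_{A'}$ is a Thompson-like homeomorphism $A' \to B'$, given by canonical similarities on a finite cone partition of $A'$. Define $f \colon E \to E$ piecewise by letting $f$ act as the canonical similarity $\C_{\alpha_i} \to \C_{\beta_i}$ on each $\C_{\alpha_i}$ and as $g|_{A'}$ on $A'$. Glueing the two partitions of $E$ (the $\C_{\alpha_i}$ together with the refined cone partition of $A'$, mapped to the $\C_{\beta_i}$ together with the refined cone partition of $B'$) shows that $f$ is realized by canonical similarities on a cone partition of $E$, so $f \in V_{\Gamma,E}$, and by construction $f$ sends each $\C_{\alpha_i}$ to $\C_{\beta_i}$ by the canonical similarity.

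The one step that requires a little care, and which I expect is the main (minor) obstacle, is the cancellation $\class(A') = \class(B')$ in $\Classes(V_{\Gamma,E})$, together with verifying that $A'$ and $B'$ being in the same class really does yield a Thompson-like homeomorphism $A' \to B'$ via an element of the full group. Both issues are handled directly by the definitions and results recorded in \cref{ssec:full_irred}: $\Classes(V_{\Gamma,E})$ is an abelian group, and two proper nonempty clopen subsets lie in the same class precisely when they are in the same $V_{\Gamma,E}$-orbit, which is exactly what is needed.
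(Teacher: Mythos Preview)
Your proof is correct and follows essentially the same approach as the paper: use \cref{cor:IrreducibleCoreFlexible} to match classes, obtain an element $g\in V_{\Gamma,E}$ carrying one complementary region to the other, and glue $g$ together with the prescribed canonical similarities. The only cosmetic difference is that the paper applies the class argument directly to the unions $E_\alpha=\bigcup \C_{\alpha_i}$ and $E_\beta=\bigcup \C_{\beta_i}$ to find $g$ with $g(E_\alpha)=E_\beta$ (whence $g(E\setminus E_\alpha)=E\setminus E_\beta$ automatically), whereas you first cancel in $\Classes(V_{\Gamma,E})$ to match the complements.
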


\begin{proof}
Let $E_\alpha$ and $E_\beta$ denote the two unions, neither of which is equal to~$E$.  Since $t(\alpha_i)=t(\beta_i)$ for each~$i$, it follows from \cref{cor:IrreducibleCoreFlexible} that $\class(\C_{\alpha_i})=\class(\C_{\beta_i})$ for each~$i$. Then $\class(E_\alpha)=\class(E_\beta)$, so there exists $g\in V_{\Gamma,E}$ that maps $E_\alpha$ to $E_\beta$, and in particular $g$ maps $E\setminus E_\alpha$ to $E\setminus E_\beta$.  Then the homeomorphism $f\colon E\to E$ that maps each $\C_{\alpha_i}$ to $\C_{\beta_i}$ by a canonical similarity and agrees with $g$ on $E\setminus E_\alpha$ is the desired element of~$V_{\Gamma,E}$.
\end{proof}

\subsection{Rational similarity groups}\label{ssec:RSGs}

In this subsection we introduce the class of rational similarity groups, which will be our main focus for the remainder of the paper.

\begin{definition}\label{def:rsg}
Let $\Sigma_{\Gamma}$ be a subshift of finite type with no isolated points or empty cones, let $E\subseteq \Sigma_\Gamma$ be a nonempty clopen set, and let $\R_{\Gamma,E}$ be the associated rational group.  A subgroup $G\leq \R_{\Gamma,E}$ will be called a \newword{rational similarity group (RSG)} if, for every pair of cones $\C_\alpha,\C_\beta \subsetneq E$ with $t(\alpha)=t(\beta)$, there exists $g\in G$ that maps $\C_\alpha$ to $\C_\beta$ by the canonical similarity.
\end{definition}

It follows from the results in \cite{BelkBleakMatucci} that every hyperbolic group $G$ that acts faithfully on its horofunction boundary $\partial_h G$ is an RSG (see \cref{sec:hyp}), and our embedding of $G$ into a finitely presented simple group will depend on the corresponding full closure $[[G \mid \partial_h G]]$.  The following proposition describes the close relationship between full RSGs and Thompson groups.

\begin{proposition}[Full RSGs and $V_{\Gamma,E}$]\label{prop:RSGsAndV}
Suppose\/ $\Sigma_\Gamma$ has an irreducible core. Then $V_{\Gamma,E}$ is an RSG, as is any subgroup of $\R_{\Gamma,E}$ that contains $V_{\Gamma,E}$.  Conversely, every full RSG in $\R_{\Gamma,E}$ contains $V_{\Gamma,E}$.
\end{proposition}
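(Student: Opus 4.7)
The plan is to deduce both halves of the proposition directly from \cref{cor:MappingConesWithV} (Mapping cones with $V_{\Gamma,E}$), together with the definitions of \textit{full} and \textit{RSG}. The hypothesis that $\Sigma_\Gamma$ has an irreducible core is used only to invoke that corollary (and to ensure, as noted earlier, that $V_{\Gamma,E}\le \R_{\Gamma,E}$, since an irreducible core guarantees no isolated points and no empty cones).

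For the first direction, I would take any cones $\C_\alpha,\C_\beta\subsetneq E$ with $t(\alpha)=t(\beta)$ and apply \cref{cor:MappingConesWithV} with $n=1$, $\C_{\alpha_1}=\C_\alpha$, $\C_{\beta_1}=\C_\beta$. The corollary's hypothesis that neither union equals $E$ is satisfied by assumption, so it produces an $f\in V_{\Gamma,E}$ mapping $\C_\alpha$ to $\C_\beta$ by the canonical similarity. This shows $V_{\Gamma,E}$ satisfies \cref{def:rsg}, and hence so does any subgroup of $\R_{\Gamma,E}$ containing it.

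For the converse, let $G\le \R_{\Gamma,E}$ be a full RSG and let $f\in V_{\Gamma,E}$, with domain and range partitions $\C_{\alpha_1},\dots,\C_{\alpha_n}$ and $\C_{\beta_1},\dots,\C_{\beta_n}$ satisfying $t(\alpha_i)=t(\beta_i)$ and $f(\C_{\alpha_i})=\C_{\beta_i}$ via the canonical similarity. I would first refine these partitions so that $n\ge 2$, which is possible because $\Sigma_\Gamma$ has no isolated points or empty cones and so every cone splits into at least two subcones; refining $\C_{\alpha_i}$ via a suffix $\gamma$ splits $\C_{\beta_i}$ correspondingly and preserves the canonical similarity property. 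After this refinement, each $\C_{\alpha_i}$ and each $\C_{\beta_i}$ is a proper subset of $E$, so the RSG property supplies $g_i\in G$ mapping $\C_{\alpha_i}$ to $\C_{\beta_i}$ by the canonical similarity. Then $f$ agrees with $g_i$ on the clopen neighborhood $\C_{\alpha_i}$ of every point of $\C_{\alpha_i}$, so $f$ locally agrees with $G$. Since $G$ is full, $f\in G$.

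There is no real obstacle here; the only mild technical point is the refinement step that rules out the degenerate case $n=1$ (where a single cone of the partition would coincide with $E$), and this is handled by the absence of isolated points and empty cones.
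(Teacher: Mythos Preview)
Your proof is correct and follows essentially the same approach as the paper: both directions rest on \cref{cor:MappingConesWithV} and the definitions of full and RSG. Your treatment is in fact slightly more careful than the paper's, which simply says ``every element of $V_{\Gamma,E}$ locally agrees with a canonical similarity at every point'' and leaves implicit both the passage from canonical similarities to actual elements of $G$ (via the RSG property) and the properness requirement $\C_\alpha,\C_\beta\subsetneq E$ in \cref{def:rsg}; your refinement step to force $n\ge 2$ handles the latter explicitly.
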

\begin{proof}
If $G$ contains $V_{\Gamma,E}$, then by \cref{cor:MappingConesWithV} all canonical similarities can be achieved using elements of $G$. Conversely, if $G$ is a full RSG then since every element of $V_{\Gamma,E}$ locally agrees with a canonical similarity at every point, and since $G$ is full, every element of $V_{\Gamma,E}$ lies in $G$.
\end{proof}

\begin{remark}
Without the assumption of an irreducible core, there do exist examples of $\Gamma$ where $V_{\Gamma,E}$ is not an RSG. For instance, take $\Gamma$ to be the graph with two nodes $v$ and $w$, one loop $a$ at $v$, three loops $b_1,b_2,b_3$ at $w$, and one edge $x$ from $v$ to $w$. Then no element of $V_\Gamma$ can realize the canonical similarity from $\C_a$ to $\C_{aa}$, since the complement of~$\C_a$, namely~$\C_x$, can only be partitioned into an odd number of cones, whereas the complement of $\C_{aa}$, namely $\C_{ax}\cup\C_x$, can only be partitioned into an even number of cones.
\end{remark}
In the irreducible core case, \cref{prop:RSGsAndV} makes it easy to exhibit a variety of other existing examples of RSGs, beyond the $V_{\Gamma,E}$. As a rudimentary example, note that given any subgroup $H\le \R_{\Gamma,E}$ one can form an RSG by simply taking $G=\langle H,V_{\Gamma,E}\rangle$. Let us look at some more interesting, explicit examples from the literature.

\begin{example}[R\"over--Nekrashevych groups]\label{ex:rn}
Let $\Gamma$ consist of a single node and $n\ge 2$ (loop) edges, so $\Sigma_\Gamma=\C_n$ is the usual $n$-ary Cantor space that is the boundary of the rooted $n$-regular tree $T_n$. Note that $\Aut(T_n)$ is isomorphic to the wreath product $\Aut(T_n)\wr S_n$. Call a subgroup $G\le \Aut(T_n)$ \newword{self-similar} if its image in $\Aut(T_n)\wr S_n$ under this isomorphism lies in $G \wr S_n$. For $g\in G$, if the image of $g$ under this isomorphism is $((g_1,\dots,g_n),\sigma)$, call the $g_i$ the \newword{level-1 states} of $g$. By the \newword{states} of $g$ we mean all elements obtainable by a finite sequence of taking level-1 states. Thus, $G$ is self-similar if and only if all the states of any $g\in G$ themselves lie in $G$. We call a self-similar group \newword{finite-state} if each of its elements has only finitely many states. See \cite{Nekrashevych-book} for a wealth of information about self-similar groups.

Given a self-similar group $G$, the \newword{R\"over--Nekrashevych group} $V_n(G)$ is the group of homeomorphisms $h$ of $\C_n$ for which there exists a partition of $\C_n$ into cones such the image of any cone in the partition under $h$ is some cone in $\C_n$, and the local action of $h$ at each cone in the partition is an element of $G$. These groups were essentially introduced by Scott in \cite{scott84} in very different language. Subsequently, R\"over studied the special case when $G$ is the Grigorchuk group in \cite{roever99}, and Nekrashevych considered the groups in full generality and with modern terminology in \cite{nekrashevych04}. Note that for any element $h$ of $V_n(G)$, all but finitely many local actions of $h$ lie in $G$, so if $G$ is finite-state then $V_n(G)$ is a subgroup of the rational group $\R_n=\R_{\Gamma}$. Also, clearly $V_n\le V_n(G)$, so we conclude that any R\"over--Nekrashevych group of a finite-state self-similar group is a full RSG.
\end{example}

\begin{example}[(Twisted) Brin--Thompson groups]\label{ex:BT}
In \cite{brin04}, Brin defined a class of higher-dimensional Thompson groups $nV$, which we refer to as the Brin--Thompson groups.  The first and second authors proved in \cite{Belk-Bleak-1} that these groups embed in $\R_{2^n}$, and indeed they are full RSGs.  In \cite{BelkZaremskyTwisted}, the first and fourth authors generalized these to the class of twisted Brin--Thompson groups $SV_G$, and when $S$ is finite these are again full RSGs.
\end{example}

\begin{example}[Circular Thompson groups $T_{n,r}$]\label{ex:T}
Every Higman--Thompson group $V_{n,r}\cong V_{\Gamma,E}$ (as in \cref{ex:V}) has an associated circular group $T_{n,r}\leq V_{n,r}$, which is the subgroup consisting of all elements that preserve a certain circular order on~$E$ (see~\cite{Brown}). These groups are RSGs, but they are not full as groups of homeomorphisms of the Cantor space~$E$. Indeed, the full closure of $T_{n,r}$ is precisely~$V_{n,r}$. Note that $T_{n,r}$ is full when viewed as a group of homeomorphisms of a circle.\end{example}

\begin{example}[Thompson's group $F$]\label{ex:F}
Thompson's group $V$ ($=V_{2,1}$) is the same as $V_{\Gamma}$, where $\Gamma$ is a graph with one node and two loops.  Thompson's group $F$ is the subgroup of $V$ that preserves the canonical linear order on $\Sigma_\Gamma$.  This group is not an RSG, since cones that contain the maximum or minimum points under the linear order cannot be mapped to other cones.

However, it is possible to realize $F$ as an RSG using a slightly different subshift.  Specifically, let $\Gamma'$ be the graph with three vertices $u$, $v$, and $w$ (corresponding to left cones, interior cones, and right cones, respectively), with loops at $u$ and $w$, two loops at $v$, and one directed edge from each of $u$ and $w$ to $v$. Then $F$ embeds into $V_{\Gamma'}$ as an RSG, though it is not full.
\end{example}

\begin{example}[Lodha--Moore group]\label{ex:LM}
The Lodha--Moore group originated in \cite{LodhaMoore,Lodha}, as the first example of a non-amenable type~$\F_{\!\infty}$ group with no non-abelian free subgroups. The original construction is analogous to Thompson's group $F$, but one can also consider a $V$-like version. Let $\Gamma$ have one node and two (loop) edges, so $\Sigma_\Gamma=\C_2$ is the usual binary Cantor space $\{0,1\}^\N$. Let $\lambda$ be the homeomorphism of $\Sigma_\Gamma$ defined by
\[
\lambda(00\cdot\omega) = 0\cdot \lambda(\omega),\quad \lambda(01\cdot\omega) = 10\cdot\lambda^{-1}(\omega),\quad \lambda(11\cdot\omega)=1\cdot\lambda(\omega)
\]
for all $\omega\in \Sigma_\Gamma$. One can check that $\lambda$ is a rational map. For any finite binary word $\alpha$, let $\lambda_\alpha$ be the homeomorphism of $\Sigma_\Gamma$ that sends $\alpha\cdot\omega$ to $\alpha\cdot\lambda(\omega)$ for all $\omega$ and fixes all points outside $\C_\alpha$. The ($V$-like) Lodha--Moore group $LM_V$ is then the group generated by $V_\Gamma$ together with all the $\lambda_\alpha$. This is a full RSG. Note that, just like the original Lodha--Moore group, $LM_V$ also has type $\F_{\!\infty}$, as recently shown by Farley \cite{Farley}. As with $F$ and $T$ (see Examples~\ref{ex:T} and~\ref{ex:F}), the Lodha--Moore groups for an interval and a circle can also be realized as RSGs, though they are not full. See \cite{LodhaCircle} for more on the circle version.
\end{example}

\begin{example}[Automorphisms of $V_{n,r}$]\label{ex:autV}
For a Higman--Thompson group $V_{n,r}\cong V_{\Gamma,E}$ as in \cref{ex:V}, the second author, Cameron, Maissel, Navas, and Olukoya proved that the automorphism group $\mathrm{Aut}(V_{n,r})$ embeds into $\R_{\Gamma,E}$~\cite{BCMNO}.  The group $\mathrm{Aut}(V_{n,r})$ contains $V_{\Gamma,E}$, and is therefore an RSG.  A feature of interest here is that the group $\mathrm{Aut}(\Sigma_\Gamma,\sigma)$ of automorphisms of the one-sided subshift $\Sigma_\Gamma$ (with shift operator $\sigma$) embeds as a subgroup $\mathcal{H}_n$ of the outer automorphism group $\mathrm{Out}(V_{n,r})$ (see \cite{Hedlund69} for information on $\mathrm{Aut}(\Sigma_\Gamma)$ and \cite{BCO} for the embedding mentioned here).  However, the group $\Aut(V_{n,r})$ is not full since the image of an element $f\in\Aut(V_{n,r})$ in $\mathrm{Out}(V_{n,r})$ can be determined from the restriction of $f$ to any cone.
\end{example}

\subsection{Nuclei and the contracting property}\label{ssec:nuclei}

In this short subsection we define the crucial notion of a subgroup of $\R_{\Gamma,E}$ being contracting. Let $\Sigma_\Gamma$ be a subshift of finite type without isolated points or empty cones, and let $E\subseteq \Sigma_\Gamma$ be a nonempty clopen set.

If $f\in \R_{\Gamma,E}$, then $f$ has only finitely many distinct local actions~$f|_\alpha$.  The \newword{nucleus} of~$f$, denoted $\Nuc_f$, is the set of all local actions that occur infinitely often. That is, $f|_{\alpha}\in \Nuc_f$ if $f|_{\alpha}=f|_{\beta}$ for infinitely many $\beta\in\Cones(E)$. 
 Note that $f|_\alpha\notin \Nuc_f$ holds for only finitely many $\alpha\in\Cones(E)$, so in particular there exists $N\in\N$ so that $f|_\alpha\in \Nuc_f$ for all $\alpha\in \Cones(E)$ for which the path $\alpha$ has length $N$ or greater.

\begin{definition}[Nucleus, contracting]\label{def:contracting}
Given a subgroup $G\le \R_{\Gamma,E}$, define the \newword{nucleus} $\Nuc_G$ of $G$ to be the union of all $\Nuc_g$ for $g\in G$. Thus, $\Nuc_G$ is the smallest set of maps such that for all $g\in G$ we have $g|_\alpha\in\Nuc_G$ for all but finitely many $\alpha\in \Cones(E)$. We say that $G\le \R_{\Gamma,E}$ is \newword{contracting} if $\Sigma_\Gamma$ has an irreducible core and $\Nuc_G$ is finite.
\end{definition}

This generalizes the notion of contracting for self-similar groups introduced by Nekrashevych~\cite{Nekrashevych-book}, and in the special case of R\"over--Nekrashevych groups (see \cref{ex:rn}) it is equivalent to the underlying self-similar group being contracting.  A related notion of contracting for asynchronous transducers appears in \cite{DonovenOlukoya} as well.  Their use case applies to maps arising as products of local maps coming from a fixed asynchronous transducer, as opposed to the whole collection of small-scale local maps from a particular group of homeomorphisms, but the concepts otherwise align.

Note that the elements of $\Nuc_G$ themselves are not necessarily in $G$. Indeed, the domain of an element of $\Nuc_G$ can be either all of $\Sigma_\Gamma$ or a cone $\C_v$ for some node $v$ in $\Gamma$, as opposed to the clopen set $E$. Moreover, the image of an element of $\Nuc_G$ is a clopen subset of $\Sigma_{\Gamma}$, but need not be a cone. Note also that the elements of $\Nuc_G$ are always injective open maps, since the elements of $G$ are homeomorphisms.

\begin{remark}
Our definition of $G\le \R_{\Gamma,E}$ being contracting includes both that $\Sigma_\Gamma$ has an irreducible core, and that $\Nuc_G$ is finite. It is worth mentioning that these are independent properties. For example, in the $V$-like Lodha--Moore group from \cref{ex:LM} there is an irreducible core but an infinite nucleus, and in the Houghton groups from \cref{ex:houghton} there is no irreducible core, but the nucleus is finite.
\end{remark}

\subsection{Classification of full RSGs}\label{ssec:construct_groups}

Let $\Sigma_\Gamma$ be a subshift of finite type with irreducible core~$\Gamma_0$.

\begin{definition}[Nucleus of injections]
Let $\Nuc$ be a set of nondegenerate maps on $\Sigma_\Gamma$.  We say that $\Nuc$ is a \newword{nucleus of injections} if it has the following properties:
\begin{description}

\listlabel{MapNuc} Each element of $\Nuc$ is a map $\C_v\to \C_w$ for some nodes $v,w$ of~$\Gamma_0$.\smallskip

\listlabel{IdNuc} For each node $v$ of $\Gamma_0$, the identity map on $\C_v$ is an element of $\Nuc$. (Equivalently, the nucleus of the identity map on $\Sigma_\Gamma$ is contained in $\Nuc$.)\smallskip

\listlabel{LocNuc}
For each $p\in\Nuc$, every local action $p|_\alpha$ belongs to~$\Nuc$.\smallskip

\listlabel{RecurNuc}
For each $p\in\Nuc$, there exists $q\in \Nuc$ such that $p\in \Nuc_q$.\smallskip

\listlabel{InvNuc}
Each $p\in\Nuc$ is an injective open map, and satisfies $\Nuc_{p^{-1}}\subseteq \Nuc$.\smallskip

\listlabel{ProdNuc}
If $p\colon \C_v\to \C_w$ and $q\colon \C_u\to \C_v$ are elements of $\Nuc$, then $\Nuc_{pq}\subseteq \Nuc$.
\end{description}
\end{definition}

Note that \MapNuc\ follows from \RecurNuc\ together with \cref{lem:finite_to_one}, but we include it for simplicity.  In the case where $\Nuc$ is finite, condition \RecurNuc\ is equivalent to saying that for each $p\in\Nuc$, there exists $q\in\Nuc$ and an edge $e$ in $\Gamma$ so that $q|_e=p$ (as in the nucleus of a contracting self-similar group), but for infinite $\Nuc$ these conditions are not equivalent.

\begin{proposition}\label{prop:NucleusHasProperties}
Let $\Sigma_\Gamma$ be a subshift of finite type with irreducible core~$\Gamma_0$. If $G\le \R_{\Gamma,E}$ is an RSG, then the nucleus $\Nuc$ for $G$ is a nucleus of injections.
\end{proposition}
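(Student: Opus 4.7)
My plan is to verify each of the six defining conditions \MapNuc--\ProdNuc in turn. Every $p\in\Nuc$ is by definition of the form $g|_\beta$ for some $g\in G$ and some $\beta\in\Cones(E)$ such that $S_p:=\{\beta'\in\Cones(E):g|_{\beta'}=p\}$ is infinite.

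For \MapNuc, infinitely many $\beta'\in S_p$ must have length at least $N$ (the threshold into $\Gamma_0$ from the irreducible-core definition), giving $t(\beta)\in\Gamma_0$, and \cref{lem:finite_to_one} forces the $\of(\beta')$'s to take infinitely many distinct values and hence to have unbounded length, giving $t(\of(\beta))\in\Gamma_0$ by the same thresholding argument. For \IdNuc, the identity of $G$ has local action $\mathrm{id}_{\C_v}$ at every $\alpha\in\Cones(E)$ with $t(\alpha)=v$, and for $v\in\Gamma_0$ there are infinitely many such $\alpha$ since $\Gamma_0$ is strongly connected and not a single cycle. For \LocNuc, \cref{lem:restrict_twice} gives $p|_\alpha=g|_{\beta'\cdot\alpha}$ for each $\beta'\in S_p$, so $p|_\alpha$ inherits the infinite recurrence of $p$.

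For \RecurNuc I would apply K\"onig's lemma to the set $T'_p$ of finite paths that are prefixes of infinitely many elements of $S_p$. This set is nonempty and finitely branching, and pigeonhole shows that every element of $T'_p$ has at least one child in $T'_p$, so K\"onig produces an infinite chain $\gamma_0\prec\gamma_1\prec\cdots$ in $T'_p$. Since $g$ has only finitely many distinct local actions, some value $q$ equals $g|_{\gamma_n}$ for infinitely many $n$, placing $q\in\Nuc_g\subseteq\Nuc$; and for any such $n$ the prefix $\gamma_n$ has infinitely many extensions $\gamma_n\cdot\delta\in S_p$, each of which witnesses $q|_\delta=p$, so $p\in\Nuc_q$. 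For \InvNuc, the injectivity and openness of $p$ are clear from $p=L_{\of(\beta)}^{-1}\circ g\circ L_\beta$; the inclusion $\Nuc_{p^{-1}}\subseteq\Nuc$ follows from the direct identity $g^{-1}|_{\of(\beta)\cdot\alpha}=p^{-1}|_\alpha$ for every $\alpha\in\Cones(p(\C_{t(\beta)}))$, which identifies each local action of $p^{-1}$ with a local action of $g^{-1}\in G$ and so transfers infinite recurrence into $\Nuc$.

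The main obstacle, and the only step that uses the defining RSG property in an essential way, is \ProdNuc. Given $p=g|_\beta$ and $q=h|_\gamma$ with $p\colon \C_v\to\C_w$ and $q\colon\C_u\to\C_v$, I first replace $\beta$ and $\gamma$ with other witnesses in their respective infinite orbits to arrange that $\C_\beta,\C_{\oh(\gamma)}\subsetneq E$ (using again that the associated prefixes can be taken arbitrarily long). Since $t(\oh(\gamma))=v=t(\beta)$, the RSG hypothesis supplies an $s\in G$ whose restriction to $\C_{\oh(\gamma)}$ is the canonical similarity onto $\C_\beta$. Unpacking the definitions via \cref{lem:restrict_composition} then yields
\[
(gsh)(\gamma\cdot\mu)=\of(\beta)\cdot pq(\mu)\qquad\text{for every }\mu\in\C_u,
\]
and \cref{lem:restrict_twice} gives $(gsh)|_{\gamma\cdot\delta}=(pq)|_\delta$ for every $\delta\in\Cones(\C_u)$. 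Consequently any $r\in\Nuc_{pq}$ appears infinitely often as a local action of $gsh\in G$, so $r\in\Nuc$. The delicate point in this step is the bookkeeping to verify the proper-subcone hypothesis of the RSG property.
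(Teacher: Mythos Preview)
Your proof is correct and follows essentially the same approach as the paper's: both verify the six axioms in order, invoking \cref{lem:restrict_twice} for \LocNuc, a pigeonhole-type argument for \RecurNuc, the identity $g^{-1}|_{\og(\beta)\cdot\alpha}=p^{-1}|_\alpha$ for \InvNuc, and the RSG property to splice $g$ and $h$ together via a canonical similarity for \ProdNuc. The only cosmetic differences are that you phrase \RecurNuc\ via K\"onig's lemma where the paper uses a direct pigeonhole, and for \InvNuc\ you obtain infinite recurrence by varying $\alpha$ over witnesses of $\Nuc_{p^{-1}}$ whereas the paper instead varies the witness $\beta$ for $p$ (invoking \cref{lem:finite_to_one} to get infinitely many distinct $\og(\beta)$); both routes are valid. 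One trivial slip: in your \ProdNuc\ display the prefix should be $\og(\beta)$ rather than $\of(\beta)$.
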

\begin{proof}
Let $G\leq \R_{\Gamma,E}$ be an RSG.

For \MapNuc, let
\[
\Cones_0(E) = \{\alpha\in \Cones(E) \mid t(\alpha)\text{ is a vertex in $\Gamma_0$}\}
\]
and observe that $\Cones(E)\setminus \Cones_0(E)$ is a finite set. 
 If $g\in G$, then it follows from \cref{lem:finite_to_one} that both $\alpha$ and $\og(\alpha)$ lie in $\Cones_0(E)$ for all but finitely many~$\alpha$, so any element of $\Nuc$ must have this property.

For \IdNuc, since $\textrm{id}_E\in G$ and every node of $\Gamma_0$ is the terminus of an arbitrarily long directed path in $\Cones(E)$, we know that $\textrm{id}_{\C_v}\in\Nuc$ for all nodes $v$ of $\Gamma_0$, so \IdNuc\ holds. 

For \LocNuc, if $p\in \Nuc_g$ for some $g\in G$, then $p=g|_\beta$ for infinitely many different $\beta$.  If $p|_\alpha$ is a local action of $p$, then by \cref{lem:restrict_twice} we have $p|_\alpha=g|_{\beta\cdot\alpha}$ for all such $\beta$, so $p|_\alpha\in \Nuc_g$.

For \RecurNuc, let $p\in \Nuc_g$ for some $g\in G$. Then $g|_\alpha=p$ for infinitely many different~$\alpha$. Hence there must exist infinitely many different $\beta$ with the property that $g|_{\beta\cdot\gamma}=p$ for infinitely many different suffixes~$\gamma$. Since $g|_{\beta\cdot\gamma}=(g|_\beta)|_\gamma$ by \cref{lem:restrict_twice}, we conclude that  $p\in \Nuc_{g|_\beta}$ for infinitely many different~$\beta$.  Since $g$ is rational, one such $g|_\beta$ must lie in $\Nuc_g$, which gives $p\in \Nuc_{g|_\beta}$ and $g|_\beta\in \Nuc$.

For \InvNuc, let $p\in\Nuc_g$ for some $g\in G$, so $g|_\beta=p$ for infinitely many different $\beta$. Since $g$ is a homeomorphism, $p$ must be an injective open map.  Let $\C_\alpha$ be any cone contained in the image of~$p$.  Then for any $\beta$ with $g|_\beta=p$, we have $g^{-1}|_{\overline{g}(\beta)}(g|_\beta)=(g^{-1}g)|_\beta=1$ by \cref{lem:restrict_composition} and so $g^{-1}|_{\overline{g}(\beta)}=(g|_\beta)^{-1}$ (with domain the image of $p$), which implies that $g^{-1}|_{\og(\beta)\cdot \alpha} = (g^{-1}|_{\og(\beta)})|_\alpha = p^{-1}|_{\alpha}$ by \cref{lem:restrict_twice}.  Since there are infinitely many different $\og(\beta)$ by \cref{lem:finite_to_one}, we conclude that $p^{-1}|_\alpha$ is in $\Nuc_{g^{-1}}$ and hence in $\Nuc$.

Finally, for \ProdNuc, let $p\in \Nuc_g$ and $q\in \Nuc_h$ for some $g,h\in G$, where $p\colon \C_v\to \C_w$ and $q\colon \C_u\to \C_v$ for some nodes $u,v,w$ in $\Gamma_0$. 
Then there exists $\beta$ so that $g|_\beta=p$ and $\C_{\beta}$ is a proper subset of~$E$, and by \cref{lem:finite_to_one} there exists $\gamma$ so that $h|_\gamma=q$ and $\C_{\oh(\gamma)}$ is a proper subset of~$E$. Since $t(\beta)=t\bigl(\oh(\gamma)\bigr)=v$ and $G$ is an RSG, there exists $f\in G$ that maps $\C_{\oh(\gamma)}$ to $\C_\beta$ by the canonical similarity. Then $gfh\in G$ and $(gfh)|_{\gamma\cdot\alpha}=(pq)|_\alpha$ for all $\alpha$ by Lemmas~\ref{lem:restrict_composition} and~\ref{lem:restrict_twice}, so for all but finitely many~$\alpha$ we have that $(pq)|_\alpha$ is in $\Nuc_{gfh}$ and hence in $\Nuc$.
\end{proof}

\begin{remark}
It is not true that the nucleus for an arbitrary subgroup $G\leq \R_{\Gamma,E}$ is a nucleus of injections.  Though \MapNuc, \IdNuc, \LocNuc, \RecurNuc, and \InvNuc\ always hold, there are groups $G$ whose nucleus does not satisfy \ProdNuc.  For example, if $g$ and $h$ are the automorphisms of the infinite, rooted binary tree that satisfy the wreath recursion
\[
g = (0\;1)(h,h)\qquad\text{and}\qquad h=(g,g)
\]
then the group $\langle g\rangle\cong \Z_2$ has nucleus $\{g,h\}$, and this does not satisfy \ProdNuc\ since $(gh)|_\alpha=gh$ for all $\alpha$. 
\end{remark}

The following theorem serves as a classification of full RSGs.

\begin{theorem}\label{thrm:RSGCharacterization}
Let $\Sigma_\Gamma$ be a subshift of finite type with irreducible core~$\Gamma_0$. Let $\Nuc$ be a nucleus of injections over $\Sigma_\Gamma$, and let $E\subseteq \Sigma_\Gamma$ be a nonempty clopen set. Then
\[
G = \{f\in \R_{\Gamma,E} \mid \Nuc_f\subseteq \Nuc\}
\]
is a full RSG with nucleus $\Nuc$, and every full RSG has this form.
\end{theorem}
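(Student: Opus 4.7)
The theorem has two directions: I plan first to show $G = \{f \in \R_{\Gamma,E} : \Nuc_f \subseteq \Nuc\}$ is a full RSG with nucleus $\Nuc$, and then to deduce from \cref{prop:NucleusHasProperties} and fullness of $G$ that every full RSG arises from its own nucleus via the same formula.

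For the forward direction, I would first check that $G$ is a subgroup of $\R_{\Gamma,E}$. The identity is in $G$ by \IdNuc. For closure under inverses, given $f \in G$ and a cone $\C_\alpha$ long enough that $p = f|_\alpha \in \Nuc$, \cref{lem:restrict_composition} applied to $f^{-1}f = \mathrm{id}$ identifies $f^{-1}|_{\of(\alpha)}$ with $p^{-1}$ on the image of $p$; then \InvNuc, combined with rationality of $p^{-1}$ (via the argument of \cref{lem:inverses_rational}), forces $\Nuc_{f^{-1}} \subseteq \Nuc$. For closure under composition, \cref{lem:restrict_composition} writes $(fg)|_\alpha$ as $(pq)|_{t(\alpha)}$ with $p = f|_{\og(\alpha)}$ and $q = g|_\alpha$, both in $\Nuc$ for sufficiently long $\alpha$, and \ProdNuc\ together with \cref{lem:restrict_twice} (applied to local actions of $pq$ at further extensions) yields $\Nuc_{fg} \subseteq \Nuc$. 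Then $V_{\Gamma,E} \subseteq G$, since elements of $V_{\Gamma,E}$ have only identity local actions outside a finite set and identities lie in $\Nuc$ by \IdNuc, so $G$ is an RSG via \cref{cor:MappingConesWithV}. Fullness follows by compactness: any $h$ locally agreeing with $G$ is rational on a finite cone cover of $E$ with local actions matching those of $G$-elements, so $\Nuc_h \subseteq \Nuc$.

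For the identification $\Nuc_G = \Nuc$, the inclusion $\Nuc_G \subseteq \Nuc$ is immediate; for the reverse, given $p \in \Nuc$ I would use \RecurNuc\ to find $q \in \Nuc$ with $p \in \Nuc_q$, apply \LocNuc\ to replace $q$ by the normalized local action $q|_v$ whose image has no common prefix, and then build $g \in G$ with $g|_\beta = q$ on some cone $\C_\beta \subseteq E$ by using \cref{cor:MappingConesWithV} to extend $q$ across $E\setminus\C_\beta$ in a Thompson-like way; then \cref{lem:restrict_twice} gives $g|_{\beta \cdot \alpha} = q|_\alpha = p$ for each of the infinitely many $\alpha$ with $q|_\alpha = p$, placing $p$ in $\Nuc_g \subseteq \Nuc_G$. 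For the converse direction, $\Nuc_G$ is a nucleus of injections by \cref{prop:NucleusHasProperties}; the inclusion $G \subseteq \{f : \Nuc_f \subseteq \Nuc_G\}$ is automatic, and the reverse uses fullness: given $f$ with $\Nuc_f \subseteq \Nuc_G$ and $x \in E$, choose $\C_\alpha \ni x$ with $f|_\alpha = g_0|_\gamma$ for some $g_0 \in G$ and cone $\gamma$, and then combine $g_0$ with elements of $V_{\Gamma,E} \subseteq G$ (supplied by \cref{cor:MappingConesWithV}) realizing the canonical similarities $\C_\alpha \to \C_\gamma$ and $\C_{\overline{g_0}(\gamma)} \to \C_{\of(\alpha)}$ to produce an element of $G$ agreeing with $f$ on $\C_\alpha$.

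The hard part will be closure of $G$ under composition: \ProdNuc\ only supplies $\Nuc_{pq} \subseteq \Nuc$, while the base local action $(pq)|_{t(\alpha)}$ arising from \cref{lem:restrict_composition} need not itself appear infinitely often among the local actions of $pq$ and so is not automatically in $\Nuc_{pq}$. Bridging this gap requires arguing that whenever a given map $h$ equals $(fg)|_\alpha$ for infinitely many $\alpha$, pigeonhole on the finitely many possible pairs $(p,q) \in \Nuc_f \times \Nuc_g$, together with iterated use of \RecurNuc\ and \LocNuc, exhibits $h$ as a recurrent local action of some composition of nucleus elements, forcing $h \in \Nuc$.
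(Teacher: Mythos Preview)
Your overall structure matches the paper's proof, and most steps are correct. The genuine issue is the fix you propose for composition closure in the final paragraph. After pigeonholing on the pair $(p,q) = (f|_{\og(\alpha)}, g|_\alpha)$ you obtain only $h = (pq)|_v$ for a fixed node $v$; this is just the normalization of $pq$ itself, and neither \RecurNuc\ nor \LocNuc\ gives any purchase on that single base-level local action. The clean resolution---which you in fact already hint at earlier (``local actions of $pq$ at further extensions'') and which is what the paper does---is to decompose $\alpha = \beta \cdot \gamma$ with $|\beta| = N$ fixed large enough that $g|_\beta \in \Nuc$ and $f|_{\og(\beta)} \in \Nuc$ for every $\beta$ of length $N$ (possible because $\og$ is finite-to-one, \cref{lem:finite_to_one}). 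Then by \cref{lem:restrict_twice} and \cref{lem:restrict_composition}, $(fg)|_{\beta\cdot\gamma} = (pq)|_\gamma$ with $p,q \in \Nuc$ depending only on $\beta$; since there are finitely many $\beta$ of length $N$ and for each one \ProdNuc\ excludes only finitely many $\gamma$, only finitely many $\alpha$ are exceptional. No appeal to \RecurNuc\ or \LocNuc\ is needed here, so you should drop the more elaborate mechanism and commit to the decomposition you already sketched.

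A minor aside: the normalization step you insert for $q$ (via \LocNuc) in the $\Nuc_G \supseteq \Nuc$ argument is redundant. By \RecurNuc\ every element of $\Nuc$ is already a local action of something in $\Nuc$, and local actions are normalized by definition, so the image of $q$ is automatically not contained in a proper subcone of its codomain.
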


\begin{proof}
We prove first that $G$ contains $V_{\Gamma,E}$, and in particular $G$ is nonempty. Let $f\in V_{\Gamma,E}$.  Then there exists a partition $\C_{\alpha_1},\ldots,\C_{\alpha_n}$ of $E$ into cones so that $f$ acts as a canonical similarity on each cone.  Subdividing further, we may assume that each $t(\alpha_i)$ is in~$\Gamma_0$.  Then for each cone $\C_\beta$ contained in any of the $\C_{\alpha_i}$, the local action $f|_\beta$ is the identity map on $\C_v$ for some node $v$ of~$\Gamma_0$.  By property \IdNuc, it follows that $f\in G$, and therefore $G$ contains $V_{\Gamma,E}$.

We claim that $G$ is a subgroup of $\R_{\Gamma,E}$. We first prove that $G$ is closed under inversion.  Let $g\in G$.  We need to prove that $g^{-1}|_\alpha \in\Nuc$ for all but finitely many $\alpha$. First note that for $\alpha,\beta$ such that $\C_\alpha \subseteq g(\C_\beta)$, say with $\alpha=\og(\beta)\cdot\gamma$, by \cref{lem:restrict_twice}, we have $g^{-1}|_\alpha = (g^{-1}|_{\og(\beta)})|_{\gamma} = (g|_\beta)^{-1}|_\gamma$, where the last equality follows from the fact that $g^{-1}|_{\og(\beta)}$ and $(g|_\beta)^{-1}$ agree on~$\C_\gamma$. Now observe that for all but finitely many $\alpha$ there exists $\beta$ such that $\C_\alpha\subseteq g(\C_\beta)$ and $g|_\beta\in\Nuc$. By the above, for such an $\alpha$ and $\beta$, say $\alpha=\og(\beta)\cdot\gamma$, we have that $g^{-1}|_\alpha$ equals the local action at $\gamma$ of the inverse of an element of $\Nuc$. For all but finitely many $\gamma$ this lies in $\Nuc$ by property \InvNuc. Hence, considering decompositions $\alpha=\og(\beta)\cdot\gamma$ where $\beta$ has minimal length satisfying $g|_\beta\in\Nuc$, we see that for all but finitely many $\alpha$ there is such a decomposition satisfying $(g|_\beta)^{-1}|_\gamma \in\Nuc$. We conclude that $g^{-1}|_\alpha \in \Nuc$ for all but finitely many $\alpha$, so $g^{-1}\in G$.

Next we prove $G$ is closed under composition. Let $g,h\in G$, and we need to prove that $(g\circ h)|_\alpha\in\Nuc$ for all but finitely many $\alpha$. First note that for any $\beta$ we have $(g\circ h)|_\beta = (g|_{\oh(\beta)}\circ h|_\beta)|_{t(\beta)}$ by \cref{lem:restrict_composition}. Now for any cone $\C_\gamma\subseteq \C_{t(\beta)}$, \cref{lem:restrict_twice} says that the local action of this at $\gamma$ equals $(g|_{\oh(\beta)}\circ h|_\beta)|_\gamma$. Since $\oh$ is finite-to-one by \cref{lem:finite_to_one}, for all but finitely many $\beta$ this is the local action at $\gamma$ of a product of two elements of $\Nuc$. Property \ProdNuc\ implies this is itself an element of $\Nuc$ for all but finitely many $\gamma$. In particular, $(g\circ h)|_{\beta\cdot\gamma}$ lies in $\Nuc$ for all but finitely many $\beta$ and $\gamma$. We conclude that $g\circ h \in G$, so $G$ is a subgroup of $\R_{\Gamma,E}$.  Since $G$ contains $V_{\Gamma,E}$, it is an RSG by \cref{prop:RSGsAndV}.

To prove that $G$ is full, suppose $h\in\Homeo(E)$ locally agrees with $G$. This implies that there exists a partition of $E$ into cones $\C_{\alpha_1},\ldots,\C_{\alpha_n}$ and elements $g_1,\dots,g_n\in G$ such that $h(\alpha_i\cdot\omega)=g_i(\alpha_i\cdot\omega)$ for all $i$ and all $\omega\in \C_{t(\alpha_i)}$. Now for any cone $\C_\alpha$ contained in one of the $\C_{\alpha_i}$ (of which only finitely are not), we have $h|_\alpha = g_i |_\alpha$, so all but finitely many local actions of $h$ lie in $\Nuc$. Hence $h\in G$, so $G$ is full.

As for the nucleus $\Nuc_G$ of $G$, clearly $\Nuc_G\subseteq\Nuc$. For the opposite inclusion, if $p\in \Nuc$ then by \RecurNuc\ there exists $q\in\Nuc$ so that $p\in\Nuc_q$.  By \MapNuc, we know that $q\colon \C_v\to \C_w$ for some nodes $v,w$ in $\Gamma_0$.  Since $\Gamma_0$ is irreducible, we can find two disjoint cones $\C_\alpha,\C_\beta\subseteq E$ such that $t(\alpha) = v$ and $t(\beta)=w$.  Let $g\colon E\to E$ be the homeomorphism that maps $\C_\alpha$ into $\C_\beta$ by $L_\beta\circ q\circ L_{\alpha}^{-1}$ (where $L_\alpha$ and $L_\beta$ are canonical similarities), maps $g(\C_\alpha)$ back to $\C_\alpha$ by the inverse of this mapping, and is the identity elsewhere.  Then it is easy to check that $g\in G$ and $g|_\alpha=q$.  It follows that $\Nuc_q\subseteq \Nuc_g$, so $p\in \Nuc_g$, and therefore $p\in \Nuc_G$.

Finally, by \cref{prop:NucleusHasProperties} the nucleus for any RSG is a nucleus of injections, so suppose $H\leq \R_{\Gamma,E}$ is any full RSG with nucleus $\Nuc$.  We claim that $H=G$.  Clearly $H\leq G$, so we must show that $G\leq H$.  Since $H$ is full, it suffices to prove that every element of $G$ locally agrees with~$H$, so let $g\in G$ and $\omega \in E$.  We can choose a neighborhood of $\omega$  of the form $\C_\alpha$ such that $g|_\alpha\in\Nuc$, and by \cref{lem:finite_to_one} we can arrange for both $\C_{\alpha}$ and $\C_{\oh(\alpha)}$ to be properly contained in~$E$.  Let $p=g|_\alpha$.  Since $p\in\Nuc$, it is in the nucleus of $H$, so there exists $h\in H$ such that $p\in \Nuc_h$. That is, $h|_{\beta}=p$ for infinitely many~$\beta$. By \cref{lem:finite_to_one} we can choose one such $\beta$ so that neither $\C_\beta$ nor $\C_{\oh(\beta)}$ is all of~$E$. Since $t(\beta)=t(\alpha)$ and $t\bigl(\oh(\beta)\bigr)=t\bigl(\og(\alpha)\bigr)$, by \cref{cor:MappingConesWithV}~there exist elements $f,f'\in V_{\Gamma,E}$ so that $f$ maps $\C_\alpha$ to $\C_\beta$ by the canonical similarity, and $f'$ maps $\C_{\oh(\beta)}$ to $\C_{\og(\alpha)}$ by the canonical similarity.  Then $g$ agrees with $f'hf$ on $\C_\alpha$, and $f'hf\in H$ since $H$ is an RSG.  We conclude that $g$ locally agrees with~$H$, and thus $g\in H$, which proves that $H=G$.
\end{proof}

The following corollary gives a nice alternative characterization for the elements of a full RSG.

\begin{corollary}\label{cor:rsg_partition}
Let $\Sigma_\Gamma$ be a subshift of finite type with irreducible core~$\Gamma_0$. Let $G\leq \R_{\Gamma,E}$ be a full RSG with nucleus~$\Nuc$. Then a homeomorphism $h\colon E\to E$ lies in $G$ if and only if there exists a finite partition of $E$ into cones\/ $\C_{\alpha_i}$ such that each $g
|_{\alpha_i}\in\Nuc$.
\end{corollary}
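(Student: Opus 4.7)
The plan is to derive the corollary directly from the classification \cref{thrm:RSGCharacterization}, which identifies $G$ with the collection of rational homeomorphisms $f\in \R_{\Gamma,E}$ satisfying $\Nuc_f\subseteq\Nuc$. (I read the statement as asserting that each $h|_{\alpha_i}\in\Nuc$, correcting the obvious typo.)

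For the forward direction, starting from $h\in G$ I would use that $\Nuc_h\subseteq\Nuc$ together with rationality of $h$ to conclude that the set $S=\{\alpha\in\Cones(E)\mid h|_\alpha\notin\Nuc\}$ is finite: after all, $h|_\alpha$ lies outside $\Nuc_h$ for only finitely many $\alpha$, and $\Nuc_h\subseteq\Nuc$. Beginning with any finite cone partition of $E$ and repeatedly subdividing each offending cone $\C_\beta$ with $\beta\in S$ into its immediate successors $\C_{\beta\cdot e}$ (one for each outgoing edge at $t(\beta)$, each nonempty by the no-empty-cones hypothesis), after finitely many refinements I arrive at a partition $\C_{\alpha_1},\ldots,\C_{\alpha_n}$ in which every indexing path is strictly longer than any element of $S$; hence $h|_{\alpha_i}\in\Nuc$ for each $i$.

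For the converse, given such a partition I would verify both rationality of $h$ and $\Nuc_h\subseteq\Nuc$, after which \cref{thrm:RSGCharacterization} finishes the argument. Any $\alpha\in\Cones(E)$ is either one of the finitely many prefixes of some $\alpha_i$, or an extension $\alpha=\alpha_i\cdot\gamma$; in the latter case \cref{lem:restrict_twice} gives $h|_\alpha=(h|_{\alpha_i})|_\gamma$, which belongs to $\Nuc$ by property \LocNuc. Moreover, each $h|_{\alpha_i}\in\Nuc=\Nuc_G$ arises as $g|_\beta$ for some $g\in G$, so by \cref{lem:restrict_twice} its local actions are among the finitely many distinct local actions of the rational map $g$. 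Summing over the finitely many $\alpha_i$ and the finitely many prefixes, $h$ has only finitely many distinct local actions, so it is rational, and $\Nuc_h\subseteq\Nuc$ follows at once.

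The whole argument is essentially mechanical given \cref{thrm:RSGCharacterization}; the only real care point is the subdivision step in the forward direction, where one must ensure that refining cones into strictly smaller cones never produces empty ones. This is immediate from the standing assumption that $\Sigma_\Gamma$ has no empty cones, which forces every node appearing as $t(\beta)$ for some $\beta\in\Cones(E)$ to have at least one outgoing edge in $\Gamma$.
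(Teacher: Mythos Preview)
Your proof is correct and takes essentially the same approach as the paper, which for the forward direction simply partitions $E$ into all cones indexed by paths of a single fixed length $M$ (chosen so that $h|_\alpha\in\Nuc$ whenever $|\alpha|\geq M$), and for the converse just invokes \cref{thrm:RSGCharacterization} without the explicit rationality check you supply. One small wrinkle in your forward direction: subdividing only the offending cones does not literally force every indexing path to exceed the lengths in $S$, but it does guarantee each $\alpha_i\notin S$, which is all you actually need.
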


\begin{proof}
If such a partition exists, then clearly $g\in G$ by \cref{thrm:RSGCharacterization}. Now suppose $g\in G$. Since $g|_\alpha\in\Nuc$ for all but finitely many $\alpha$, there exists $M$ such that $g|_\alpha\in\Nuc$ for all paths of length $M$. Now if $\{\alpha_1,\dots,\alpha_m\}$ is the set of all paths of length $M$, then the cones $\C_{\alpha_i}$ form the desired partition.
\end{proof}

We can use \cref{thrm:RSGCharacterization} to build some new explicit examples of RSGs, for which the nuclei contain non-invertible elements.

\begin{example}
Let $\Gamma$ have a single node and three (loop) edges, called $0$, $1$, and $2$. Note that $\Sigma_\Gamma$ is the usual $3$-ary Cantor space $\{0,1,2\}^\N$. Let $f\colon \Sigma_\Gamma\to \Sigma_\Gamma$ be the rational injection defined by
\[
f(0\cdot\omega) = 0\cdot f(\omega),\qquad f(1\cdot\omega) = 02\cdot\omega,\qquad f(2\cdot\omega)=1\cdot\omega
\]
for all $\omega\in \Sigma_\Gamma$. The following facts are easy to verify:
\begin{enumerate}
    \item $f|_0=f$, and $f|_1=f|_2=1$.\smallskip
    \item $\mathrm{im}(f) = \C_0 \cup \C_1$, with $f^{-1}|_0 = f$ and $f^{-1}|_1= 1$. \smallskip
    \item $f^2$ is the mapping $\omega\mapsto 0\cdot\omega$, so in particular $f^2|_\varnothing=1$.
\end{enumerate}
Let $\Nuc=\{1,f\}$. Then $\Nuc$ is a nucleus of injections, so the group defined as in \cref{thrm:RSGCharacterization} is a full, contracting RSG.
\end{example}

\begin{example}
Let $\Gamma$ have a single node and two (loop) edges, called $0$ and $1$, so $\Sigma_\Gamma$ is the usual $2$-ary Cantor space $\{0,1\}^\N$. Let $f\colon \Sigma_\Gamma\to \Sigma_\Gamma$ be the rational injection defined by
\[
f(0\cdot\omega) = 0\cdot f(\omega),\qquad f(10\cdot\omega) = 011\cdot\omega,\qquad f(11\cdot\omega)=10\cdot\omega
\]
for all $\omega\in \Sigma_\Gamma$.  The following facts are easy to verify:
\begin{enumerate}
    \item $f|_0=f$, and $f|_{10}=f|_{11}=1$.\smallskip
    \item $\mathrm{im}(f) = \C_0 \cup \C_{10}$, with $f^{-1}|_0 = f$ and $f^{-1}|_{10} = 1$. \smallskip
    \item $f^2$ is the mapping $\omega\mapsto 0\cdot\omega$, so in particular $f^2|_\varnothing=1$.
\end{enumerate}
Let $\Nuc=\{1,f\}$. Then $\Nuc$ is a nucleus of injections, so the group defined as in \cref{thrm:RSGCharacterization} is a full, contracting RSG.
\end{example}

\section{Finite presentability of full, contracting RSGs}\label{sec:fp}

This section comprises a proof of the following:

\begin{theorem}\label{thrm:fin_pres}
Every full, contracting RSG is finitely presented.
\end{theorem}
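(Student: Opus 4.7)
The plan is to build an infinite presentation of $G$ adapted to the nucleus structure and then reduce it to a finite one using the finite presentability of $V_{\Gamma,E}$ from \cref{cor:V_fp} together with the finiteness of the nucleus $\Nuc$. Throughout, let $G\leq\R_{\Gamma,E}$ be a full, contracting RSG; by \cref{prop:RSGsAndV} we have $V_{\Gamma,E}\leq G$. For generation, I would choose, for each $p\in\Nuc$ with $p\colon\C_v\to\C_w$, a pair of disjoint cones $\C_{\alpha_p},\C_{\beta_p}\subsetneq E$ with $t(\alpha_p)=v$ and $t(\beta_p)=w$ (possible since $\Gamma_0$ is irreducible), and fix an element $\tilde p\in G$ that acts as $L_{\beta_p}\circ p\circ L_{\alpha_p}^{-1}$ on $\C_{\alpha_p}$, as its inverse on the image, and is prescribed elsewhere via a fixed $V_{\Gamma,E}$-completion. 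Combined with a finite generating set of $V_{\Gamma,E}$, \cref{cor:rsg_partition} guarantees that $\{\tilde p\mid p\in\Nuc\}$ generates $G$: any $g\in G$ is specified by a partition of $E$ into cones $\C_{\alpha_i}$ with each $g|_{\alpha_i}\in\Nuc$, and can be built by aligning cones via $V_{\Gamma,E}$ and installing each required nucleus element as a suitable $V_{\Gamma,E}$-conjugate of the corresponding $\tilde p$.

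Next I would write down an infinite presentation $\langle S\mid R\rangle$ whose generators $S$ are all elements of $V_{\Gamma,E}$ together with symbols $x_{p,\alpha,\beta}$ indexed by $p\in\Nuc$ and by pairs of disjoint cones $\C_\alpha,\C_\beta\subsetneq E$ of matching type. The relations $R$ consist of: (i) all relations of $V_{\Gamma,E}$; (ii) $V_{\Gamma,E}$-conjugation relations of the form $g\cdot x_{p,\alpha,\beta}\cdot g^{-1}= x_{p,g(\alpha),g(\beta)}$ whenever $g$ moves the relevant cones by a canonical similarity; (iii) refinement and product relations implementing axioms \LocNuc, \InvNuc, and \ProdNuc, which decompose $x_{p,\alpha,\beta}$ on a subdivided cone into a product of $x$-generators indexed by the local actions $p|_e$, and rewrite composites of $x$-generators on compatible cones via the product from \ProdNuc; and (iv) identifications $x_{\mathrm{id}_{\C_v},\alpha,\beta}=v_{\alpha,\beta}$, where $v_{\alpha,\beta}\in V_{\Gamma,E}$ is the canonical-similarity element.

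To prove that $\langle S\mid R\rangle$ really presents $G$, I would introduce \emph{normalish forms}: words of the shape $w\cdot x_{p_1,\alpha_1,\beta_1}\cdots x_{p_n,\alpha_n,\beta_n}$ with $w\in V_{\Gamma,E}$, the $\C_{\alpha_i}$ pairwise disjoint, and likewise the $\C_{\beta_i}$. Existence of such a form for every element of $G$ is immediate from \cref{cor:rsg_partition}. Uniqueness modulo $R$ would then be proved by a common-refinement argument: given two normalish forms for the same element, pass to a common refinement of the two underlying partitions of $E$, and show that each form can be rewritten into the refined form using relations of types (ii) and (iii) combined with the $V_{\Gamma,E}$ calculus from (i) and (iv). The reduction to a finite presentation is then obtained by replacing $V_{\Gamma,E}$ by a finite generating set via \cref{cor:V_fp}, and, for each of the finitely many $p\in\Nuc$, keeping only the chosen representative $x_{p,\alpha_p,\beta_p}$; every other $x_{p,\alpha,\beta}$ is defined as a $V_{\Gamma,E}$-conjugate via (ii) and eliminated through Tietze transformations, after which the surviving relations of types (iii) and (iv) collapse to a finite collection modulo the finite presentation of $V_{\Gamma,E}$.

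The main obstacle I expect is controlling the common-refinement step in the uniqueness argument. Refining a cone $\C_\alpha$ into subcones $\C_{\alpha\cdot e_i}$ forces $x_{p,\alpha,\beta}$ to be decomposed via type-(iii) relations into generators indexed by the local actions $p|_{e_i}$, and one must verify that the resulting rewriting system is confluent and interacts consistently with the $V_{\Gamma,E}$-conjugation relations (ii). This uses the full strength of the nucleus-of-injections axioms established in \cref{prop:NucleusHasProperties}, especially \LocNuc\ and \ProdNuc. The contracting hypothesis is essential at precisely this point, because it is what ensures that every such decomposition stays inside the finite set $\Nuc$, so that the infinite relation set $R$ can be collapsed to finitely many relations after Tietze elimination.
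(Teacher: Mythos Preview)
Your overall architecture—build an infinite presentation around nucleus-indexed generators and ``normalish forms'', then Tietze-reduce using \cref{cor:V_fp} and finiteness of $\Nuc$—is the paper's. But there is a genuine gap in your choice of generators, and it propagates into the normalish-form step.

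The problem is a class-balancing obstruction that your single-nucleus generators $x_{p,\alpha,\beta}$ do not address. Each such generator (your involution $\tilde p$, or a conjugate) has support meeting both a ``source'' cone $\C_\alpha$ and a disjoint ``target'' cone $\C_\beta$. Now take an arbitrary $g\in G$: \cref{cor:rsg_partition} hands you a partition $\{\C_{\alpha_i}\}$ of \emph{all} of $E$ with $g|_{\alpha_i}\in\Nuc$. For the product $\prod_i x_{p_i,\alpha_i,\beta_i}$ in your normalish form to be order-independent (so that a common-refinement argument can even begin) the supports must be pairwise disjoint, but since the $\C_{\alpha_i}$ already cover $E$ there is nowhere to put the $\C_{\beta_i}$. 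If you drop disjointness of supports, the factors no longer commute and confluence of your rewriting becomes genuinely unclear. So existence of a normalish form for every $g$ is \emph{not} immediate from \cref{cor:rsg_partition}.

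The paper's fix is to bundle nucleus elements into \emph{cycles}. It introduces the monoid homomorphism $\partial_1\colon\N\Nuc\to\Classes(V_{\Gamma,E})$ given by $\partial_1(p)=\class(p(\C_v))-\class(\C_v)$, and observes that a formal sum $P=p_1+\cdots+p_k$ lies in $\ker(\partial_1)$ exactly when ``apply $p_i$ on $\C_{\nu_i}$'' can be completed by an element of $V_{\Gamma,E}$ to a homeomorphism of $E$ supported on $\bigcup_i\C_{\nu_i}$ alone, with no auxiliary target cones. The submonoid $\ker(\partial_1)$ is finitely generated (a short lemma, via a result of Eilenberg--Sch\"utzenberger on subtractive submonoids of $\N^k$), yielding finitely many model nuclear generators $h_P$; their $V_{\Gamma,E}$-conjugates are the nuclear generators, and a normalish form is $f h_1\cdots h_n$ with $f\in V_{\Gamma,E}$ and the $h_i$ having pairwise disjoint domains of support. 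These forms are not unique, and the paper does not attempt your uniqueness-via-common-refinement; instead it shows that every word can be rewritten into normalish form using the relations, and that any normalish form for the identity is already a consequence of the relations. The missing ingredient in your plan is precisely this passage from individual $p\in\Nuc$ to cycles in $\ker(\partial_1)$.
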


In the special case where the RSG is the R\"over--Nekrashevych group of a contracting self-similar group (see \cref{ex:rn}), Nekrashevych proved finite presentability in \cite{NekraFP}. Our general situation is much more complicated since $\Gamma$ may have more than one node, we do not have a self-similar group to work with, and the elements of $\Nuc$ might not even be invertible. Thus, a few aspects of our proof are inspired by the proof in \cite{NekraFP}, but in general many new ideas and approaches are needed.

Throughout this section, we fix a full, contracting RSG $G\le \R_{\Gamma,E}$, with nucleus $\Nuc$.  We introduce both a finite and an infinite generating set for $G$ in \cref{ssec:nuc_gens}, as well as a special class of words in the infinite generating set that we call ``normalish forms''.  In \cref{ssec:infinitepres} we construct an infinite presentation for $G$, and in \cref{ssec:finitepres} we prove that only finitely many relations are needed.

\subsection{Nuclear generators}\label{ssec:nuc_gens}

Let $\N\Nuc$ be the free commutative monoid generated by the elements of~$\Nuc$.  We think of elements of $\N\Nuc$ as formal sums $P=p_1+\cdots +p_k$ of elements of $\Nuc$. If $p\colon\C_v\to \C_w$ is an element of $\Nuc$, let
\[
\partial_1(p) = \class(p(\C_v)) - \class(\C_v) \text{.}
\]
This induces a monoid homomorphism $\partial_1\colon \N\Nuc\to \Classes(V_\Gamma)$. Elements of $\ker(\partial_1)$ are called \newword{cycles} in $\N\Nuc$. (This homomorphism $\partial_1$ is closely related to the first boundary homomorphism for \'etale groupoid homology defined by Matui \cite{Matui12}.)

\begin{lemma}\label{lem:monoid_fp}
The monoid\/ $\ker(\partial_1)$ of cycles in\/ $\N\Nuc$ is finitely generated.
\end{lemma}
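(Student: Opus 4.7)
The plan is to exploit that $\Nuc$ is finite, which is exactly the contracting hypothesis, so $\N\Nuc \cong \N^k$ where $k = |\Nuc|$. Since the codomain $\Classes(\Gamma)$ is an abelian group, the monoid homomorphism $\partial_1$ extends uniquely to a group homomorphism $\tilde\partial_1 \colon \Z^k \to \Classes(\Gamma)$, and one has $\ker(\partial_1) = \ker(\tilde\partial_1) \cap \N^k$. The useful feature of this description is that $\ker(\partial_1)$ is closed under coordinatewise subtraction when the difference lies in $\N^k$: if $s, s' \in \ker(\partial_1)$ with $s \leq s'$ coordinatewise, then $s' - s \in \N^k$ and $\tilde\partial_1(s' - s) = 0$, so $s' - s \in \ker(\partial_1)$.

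Next I would reduce finite generation to a finiteness statement about indecomposables. Let $S$ be the set of nonzero $m \in \ker(\partial_1)$ that admit no decomposition $m = m_1 + m_2$ with $m_1, m_2 \in \ker(\partial_1) \setminus \{0\}$. A straightforward induction on the total degree $\sum_i m_i$ shows that $S$ generates $\ker(\partial_1)$ as a monoid, so it suffices to prove that $S$ is finite. This is where Dickson's lemma enters: $\N^k$ admits no infinite antichain under the componentwise partial order. If $S$ contained two distinct comparable elements $s \leq s'$, then $s' - s$ would be a nonzero element of $\ker(\partial_1)$ by the closure property above, giving the forbidden decomposition $s' = s + (s' - s)$ and contradicting $s' \in S$. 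Hence $S$ is an antichain in $\N^k$, and so is finite.

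There is no serious obstacle here: once one observes that contractingness makes $\Nuc$ finite and that $\Classes(\Gamma)$ being a group legitimises subtraction inside the kernel, the statement is a textbook application of Dickson's lemma. (Equivalently, one can view $\ker(\partial_1)$ as the lattice points in the rational polyhedral cone $(\ker\tilde\partial_1 \otimes \R) \cap \R_{\geq 0}^k$ and cite Gordan's lemma, but the Dickson argument above is more elementary and self-contained.)
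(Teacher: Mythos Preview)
Your proof is correct and follows essentially the same approach as the paper: both observe that $\ker(\partial_1)$ is a subtractive submonoid of the finitely generated free commutative monoid $\N\Nuc$ (using that $\Nuc$ is finite by the contracting hypothesis), and then deduce finite generation. The only difference is that the paper cites a reference (Eilenberg--Sch\"utzenberger, \cite[Proposition~7.1]{Eilenberg}) for the last step, whereas you spell out the Dickson's lemma argument directly; your version is thus more self-contained but otherwise identical in spirit.
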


\begin{proof}
Note that for $P,P'\in \N\Nuc$, if $P$ and $P + P'$ are both cycles then $P'$ is a cycle. This means that $\ker(\partial_1)$ is a subtractive submonoid of $\N\Nuc$ in the sense of \cite{Eilenberg}, and hence is finitely generated by \cite[Proposition~7.1]{Eilenberg}. \end{proof}

Fix some finite generating set for $\ker(\partial_1)$.  We will use these generators to construct a finite generating set for~$G$.

First we need some definitions.  We will use the term \newword{code} to mean any set $\{\alpha_1,\ldots,\alpha_k\}$ in $\Cones(E)$ whose corresponding cones $\C_{\alpha_i}$ are pairwise disjoint.  A code is \newword{complete} if $\bigcup_{i=1}^k \C_{\alpha_i}=E$, and \newword{incomplete} otherwise. If $P=p_1+\cdots +p_k$ is a generator for $\ker(\partial_1)$, where $p_i\colon \C_{v_i}\to \C_{w_i}$, a \newword{domain code} for $P$ is a code $\{\nu_1,\ldots,\nu_k\}$ such that $t(\nu_i)=v_i$ for each~$i$, and \newword{range code} is defined analogously.

Now, for each generator $P=p_1+\cdots+p_k$, we choose a \newword{model nuclear generator} $h_P\in G$ as follows.  Choose domain and range codes $\{\nu_1,\ldots,\nu_k\}$ and $\{\xi_1,\ldots,\xi_k\}$ for $P$, and let $D=\bigcup_{i=1}^k \C_{\nu_i}$ and $R=\bigcup_{i=1}^k \C_{\xi_i}$.  Thanks to flexibility, we can choose the domain and range codes to both be incomplete, so $D$ and $R$ are both proper subsets of $E$.  Let $g\colon D\to R$ be the map that sends each $\C_{\nu_i}$ to $\C_{\xi_i}$ with local action~$p_i$.  Since $P\in \ker(\partial_1)$, the image $g(D)$ has the same class as~$D$, so by \cref{cor:MappingConesWithV} we can choose some $s\in V_{\Gamma,E}$ that maps $g(D)$ onto~$D$.  Now we define $h_P$ to be the homeomorphism of $E$ that agrees with $s\circ g$ on $D$ and is the identity elsewhere.  Note that $h_P\in G$ since $G$ is full.  We should emphasize that $h_P$ depends on the choice of domain and range codes, though we suppress this from the notation.

The main goal of this subsection is to prove the following proposition.

\begin{proposition}\label{prop:GFinitelyGenerated}
The group $G$ is finitely generated, with generating set consisting of the model nuclear generators $h_P$ together with a finite generating set for~$V_{\Gamma,E}$.
\end{proposition}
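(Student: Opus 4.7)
The plan is to take any $g\in G$, apply \cref{cor:rsg_partition} to obtain a partition $\{\alpha_1,\dots,\alpha_n\}$ of $E$ into cones with $p_i := g|_{\alpha_i}\in\Nuc$ for all $i$, and then exhibit $g$ as a product of elements of $V_{\Gamma,E}$ and model nuclear generators. The first observation is that the formal sum $P_g := p_1+\cdots+p_n\in\N\Nuc$ is a cycle: using \cref{cor:IrreducibleCoreFlexible}, which says classes depend only on termini, one has $\sum_i\class(\C_{v_i})=\sum_i\class(\C_{\alpha_i})=\class(E)$ and $\sum_i\class(p_i(\C_{v_i}))=\sum_i\class(g(\C_{\alpha_i}))=\class(E)$, so $\partial_1(P_g)=0$. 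By \cref{lem:monoid_fp} I can then decompose $P_g = P_1+\cdots+P_m$ as a sum of the chosen finite generators of $\ker(\partial_1)$, which induces a partition $\{1,\dots,n\}=J_1\sqcup\cdots\sqcup J_m$ with $\{p_i:i\in J_j\}$ equal to the multiset underlying $P_j$.

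For each $j$, I would build an element $\tilde h_j\in\langle V_{\Gamma,E}, h_{P_j}\rangle$ supported on $D_j := \bigsqcup_{i\in J_j}\C_{\alpha_i}$, sending $D_j$ onto itself, with local action $p_i$ on $\C_{\alpha_i}$ for each $i\in J_j$. Concretely, $\tilde h_j$ would be a conjugate $t_j^{-1} h_{P_j} t_j$, where $t_j\in V_{\Gamma,E}$ maps the cones of $D_j$ onto the fixed domain-code cones of $h_{P_j}$ by canonical similarities (extended by a suitable $V_{\Gamma,E}$-bijection on the complement). Such a $t_j$ exists by \cref{cor:MappingConesWithV}, since the termini match by definition of a domain code for $P_j$. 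Using Lemmas~\ref{lem:restrict_twice} and~\ref{lem:restrict_composition}, together with the fact that $V_{\Gamma,E}$ elements contribute trivial local actions on cones in their partition, one confirms that $\tilde h_j$ has local action $p_i$ on $\C_{\alpha_i}$ for each $i\in J_j$ and is the identity on $E\setminus D_j$. Since the $D_j$ are pairwise disjoint and cover $E$, the $\tilde h_j$ have disjoint supports, and the product $\tilde g := \tilde h_m\cdots\tilde h_1$ therefore has the same partition and the same local actions as $g$.

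Finally, I would verify directly that $g\tilde g^{-1}\in V_{\Gamma,E}$: partitioning each image $p_i(\C_{v_i})$ into cones $\C_{\mu_{i,k}}$, the map $g\tilde g^{-1}$ sends each $\C_{\overline{\tilde g}(\alpha_i)\cdot\mu_{i,k}}$ to $\C_{\overline{g}(\alpha_i)\cdot\mu_{i,k}}$ by a canonical similarity (the termini agree, equal to $t(\mu_{i,k})$), yielding a finite cone partition of $E$ on which $g\tilde g^{-1}$ acts by canonical similarities, hence $g\tilde g^{-1}\in V_{\Gamma,E}$. This writes $g = (g\tilde g^{-1})\tilde g \in \langle V_{\Gamma,E}, h_{P_1},\dots,h_{P_m}\rangle$, and combining with finite generation of $V_{\Gamma,E}$ from \cref{cor:V_fp} gives the conclusion.

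The main obstacle I expect is the edge case where the decomposition forces some $D_j$ to equal all of $E$, in which case the ``incomplete'' hypothesis of \cref{cor:MappingConesWithV} fails and there is no $V_{\Gamma,E}$ element mapping $D_j$ to the proper subset $D_j^0$. This can only happen when $m=1$. I would handle it by refining the original partition using property \LocNuc: replacing each $\alpha_i$ with its length-one extensions $\alpha_i\cdot e$ keeps all local actions in $\Nuc$, and iterating makes the number of summands in the refined $P_g$ exceed the maximum size of any single generator in the fixed finite generating set of $\ker(\partial_1)$, thereby forcing $m\geq 2$ and each $D_j\subsetneq E$.
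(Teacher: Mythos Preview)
Your approach is essentially the same as the paper's: show the formal sum of local actions is a cycle, decompose it into generators of $\ker(\partial_1)$, realize each piece via a $V_{\Gamma,E}$-conjugate of a model nuclear generator, and absorb the remainder into $V_{\Gamma,E}$. The paper packages the intermediate product you call $\tilde g$ as a ``normalish form'' (\cref{def:normalish}) and proves via \cref{prop:NormalishFormsExist} and \cref{cor:EveryElementNormalishForm} that every element has one, but the underlying argument is identical to yours. Your handling of the $m=1$ edge case by refining the partition until the sum is too long to be a single generator is exactly the device used in \cref{cor:EveryElementNormalishForm}.

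There is one genuine imprecision. You assert that $\tilde h_j = t_j^{-1} h_{P_j} t_j$ has local action $p_i$ at $\alpha_i$, but this need not hold. The construction of $h_{P_j}$ post-composes with a Thompson element $s$ (to force the support $D$ back onto itself), and $s$ is only specified on the \emph{image} $g(D)$, which may be a proper subset of $\bigcup_i \C_{\xi_i}$ when some $p_i$ is not surjective onto $\C_{w_i}$. In that case $s$ need not act as a single canonical similarity on $\C_{\xi_i}$, so $(h_{P_j})|_{\nu_i}$ can differ from $p_i$; concretely, if $p$ has image $\C_0\cup\C_{10}$ in the binary shift and $s$ happens to send $\C_{\xi 0}\mapsto\C_{\nu 1}$ and $\C_{\xi 10}\mapsto\C_{\nu 0}$, then $h_P|_\nu$ is the bit-swap, not $p$. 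What \emph{is} true is that there exists $r\in V_{\Gamma,E}$ (the paper calls it a \emph{rectifier}) with $(r\,\tilde h_j)|_{\alpha_i}=p_i$ for each $i\in J_j$; equivalently, on $\C_{\alpha_i}$ the map $\tilde h_j$ agrees with some element of $V_{\Gamma,E}$ composed with $L_{\xi_i}\circ p_i\circ L_{\alpha_i}^{-1}$. Your concluding computation of $g\tilde g^{-1}$ therefore needs a small repair: rather than matching $\overline{\tilde g}(\alpha_i)$ with $\overline{g}(\alpha_i)$, argue that on $\tilde g(\C_{\alpha_i})$ the map $g\tilde g^{-1}$ is a composite of canonical similarities and elements of $V_{\Gamma,E}$, hence locally agrees with $V_{\Gamma,E}$. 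This is precisely \cref{lem:RecognizingNormalish} in the paper, and with it your argument goes through.
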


We begin by defining some conjugates of the model nuclear generators that will be useful.  If $f\in V_{\Gamma,E}$, we say that $f$ \newword{maps} a code $\{\alpha_1,\ldots,\alpha_k\}$ to a code $\{\beta_1,\ldots,\beta_k\}$ if $f$ maps each cone $\C_{\alpha_i}$ to $\C_{\beta_i}$ by the canonical similarity.

\begin{definition}[Nuclear generators]
A \newword{nuclear generator} is a conjugate
\[
h = ch_Pc^{-1}
\]
where $h_P$ is a model nuclear generator with domain code $\{\nu_1,\ldots,\nu_k\}$ and $c$ is an element of $V_{\Gamma,E}$ that maps this to\ some code $\{\alpha_1,\ldots,\alpha_k\}$.
\end{definition}

Note that $h$ is supported on the union $\bigcup_{i=1}^k \C_{\alpha_i}$.  This is the \newword{domain of support} for~$h$.  The code $\{\alpha_1,\ldots,\alpha_k\}$ is the \newword{nuclear code} 
for~$h$, and the corresponding \newword{nuclear states} are the summands $p_i$ of~$P$.  We think of the nuclear code and corresponding nuclear states as part of the definition of a nuclear generator.

A \newword{rectifier} for a nuclear generator $h$ is an element $r\in V_{\Gamma,E}$ such that $(rh)|_{\alpha_i}=p_i$ for each~$i$. Here $h=ch_P c^{-1}$ and $P=p_1+\cdots+p_k$ as above. For example, if $s$ is the element used to construct $h_P$, then $s^{-1}$ is a rectifier for~$h_P$, and more generally $s^{-1}c^{-1}$ is a rectifier for any nuclear generator $ch_Pc^{-1}$.

\begin{definition}[Normalish form]\label{def:normalish}
A \newword{normalish form} for an element $g\in G$ is a word for $g$ of the form
\[
fh_1\cdots h_n
\]
where $f\in V_{\Gamma,E}$ and $h_1,\ldots,h_n$ are nuclear generators whose domains of support are pairwise disjoint.
\end{definition}

As we will see, every element of $G$ has a normalish form, but this form is not unique.  The \newword{nuclear code} for a normalish form is the union of the nuclear codes for the $h_i$.

\begin{lemma}[Recognizing normalish forms]\label{lem:RecognizingNormalish}
Let $g\in G$, and let $h_1\cdots h_n$ be a normalish form with nuclear code $\{\alpha_1,\ldots,\alpha_k\}$ and corresponding nuclear states $p_1,\ldots,p_k$.  Then the following are equivalent:
\begin{enumerate}
    \item There exists $f\in V_{\Gamma,E}$ such that $g=fh_1\cdots h_n$.\smallskip
    \item There exist $r_1,\ldots,r_k\in V_{\Gamma,E}$ such that $(r_ig)|_{\alpha_i}=p_i$ for each~$i$, and $g$ agrees with some element of $V_{\Gamma,E}$ on the complement of\/~$\bigcup_{i=1}^k\C_{\alpha_i}$.
\end{enumerate}
\end{lemma}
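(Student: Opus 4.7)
The plan is to prove the two implications separately: (i) $\Rightarrow$ (ii) is routine, while (ii) $\Rightarrow$ (i) carries the content.

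\emph{For} (i) $\Rightarrow$ (ii): Assume $g = f h_1 \cdots h_n$ with $f \in V_{\Gamma,E}$, and let $D_j$ denote the domain of support of $h_j$. Since the $D_j$ are pairwise disjoint and each $h_j$ preserves $D_j$, the product $h_1 \cdots h_n$ acts as the identity on $E \setminus \bigcup_i \C_{\alpha_i}$ and as $h_j$ on each $\C_{\alpha_i}$ with $\alpha_i$ in the nuclear code of $h_j$. The second clause of (ii) follows because $g$ then agrees with $f \in V_{\Gamma,E}$ off $\bigcup_i \C_{\alpha_i}$. For the first clause, fix a rectifier $r^{(j)} \in V_{\Gamma,E}$ for $h_j$ (so that $(r^{(j)} h_j)|_{\alpha_i} = p_i$) and set $r_i := r^{(j)} f^{-1}$; then $(r_i g)|_{\alpha_i} = (r^{(j)} h_j)|_{\alpha_i} = p_i$ as required.

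\emph{For} (ii) $\Rightarrow$ (i): Set $\tilde f := g h_n^{-1} \cdots h_1^{-1} \in G$, so that $g = \tilde f h_1 \cdots h_n$, and we reduce to showing $\tilde f \in V_{\Gamma,E}$. Since $V_{\Gamma,E}$ is full, I will verify that $\tilde f$ locally agrees with $V_{\Gamma,E}$ at every point. Off $\bigcup_i \C_{\alpha_i}$, the product $h_n^{-1} \cdots h_1^{-1}$ reduces to the identity, so $\tilde f = g$ agrees with the hypothesized $f_0 \in V_{\Gamma,E}$. On each $D_j$, the same disjointness argument reduces $\tilde f$ to $g h_j^{-1}$, and the task is to show this locally agrees with $V_{\Gamma,E}$. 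For each index $i'$ with $\alpha_{i'}$ in the nuclear code of $h_j$, the hypothesis $(r_{i'} g)|_{\alpha_{i'}} = p_{i'}$ and the rectifier equation $(r^{(j)} h_j)|_{\alpha_{i'}} = p_{i'}$ together imply $r_{i'} g = \phi_{i'} \circ r^{(j)} h_j$ on $\C_{\alpha_{i'}}$, where $\phi_{i'} \colon \C_{\delta_{i'}} \to \C_{\beta_{i'}}$ is the canonical similarity between the greatest common prefixes of the respective images. Rearranging yields $g h_j^{-1} = r_{i'}^{-1} \phi_{i'} r^{(j)}$ on the clopen piece $h_j(\C_{\alpha_{i'}})$, and the pieces $h_j(\C_{\alpha_{i'}})$ partition $D_j$ since $h_j$ is a homeomorphism of $D_j$. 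Near any point of $D_j$, the right-hand side is a composition of canonical similarities on a small enough subcone, which extends to an element of $V_{\Gamma,E}$ via \cref{cor:MappingConesWithV}. Hence $\tilde f$ locally agrees with $V_{\Gamma,E}$ everywhere, so $\tilde f \in V_{\Gamma,E}$.

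The main obstacle is in the backward direction: the sets $h_j(\C_{\alpha_{i'}})$ partitioning $D_j$ are clopen but not in general cones, so one must argue locally, refining to small enough subcones on which each ingredient $r^{(j)}$, $\phi_{i'}$, and $r_{i'}^{-1}$ restricts to a single canonical similarity, and then invoke \cref{cor:MappingConesWithV} to globalize the resulting canonical similarities to elements of $V_{\Gamma,E}$.
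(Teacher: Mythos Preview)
Your proof is correct and follows essentially the same route as the paper's: both directions match, and for (ii)~$\Rightarrow$~(i) you decompose $E$ into the complement of the nuclear cones together with the pieces $h_j(\C_{\alpha_{i'}})$, then use the equality of local actions $(r_{i'}g)|_{\alpha_{i'}}=(r^{(j)}h_j)|_{\alpha_{i'}}$ to see that $gh_j^{-1}$ is a composition of elements of $V_{\Gamma,E}$ with a canonical similarity on each piece, concluding via fullness of $V_{\Gamma,E}$. The paper is slightly terser in the final step (it does not name \cref{cor:MappingConesWithV} or the local refinement explicitly), but the argument is the same.
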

\begin{proof}
If (i) holds then $g$ agrees with $f$ on the complement of $\bigcup_{i=1}^k \C_{\alpha_i}$.  Furthermore, for each $i$ there exists $j$ so that $\alpha_i$ is in the nuclear code for $h_j$. If $r\in V_{\Gamma,E}$ is a rectifier for $h_j$, then $rf^{-1}g$ agrees with $rh_j$ on $\C_{\alpha_i}$, so $(rf^{-1}g)|_{\alpha_i}=(rh_j)|_{\alpha_i}=p_i$, which proves~(ii).

Now suppose (ii) holds, and let $h=h_1\cdots h_n$.  We must prove that $gh^{-1}\in V_{\Gamma,E}$.  Let $E'=\bigcup_{i=1}^k \C_{\alpha_i}$.  Then $h(E')=E'$, so $E$ is the disjoint union
\[
(E\setminus E')\cup \bigcup_{i=1}^k
h(\C_{\alpha_i}).
\]
Thus it suffices to prove that $gh^{-1}$ agrees with some element of $V_{\Gamma,E}$ on each of these sets.  Since $h$ is the identity on $E\setminus E'$ and $g$ agrees with some element of $V_{\Gamma,E}$ on $E\setminus E'$, we already know that this holds for~$E\setminus E'$.

Now consider one of the sets $h(\C_{\alpha_i})$. We know that $\alpha_i$ is part of the nuclear code for some $h_j$, and if $r$ is a rectifier for $h_j$ then $(r_ig)|_{\alpha_i}=p_i=(rh_j)|_{\alpha_i}$.  It follows that $(r_ig)(rh_j)^{-1}=r_igh_j^{-1}r^{-1}$ agrees with the canonical similarity $\C_{\overline{rh_j}(\alpha_i)}\to \C_{\overline{r_ig}(\alpha_i)}$ on $rh_j(\C_{\alpha_i})$. Since $r,r_i\in V_{\Gamma,E}$, it follows that $gh_j^{-1}$ agrees with some element of $V_{\Gamma,E}$ on $h_j(\C_{\alpha_i})$.  But $h$ agrees with $h_j$ on $\C_{\alpha_i}$, so it follows that $gh^{-1}$ agrees with some element of $V_{\Gamma,E}$ on $h(\C_{\alpha_i})$.  We conclude that $gh^{-1}\in V_{\Gamma,E}$, so (i) holds.
\end{proof}

\begin{proposition}[Existence of normalish forms]\label{prop:NormalishFormsExist}
Let $g\in G$, let $A$ be a code, and suppose that:
\begin{enumerate}
    \item Each local action $g|_{\alpha}\;(\alpha\in A)$ lies in $\Nuc$.\smallskip
    \item If $A$ is incomplete, then $g$ agrees with some element of $V_{\Gamma,E}$ on the complement of\/~$\bigcup_{\alpha\in A} \C_{\alpha}$. \smallskip
    \item If $A$ is complete, then\/ $\sum_{\alpha\in A} g|_\alpha$ is not a generator for\/ $\ker(\partial_1)$.\smallskip
\end{enumerate}
Then there exists a normalish form for $g$ whose nuclear code is $A$.
\end{proposition}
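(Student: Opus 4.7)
The plan is to form the formal sum $Q = \sum_{\alpha \in A} g|_\alpha \in \N\Nuc$ of local actions along $A$, decompose $Q$ as a sum of the fixed generators of $\ker(\partial_1)$, and realize each summand by a suitably conjugated model nuclear generator. The crucial first step is to verify $Q \in \ker(\partial_1)$. Since canonical similarities preserve classes (\cref{cor:IrreducibleCoreFlexible}), for each $\alpha$ one has $\partial_1(g|_\alpha) = \class(g(\C_\alpha)) - \class(\C_\alpha)$, so additivity of $\class$ on disjoint unions yields
\[
\partial_1(Q) \;=\; \class\!\Bigl(g\Bigl(\bigcup_{\alpha\in A}\C_\alpha\Bigr)\Bigr) \;-\; \class\!\Bigl(\bigcup_{\alpha\in A}\C_\alpha\Bigr).
\]
If $A$ is complete, both terms equal $\class(E)$. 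If $A$ is incomplete, reading condition (ii) as saying that $g$ agrees with some $f' \in V_{\Gamma,E}$ on the complement $B = E \setminus \bigcup_\alpha \C_\alpha$, one has $g(B) = f'(B)$, and since $f'$ preserves $\class$ this forces $\class(g(B)) = \class(B)$; together with $g$ being a bijection of $E$, this yields $\class\bigl(g(\bigcup_\alpha \C_\alpha)\bigr) = \class(\bigcup_\alpha \C_\alpha)$. In either case $\partial_1(Q) = 0$.

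Next, by \cref{lem:monoid_fp} I would write $Q = P_1 + \cdots + P_n$ as a sum of fixed generators of $\ker(\partial_1)$. This multiset identity in $\N\Nuc$ induces a partition $A = A_1 \sqcup \cdots \sqcup A_n$ together with, for each $j$, a bijection matching each $\alpha_{j,i} \in A_j$ to the summand $p_{j,i}$ of $P_j$ with $g|_{\alpha_{j,i}} = p_{j,i}$; in particular the terminus $t(\alpha_{j,i})$ equals the domain node of $p_{j,i}$, which matches the terminus of the $i$-th cone of the fixed incomplete domain code $\{\nu_{j,1}, \ldots, \nu_{j,k_j}\}$ of the model generator $h_{P_j}$. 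For each $j$, \cref{cor:MappingConesWithV} then produces $c_j \in V_{\Gamma,E}$ sending $\C_{\nu_{j,i}}$ to $\C_{\alpha_{j,i}}$ by canonical similarity, and I set $h_j := c_j h_{P_j} c_j^{-1}$, a nuclear generator with nuclear code $A_j$ and nuclear states $p_{j,i}$. The hypothesis of \cref{cor:MappingConesWithV} requires $\bigcup_i \C_{\alpha_{j,i}} \subsetneq E$; this holds when $A$ is incomplete, and when $A$ is complete with $n \geq 2$. The remaining case ``$A$ complete with $n = 1$'' would force $Q$ itself to be a single generator, which is precisely what condition (iii) excludes.

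Since the $A_j$ partition $A$, the domains of support $\bigcup_{\alpha \in A_j} \C_\alpha$ of the $h_j$ are pairwise disjoint, so $h_1 \cdots h_n$ is a partial normalish form with nuclear code $A$ and nuclear states $\{g|_\alpha : \alpha \in A\}$. To conclude I would apply \cref{lem:RecognizingNormalish}: taking $r_\alpha := \mathrm{id}_E$ gives $(r_\alpha g)|_\alpha = g|_\alpha$, matching the nuclear state at $\alpha$ by construction, and condition (ii) (vacuous when $A$ is complete) provides the complement-agreement required by the lemma. The lemma then produces $f \in V_{\Gamma,E}$ with $g = f h_1 \cdots h_n$, as desired. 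The main subtlety is Step~1, verifying $\partial_1(Q) = 0$, since this is where global information about $g$ and the $\class$ structure of $V_{\Gamma,E}$ enters the argument; everything else is careful bookkeeping around \cref{cor:MappingConesWithV} and the exclusion by (iii) of the single situation in which no suitable $c_j$ can be produced.
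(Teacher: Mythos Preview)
Your proof is correct and follows essentially the same approach as the paper: show the formal sum lies in $\ker(\partial_1)$, decompose it into generators, build the corresponding nuclear generators via \cref{cor:MappingConesWithV}, and finish with \cref{lem:RecognizingNormalish}. You also correctly read condition~(ii) as agreement on the \emph{complement} $E\setminus\bigcup_{\alpha\in A}\C_\alpha$ (this is how the paper's own proof uses it, and how the proposition is invoked elsewhere); the statement as printed appears to contain a typo.
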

\begin{proof}
Let $E'=\bigcup_{\alpha\in A} \C_\alpha$. Since $g$ agrees with an element of $V_{\Gamma,E}$ on the complement of $E'$, we know that
\begin{multline*}
\qquad \class(g(E'))=\class(E)-\class(g(E\setminus E')) \\ = \class(E)-\class(E\setminus E') = \class(E')\qquad
\end{multline*}
in $\Classes(V_{\Gamma,E})$, so the sum $P = \sum_{\alpha\in A} g|_\alpha$ lies in $\ker(\partial_1)$.  Thus $P=P_1+\cdots+ P_n$ for some generators $P_i$ of $\ker(\partial_1)$, so we can partition $A$ into sets $A_1,\ldots,A_n$ such that $\sum_{\alpha\in A_i} g|_{\alpha}=P_i$ for each~$i$. 

Let $C_i=\bigcup_{\alpha\in A_i} \C_{\alpha}$ for each~$i$.  By condition (iii), either $n>1$, or $n=1$ and $C_1\ne E$, and therefore in either case each $C_i$ is a proper subset of~$E$.  By \cref{cor:MappingConesWithV}, we can construct nuclear generators $h_1,\ldots,h_n$ such that each $h_i$ has nuclear code~$A_i$ and nuclear states $\{g|_{\alpha}\}_{\alpha\in A_i}$. Then by \cref{lem:RecognizingNormalish} there exists $f\in V_{\Gamma,E}$ such that $fh_1\cdots h_n$ is the desired normalish form for $g$.
\end{proof}

\begin{corollary}\label{cor:EveryElementNormalishForm}
Every element of $G$ has a normalish form.
\end{corollary}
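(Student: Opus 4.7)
The plan is to reduce to Proposition~\ref{prop:NormalishFormsExist} by constructing, for any $g \in G$, a code $A$ that satisfies its three hypotheses. Since $G$ is contracting, Corollary~\ref{cor:rsg_partition} immediately yields a complete code $A_0 \subseteq \Cones(E)$ partitioning $E$ with $g|_\alpha \in \Nuc$ for every $\alpha \in A_0$. This takes care of hypothesis~(i), and hypothesis~(ii) becomes vacuous as soon as one keeps the code complete, so the whole issue is hypothesis~(iii).

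To force (iii), I would invoke Lemma~\ref{lem:monoid_fp}: $\ker(\partial_1)$ is finitely generated, so fix such a finite generating set and let $K$ be the maximum total degree (sum of multiplicities) appearing among those generators. If I can refine $A_0$ into a complete code $A$ with $|A| > K$ while preserving~(i), then $\sum_{\alpha \in A} g|_\alpha$ lies in $\N\Nuc$ with total degree $|A| > K$, so it cannot coincide with any generator in our fixed set, which gives~(iii). Refinement here means replacing some $\alpha \in A$ by its immediate subcone code $\{\alpha \cdot e : e \text{ an edge out of } t(\alpha)\}$; property~\LocNuc\ of the nucleus, combined with Lemma~\ref{lem:restrict_twice} (which gives $g|_{\alpha \cdot e} = (g|_\alpha)|_e$), ensures that this operation preserves hypothesis~(i). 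Because $\Sigma_\Gamma$ has no isolated points, every node of $\Gamma$ has a directed descendant with at least two outgoing edges, so iterated refinement along such a path eventually splits any given cone into two or more subcones, strictly increasing $|A|$. Finitely many rounds of refinement therefore suffice to reach $|A| > K$.

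Once such an $A$ is in hand, Proposition~\ref{prop:NormalishFormsExist} directly produces the desired normalish form for $g$. The main obstacle is the refinement step, specifically the guarantee that the code can always be grown in size; this crucially exploits the no-isolated-points hypothesis on $\Sigma_\Gamma$. Everything else in the argument is just matching the hypotheses of Proposition~\ref{prop:NormalishFormsExist}.
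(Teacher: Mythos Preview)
Your proposal is correct and follows essentially the same route as the paper: obtain a complete code with local actions in $\Nuc$ via Corollary~\ref{cor:rsg_partition}, refine it until its size exceeds the degree of any generator of $\ker(\partial_1)$, and then invoke Proposition~\ref{prop:NormalishFormsExist}. The paper's proof is terser about why refinement preserves~(i) and can strictly increase $|A|$, but your justifications via \LocNuc, Lemma~\ref{lem:restrict_twice}, and the no-isolated-points hypothesis are exactly the details one would fill in.
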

\begin{proof}
Let $g\in G$.  By \cref{cor:rsg_partition}, there exists a finite partition of $E$ into cones $\{C_{\alpha}\}_{\alpha\in A}$ such that each $g|_\alpha\in \Nuc$.  Refining this partition further, we can make $|A|$ large enough so that the element $\sum_{\alpha\in A} g|_{\alpha}$ of $\ker(\partial_1)$ is not one of the generators for $\ker(\partial_1)$. Then by \cref{prop:NormalishFormsExist}, there exists a normalish form for $g$ with nuclear code~$A$.
\end{proof}

\begin{proof}[Proof of \cref{prop:GFinitelyGenerated}]
By \cref{cor:EveryElementNormalishForm}, every element of $G$ has a normalish form.  Since every nuclear generator is a conjugate of a model nuclear generator by an element of $V_{\Gamma,E}$, we see that $G$ is generated by the model nuclear generators together with the elements of~$V_{\Gamma,E}$. But $V_{\Gamma,E}$ is finitely generated by \cref{cor:V_fp}, so $G$ is finitely generated.
\end{proof}

\subsection{An infinite presentation}\label{ssec:infinitepres}

In this subsection we describe an infinite presentation for $G$ with respect to the nuclear generators and the elements of $V_{\Gamma,E}$. We will use this presentation in \cref{ssec:finitepres} to derive a finite presentation for~$G$.

First we need some terminology. We say that a code $B$ is a \newword{refinement} of a code $A$ if $\bigcup_{\alpha\in A}\C_\alpha=\bigcup_{\beta\in B}\C_\beta$ and for every $\beta\in B$ there exists $\alpha\in A$ with $\C_\beta\subseteq \C_\alpha$.  We say that $B$ is an \newword{elementary refinement} of $A$ if there exists $\alpha\in A$ such that
\[
B = \bigl(A\setminus\{\alpha\}\big) \cup \{\beta_1,\ldots,\beta_k\}
\]
where $\C_{\beta_1},\ldots,\C_{\beta_k}$ are the maximal proper subcones of $\C_\alpha$.  Note that any refinement can be realized by a sequence of elementary refinements.

Next, if $h$ and $h'$ are nuclear generators with nuclear codes $A$ and $A'$, respectively, we say that an element $c\in V_{\Gamma,E}$ is a \newword{rigid conjugator} from $h$ to $h'$ if $c$ maps $A$ to $A'$ in a way that preserves the corresponding nuclear states.  Note then that $h'=chc^{-1}$.

Finally, we say that a normalish form $fh_1\cdots h_n$ is \newword{exact} if $f=1$.

\begin{definition}[An infinite set of relations for $G$]\label{def:RelInf}
Let $\mathrm{Rel}_{\mathrm{inf}}$ be the following set of relations in $G$, with respect to the generating set consisting of all elements of $V_{\Gamma,E}$ and all nuclear generators.
\begin{description}

\listlabel{VRel} All relations in $V_{\Gamma,E}$.\smallskip

\listlabel{VNuc} All true relations of the form $h=f$, where $h$ is a nuclear generator and $f\in V_{\Gamma,E}$. \smallskip

\listlabel{DisjComm} All relations of the form $[h_1,h_2]=1$, where $h_1$ and $h_2$ are nuclear generators whose domains of support are disjoint.\smallskip

\listlabel{Conj} All relations $h'=chc^{-1}$, where $h$ and $h'$ are  nuclear generators and $c\in V_{\Gamma,E}$ is a rigid conjugator from $h$ to $h'$.
\smallskip

\listlabel{Refine} For every nuclear generator $h$ and every elementary refinement $B$ of the nuclear code for $h$, a relation of the form $h=fh_1\cdots h_n$, where the right side is a normalish form for $h$ with nuclear code $B$.\smallskip

\listlabel{Inv} For every nuclear generator $h$, a relation $h^{-1}=fh_1\cdots h_n$, where the right side is some normalish form for~$h^{-1}$.\smallskip

\listlabel{Prod} For every exact normalish form $h_1\cdots h_m$, say with nuclear code~$A$, and every nuclear generator $h$ whose nuclear code $B$ is a subset of $A$, a relation $h_1\cdots h_mh^{-1}=fh_1'\cdots h_n'$, where the right side is a normalish form whose nuclear code contains~$A\setminus B$.
\end{description}
\end{definition}

Note that \VRel, \VNuc, and \Conj\ hold in $G$ by definition, and \DisjComm\ clearly holds as well. The relations \Refine, \Inv, and \Prod\ involve some arbitrary choices for the normalish forms on the right-hand sides of the relations, and these relations will hold in $G$ as long as normalish forms satisfying the given conditions exist.  The following proposition verifies this.

\begin{proposition}
The normalish forms required for the right-hand sides of the relations\/ {\normalfont\Refine}, {\normalfont\Inv}, and\/ {\normalfont\Prod} exist.
\end{proposition}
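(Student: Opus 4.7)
The plan is to invoke \cref{lem:RecognizingNormalish} in each of the three cases, producing the required normalish form by specifying candidate nuclear states in $\Nuc$, a rectifier in $V_{\Gamma,E}$ at each cone of the nuclear code, and a decomposition of the resulting sum into our fixed finite generating set of $\ker(\partial_1)$, which then furnishes the individual nuclear generators via \cref{cor:MappingConesWithV}.

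For \Refine, given a nuclear generator $h$ with states $p_1,\ldots,p_k$ and an elementary refinement $B$ replacing $\alpha_i$ by its one-step extensions $\{\alpha_i\cdot e_j\}_j$, keep the states $p_\ell$ on the cones $\alpha_\ell$ for $\ell\ne i$ and replace $p_i$ by $\{p_i|_{e_j}\}_j$, each of which lies in $\Nuc$ by \LocNuc. Any rectifier $\rho$ of $h$ then serves as a uniform rectifier: by definition on the unchanged cones, and on the new cones by applying \cref{lem:restrict_twice} to obtain $(\rho h)|_{\alpha_i\cdot e_j} = ((\rho h)|_{\alpha_i})|_{e_j} = p_i|_{e_j}$. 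The complement condition is trivial since $h$ is the identity outside its support. The new total sum lies in $\ker(\partial_1)$, decomposes as a sum of our fixed generators, and partitions $B$ into subcodes that each lie in the (incomplete) support of $h$, allowing \cref{cor:MappingConesWithV} to produce the required nuclear generators. For \Inv, since $G$ is a group we have $h^{-1}\in G$, and \cref{cor:EveryElementNormalishForm} provides a normalish form directly.

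For \Prod, set $g = h_1\cdots h_m h^{-1}$ and let $D_B$ denote the support of $h$. On each cone $\C_\alpha$ with $\alpha\in A\setminus B$, both $h^{-1}$ and the $h_i$'s with $i\ne j$ act as the identity, so $g$ agrees with the unique $h_j$ in the product whose nuclear code contains $\alpha$; we take the nuclear state of that $h_j$ at $\alpha$ together with a rectifier of $h_j$. Using \cref{cor:rsg_partition} applied to $g\in G$, refine $D_B$ to a code $A_B'$ on which every local action of $g$ lies in $\Nuc$, and take these local actions themselves as nuclear states there with trivial rectifiers. Setting $A' = (A\setminus B)\cup A_B'$, the complement of $\bigcup_{\alpha\in A'}\C_\alpha$ equals $E\setminus\bigcup_{\alpha\in A}\C_\alpha$, on which $g$ is the identity, so the complement condition holds. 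The total sum lies in $\ker(\partial_1)$ because $g$ is a homeomorphism of $E$ fixing the complement of $\bigcup_{\alpha\in A}\C_\alpha$, and decomposing it into fixed generators supplies the partition of $A'$ required by \cref{cor:MappingConesWithV} to build the individual nuclear generators, whose collective nuclear code contains $A\setminus B$ by construction.

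The main obstacle is the construction step in the \Prod\ case when $\bigcup_{\alpha\in A}\C_\alpha = E$: the total sum could coincide with a single generator of $\ker(\partial_1)$, forcing a single nuclear generator to be supported on all of $E$, which is incompatible with \cref{cor:MappingConesWithV}. This is handled by iteratively refining $A_B'$ (preserving the nuclear-state property via \LocNuc) until the total number of summands exceeds the maximum number appearing in any of our fixed generators, forcing any decomposition to use at least two generators and thus yielding a partition of $A'$ into strictly proper subcodes.
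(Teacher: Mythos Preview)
Your argument is correct and follows the same overall strategy as the paper: verify the hypotheses of \cref{lem:RecognizingNormalish} by exhibiting nuclear states in $\Nuc$, rectifiers in $V_{\Gamma,E}$, and a decomposition of the resulting cycle into the fixed generators of $\ker(\partial_1)$, then build the nuclear generators via \cref{cor:MappingConesWithV}.  Your handling of \Refine\ and \Inv\ matches the paper essentially verbatim, and your treatment of the ``single generator'' obstruction (refining until the sum has too many summands to be a single generator) is the same idea the paper uses.

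The one place where your route diverges is in \Prod.  The paper first manufactures a \emph{single} element $r\in V_{\Gamma,E}$---built from auxiliary ``compression'' maps $k_i$ that squeeze each rectified image into a common incomplete code---so that $(rgh^{-1})|_\alpha\in\Nuc$ holds simultaneously for every $\alpha$ in the target nuclear code, and then invokes \cref{prop:NormalishFormsExist}.  You instead exploit the fact that \cref{lem:RecognizingNormalish} allows a \emph{different} rectifier $r_i$ at each cone: on $A\setminus B$ you take the existing rectifier of the relevant $h_j$, and on $A_B'$ you take the identity.  This sidesteps the $k_i$ construction entirely and is a genuine simplification; the paper's single-$r$ approach is more uniform but requires an extra technical step that your per-cone approach renders unnecessary.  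Both arguments produce the same nuclear states on $A\setminus B$, so the resulting normalish forms coincide there.
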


\begin{proof}
For \Inv, any normalish form for $h^{-1}$ suffices, and such a normalish form always exists by \cref{cor:EveryElementNormalishForm}.

For \Refine, suppose $h$ is a nuclear generator with nuclear code~$A$, and let $B$ be an elementary refinement of $A$. We must show that there exists a normalish form for $h$ with nuclear code $B$. Let $r$ be a rectifier for $h$.  Then $(rh)|_\alpha\in \Nuc$ for all $\alpha\in A$, and it follows that $(rh)|_\beta\in\Nuc$ for all $\beta\in B$.  Since $rh$ agrees with $r$ on the complement of $\bigcup_{\beta\in B} \C_\beta$, by \cref{prop:NormalishFormsExist} there exists a normalish form $fh_1\cdots h_n$ for $rh$ whose nuclear code is~$B$.  Then $f'h_1\cdots h_n$ is a normalish form for $h$ with nuclear code~$B$, where $f'=r^{-1}f$.

For \Prod, let $h_1\cdots h_m$ be an exact normalish form for some $g\in G$ with nuclear code~$A$, and let $h$ be a nuclear generator whose nuclear code $B$ is a subset of~$A$. We must prove that there exists a normalish form for $gh^{-1}$ whose nuclear code contains $A\setminus B$.  If $B=A$ then $A\setminus B=\emptyset$, so the claim is just that $h_1\cdots h_mh^{-1}$ has a normalish form, and this is immediate from \cref{cor:EveryElementNormalishForm}. Now suppose $B$ is a proper subset of $A$. For each~$i$, let $A_i$ be the nuclear code for $h_i$, and let $r_i$ be a rectifier for $h_i$. 

Next observe that there exists a unique complete code $\{\nu_1,\ldots,\nu_M\}$ of minimum size such that each $t(\nu_j)$ is a node in~$\Gamma_0$.  In particular, any cone $\C_\nu\subseteq E$ with $t(\nu)$ a node in $\Gamma_0$ is contained in some $\C_{\nu_j}$. Choose an incomplete code $C=\{\xi_{ij}\mid 1\le i\leq m\text{, } 1\le j\leq M\}$ such that each $t(\xi_{ij})=t(\nu_j)$, and for each $1\leq i\leq m$ let $k_i\colon E\to E$ be the (non-surjective) map that sends each $\C_{\nu_j}$ to $\C_{\xi_{ij}}$ by the canonical similarity.  Note that, for any $g'$ in $G$ and any $\alpha$, if the local action $g'|_{\alpha}$ lies in~$\Nuc$ then $t(\overline{g'}(\alpha))$ is a node in $\Gamma_0$. Hence, in this case, $t(\overline{g'}(\alpha))$ has some $\nu_j$ as a prefix, and so each $k_i$ acts as a canonical similarity on $\C_{\overline{g'}(\alpha)}$. In particular $(k_i\circ g')|_{\alpha}=g'|_\alpha$ for each~$i$. 

Now let $r$ be any element of $V_{\Gamma,E}$ that agrees with $k_i\circ r_i$ on each $\bigcup_{\alpha\in A_i\setminus B} h_i(\C_\alpha)$. Such an element exists since $A\setminus B$ and $C$ are both incomplete codes. Note that $r$ satisfies $(rg)|_\alpha = (r h_i)|_\alpha=(r_ih_i)|_\alpha$ for each $i$ and each $\alpha\in A_i\setminus B$.  In particular, $(rg)|_\alpha\in \Nuc$ for all $\alpha\in A\setminus B$, and hence $(rgh^{-1})|_{\alpha}\in \Nuc$ for all $\alpha\in A\setminus B$.  Let $B'$ be a refinement of $B$ so that $(rgh^{-1})|_{\beta}\in \Nuc$ for all $\beta\in B'$, and let $A'=(A\setminus B)\cup B'$.  Then $(rgh^{-1})|_\alpha\in \Nuc$ for all $\alpha\in A'$. By refining $B'$ further we can make sure that $\sum_{\alpha\in A'} (rgh^{-1})|_\alpha$ is not a generator for $\ker(\partial_1)$, so by \cref{prop:NormalishFormsExist} there exists a normalish form $fh_1'\cdots h_n'$ for $rgh^{-1}$ with nuclear code~$A'$.  Then $(r^{-1}f)h_1'\cdots h_n'$ is a normalish form for $gh^{-1}$ whose nuclear code $A'$ contains $A\setminus B$.
\end{proof}

\newcommand{\eqinf}{\equiv_{\mathrm{inf}}}%
Our goal for the remainder of this subsection is to prove that the relations in $\mathrm{Rel}_{\mathrm{inf}}$ give a presentation for~$G$.  If $w$ and $w'$ are words in the nuclear generators and the elements of $V_{\Gamma,E}$, we will write $w \eqinf w'$ if the relation $w=w'$ follows from the relations in $\mathrm{Rel}_{\mathrm{inf}}$.

\begin{lemma}[Refining a normalish form]\label{lem:RefineNormalish}
Let $w$ be a normalish form with nuclear code $A$, and let $B$ be any refinement of $A$.  Then there exists a normalish form $w'$ with nuclear code $B$ such that $w\eqinf w'$.
\end{lemma}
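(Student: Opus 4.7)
The plan is to first reduce to the case where $B$ is an \emph{elementary} refinement of $A$, by inducting on the number of elementary refinement steps required to pass from $A$ to $B$ (this is possible by the observation just before \cref{def:RelInf} that any refinement is a composition of elementary ones); the base case of no refinement is trivial via $w' = w$. So assume $B = (A \setminus \{\alpha\}) \cup \{\beta_1, \ldots, \beta_k\}$, where $\C_{\beta_1}, \ldots, \C_{\beta_k}$ are the maximal proper subcones of $\C_\alpha$. Write $w = f h_1 \cdots h_n$ and let $h_i$ be the unique nuclear generator whose nuclear code $A_i$ contains $\alpha$.

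By the relation \Refine, there is a normalish form $h_i \eqinf f' h_{i,1} \cdots h_{i,m}$ whose nuclear code is the elementary refinement of $A_i$ at $\alpha$. Substituting this into $w$ gives
\[
w \eqinf f\, h_1 \cdots h_{i-1}\, f'\, h_{i,1} \cdots h_{i,m}\, h_{i+1} \cdots h_n,
\]
which is not yet in normalish form because of the stray $f'$ stuck in the middle. The key observation, and the main technical point of the argument, is that $f'$ has support contained in $\mathrm{supp}(h_i) = \bigcup_j \mathrm{supp}(h_{i,j})$. Indeed, on the complement of this set both $h_i$ and the product $h_{i,1} \cdots h_{i,m}$ act as the identity, so the equation $h_i = f' h_{i,1} \cdots h_{i,m}$ forces $f'$ to act as the identity there as well.

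Consequently, for every $j \neq i$, the element $f'$ is the identity on $\mathrm{supp}(h_j)$; it therefore sends each cone in the nuclear code $A_j$ to itself by the identity canonical similarity, preserving nuclear states, and so is a rigid conjugator from $h_j$ to itself. The relation \Conj\ then supplies $h_j = f' h_j f'^{-1}$, equivalently $h_j f' \eqinf f' h_j$. Using these commutations to slide $f'$ leftward past $h_{i-1}, \ldots, h_1$, and combining with $f$ via \VRel\ into a single element $ff' \in V_{\Gamma,E}$, we arrive at
\[
w' = (ff')\, h_1 \cdots h_{i-1}\, h_{i,1} \cdots h_{i,m}\, h_{i+1} \cdots h_n,
\]
whose nuclear generators have pairwise disjoint supports (the $\mathrm{supp}(h_{i,j})$ partition $\mathrm{supp}(h_i)$, which is itself disjoint from each $\mathrm{supp}(h_j)$ for $j\neq i$) and whose nuclear code is precisely $B$, as desired. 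The main obstacle to the argument is establishing the support containment $\mathrm{supp}(f') \subseteq \mathrm{supp}(h_i)$: without it, $f'$ could act nontrivially across the other $h_j$ and the reassembly into a normalish form using only the prescribed relations would fail.
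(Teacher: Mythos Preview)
Your proof is correct. The paper takes an almost identical route, with one simplification: rather than leaving $h_i$ in place and then arguing that $f'$ can be commuted leftward past $h_1,\ldots,h_{i-1}$ via \Conj, the paper first invokes \DisjComm\ to permute the nuclear generators so that the one containing $\alpha$ is $h_1$. Then after applying \Refine\ to $h_1$, the new $f'$ already sits immediately to the right of $f$, and a single application of \VRel\ finishes the job. This sidesteps entirely the need to establish that $\mathrm{supp}(f')\subseteq\mathrm{supp}(h_i)$ and to invoke \Conj\ for the commutations. Your support argument is perfectly valid, but the reordering trick is shorter and uses one fewer family of relations.
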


\begin{proof}
It suffices to consider the case where $B$ is an elementary refinement of~$A$, say
\[
B = \bigl(A\setminus\{\alpha_0\}\bigr)\cup \{\beta_1,\ldots,\beta_k\}.
\]
Suppose $w$ is $fh_1\cdots h_n$.  By \DisjComm, we may permute $h_1,\ldots,h_n$ freely, so suppose $\alpha_0$ is part of the nuclear code $A_1$ for $h_1$.  Let $B_1=\bigl(A_1\setminus\{\alpha_0\}\bigr)\cup \{\beta_1,\ldots,\beta_k\}$.  Then $B_1$ is an elementary refinement of~$A_1$, so by \Refine\ we have $h_1\eqinf f'h_1'\cdots h_m'$ for some normalish form $f'h_1'\cdots h_m'$ with nuclear code $B_1$.  Then
\[
w \eqinf f'' h_1'\cdots h_m'h_2\cdots h_n,
\]
where $f''=ff'$, and the right side is the desired normalish form $w'$ with nuclear code~$B$.
\end{proof}

We will say that a normalish form is \newword{complete} if its nuclear code is complete, and \newword{incomplete} otherwise.

\begin{lemma}[Completing a normalish form]\label{lem:CompletingNormalish}
If $w$ is any normalish form, there exists a complete normalish form $w'$ such that $w\eqinf w'$.
\end{lemma}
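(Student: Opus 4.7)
The plan is to enlarge the nuclear code of $w$ to a complete one by appending ``trivial'' nuclear generators supported on cones in the complement, each of which happens to lie in $V_{\Gamma,E}$. Write $w = f g_1 \cdots g_n$ with nuclear code $A$, assume $A$ is incomplete, and set $C := E \setminus \bigcup_{\alpha\in A}\C_\alpha$. Using that $\Gamma_0$ is an irreducible core, partition $C$ into cones $\C_{\gamma_1},\ldots,\C_{\gamma_l}$ with each $t(\gamma_j)$ a node of $\Gamma_0$, refining if needed to ensure $l\geq 2$ (and hence each $\C_{\gamma_j}\subsetneq E$) in the edge case $A=\emptyset$.

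The key observation is that for every node $v$ of $\Gamma_0$ the identity $\mathrm{id}_{\C_v}$ belongs to $\Nuc$ by \IdNuc, and as a single-term formal sum it is indecomposable in $\N\Nuc$. Any generating set for $\ker(\partial_1)$ must therefore contain $\mathrm{id}_{\C_v}$, in particular the one fixed earlier. Hence for each $j$ there is a model nuclear generator $h_{P_j}$ for $P_j = \mathrm{id}_{\C_{t(\gamma_j)}}$, which by \cref{cor:MappingConesWithV} can be conjugated by a suitable element of $V_{\Gamma,E}$ to yield a nuclear generator $h_j$ with nuclear code $\{\gamma_j\}$ and identity state. Since every state appearing in the construction is the identity, $h_j$ is Thompson-like and supported on $\C_{\gamma_j}$, so $h_j \in V_{\Gamma,E}$; let $f_j \in V_{\Gamma,E}$ denote this same element, so that $h_j = f_j$ is an instance of \VNuc.

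For each $j = 1,\ldots,l$ in turn, insert the identity $f_j^{-1} h_j = 1$ at the right end of the current word. Since $f_j^{-1}$ is supported on $\C_{\gamma_j}$, it acts as the identity on every cone in the nuclear code of any $g_i$ or of any previously inserted $h_k$, so it is a rigid conjugator from each such nuclear generator to itself. Thus \Conj\ allows us to commute $f_j^{-1}$ leftward past every nuclear generator, after which \VRel\ absorbs it into the leading $V_{\Gamma,E}$ factor. The end result is
\[
w \eqinf \bigl(f\, f_1^{-1}\cdots f_l^{-1}\bigr)\, g_1\cdots g_n\, h_1\cdots h_l,
\]
which is a normalish form, because the supports of $g_1,\ldots,g_n,h_1,\ldots,h_l$ are pairwise disjoint by construction, and has complete code $A \cup \{\gamma_1,\ldots,\gamma_l\}$.

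The main obstacle is the first step: guaranteeing the existence of nuclear generators with prescribed single-cone nuclear codes inside $C$. The indecomposability of $\mathrm{id}_{\C_v}$ in $\ker(\partial_1)$ is what forces such trivial generators into our fixed finite generating set, and the fact that they then lie in $V_{\Gamma,E}$ is what makes the \Conj-based shuffle viable (so that we need not posit a would-be ``$[V_{\Gamma,E},h_i]=1$ when supports are disjoint'' relation).
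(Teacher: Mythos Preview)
Your proof is correct and follows essentially the same idea as the paper's: fill in the complement of the nuclear code with nuclear generators whose single nuclear state is $\mathrm{id}_{\C_v}$ for the appropriate node~$v$.

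The execution differs in one respect worth noting. The paper simply observes that the identity element of $G$ can itself be realised as a nuclear generator $e_\beta$ with nuclear code $\{\beta\}$ (implicitly choosing the model nuclear generator $h_P$ for $P=\mathrm{id}_{\C_v}$ to be the identity, which is a valid choice in the construction), and then appends $e_{\beta_1}\cdots e_{\beta_k}$ directly using $e_\beta\eqinf 1$ from \VNuc. Your argument does not assume any particular choice of $h_P$; you therefore only know $h_j\in V_{\Gamma,E}$, and must insert $f_j^{-1}h_j$ and shuffle the $V_{\Gamma,E}$ correction leftward via \Conj. This is a bit more work but is more robust to the arbitrary choices made earlier, and your observation that each $\mathrm{id}_{\C_v}$ is indecomposable in $\ker(\partial_1)$ and hence must appear in any generating set is a nice way to guarantee the needed model generator exists without retroactive assumptions.
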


\begin{proof}
Observe that if $v$ is any node of $\Gamma_0$ and $C_\beta$ is a cone with $t(\beta)=v$, then the identity element of $G$ can be viewed as a nuclear generator $e_\beta$ with nuclear code $\{\beta\}$.  It follows from \VNuc\ that $e_\beta\eqinf 1$.

Now suppose $w=fh_1\cdots h_n$ is an incomplete normalish form. Let $A$ be its nuclear code, and let $E'=\bigcup_{\alpha\in A} \C_{\alpha}$.  We can partition $E\setminus E'$ into finitely many cones $\C_{\beta_1},\ldots,\C_{\beta_k}$ such that each $t(\beta_i)$ is a node of~$\Gamma_0$.  Then $w\eqinf fh_1\cdots h_n e_{\beta_1}\cdots e_{\beta_k}$, where the right side is a complete normalish form $w'$ with nuclear code $A\cup\{\beta_1,\ldots,\beta_k\}$.
\end{proof}

\begin{lemma}[Checking for the identity]\label{lem:TrivialNormalish}
If $w$ is a normalish form for the identity, then $w\eqinf 1$.
\end{lemma}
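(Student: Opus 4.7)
The plan is to prove that in a normalish form $w = fh_1\cdots h_n$ representing the identity, each nuclear generator $h_i$ must already equal an element of $V_{\Gamma,E}$, after which \VNuc\ and \VRel\ collapse $w$ to the empty word. My first step would be the key structural observation: every nuclear generator $h = ch_Pc^{-1}$ is a self-homeomorphism of its own domain of support $D = \bigcup_i \C_{\alpha_i}$, because the model generator $h_P = s\circ g$ was explicitly built so that the Thompson element $s$ carries $g(D)$ back to $D$, and conjugation by $c \in V_{\Gamma,E}$ transports this self-support property to $h$. Together with the pairwise disjointness of the domains of support $D_i$, this means the product $h_1\cdots h_n$ agrees with $h_i$ on each $D_i$ and with the identity on $E\setminus\bigcup_i D_i$.

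Since $w = 1$ in $G$, we have $f = (h_1\cdots h_n)^{-1}$, which agrees with $h_i^{-1}$ on each $D_i$ and with the identity on the complement. Because $f \in V_{\Gamma,E}$ admits some finite cone partition of $E$ on which it acts by canonical similarities, one may take a common refinement with the cone partition of $D_i$ given by the nuclear code $A_i$ (two cones in $\Sigma_\Gamma$ are always nested or disjoint, so the common refinement exists cone by cone). Restricting this common refinement to $D_i$ yields a cone partition of $D_i$ on which $f|_{D_i} = h_i^{-1}|_{D_i}$ acts by canonical similarities and whose image cones again lie in $D_i$ (since $h_i^{-1}$ bijects $D_i$ onto itself). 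Extending by the identity on $E\setminus D_i$, we conclude that $h_i^{-1}$, and hence $h_i$, lies in $V_{\Gamma,E}$.

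Pick $f_i \in V_{\Gamma,E}$ with $h_i = f_i$ in $G$. Then relation \VNuc\ gives $h_i \eqinf f_i$ for each $i$, so
\[
w \;=\; fh_1\cdots h_n \;\eqinf\; ff_1\cdots f_n.
\]
The right-hand side is a word in the generators of $V_{\Gamma,E}$ that represents the identity of $G$, and since $V_{\Gamma,E}$ sits inside $G$ it also represents the identity of $V_{\Gamma,E}$; \VRel\ then collapses it to the empty word, giving $w \eqinf 1$. The only nontrivial point is the self-support property of nuclear generators; once that is in hand, the argument is a short bookkeeping exercise inside $V_{\Gamma,E}$ and does not require \Conj, \Refine, \Inv, or \Prod.
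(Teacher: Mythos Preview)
Your proof is correct and follows essentially the same approach as the paper: both arguments observe that since $fh_1\cdots h_n=1$ and the $h_i$ have pairwise disjoint supports $D_i$, each $h_i$ agrees with $f^{-1}$ on $D_i$ and is the identity elsewhere, hence lies in $V_{\Gamma,E}$, after which \VNuc\ and \VRel\ finish the job. The paper's version is simply more terse, invoking fullness of $V_{\Gamma,E}$ implicitly rather than spelling out the cone-refinement argument you give.
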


\begin{proof}
Suppose $w=fh_1\cdots h_n$ is a normalish form for the identity, and let $C_i$ be the domain of support for each~$h_i$.  Then each $h_i$ agrees with $f^{-1}$ on $C_i$ and is the identity elsewhere, so each $h_i$ is equal to some element  $f_i\in V_{\Gamma,E}$.  By \VNuc, it follows that $h_i\eqinf f_i$ for each $i$, so $w\eqinf ff_1\cdots f_n$. Since $ff_1\cdots f_n=1$, it follows from \VRel\ that $w\eqinf 1$.
\end{proof}

\begin{lemma}[Multiplying by an element of $V_{\Gamma,E}$]\label{lem:MultiplyByf}
For any normalish form $w$ and any $f\in V_{\Gamma,E}$, there exists a normalish form $w'$ such that $wf\eqinf w'$.
\end{lemma}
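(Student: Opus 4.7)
The plan is to push $f$ leftward past each nuclear generator in $w$ using relation \Conj, after first refining the nuclear code of $w$ so that $f^{-1}$ acts as a canonical similarity on each of its cones.

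First I would write $w = gh_1\cdots h_n$ with $g\in V_{\Gamma,E}$, and let $A=A_1\cup\cdots\cup A_n$ be its nuclear code. Since $f\in V_{\Gamma,E}$, there is a partition of $E$ into cones on each of which $f^{-1}$ acts as a canonical similarity. Taking the common refinement of $A$ with this partition (within $\bigcup_{\alpha\in A}\C_\alpha$) gives a refinement $B$ of $A$ such that $f^{-1}$ acts as a canonical similarity on $\C_\beta$ for every $\beta\in B$. Applying \cref{lem:RefineNormalish}, I get $w\eqinf \tilde g\tilde h_1\cdots\tilde h_m$ for some normalish form whose nuclear code is $B$.

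Next, for each $j$, let $\tilde A_j\subseteq B$ be the nuclear code of $\tilde h_j$. Because $f^{-1}$ is a canonical similarity on each cone $\C_\beta$ with $\beta\in\tilde A_j$, it sends $\tilde A_j$ bijectively onto a code $\tilde A_j'$ by canonical similarities. Let $\tilde h_j'$ be the nuclear generator with nuclear code $\tilde A_j'$ and the same nuclear states as $\tilde h_j$; concretely, if $\tilde h_j = c_j h_{P_j} c_j^{-1}$ then $\tilde h_j' = (f^{-1}c_j)h_{P_j}(f^{-1}c_j)^{-1}$, which is a genuine nuclear generator. By construction $f^{-1}$ is a rigid conjugator from $\tilde h_j$ to $\tilde h_j'$, so relation \Conj\ yields $\tilde h_j' \eqinf f^{-1}\tilde h_j f$, equivalently $\tilde h_j f \eqinf f\tilde h_j'$.

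Iterating this commutation $m$ times pushes $f$ through all the nuclear generators:
\[
wf \eqinf \tilde g\tilde h_1\cdots\tilde h_m f \eqinf \tilde g f\,\tilde h_1'\cdots\tilde h_m'.
\]
The $\tilde h_j'$ have pairwise disjoint domains of support since they are images under the homeomorphism $f^{-1}$ of the pairwise disjoint supports of the $\tilde h_j$, and by \VRel\ the product $\tilde g f$ collapses to a single element of $V_{\Gamma,E}$. Hence $w' := (\tilde g f)\tilde h_1'\cdots\tilde h_m'$ is the desired normalish form with $wf\eqinf w'$. The only delicate point is setting up $B$ so that the conjugation relation \Conj\ is applicable to every $\tilde h_j$ simultaneously; the rest is a straightforward use of \cref{lem:RefineNormalish} and the conjugation and $V_{\Gamma,E}$-relations.
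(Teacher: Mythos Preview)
Your proof is correct and follows essentially the same route as the paper's: refine the nuclear code so that $f^{-1}$ acts as a canonical similarity on each cone, then use \Conj\ to conjugate each nuclear generator by $f^{-1}$ and absorb the leftover $V_{\Gamma,E}$ element via \VRel. The paper's version is slightly terser but the steps and the relations invoked are identical.
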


\begin{proof}
Let $A$ be the nuclear code for $w$. We can choose a refinement $B$ of $A$ such that $f^{-1}$ acts as a canonical similarity on $\C_\beta$ for each $\beta\in B$.  By \cref{lem:RefineNormalish}, there exists a normalish form $w_B=f'h_1\cdots h_n$ with nuclear code $B$ such that $w\eqinf w_B$.  By \Conj, there exists for each $h_i$ a nuclear generator $h_i'$ such that $f^{-1}h_if\eqinf h_i'$.  Then $w_Bf = f'h_1\cdots h_nf\eqinf f'fh_1'\cdots h_n'$, and combining $f'f$ using a relation from \VRel\ gives the desired normalish form $w'$.
\end{proof}

\begin{lemma}[Multiplying by a nuclear generator]\label{lem:MultiplyNormalish}
If $w$ is a normalish form and $h$ is a nuclear generator, then there exists a normalish form $w'$ such that $wh\eqinf w'$.
\end{lemma}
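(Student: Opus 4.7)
The plan is to reduce this lemma to iterated applications of \Prod, using \Inv\ to first replace $h$ by the inverse of a normalish form.  This converts the problem into absorbing a finite sequence of inverses of nuclear generators (and one element of $V_{\Gamma,E}$) into the right side of $w$, one at a time.

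Concretely, \Inv\ gives a relation $h^{-1}\eqinf f_h g_1\cdots g_k$.  Inverting this equation, which is a valid manipulation in any group presentation, yields $h\eqinf g_k^{-1}\cdots g_1^{-1}f_h^{-1}$, and hence
\[
wh\eqinf w\cdot g_k^{-1}\cdots g_1^{-1}\cdot f_h^{-1}.
\]
The key intermediate claim is then: for any normalish form $w^{*}$ and any nuclear generator $g$, there exists a normalish form $w^{**}$ with $w^{*}g^{-1}\eqinf w^{**}$.  To establish this, first apply \cref{lem:CompletingNormalish} and then \cref{lem:RefineNormalish} finitely many times to produce an equivalent complete normalish form $\tilde w^{*}=\tilde f\,\tilde h_1\cdots \tilde h_N$ whose nuclear code contains the nuclear code of $g$ as a subset.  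The inner factor $\tilde h_1\cdots \tilde h_N$ is then an exact normalish form whose nuclear code contains that of $g$, so \Prod\ applies and gives $\tilde h_1\cdots \tilde h_N\, g^{-1}\eqinf f'h_1'\cdots h_{n'}'$ for some normalish form.  Reattaching $\tilde f$ on the left and combining the two leading $V_{\Gamma,E}$-elements via \VRel\ produces the desired $w^{**}$.

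With this claim in hand, I would iterate: starting from $w^{(0)}:=w$, I absorb $g_k^{-1},g_{k-1}^{-1},\ldots,g_1^{-1}$ one at a time to obtain normalish forms $w^{(1)},\ldots,w^{(k)}$ with $w^{(k)}\eqinf w\, g_k^{-1}\cdots g_1^{-1}$.  A final application of \cref{lem:MultiplyByf} absorbs $f_h^{-1}$ and yields a normalish form $w'$ with $wh\eqinf w'$, completing the argument.

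The main obstacle is the bookkeeping at each iteration: after an application of \Prod\ the new nuclear code is only guaranteed to contain the complement (in the old nuclear code) of the absorbed one, so the completion-and-refinement preparation must be redone from scratch before the next \Prod\ step, in order to reinstate the hypothesis that the nuclear code being absorbed sits inside the current normalish form's nuclear code.  Once this choreography is arranged, the proof is simply a chaining of the existing relations in $\mathrm{Rel}_{\mathrm{inf}}$ together with the preceding lemmas, and no new ideas beyond them are required.
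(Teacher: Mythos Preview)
Your overall strategy matches the paper's: use \Inv\ to rewrite $h$ as a product of inverses of nuclear generators followed by an element of $V_{\Gamma,E}$, absorb those inverses one at a time via \Prod, then absorb the $V_{\Gamma,E}$-element via \cref{lem:MultiplyByf}. However, the proof of your key intermediate claim has a genuine gap.

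You assert that for any normalish form $w^*$ and any nuclear generator $g$, one can complete $w^*$ and then refine it so that its nuclear code contains the nuclear code $B$ of $g$. But refinement only replaces paths by strictly longer ones, and completion adds new paths without shortening those already present. If the nuclear code of $w^*$ already contains some $\alpha$ with $\C_\alpha\subsetneq\C_\beta$ for some $\beta\in B$, then every code reachable from it by completing and refining still contains a path lying properly inside $\C_\beta$, and hence (by disjointness of cones in a code) can never contain $\beta$ itself. Your proposed ``redo the preparation from scratch'' between iterations hits the same wall: after a \Prod\ step the new nuclear code may well contain paths strictly deeper than those in the next $B_i$ to be absorbed, so the preparation you need cannot be carried out.

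The paper sidesteps this by synchronizing \emph{both} sides at the outset. After completing $w$ and completing the normalish form for $h^{-1}$, it passes to a common refinement $A$ of the two complete codes and applies \cref{lem:RefineNormalish} to both. Now every $h_i'$ in the refined normalish form for $h^{-1}$ has nuclear code $B_i\subseteq A$, and since the $B_i$ are pairwise disjoint, after absorbing $(h_1')^{-1}$ the resulting nuclear code still contains $A\setminus B_1\supseteq B_2$, so \Prod\ applies again immediately, and so on through $B_m$. No re-refinement is ever needed; this single up-front synchronization is precisely the missing ingredient in your argument.
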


\begin{proof}
Suppose $w=f h_1\cdots h_n$.  By \cref{lem:CompletingNormalish}, we may assume that $w$ is complete. By \Inv\ and \cref{lem:CompletingNormalish} we can write $h^{-1}$ as a complete normalish form, say $f' h_1'\cdots h_m'$. Since our two normalish forms are complete, their nuclear codes have a common refinement, so by \cref{lem:RefineNormalish} we may assume that $w$ and $f'h_1'\cdots h_m'$ have the same nuclear code~$A$. After applying \DisjComm, our goal is to put
\[
wh^{-1} \eqinf f h_1\cdots h_n (h_1')^{-1}\cdots (h_m')^{-1}(f')^{-1}
\]
into normalish form. The nuclear code $B_1$ of $h_1'$ is a subset of~$A$. Thus, using \Prod\ and \VRel, we can put the subword $f h_1\cdots h_n (h_1')^{-1}$ into normalish form while keeping $A\setminus B_1$ a subset of the overall nuclear code. The nuclear code $B_2$ of $h_2'$ is a subset of $A\setminus B_1$, so we can once again apply \Prod\ and \VRel, and continue in this way (since the domains of support of the $h_i'$ are pairwise disjoint) until the entire word equals some normalish form times $(f')^{-1}$. Now we conclude using \cref{lem:MultiplyByf}.
\end{proof}

\begin{proposition}\label{prop:RelInfSuffices}
The relations of\/ $\mathrm{Rel}_{\mathrm{inf}}$ define an infinite presentation for~$G$.
\end{proposition}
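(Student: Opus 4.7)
The plan is to show that every word in the generators that represents the identity of $G$ is $\eqinf$-equivalent to the empty word. The approach splits naturally into a \emph{normalization} step followed by an \emph{identification} step, using only the lemmas already proven in this subsection.

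For normalization, I would prove by induction on length that every word $w$ in the generators is $\eqinf$-equivalent to some normalish form. The empty word is a degenerate normalish form for the identity. For the inductive step, write $w = w'\cdot s$ with $s$ a single generator or the inverse of one, and assume inductively that $w'\eqinf u$ for some normalish form $u$. If $s\in V_{\Gamma,E}$ (in particular if $s$ is the inverse of an element of $V_{\Gamma,E}$, since $V_{\Gamma,E}$ is a group), then \cref{lem:MultiplyByf} yields a normalish form $u'$ with $us\eqinf u'$. If $s$ is a nuclear generator, \cref{lem:MultiplyNormalish} does the same. If $s=h^{-1}$ for some nuclear generator $h$, I would first use relation \Inv\ to replace $h^{-1}$ by some normalish form $v = fh_1\cdots h_n$, and then apply \cref{lem:MultiplyByf} once (to absorb $f$) followed by $n$ applications of \cref{lem:MultiplyNormalish} (to absorb each $h_i$ in turn), producing a normalish form equivalent to~$us$.

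For identification, if the original word $w$ represents the identity in $G$, then the normalish form $u'$ produced above also represents the identity, so \cref{lem:TrivialNormalish} gives $u'\eqinf 1$, whence $w\eqinf 1$. Since every relation that holds in $G$ takes the form $w\cdot w_2^{-1}=1$ for words $w_1,w_2$ representing the same element, this shows that the relations in $\mathrm{Rel}_{\mathrm{inf}}$ suffice to derive every relation of $G$; combined with \cref{prop:GFinitelyGenerated} (which guarantees that $V_{\Gamma,E}$ together with the nuclear generators generate $G$), this proves that $\mathrm{Rel}_{\mathrm{inf}}$ defines an infinite presentation.

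The main obstacle in this strategy is entirely packaged into the already-established lemmas \cref{lem:RefineNormalish,lem:CompletingNormalish,lem:MultiplyByf,lem:MultiplyNormalish,lem:TrivialNormalish}: these encode the nontrivial bookkeeping of how a multiplication can be pushed through a normalish form using only the six families of relations in \cref{def:RelInf}, via refinement of nuclear codes (from \Refine), conjugation (\Conj), disjoint-support commutation (\DisjComm), product rewrites (\Prod), and the ``collapsing'' of nuclear generators that happen to act as canonical similarities (\VNuc). With all that machinery in hand, the proof of \cref{prop:RelInfSuffices} itself is a clean induction and should be quite short.
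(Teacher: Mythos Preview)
Your proposal is correct and follows essentially the same approach as the paper: reduce an arbitrary word to a normalish form via induction using \Inv, \cref{lem:MultiplyByf}, and \cref{lem:MultiplyNormalish}, then apply \cref{lem:TrivialNormalish}. The only cosmetic difference is that the paper eliminates all inverse nuclear generators up front with \Inv\ before beginning the induction, whereas you handle them as they arise; this is immaterial.
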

\begin{proof}
Let $w$ be any word for the identity.  Applying \Inv\ to any inverses of nuclear generators occurring in $w$, we obtain a word $w'$ without any inverses of nuclear generators such that $w\eqinf w'$.  Applying \cref{lem:MultiplyByf} and \cref{lem:MultiplyNormalish} inductively, we obtain a normalish form $w''$ such that $w'\eqinf w''$.  Now since $w''$ is a normalish form for the identity, \cref{lem:TrivialNormalish} implies that $w''\eqinf 1$, and hence $w\eqinf 1$.
\end{proof}

\subsection{A finite presentation}\label{ssec:finitepres}

In this subsection we use the infinite presentation from \cref{ssec:infinitepres} to derive a finite presentation for~$G$, using the finite generating set consisting of the model nuclear generators $h_P$ and a finite generating set for $V_{\Gamma,E}$ (see \cref{prop:GFinitelyGenerated}).

First, if $h$ is a nuclear generator, define the \newword{rigid centralizer} of $h$ to be the group of all rigid conjugators from $h$ to itself.

\begin{proposition}\label{prop:rigid_cent_fg}
The rigid centralizer of any nuclear generator is finitely generated.
\end{proposition}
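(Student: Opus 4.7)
The plan is to identify the rigid centralizer $\mathcal{C}(h)$ of a nuclear generator $h$ as a direct product of a finite group and a Thompson group of the form $V_{\Gamma,E\setminus E'}$, and then invoke \cref{cor:V_fp}. Write $A = \{\alpha_1,\ldots,\alpha_k\}$ for the nuclear code of $h$, with corresponding nuclear states $p_1,\ldots,p_k$, and let $E' = \bigcup_{i=1}^k \C_{\alpha_i}$ denote the domain of support. Unwinding the definition, a rigid conjugator from $h$ to $h$ is an element $c\in V_{\Gamma,E}$ that is determined on $E'$ by a permutation $\sigma$ of $\{1,\ldots,k\}$ satisfying $p_{\sigma(i)} = p_i$ for all $i$, acting on each $\C_{\alpha_i}$ by the canonical similarity to $\C_{\alpha_{\sigma(i)}}$ (which is well-defined because equality of nuclear states forces $t(\alpha_i) = t(\alpha_{\sigma(i)})$), and which acts on $E\setminus E'$ by an arbitrary element of $V_{\Gamma,E\setminus E'}$.

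From this I would isolate two subgroups of $\mathcal{C}(h)$ with disjoint supports. Let $F\leq \mathcal{C}(h)$ consist of those $c$ that act as the identity on $E\setminus E'$; these correspond exactly to the nuclear-state-preserving permutations realized on $E'$ by canonical similarities, so $F$ is a direct product of symmetric groups, one for each equivalence class of nuclear states, and in particular is finite. Next, embed $V_{\Gamma,E\setminus E'}$ into $\mathcal{C}(h)$ by extending each of its elements by the identity on $E'$: such an extension is Thompson-like on $E$ and fixes every $\C_{\alpha_i}$ pointwise, hence lies in $\mathcal{C}(h)$. These two subgroups commute (having disjoint supports) and intersect trivially; for any $c\in\mathcal{C}(h)$, if $c_F\in F$ realizes the permutation induced by $c$, then $c_F^{-1}c$ fixes $E'$ pointwise and restricts to an element of $V_{\Gamma,E\setminus E'}$ (via a common refinement of the Thompson-like partition for $c_F^{-1}c$ with the partition $\{\C_{\alpha_i}\}$ of $E'$). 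Hence $\mathcal{C}(h)\cong F\times V_{\Gamma,E\setminus E'}$.

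To conclude, note that the model nuclear generators $h_P$ are constructed using incomplete domain codes, so $E\setminus E'$ is always a nonempty clopen subset of $\Sigma_\Gamma$. Since $G$ is contracting, $\Sigma_\Gamma$ has an irreducible core, and \cref{cor:V_fp} then applies to yield that $V_{\Gamma,E\setminus E'}$ is finitely presented, in particular finitely generated. Combined with the finiteness of $F$, this gives that $\mathcal{C}(h)$ is finitely generated. The only step requiring any real care is verifying that $c_F^{-1}c$ genuinely restricts to a Thompson-like homeomorphism of $E\setminus E'$, but this is a routine refinement of cone partitions and I do not expect it to pose a meaningful obstacle.
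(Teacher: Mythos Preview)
Your proof is correct and follows essentially the same approach as the paper: both identify the pointwise stabilizer of $E'$ as (isomorphic to) $V_{\Gamma,E\setminus E'}$, observe it has finite index in the rigid centralizer, and invoke \cref{cor:V_fp}. Your version is slightly sharper in that you exhibit the rigid centralizer as an explicit direct product $F\times V_{\Gamma,E\setminus E'}$ rather than just noting the finite-index inclusion, but this extra precision is not needed for the finite-generation conclusion.
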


\begin{proof}
Let $h$ be a nuclear generator with nuclear code~$A$ and domain of support~$E'$.  By definition, any rigid conjugator from $h$ to $h$ must permute the cones $\{\C_\alpha\}_{\alpha\in A}$ by canonical similarities.  In particular, since $A$ is finite, the group $\Fix(E')$ of elements of $V_{\Gamma,E}$ that fix $E'$ pointwise has finite index in the rigid centralizer.  But $\Fix(E')$ is isomorphic to $V_{\Gamma,E\setminus E'}$, and is therefore finitely generated by \cref{cor:V_fp}.
\end{proof}

Next, we say that two exact normalish forms $h_1\cdots h_n$ and $h_1'\cdots h_n'$ are \newword{rigidly conjugate} if there exists a single element $c\in V_{\Gamma,E}$ that is a rigid conjugator from $h_i$ to $h_i'$ for each~$i$.

Note that every nuclear generator is rigidly conjugate to a model nuclear generator, so there are only finitely many rigid conjugacy classes of nuclear generators.  The following proposition generalizes this.

\begin{proposition}[Criterion for rigid conjugacy]\label{prop:ConjugateNormalish}
Two exact normalish forms $h_1\cdots h_n$ and $h_1'\cdots h_n'$ are rigidly conjugate if and only if they are either both complete or both incomplete, and $h_i'$ is rigidly conjugate to $h_i$ for each~$i$.
\end{proposition}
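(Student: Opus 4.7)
The plan is to prove the two directions of the biconditional separately, with the forward direction essentially unpacking the definitions and the backward direction leveraging \cref{cor:MappingConesWithV}. Throughout, let $A_i$ denote the nuclear code of $h_i$, let $A_i'$ denote the nuclear code of $h_i'$, and set $A = \bigsqcup_{i=1}^n A_i$ and $A' = \bigsqcup_{i=1}^n A_i'$; these are genuine disjoint unions because the domains of support of the $h_i$ (resp.\ $h_i'$) are pairwise disjoint in a normalish form.

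For the forward direction, suppose $c \in V_{\Gamma,E}$ is a rigid conjugator from $h_1\cdots h_n$ to $h_1'\cdots h_n'$. By definition this means $c$ simultaneously conjugates each $h_i$ to $h_i'$ while preserving nuclear states, which already yields one half of the conclusion. For the completeness/incompleteness claim, I would observe that $c$ maps the cones indexed by $A$ bijectively onto the cones indexed by $A'$ via canonical similarities, so $c\bigl(\bigcup_{\alpha\in A}\C_\alpha\bigr) = \bigcup_{\alpha'\in A'}\C_{\alpha'}$; since $c$ is a self-homeomorphism of $E$, the first union equals $E$ if and only if the second does.

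For the backward direction, suppose each $h_i$ is rigidly conjugate to $h_i'$ via some $c_i \in V_{\Gamma,E}$ (the $c_i$ can be different), and the two forms are either both complete or both incomplete. Each $c_i$ restricts to a bijection $A_i \to A_i'$ that matches terminus nodes pairwise and preserves nuclear states. Because the $A_i$ are pairwise disjoint, I can assemble these individual bijections into a single state-preserving bijection $\phi\colon A \to A'$ with $t(\alpha) = t(\phi(\alpha))$ for every $\alpha \in A$. My goal is then to realize $\phi$ by a single element $c \in V_{\Gamma,E}$ acting as the canonical similarity $\C_\alpha \to \C_{\phi(\alpha)}$ on each cone of $A$.

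In the complete case, $A$ and $A'$ are complete codes, so specifying canonical similarities on each cone of $A$ already defines a Thompson-like self-homeomorphism of $E$, which is by definition an element of $V_{\Gamma,E}$. In the incomplete case, neither $\bigcup_{\alpha\in A}\C_\alpha$ nor $\bigcup_{\alpha'\in A'}\C_{\alpha'}$ is all of $E$, so the hypotheses of \cref{cor:MappingConesWithV} are met and produce the desired $c \in V_{\Gamma,E}$. In either case, by construction $c$ simultaneously conjugates each $h_i$ to $h_i'$ preserving nuclear states, hence is a rigid conjugator of the full forms. The only mild subtlety is the case split, which is forced because \cref{cor:MappingConesWithV} requires proper codes; this is precisely the reason the ``both complete or both incomplete'' hypothesis is needed (and, dually, is visible in the forward direction).
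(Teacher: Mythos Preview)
Your proof is correct and follows essentially the same approach as the paper. The paper's backward direction is terser—it simply asserts that a single $c\in V_{\Gamma,E}$ agreeing with each $c_i$ on the domain of support of $h_i$ exists—whereas you spell out the construction explicitly via the complete/incomplete case split and the invocation of \cref{cor:MappingConesWithV}, which is exactly the content behind the paper's one-line claim.
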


\begin{proof}The forward direction is trivial.  For the converse, suppose the given normalish forms satisfy the given conditions.  For each $i$, let $c_i$ be a rigid conjugator from $h_i$ to $h_i'$. Since the nuclear codes for our normalish forms are either both complete or both incomplete, there exists $c\in V_{\Gamma,E}$ such that $c$ agrees with each $c_i$ on the domain of support of~$h_i$.  Then $c$ is the desired rigid conjugator from $h_1\cdots h_n$ to $h_1'\cdots h_n'$.
\end{proof}

\begin{corollary}\label{cor:FinitelyManyNormalishForms}
For any $N\in\N$, there are only finitely many rigid conjugacy classes of exact normalish forms $h_1\cdots h_n$ with $n\leq N$.
\end{corollary}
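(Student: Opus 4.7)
The plan is to reduce the counting to two ingredients: the finiteness of rigid conjugacy classes of single nuclear generators, and the classification of rigid conjugacy for exact normalish forms provided by \cref{prop:ConjugateNormalish}.

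First I would verify that there are only finitely many rigid conjugacy classes of nuclear generators. By construction, every nuclear generator is of the form $h = ch_Pc^{-1}$ for some model nuclear generator $h_P$ and some $c \in V_{\Gamma,E}$ that maps the nuclear code of $h_P$ to that of $h$ while preserving the associated nuclear states. That is precisely the definition of $c$ being a rigid conjugator from $h_P$ to $h$. Hence each nuclear generator is rigidly conjugate to one of the model nuclear generators. The model nuclear generators are indexed by the chosen finite generating set for $\ker(\partial_1)$, whose finiteness is guaranteed by \cref{lem:monoid_fp}. Thus there are only finitely many rigid conjugacy classes of nuclear generators; call this number $k$.

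Next, by \cref{prop:ConjugateNormalish}, the rigid conjugacy class of an exact normalish form $h_1 \cdots h_n$ is determined by (i) whether its nuclear code is complete or incomplete and (ii) the rigid conjugacy class of each individual factor $h_i$. Consequently, the number of rigid conjugacy classes of exact normalish forms of length exactly $n$ is bounded above by $2k^n$. Summing over $0 \leq n \leq N$ gives the finite bound $\sum_{n=0}^{N} 2k^n$, which proves the corollary. There is no real obstacle here; the work has already been done in \cref{prop:ConjugateNormalish} and \cref{lem:monoid_fp}, and this corollary just packages those facts into the counting statement needed for the subsequent finite-presentation argument.
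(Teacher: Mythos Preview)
Your proof is correct and follows essentially the same approach as the paper: both argue that every nuclear generator is rigidly conjugate to one of the finitely many model nuclear generators, and then invoke \cref{prop:ConjugateNormalish} to conclude. You simply spell out an explicit bound where the paper says the result ``follows easily.''
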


\begin{proof}
Any nuclear generator is rigidly conjugate to a model nuclear generator, so there are only finitely many rigid conjugacy classes of nuclear generators. 
 The result follows easily from this and \cref{prop:ConjugateNormalish}.
\end{proof}

Now, fix an $N\in\N$ so that each generator for $\ker(\partial_1)$ has at most $N$ summands, and choose representatives for the rigid conjugacy classes of exact normalish forms $h_1\cdots h_n$ with $n\leq N$.  We refer to these finitely many representatives as \newword{model normalish forms}.

We are now ready to state the relations in our finite presentation for~$G$. For convenience, we actually state another infinite presentation, with generators consisting of the elements of $V_{\Gamma,E}$ together with all nuclear generators.  However, the presentation we give will easily reduce to a finite presentation through Tietze transformations. (See \cite[Section~1.5]{magnus66} for an account of using Tietze transformations with infinite presentations.)

\begin{definition}[Defining relations for $G$]\label{def:RelFin}
Let $\Rel_{\mathrm{fin}}$ be the following set of relations.

\begin{description}
\listlabel{VRel} All the relations in $V_{\Gamma,E}$.\smallskip

\listlabel{Model} For each nuclear generator $h$ that is not a model nuclear generator, one relation of the form $h=ch_Pc^{-1}$, where $h_P$ is a model nuclear generator and $c$ is a rigid conjugator from $h_P$ to~$h$.\smallskip

\listlabel{VNucFin} Every true relation of the form $h_P=f$, where $h_P$ is a model nuclear generator and $f\in V_{\Gamma,E}$.\smallskip

\listlabel{CentConjFin} For each model nuclear generator~$h_P$, relations $c_ih_Pc_i^{-1}=h_P$, where $\{c_1,\ldots,c_n\}$ is some finite generating set for the rigid centralizer of~$h_P$ (see \cref{prop:rigid_cent_fg}).\smallskip

\listlabel{DisjCommFin} For each pair $h_P,h_{P'}$ of model nuclear generators, one relation $[h_P,h]=1$, where $h$ is some nuclear generator that is rigidly conjugate to $h_{P'}$, and such that $h_Ph$ is an incomplete normalish form.\smallskip

\listlabel{RefineFin} For each model nuclear generator $h_P$ and each elementary refinement $B$ of the nuclear code for $h_P$, one corresponding relation $h_P=fh_1\cdots h_n$ of type \Refine.\smallskip

\listlabel{InvFin} For each model nuclear generator $h_P$, one corresponding relation $h_P^{-1}=fh_1\cdots h_n$ of type \Inv.\smallskip

\listlabel{ProdFin} For each model normalish form $h_1\cdots h_m$, say with nuclear code $A$, and each nuclear generator $h$ whose nuclear code $B$ is a subset of $A$, one corresponding relation $h_1\cdots h_mh^{-1}=fh_1'\cdots h_n'$ of type \Prod.
\end{description}
\end{definition}

Note that all of these sets of relations are finite except for \VRel\ and \Model, which we plan to simplify with Tietze transformations.  In particular, we should point out that for \ProdFin, even though $h$ is not necessarily a model nuclear generator, nonetheless there can be only finitely many such $h$ whose nuclear code is a subset of~$A$.  Note also that all of the relations in $\Rel_{\mathrm{fin}}$ are contained in $\Rel_{\mathrm{inf}}$, with \Model\ and \CentConjFin\ both being subsets of \Conj.

Our goal is to prove that there exists a choice of infinite presentation $\Rel_{\mathrm{inf}}$ as described in \cref{ssec:infinitepres} so that all of the relations in $\Rel_{\mathrm{inf}}$ follow from those in $\Rel_{\mathrm{fin}}$.  In particular, this will involve certain choices of the relations in \Refine, \Inv, and \Prod\  in $\Rel_{\mathrm{inf}}$ so that they follow from those in $\Rel_{\mathrm{fin}}$.  This will prove that $\Rel_{\mathrm{fin}}$ is a set of defining relations for~$G$.  We will then use Tietze transformations to deduce that $G$ is finitely presented.
\newcommand{\eqfin}{\equiv_{\mathrm{fin}}}%
For the remainder of this section, if $w$ and $w'$ are words, write $w\eqfin w'$ if the relations of $\Rel_{\mathrm{fin}}$ imply that~$w=w'$.

\begin{lemma}\label{lem:HaveVNuc}
The relations in\/ {\normalfont\VNuc} follow from those in\/ $\Rel_{\mathrm{fin}}$.
\end{lemma}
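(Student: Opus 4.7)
The plan is to exploit the fact that every nuclear generator is rigidly conjugate to a model nuclear generator, so that any true relation $h=f$ between a nuclear generator $h$ and an element $f\in V_{\Gamma,E}$ can be reduced, via \Model\ and \VRel, to a true relation of the same form involving a model nuclear generator. The relation in the latter form is then available directly from \VNucFin.

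In detail, suppose $h=f$ is a true relation in $G$ with $h$ a nuclear generator and $f\in V_{\Gamma,E}$. I would first treat the trivial case: if $h$ is itself a model nuclear generator, then $h=f$ is exactly one of the relations listed in \VNucFin, so $h\eqfin f$. Otherwise, the relation \Model\ provides a relation $h\eqfin ch_Pc^{-1}$ for some model nuclear generator $h_P$ and some rigid conjugator $c\in V_{\Gamma,E}$. Since $h=f$ in $G$, the element $f':=c^{-1}fc$ lies in $V_{\Gamma,E}$ (because $V_{\Gamma,E}$ is a group and $c,f\in V_{\Gamma,E}$), and $h_P=c^{-1}hc=c^{-1}fc=f'$ is therefore a \emph{true} relation of the form appearing in \VNucFin. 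Thus $h_P\eqfin f'$.

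Conjugating this relation by $c$ is a purely $V_{\Gamma,E}$-level computation, namely the relation $c(c^{-1}fc)c^{-1}=f$ holds in $V_{\Gamma,E}$ and so follows from \VRel. Combining, we obtain
\[
h\;\eqfin\; c h_P c^{-1}\;\eqfin\; c f' c^{-1}\;\eqfin\; f,
\]
as required. There is no significant obstacle here: the finite presentation was set up precisely so that \Model\ and \VRel\ together ``spread'' the model-generator relations in \VNucFin\ to all nuclear generators. The only small check is that $c^{-1}fc\in V_{\Gamma,E}$, which is immediate from $V_{\Gamma,E}$ being a group.
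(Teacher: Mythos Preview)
Your proof is correct and follows essentially the same approach as the paper's: use \Model\ to replace $h$ by a conjugate $ch_Pc^{-1}$ of a model nuclear generator, invoke \VNucFin\ for the resulting true relation $h_P=c^{-1}fc$, and then undo the conjugation using \VRel. Your explicit handling of the case where $h$ is already a model nuclear generator is a small improvement in clarity over the paper, which tacitly allows $c=1$ in that case.
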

\begin{proof}
Let $h$ be a nuclear generator, and suppose $h=f$ for some $f\in V_{\Gamma,E}$.  By \Model, we know that $h\eqfin ch_Pc^{-1}$ for some model nuclear generator $h_P$ and some $c\in V_{\Gamma,E}$.  Then $h_P$ also lies in $V_{\Gamma,E}$, so by \VNucFin, $h_P\eqfin f'$ for some $f'\in V_{\Gamma,E}$.  Then $h\eqfin cf'c^{-1}$, and by \VRel\ we have $cf'c^{-1}\eqfin f$.
\end{proof}

\begin{lemma}\label{lem:HaveConj}
The relations in\/ {\normalfont\Conj} follow from those in\/ $\Rel_{\mathrm{fin}}$.
\end{lemma}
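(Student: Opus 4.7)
The plan is to reduce any relation of the form $h'=chc^{-1}$ from \Conj\ to the relations already available in $\Rel_{\mathrm{fin}}$, using \Model\ to pass to a model nuclear generator and then using \CentConjFin\ to absorb the resulting correction, which will lie in the rigid centralizer of that model.

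First, observe that rigid conjugacy is an equivalence relation on nuclear generators: the identity of $V_{\Gamma,E}$ is a rigid conjugator from $h$ to itself, the inverse of a rigid conjugator from $h$ to $h'$ is a rigid conjugator from $h'$ to $h$, and the composition of rigid conjugators composes. Since every nuclear generator is rigidly conjugate to a model nuclear generator, and in any rigid conjugacy class there is a unique model (by construction), both $h$ and $h'$ must be rigidly conjugate to a common model nuclear generator $h_P$. By \Model, we obtain elements $c_1,c_2\in V_{\Gamma,E}$ which are rigid conjugators from $h_P$ to $h$ and from $h_P$ to $h'$ respectively, and such that $h \eqfin c_1 h_P c_1^{-1}$ and $h'\eqfin c_2 h_P c_2^{-1}$.

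Substituting these into $chc^{-1}$ and using \VRel\ to handle the purely $V_{\Gamma,E}$-arithmetic, we compute
\[
chc^{-1} \eqfin (cc_1)\, h_P\,(cc_1)^{-1} \eqfin c_2\bigl(c_2^{-1}cc_1\bigr) h_P \bigl(c_2^{-1}cc_1\bigr)^{-1} c_2^{-1}.
\]
Set $b = c_2^{-1}cc_1 \in V_{\Gamma,E}$. By composition of rigid conjugators, $b$ is a rigid conjugator from $h_P$ to itself, i.e.\ $b$ lies in the rigid centralizer of $h_P$. Writing $b$ as a word in the chosen finite generating set $\{c_1^P,\ldots,c_n^P\}$ of this rigid centralizer and applying \CentConjFin\ inductively (together with \VRel\ to commute the generators through the conjugation and to handle inverses via the identity $c_i^{-1}h_Pc_i \eqfin h_P$, which follows from $c_ih_Pc_i^{-1}\eqfin h_P$ and \VRel), we deduce $b\,h_P\,b^{-1} \eqfin h_P$. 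Substituting this back gives
\[
chc^{-1} \eqfin c_2\, h_P\, c_2^{-1} \eqfin h',
\]
which is exactly the instance of \Conj\ that we wanted. Hence every relation in \Conj\ is a consequence of $\Rel_{\mathrm{fin}}$.

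The only subtlety worth flagging is the claim that $c_1$ and $c_2$ can be chosen to target the \emph{same} model $h_P$; this is what lets $b = c_2^{-1}cc_1$ make sense as a rigid self-conjugator. This is not an obstacle but is the key observation on which the argument turns, and it follows immediately from the transitivity of rigid conjugacy together with the uniqueness of the model representative in a rigid conjugacy class built into \Model.
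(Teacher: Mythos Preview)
Your argument is essentially identical to the paper's: both pass to the common model $h_P$ via \Model, observe that the composite $c_2^{-1}cc_1$ (the paper's $d_2^{-1}cd_1$) lies in the rigid centralizer of $h_P$, and then invoke \CentConjFin\ together with \VRel\ to collapse it. The only point the paper handles that you gloss over is the edge case where $h$ or $h'$ already equals $h_P$, so no \Model\ relation exists for it; there one simply takes $c_i=1$, which you should state explicitly.
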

\begin{proof}
Let $h_2=ch_1c^{-1}$ be a relation in \Conj, where $h_1,h_2$ are nuclear generators and $c$ is a rigid conjugator from $h_1$ to $h_2$.  Then $h_1$ and $h_2$ have the same nuclear states, so they are rigidly conjugate to the same model nuclear generator $h_P$.  By \Model, we know that $h_1\eqfin d_1h_Pd_1^{-1}$ and $h_2\eqfin d_2h_Pd_2^{-1}$ for some rigid conjugators $d_1$ and $d_2$. (In the special case where $h_i=h_P$, there is no such relation in \Model, but in this case we can take $d_i=1$.) Then
\[
h_P = d_2^{-1}h_2d_2 = d_2^{-1}ch_1c^{-1}d_2 = d_2^{-1}cd_1h_Pd_1^{-1}c^{-1}d_2 \text{,}
\]
so $d_2^{-1}cd_1$ is in the rigid centralizer of $h_P$.  By \CentConjFin, we know that $[h_P,c_i]\eqfin 1$ for each generator $c_i$ of this rigid centralizer, and by \VRel\ it follows that $[h_P,d_2^{-1}cd_1]\eqfin 1$, so $d_2h_Pd_2^{-1}\eqfin cd_1h_Pd_1^{-1}c^{-1}$.  Then
\[
h_2 \eqfin d_2h_Pd_2^{-1} \eqfin cd_1h_Pd_1^{-1}c^{-1}
\eqfin  
ch_1c^{-1}
\]
as desired.
\end{proof}

\begin{lemma}\label{lem:HaveRefine}
We can choose the relations in\/ {\normalfont\Refine} to follow from those in\/~$\Rel_{\mathrm{fin}}$.
\end{lemma}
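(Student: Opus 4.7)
My plan is to reduce the refinement relations for an arbitrary nuclear generator to those for the model nuclear generators, using the already-established conjugacy machinery. Fix a nuclear generator $h$ with nuclear code $A$ and an elementary refinement $B$ of $A$. By \Model{} together with the previously proved \cref{lem:HaveConj}, I can write $h \eqfin c h_P c^{-1}$ for some model nuclear generator $h_P$ (with nuclear code $A_P$) and rigid conjugator $c\in V_{\Gamma,E}$, taking $c=1$ in the edge case $h=h_P$. Because $c$ maps $A_P$ to $A$ by canonical similarities (preserving the nuclear states), the elementary refinement $B$ of $A$ pulls back through $c$ to an elementary refinement $B_P$ of $A_P$.

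Next, I invoke \RefineFin{} at $(h_P, B_P)$, which provides a specific relation $h_P \eqfin f' h_1'\cdots h_n'$ whose right-hand side is a normalish form for $h_P$ with nuclear code $B_P$. Substituting into $h \eqfin c h_P c^{-1}$ yields $h \eqfin c f' h_1'\cdots h_n' c^{-1}$. Inserting factors $c^{-1}c=1$ between consecutive symbols (justified by \VRel) regroups this as
\[
h \eqfin (cf'c^{-1})(ch_1'c^{-1})(ch_2'c^{-1})\cdots (ch_n'c^{-1}).
\]
The element $cf'c^{-1}$ is a product of members of $V_{\Gamma,E}$, so by \VRel{} it equals some single $f\in V_{\Gamma,E}$. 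For each $i$, if $B_i'\subseteq B_P$ is the nuclear code of $h_i'$, then $c$ restricts to a rigid conjugator from $h_i'$ to the nuclear generator $h_i$ whose nuclear code is $\{c(\beta):\beta\in B_i'\}\subseteq B$ (with the same nuclear states). By \cref{lem:HaveConj}, which shows that \Conj{} follows from $\Rel_{\mathrm{fin}}$, we get $ch_i'c^{-1}\eqfin h_i$ for each $i$.

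Combining these simplifications yields $h \eqfin f h_1\cdots h_n$. Since the sets $B_i'$ partition $B_P$ and $c$ acts bijectively via canonical similarities on the relevant cones, the nuclear codes $\{c(\beta):\beta\in B_i'\}$ partition $B$, so the domains of support of the $h_i$ are pairwise disjoint and the product is a genuine normalish form for $h$ with nuclear code exactly $B$. I declare this to be the chosen relation for $(h,B)$ in \Refine, completing the proof. The only subtle point, and the one I would check carefully, is that $c$ really does restrict to a rigid conjugator on each sub-piece $h_i'$; this follows because $c$ maps the cones indexed by $A_P$ to those indexed by $A$ via canonical similarities, and both $B_P$ and $B$ are elementary refinements lying inside these cones, so the state-preserving bijection at the level of $A_P$ automatically descends to the refined codes.
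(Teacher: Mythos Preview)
Your proof is correct and follows essentially the same approach as the paper: pull the elementary refinement $B$ back through the rigid conjugator $c$ to an elementary refinement $B_P$ of the model code, apply \RefineFin{} there, and then push the resulting normalish form forward again using \Conj{} (via \cref{lem:HaveConj}) and \VRel{}. The paper phrases the conjugation step slightly more compactly, treating the whole exact normalish form $h_1'\cdots h_n'$ as a single object rigidly conjugated by $c$, whereas you conjugate each $h_i'$ separately, but this is the same argument.
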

\begin{proof}
Let $h$ be a nuclear generator with nuclear code~$A$, and let $B$ be an elementary refinement of $A$.  From \Model, we have $h\eqfin ch_Pc^{-1}$, where $h_P$ is a model nuclear generator and $c$ is a rigid conjugator from $h_P$ to $h$. Let $A'$ be the nuclear code for $h_P$, so $c$ maps $A$ to $A'$.  Let $B'=\{\overline{c}(\beta) \mid \beta \in B\}$, so $B'$ is an elementary refinement of $A'$, and $c$ maps $B$ to $B'$.  From \RefineFin, we have a relation $h_P \eqfin f'w'$, where $w'$ is an exact normalish form with nuclear code $B'$. Then $c$ is a rigid conjugator from $w'$ to some exact normalish form $w$ with nuclear code~$B$. It follows from \Conj\ and \cref{lem:HaveConj} that $w\eqfin cw'c^{-1}$, and hence
\[
h\eqfin ch_Pc^{-1} \eqfin cf'w'c^{-1} \eqfin cf'c^{-1}w.
\]
Combining $cf'c^{-1}$ using \VRel, the right side becomes a normalish form with nuclear code $B$, as desired.
\end{proof}

\begin{lemma}\label{lem:HaveDisjComm}
The relations in\/ {\normalfont\DisjComm} follow from those in\/~$\Rel_{\mathrm{fin}}$.
\end{lemma}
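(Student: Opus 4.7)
The plan is to derive the general commutation $[h_1,h_2]\eqfin 1$ from the finite set of commutation relations in \DisjCommFin, handling two cases according to whether the exact normalish form $h_1 h_2$ is incomplete or complete. For the incomplete case, by \Model\ the generators $h_1$ and $h_2$ are rigidly conjugate to model nuclear generators $h_{P_1}$ and $h_{P_2}$, and \DisjCommFin\ supplies a nuclear generator $h'$ rigidly conjugate to $h_{P_2}$ with $[h_{P_1},h']\eqfin 1$ and $h_{P_1} h'$ incomplete. Since rigid conjugacy is transitive and both $h_1 h_2$ and $h_{P_1} h'$ are incomplete exact normalish forms, \cref{prop:ConjugateNormalish} yields a common $c\in V_{\Gamma,E}$ that is simultaneously a rigid conjugator from $h_{P_1}$ to $h_1$ and from $h'$ to $h_2$. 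Applying \cref{lem:HaveConj} to each of these conjugations and then combining gives $h_1 h_2\eqfin c(h_{P_1}h')c^{-1}\eqfin c(h'h_{P_1})c^{-1}\eqfin h_2 h_1$.

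The complete case would be reduced to the incomplete case by refining $h_1$ into at least two nuclear generators. Let $N$ be the uniform bound on the size of generators of $\ker(\partial_1)$ fixed in \cref{ssec:finitepres}. Iterating \cref{lem:HaveRefine}, I produce a chain of elementary refinements of $h_1$'s nuclear code whose total number of cones exceeds $N$. At each iterative step, when a sub-nuclear-generator $h^\ast$ is refined to $g h_1^\ast\cdots h_\ell^\ast$, the fresh $V_{\Gamma,E}$ factor $g$ is supported inside $h^\ast$'s domain of support, which is disjoint from the supports of all other sub-nuclear-generators in the current normalish form. Hence $g$ acts as the identity on the nuclear codes of those other sub-generators, so it is a rigid self-conjugator of each and commutes with each via \cref{lem:HaveConj}; this allows $g$ to be pushed to the left and absorbed into the leading $V_{\Gamma,E}$ prefix, keeping the expression in normalish form. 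After enough iterations I obtain $h_1\eqfin f' h_{1,1}\cdots h_{1,k}$ with $k\geq 2$, with the accumulated $f'$ supported in $h_1$'s domain of support, and each $h_{1,i}$ supported strictly inside $h_1$'s domain of support.

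Because $k\geq 2$, each product $h_{1,i}h_2$ is an incomplete normalish form, so the incomplete case gives $[h_{1,i},h_2]\eqfin 1$. The element $f'$ is supported inside $h_1$'s support and hence acts trivially on $h_2$'s nuclear code, making $f'$ a rigid self-conjugator of $h_2$; \cref{lem:HaveConj} then gives $[f',h_2]\eqfin 1$. Telescoping $h_1 h_2\eqfin f'h_{1,1}\cdots h_{1,k}h_2\eqfin f'h_2 h_{1,1}\cdots h_{1,k}\eqfin h_2 f'h_{1,1}\cdots h_{1,k}\eqfin h_2 h_1$ finishes the complete case. The main obstacle in the argument is the refinement step: one must ensure that sufficient iteration of elementary refinements genuinely forces $k\geq 2$, which requires using that $\Sigma_\Gamma$ has no isolated points (so every cone can be split arbitrarily finely along directed paths reaching forks) together with the fact that the nuclear states of a single nuclear generator sum to a generator of $\ker(\partial_1)$ and hence have at most $N$ summands, so once the total count exceeds $N$ the normalish form given by \Refine\ must split into multiple nuclear generators.
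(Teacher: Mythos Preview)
Your proposal is correct and follows essentially the same two-case strategy as the paper's proof. The incomplete case is identical: both invoke \DisjCommFin, transitivity of rigid conjugacy, \cref{prop:ConjugateNormalish}, and \cref{lem:HaveConj} to transport the model commutation relation. For the complete case, the paper refines $h_2$ (you refine $h_1$, which is symmetric) via \cref{lem:HaveRefine}, observes that the resulting $V_{\Gamma,E}$ factor is supported away from the other generator and hence commutes with it by \cref{lem:HaveConj}, and then iterates whenever the refinement yields a single nuclear generator, using the bound $N$ on nuclear-code size to force termination. Your description of the iteration is somewhat more elaborate than needed---you track movement of the $V_{\Gamma,E}$ factor past multiple sub-generators, whereas in practice the iteration only continues while there is exactly one---but the argument is sound.
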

\begin{proof}
Let $[h_1,h_2]=1$ be a relation in \DisjComm, where $h_1$ and $h_2$ are nuclear generators whose domains of support $E_1,E_2$ are disjoint.

Suppose first that $E_1\cup E_2\ne E$.  For $i=1,2$, let $h_{P_i}$ be a model nuclear generator that is rigidly conjugate to $h_i$.  By \DisjCommFin, we know that $[h_{P_1},h]\eqfin 1$, where $h$ is some nuclear generator that is rigidly conjugate to $h_{P_2}$, and such that $h_{P_1}h$ is an incomplete normalish form.  Since $h_1h_2$ is an incomplete normalish form, by \cref{prop:ConjugateNormalish} there exists $c\in V_{\Gamma,E}$ that rigidly conjugates $h_{P_1}h$ to $h_1h_2$.  Then $h_1\eqfin ch_{P_1}c^{-1}$ and $h_2\eqfin chc^{-1}$ by \Conj\ and \cref{lem:HaveConj}, so it follows that $[h_1,h_2]\eqfin 1$.

All that remains is the case where $E_1\cup E_2=E$.  In this case, let $A$ be the nuclear code for $h_2$, and let $B$ be an elementary refinement of~$A$. By \Refine\ and \cref{lem:HaveRefine}, we have $h_2\eqfin f'h_1'\cdots h_n'$, where the right side is some normalish form with nuclear code $B$. Note that $f'$ must be supported on $E_2$, so $[h_1,f']\eqfin 1$ by \Conj\ and \cref{lem:HaveConj}.  If $n\geq 2$, then $[h_1,h_i']\eqfin 1$ for each $i$ by the argument above, and therefore $[h_1,h_2]\eqfin 1$.  Now suppose $n=1$, so it suffices to prove that $[h_1,h_1']\eqfin 1$. Note that the nuclear code for $h_1'$ is strictly larger than the nuclear code for~$h_1$. Since there is a maximum size for the nuclear code of a nuclear generator, we can continue this process until \Refine\ gives us a normalish form with two or more nuclear generators, at which point we have reduced to the $n\ge 2$ case above, so we are done.
\end{proof}

\begin{lemma}[Refining again]\label{lem:RefineNormalishAgain}
Let $w$ be a normalish form with nuclear code $A$, and let $B$ be any refinement of $A$.  Then there exists a normalish form $w'$ with nuclear code $B$ such that $w\eqfin w'$.
\end{lemma}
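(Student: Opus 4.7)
The plan is to rerun the proof of \cref{lem:RefineNormalish} almost verbatim, with $\eqinf$ replaced by $\eqfin$ throughout, using the ``transfer'' lemmas \cref{lem:HaveRefine}, \cref{lem:HaveDisjComm}, \cref{lem:HaveConj}, and \cref{lem:HaveVNuc} to justify each invocation of a relation from $\Rel_{\mathrm{inf}}\setminus\Rel_{\mathrm{fin}}$. Since none of the ingredients is new, the only job is to chain these lemmas together carefully.

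First I would reduce to the case of an elementary refinement. Any refinement $B$ of $A$ is obtained by a finite sequence of elementary refinements $A = A^{(0)}, A^{(1)}, \ldots, A^{(k)} = B$, and $\eqfin$ is transitive, so it suffices to exhibit, for each step, a normalish form with nuclear code $A^{(j+1)}$ that is $\eqfin$-equivalent to one with nuclear code $A^{(j)}$.

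So suppose $B = (A\setminus\{\alpha_0\})\cup\{\beta_1,\ldots,\beta_k\}$, where $\C_{\beta_1},\ldots,\C_{\beta_k}$ are the maximal proper subcones of $\C_{\alpha_0}$, and write $w = fh_1\cdots h_n$. I would first apply \DisjComm\ (available via \cref{lem:HaveDisjComm}) to permute the $h_i$ so that $\alpha_0$ lies in the nuclear code $A_1$ of $h_1$. Setting $B_1 = (A_1\setminus\{\alpha_0\})\cup\{\beta_1,\ldots,\beta_k\}$, which is an elementary refinement of $A_1$, I would invoke \Refine\ (available via \cref{lem:HaveRefine}) to obtain $h_1 \eqfin f'h_1'\cdots h_m'$, a normalish form with nuclear code $B_1$. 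Substituting and using \VRel\ to combine $f$ with $f'$ gives
\[
w \;\eqfin\; (ff')\,h_1'\cdots h_m'\,h_2\cdots h_n,
\]
which is a normalish form with nuclear code $B$, as required.

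I do not expect any real obstacle here: the preceding lemmas have done exactly the work of making \Refine, \DisjComm, and the full relation set \VRel\ usable inside $\Rel_{\mathrm{fin}}$, so this step is essentially a bookkeeping exercise that upgrades the earlier $\eqinf$-statement to the finite setting.
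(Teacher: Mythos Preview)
Your proposal is correct and is essentially identical to the paper's own proof, which simply observes that by \cref{lem:HaveRefine} and \cref{lem:HaveDisjComm} the relations \Refine\ and \DisjComm\ follow from $\Rel_{\mathrm{fin}}$, so the proof of \cref{lem:RefineNormalish} carries over verbatim with $\eqfin$ in place of $\eqinf$. (Your citations of \cref{lem:HaveConj} and \cref{lem:HaveVNuc} are harmless but not actually needed, since the original proof of \cref{lem:RefineNormalish} only invokes \DisjComm, \Refine, and \VRel.)
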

\begin{proof}
By Lemmas~\ref{lem:HaveRefine} and~\ref{lem:HaveDisjComm}, the relations \Refine\ and \mbox{\DisjComm} follow from those in\/~$\Rel_{\mathrm{fin}}$.  Thus, applying the proof of \cref{lem:RefineNormalish} verbatim with $\eqfin$ in place of $\eqinf$ yields the desired~$w'$.
\end{proof}

\begin{lemma}\label{lem:HaveInv}
We can choose the relations in\/ {\normalfont\Inv} to follow from those in\/~$\Rel_{\mathrm{fin}}$.
\end{lemma}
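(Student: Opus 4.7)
The plan is to mirror the strategy of \cref{lem:HaveRefine}, but with \InvFin\ playing the role of \RefineFin. Given a nuclear generator $h$, if $h$ is itself a model nuclear generator then \InvFin\ already supplies the required relation, so I would assume otherwise. I would then invoke \Model\ to get $h\eqfin ch_Pc^{-1}$ for some model nuclear generator $h_P$ and rigid conjugator $c\in V_{\Gamma,E}$, which gives $h^{-1}\eqfin ch_P^{-1}c^{-1}$. By \InvFin, there is a chosen normalish form $h_P^{-1}\eqfin f_0 h_1^0\cdots h_n^0$, and the goal is to convert $c(f_0 h_1^0\cdots h_n^0)c^{-1}$ into a normalish form for $h^{-1}$ using only $\Rel_{\mathrm{fin}}$, then declare this to be the \Inv\ relation for~$h$.

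The subtlety, and the main obstacle, is that (in contrast to \cref{lem:HaveRefine}) the nuclear code $B_0$ of the chosen normalish form for $h_P^{-1}$ need not lie inside the domain of support of $h_P$; in particular $c$ might not act as a canonical similarity on each cone $\C_\beta$ with $\beta\in B_0$, so \Conj\ does not apply directly. To fix this, I would first choose a partition of $E$ into cones on which $c$ acts as a canonical similarity (such a partition exists since $c\in V_{\Gamma,E}$), take a common refinement $B^\ast$ of this partition and $B_0$, and apply \cref{lem:RefineNormalishAgain} to replace $f_0h_1^0\cdots h_n^0$ by a normalish form $f_1 h_1^1\cdots h_m^1$ with nuclear code~$B^\ast$, still $\eqfin$-equivalent to $h_P^{-1}$.

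With this preparation, $c$ acts as a canonical similarity on every cone of the nuclear code of each $h_i^1$. Inserting $c^{-1}c$ between adjacent factors gives
\[
h^{-1}\eqfin (cf_1c^{-1})(ch_1^1c^{-1})\cdots(ch_m^1c^{-1}).
\]
Using \VRel, $cf_1c^{-1}$ can be replaced by a single element $f\in V_{\Gamma,E}$. Each $ch_i^1c^{-1}$ is, by construction, the conjugate of a nuclear generator by a rigid conjugator, so by \Conj\ together with \cref{lem:HaveConj} it is $\eqfin$-equivalent to a nuclear generator $h_i$ whose nuclear code is $\{\bar c(\beta)\mid \beta\text{ in the nuclear code of }h_i^1\}$ and whose nuclear states agree with those of $h_i^1$. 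The domains of support of the $h_i$ are pairwise disjoint because $c$ is a homeomorphism and the domains of the $h_i^1$ are pairwise disjoint, so $fh_1\cdots h_m$ is a genuine normalish form.

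Consequently, I can declare the \Inv\ relation for $h$ to be the equality $h^{-1} = fh_1\cdots h_m$ just produced, which by construction follows from the relations in~$\Rel_{\mathrm{fin}}$. The crux of the argument is the refinement step: once $B_0$ has been refined past $c$'s partition, all remaining manipulations are free-group reshuffling combined with direct applications of \VRel, \Conj, and \cref{lem:HaveConj}, mirroring the proof of \cref{lem:HaveRefine} almost verbatim.
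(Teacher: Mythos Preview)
Your proof is correct and follows essentially the same route as the paper: pull back to the model generator via \Model, apply \InvFin, refine the resulting nuclear code so the conjugator acts by canonical similarities (\cref{lem:RefineNormalishAgain}), and then push forward using \Conj\ via \cref{lem:HaveConj}. The only imprecision is the phrase ``common refinement of this partition and $B_0$'', since $B_0$ need not be complete; what you really want (and clearly intend) is a refinement $B^\ast$ of $B_0$ such that $c$ acts as a canonical similarity on each $\C_\beta$ for $\beta\in B^\ast$, which is exactly the paper's formulation.
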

\begin{proof}
Let $h$ be a nuclear generator.  From \Model, we have $h\eqfin ch_Pc^{-1}$ for some model nuclear generator $h_P$, where $c$ is a rigid conjugator from $h_P$ to $h$. From \InvFin, we have $h_P^{-1}\eqfin w$, where $w$ is some normalish form with nuclear code $A$.  Let $B$ be a refinement of $A$ such that $c^{-1}$ acts as a canonical similarity on $\C_\beta$ for each $\beta\in B$. By \cref{lem:RefineNormalishAgain}, there exists a normalish form $w'$ with nuclear code $B$ such that $w\eqfin w'$.  Then $c^{-1}$ rigidly conjugates $w'$ to some normalish form $w''$.  By \Conj\ and \cref{lem:HaveConj} we know that $c^{-1}w'c\eqfin w''$, so $h^{-1}\eqfin ch_P^{-1}c^{-1} \eqfin cwc^{-1} \eqfin cw'c^{-1} \eqfin w''$.
\end{proof}

\begin{lemma}\label{lem:HaveProd}
We can choose the relations in\/ {\normalfont\Prod} to follow from those in\/ $\Rel_{\mathrm{fin}}$.
\end{lemma}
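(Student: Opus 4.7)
The plan is to derive each \Prod\ relation by reducing it to an instance of \ProdFin\ applied to a model normalish form, in the same rigid-conjugation-plus-refinement style as the preceding lemmas in this subsection.

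Given a \Prod\ input $(h_1\cdots h_m,h)$ with nuclear code $A=A_1\sqcup\cdots\sqcup A_m$ and $B\subseteq A$, I would first exploit the fact that $|B|\leq N$: since the $A_i$ are pairwise disjoint, $B$ meets at most $N$ of them, so after relabeling only $h_1,\ldots,h_k$ with $k\leq N$ can have domains of support that touch that of $h$. By \cref{lem:HaveDisjComm}, the remaining $h_{k+1},\ldots,h_m$ commute past $h^{-1}$, so I am reduced to deriving a relation for $(h_1\cdots h_k)h^{-1}$.

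Since $k\leq N$, \cref{cor:FinitelyManyNormalishForms} together with \cref{prop:ConjugateNormalish} furnishes a model normalish form $w_0=\tilde h_1\cdots\tilde h_k$ of the same rigid conjugacy class (in particular the same completeness status) as $h_1\cdots h_k$, together with a rigid conjugator $c\in V_{\Gamma,E}$ from $w_0$ to $h_1\cdots h_k$; let $\tilde A$ be the nuclear code of $w_0$. Setting $\tilde h=c^{-1}hc$ yields a nuclear generator with nuclear code $\tilde B=\{\overline{c^{-1}}(\beta)\mid\beta\in B\}\subseteq\tilde A$, and the \ProdFin\ relation associated to $(w_0,\tilde h)$ supplies an equality $w_0\tilde h^{-1}\eqfin\tilde f\,\tilde w$ in which $\tilde w$ is an exact normalish form with nuclear code containing $\tilde A\setminus\tilde B$. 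Conjugating both sides of this equality by $c$ by applying \cref{lem:HaveConj} to each constituent nuclear generator and collapsing $V_{\Gamma,E}$-prefixes via \VRel\ turns the left-hand side into $(h_1\cdots h_k)h^{-1}$ and the right-hand side into $(c\tilde fc^{-1})\cdot(c\tilde wc^{-1})$.

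To interpret $c\tilde wc^{-1}$ as a bona fide normalish form, I would refine $\tilde w$ by means of \cref{lem:RefineNormalishAgain} to a normalish form $\tilde w'$ whose nuclear code is fine enough for $c$ to restrict to a canonical similarity on each of its cones. Crucially, because $c$ is a rigid conjugator from $w_0$ to $h_1\cdots h_k$, it already acts as a canonical similarity on every cone of $\tilde A$, so the refinement can be chosen to leave the cones in $\tilde A\setminus\tilde B$ untouched; hence $\tilde A\setminus\tilde B$ still sits inside the refined nuclear code. Conjugating each nuclear generator of $\tilde w'$ by $c$ via \cref{lem:HaveConj} then yields a normalish form $w^*$ whose nuclear code is the $c$-image of that of $\tilde w'$ and therefore contains $c(\tilde A\setminus\tilde B)=(A_1\cup\cdots\cup A_k)\setminus B$. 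Reattaching the commuted factors $h_{k+1},\ldots,h_m$ on the right produces a normalish form for $wh^{-1}$ whose nuclear code contains $A\setminus B$, and I take this as the chosen \Prod\ relation. The principal subtlety is the refinement balancing act in this last step: the refinement must be simultaneously fine enough for $c$ to act cone-by-cone as a canonical similarity, yet coarse enough on $\tilde A\setminus\tilde B$ to preserve the required containment of nuclear codes, and it is precisely the rigidity of $c$ on $\tilde A$ that makes both conditions compatible.
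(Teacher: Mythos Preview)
Your argument is correct and follows essentially the same approach as the paper's proof: you reduce to a model normalish form via a rigid conjugator, invoke \ProdFin, refine so the conjugator acts by canonical similarities while preserving the $\tilde A\setminus\tilde B$ cones, and then conjugate back. The only difference is organizational---the paper treats the case ``$B$ meets every $A_i$'' first and then reduces the general case to it, whereas you always begin by commuting the non-meeting factors past $h^{-1}$; these are the same idea packaged differently.
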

\begin{proof}
Let $w$ be an exact normalish form  $h_1\cdots h_m$ with nuclear code~$A$, and let $h$ be a nuclear generator whose nuclear code $B$ is a subset of~$A$.  Suppose first that $B$ intersects the nuclear code of each~$h_i$.  Since $|B|\leq N$ by the definition of $N$ (see the text after \cref{cor:FinitelyManyNormalishForms}), it follows that $m\leq N$, so $w$ is rigidly conjugate to one of our model normalish forms $w_0$, say with nuclear code~$A_0$.  Let $c$ be a rigid conjugator from $w$ to $w_0$, and note that $c$ also rigidly conjugates $h$ to some nuclear generator $h_0$ whose nuclear code $B_0$ is contained in~$A_0$.  By \Conj\ and \cref{lem:HaveConj}, we know that $w_0h_0^{-1} \eqfin cwh^{-1}c^{-1}$. By \ProdFin, we have $w_0h_0^{-1} \eqfin f_0w_0'$, where $f_0\in V_{\Gamma,E}$ and $w_0'$ is some exact normalish form whose nuclear code contains $A_0\setminus B_0$.

Now observe that $c^{-1}$ acts as a canonical similarity on $\C_{\alpha}$ for each $\alpha\in A_0\setminus B_0$.  Let $A_1$ be a refinement of $A_0$ that contains $A_0\setminus B_0$ and such that $c^{-1}$ acts as a canonical similarity on $C_\alpha$ for each $\alpha\in A_1$.  By \cref{lem:RefineNormalishAgain}, there exists $f_1\in V_{\Gamma,E}$ and an exact normalish form $w_1$ with nuclear code $A_1$ so that $f_0w_0'\eqfin f_1w_1$.  Then $c^{-1}$ rigidly conjugates $w_1$ to some exact normalish form $w_2$ whose nuclear code contains $A\setminus B$, and it follows from \Conj\ and \cref{lem:HaveConj} that $w_2\eqfin c^{-1}w_1c$.  Then
\[
wh^{-1} \eqfin c^{-1}w_0h_0^{-1} c \eqfin c^{-1}f_0w_0'c \eqfin c^{-1}f_1w_1c \eqfin c^{-1}f_1cw_2
\]
and combining the $c^{-1}f_1c$ on the right using \VRel\ yields the desired normalish form.

All that remains is the case where $B$ does not intersect the nuclear code of each $h_i$ in the original normalish form $h_1\cdots h_m$.  By \DisjComm\ and \cref{lem:HaveDisjComm}, we can permute the $h_i$ freely, so we may assume that $B$ intersects the nuclear codes of $h_1,\ldots,h_j$ for some $j$ and does not intersect the nuclear codes of $h_{j+1},\ldots,h_m$. By \DisjComm\ and \cref{lem:HaveDisjComm}, we know that
\[
h_1\cdots h_m h^{-1} \eqfin h_1\cdots h_j h^{-1} h_{j+1}\cdots h_m.
\]
Let $A_1'$ and $A_2'$ be the nuclear codes for $h_1\cdots h_j$ and $h_{j+1}\cdots h_m$, respectively, and note that $B\subseteq A_1'$. By the argument above, there exists a normalish form $w''$ whose nuclear code $A''$ contains $A_1'\setminus B$ such that $h_1\cdots h_jh^{-1}\eqfin w''$. Then $wh^{-1} \eqfin w''h_{j+1}\cdots h_m$, and the word on the right is a normalish form whose nuclear code contains $(A_1'\setminus B)\cup A_2' = A\setminus B$, as desired.
\end{proof}

\begin{proof}[Proof of \cref{thrm:fin_pres}]
By Lemmas~\ref{lem:HaveVNuc}, \ref{lem:HaveConj}, \ref{lem:HaveRefine}, \ref{lem:HaveDisjComm}, \ref{lem:HaveInv}, and \ref{lem:HaveProd}, all of the relations $\Rel_{\mathrm{inf}}$ listed in \cref{def:RelInf} follow from the relations $\Rel_{\mathrm{fin}}$ listed in \cref{def:RelFin}.  By \cref{prop:RelInfSuffices}, the relations in $\Rel_{\mathrm{inf}}$ give a presentation for $G$, and therefore the relations in $\Rel_{\mathrm{fin}}$ do as well.

The relations in $\Rel_{\mathrm{fin}}$ use an infinite generating set consisting of all the elements of $V_{\Gamma,E}$ together with all of the nuclear generators.  Furthermore, the relations \VRel\ and \Model\ are infinite families.  However, for each relation $h=ch_Pc^{-1}$ in \Model, we can use a Tietze transformation to remove this relation as well as the generator~$h$, leaving us with only the finitely many model nuclear generators.  Furthermore, since $V_{\Gamma,E}$ is finitely presented by \cref{cor:V_fp}, we can use (non-elementary) Tietze transformations to remove all but finitely many of the generators from $V_{\Gamma,E}$ and all but finitely many of the relations from \VRel, which leaves us with a finite presentation for~$G$.
\end{proof}

We would like to reiterate \cref{quest:F_infty}, which asks whether all full, contracting RSGs have type $\F_{\!\infty}$, which is stronger than being finitely presented. Note that this is true for the special case of $V_{\Gamma,E}$, by \cref{cor:V_fp}. In terms of a topological proof, for $V_{\Gamma,E}$ one can mimic the ``standard'' approach to proving type $\F_{\!\infty}$ for certain Thompson-like groups. As soon as $\Nuc$ contains non-identity elements however, several steps of the general proof outline break down and do not have clear alternatives. Even for contracting R\"over--Nekrashevych groups, it remains an open question whether they always have type $\F_{\!\infty}$.

\section{Embedding hyperbolic groups into RSGs}\label{sec:hyp}

The main result of \cite{BelkBleakMatucci} is that every hyperbolic group admits a faithful rational representation \cite[Theorem~1]{BelkBleakMatucci}, and the main result of this section is the following improvement:

\begin{theorem}\label{thrm:hyp_to_contracting}
Every hyperbolic group embeds into a full, contracting~RSG.
\end{theorem}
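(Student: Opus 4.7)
The plan is to realize $G$ as a subgroup of the full closure of its action on its horofunction boundary, $H := [[\,G\mid \partial_h G\,]]$, and prove that this $H$ is a full, contracting RSG. First, I would reduce to the case where $G$ has a proper $\Z$ free factor by replacing $G$ with $G*\Z$, which is hyperbolic and in which $G$ embeds. The reason for this reduction is that the contracting property needs the ambient subshift to have an irreducible core, and this hypothesis is precisely what the construction of \cite{BelkBleakMatucci} produces in the presence of such a free factor. Invoking that construction, I get a finite directed graph $\Gamma$ with an irreducible core, a clopen subset $E\subseteq \Sigma_\Gamma$ identified with $\partial_h G$, and a faithful rational representation $G\hookrightarrow \R_{\Gamma,E}$ in which each $g\in G$ acts as a finite-state transducer on horofunction addresses.

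Next, I would show $H\le \R_{\Gamma,E}$ and $V_{\Gamma,E}\le H$. The inclusion $H\le \R_{\Gamma,E}$ follows because an element of $H$ is piecewise given by finitely many rational elements of $G$ on a cone partition of $E$, and Lemmas~\ref{lem:restrict_twice} and \ref{lem:restrict_composition} show that only finitely many distinct local actions can arise from such a finite reshuffle. The inclusion $V_{\Gamma,E}\le H$ comes from the fact that the \cite{BelkBleakMatucci} address system is arranged so that every canonical similarity between cones of $E$ locally agrees with some element of $G$; since $H$ is closed under local agreement, all of $V_{\Gamma,E}$ lies in $H$. Combining these with \cref{prop:RSGsAndV}, $H$ is a full RSG containing $G$, and hence $G$ embeds in a full RSG.

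The main obstacle is showing $H$ is contracting, i.e.\ that $\Nuc_H$ is finite. The strategy is to identify local actions $g|_\alpha$ for deep cones $\C_\alpha$ with germs of the $G$-action at boundary points, and bound the set of such germs that can recur infinitely often. Each point of $\partial_h G$ has a virtually cyclic group of germs under the $G$-action (this is what \cref{prop:CyclicStabilizers} provides), and hyperbolicity gives exponential convergence of geodesics toward a common boundary point, so the local action at a very deep cone $\C_\alpha$ is determined up to finitely many possibilities by bounded-distance data near the endpoint of $\alpha$. Making this precise — controlling uniformly across $g\in G$ how many distinct transducer states can recur infinitely often, using thin quadrilateral estimates in the Cayley graph together with compactness of $\partial_h G$ — is the technical core, and is exactly what \cref{lem:contractinglemma} is set up to deliver. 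This is the hard part, and it is where hyperbolicity is used most essentially; all earlier steps are formal consequences of the address construction and the machinery developed in \cref{sec:the_groups}.

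Once $\Nuc_H$ is known to be finite and $\Sigma_\Gamma$ has an irreducible core, \cref{def:contracting} certifies that $H$ is contracting, so $G\le H$ is the desired embedding into a full, contracting RSG, establishing \cref{thrm:hyp_to_contracting}.
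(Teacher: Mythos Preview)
Your outline is essentially the paper's own approach: reduce to $G*\Z$, realize $G$ rationally on $\partial_h G$ via the tree-of-atoms address system, show the image is an RSG, prove finite nucleus via the contracting lemma, and pass to the full closure. Two corrections are worth making. First, the irreducible-core property, faithfulness, and absence of isolated points for $\partial_h(G*\Z)$ are \emph{not} established in \cite{BelkBleakMatucci}; they are proved in this paper as \cref{thrm:FreeProductBoundary}, and this is exactly why the $G*\Z$ reduction is needed. Second, your invocation of \cref{prop:CyclicStabilizers} is circular: that proposition \emph{assumes} a finite nucleus, so it cannot be used to prove one. The virtually-cyclic-germ observation is in any case a red herring here; the actual mechanism for finiteness of $\Nuc_G$ is the signature argument in the proof of \cref{thrm:contracting}, which combines \cref{lem:contractinglemma} with the finiteness of cone types to show there are only finitely many equivalence classes of mapping triples, hence finitely many morphism-equivalence classes of local actions. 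Once $G$ itself is shown to be a contracting RSG, the passage to the full closure uses \cref{prop:NucleusHasProperties} and \cref{thrm:RSGCharacterization} rather than a direct verification that $V_{\Gamma,E}\le H$.
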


The key improvement here is ``local to global'': roughly speaking, in \cite{BelkBleakMatucci} it was proved that each element individually has only finitely many local actions, and here we prove that there is a common finite set of local actions containing all but finitely many of each element's local actions. In what follows we will often be discussing a group $G$ with some fixed finite generating set and associated word metric $d$.  In such cases we will often identify $G$ with its Cayley graph, and so for example refer to geodesics in $G$.  In this case we write $C(x)$ for the \newword{cone} on $x\in G$, namely $C(x)\coloneqq\{y\in G\mid d(1,y)=d(1,x)+d(x,y)\}$. For $g\in G$ we also write $|g|$ for the word length of $g$, i.e.\ $|g|\coloneqq d(1,g)$.  If $G$ happens to be hyperbolic, then we will also assume a constant of hyperbolicity $\delta>0$.  

In Subsections~\ref{ssec:horofunction} and~\ref{ssec:TypesOfAtoms}, we briefly recall the definition of the horofunction boundary, as well as the tree of atoms and associated machinery from~\cite{BelkBleakMatucci}. In \cref{ssec:free_Z_factor} we prove that the action of $G$ on its horofunction boundary is particularly well-behaved when $G$ has $\Z$ as a proper free factor. Finally, in Subsections~\ref{ssec:contractinglemma} and~\ref{ssec:contracting} we prove that the image of a hyperbolic group in the rational group is contracting as long as the action on the horofunction boundary is well-behaved.

\subsection{The horofunction boundary and the tree of atoms}\label{ssec:horofunction}

Gromov defined a compact boundary $\partial_h X$ for any metric space~$X$, known as the horofunction boundary (see \cite[Section~7.5]{Gromov1987} or \cite[Chapter~II.8]{BrHa}). If $G$ is a group and $d$ is the word metric on $G$ with respect to some finite generating set, then $\partial_h G$ is a compact, totally disconnected, metrizable space. We will define the horofunction boundary in this more restricted context.

Let $F(G,\Z)$ be the abelian group of all integer-valued functions on~$G$, and let $\oF(G,\Z)$ be the quotient of $F(G,\Z)$ by the subgroup of all constant functions. Viewing $F(G,\Z)=\Z^G$ as a topological space with the product topology, we also get a (quotient) topology on $\oF(G,\Z)$.  For each $x\in G$, let $d_x\colon G\to\Z$ be the function $d_x(y)\coloneqq d(x,y)$, and let $\od_x$ denote the image of this function in $\oF(G,\Z)$.  Then the mapping $x\mapsto \od_x$ defines a topological embedding $i\colon G \to \oF(G,\Z)$.

\begin{definition}[Horofunction boundary]
The \newword{horofunction boundary} $\partial_h G$ of $G$ is the set of all limit points of $i(G)$ in $\oF(G,\Z)$.
\end{definition}

A function $h\colon G\to \Z$ whose image in $\oF(G,\Z)$ lies in $\partial_h G$ is known as a \newword{horofunction}.  This terminology comes from hyperbolic geometry, where each point on the boundary of hyperbolic $n$-space has associated horofunctions whose level sets are horospheres.  The horofunctions associated to Gromov's horofunction boundary are similar to, but distinct from, the horofunctions for $\delta$-hyperbolic spaces introduced by Coornaert and Papadopoulos~\cite{Coornaert-Papadopoulos-1}.

It is a fact that $\partial_h G$ is always compact and totally disconnected \cite[Proposition~1.28]{BelkBleakMatucci}, and $G$ acts on $\partial_h G$ by homeomorphisms.  Unlike some other boundaries, the horofunction boundary is not a quasi-isometry invariant, and indeed the homeomorphism type of $\partial_h G$ can depend on the finite generating set chosen for~$G$.  The horofunction boundary is convenient for us primarily because it is totally disconnected, and as a result can sometimes be identified with a clopen subset of a subshift of finite type.

As described in \cite{BelkBleakMatucci}, the horofunction boundary of a group can be realized as the space of ends of a certain infinite, rooted tree called the tree of atoms, which we will now define.  Let $B_n$ denote the $n$-ball in $G$, that is, the ball of radius $n$ centered at the identity, and define an equivalence relation $\sim$ on $G$ by $x\sim y$ if $\od_x$ and $\od_y$ agree on $B_n$.  That is, $x\sim y$ if $d_x-d_y$ is constant on~$B_n$.  It turns out that there are only finitely many equivalence classes \cite[Proposition~3.3]{BelkBleakMatucci}, and these are the \newword{atoms for $\boldsymbol{B_n}$}.  Note that any atom for $B_{n+1}$ must be contained in an atom for~$B_n$.

\begin{definition}[Tree of atoms]
The \newword{tree of atoms} $\A(G)$ of $G$ is the tree with a vertex for each infinite atom of each $B_n$ ($n\ge 0$), and with an edge from an atom of $B_n$ to an atom of $B_{n+1}$ whenever the latter is contained in the former.
\end{definition}

For example, the root of $\A(G)$ is the unique atom of $B_0$, namely all of $G$ (assuming $G$ is infinite).  We denote by $\A_n(G)$ the vertices of $\A(G)$ representing the infinite atoms of $B_n$.  If $A$ is an atom for $B_n$, we denote by $\od_A$ the function on~$B_n$ (up to an additive constant) that is equal to the restriction of $\od_x$ to $B_n$ for all $x\in A$.

Each infinite atom $A\in\A_n(G)$ has a \newword{shadow}
\[
\partial A = \{\oh\in \partial_h G \mid \oh\text{ agrees with }\od_A\text{ on }B_n\}\text{,}
\]
which is a clopen subset of $\partial_h G$.  These form a basis for the topology on $\partial_h G$, and indeed $\partial_h G$ is homeomorphic to the space of ends of $\A(G)$, or equivalently the space of infinite descending paths in $\A(G)$ \cite[Theorem~3.6]{BelkBleakMatucci}. Note that this statement holds for arbitrary groups $G$, and in fact for arbitrary locally finite graphs under the path metric.

\begin{figure}
    \centering
    $\underset{\textstyle\rule{0pt}{14pt}\text{(a)}}{\fbox{\includegraphics{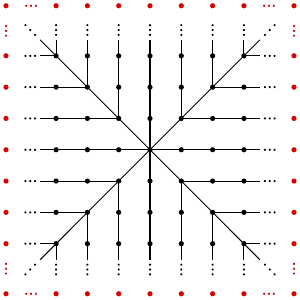}}}$\qquad
    $\underset{\textstyle\rule{0pt}{14pt}\text{(b)}}{\fbox{\includegraphics{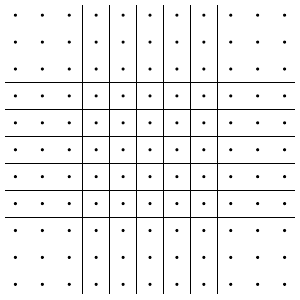}}}$
    \caption{(a) The tree of atoms for $\Z^2$ and the horofunction boundary. (b) The atoms for $B_3\subseteq \Z^2$.}
    \label{fig:Z2TreeOfAtoms}
\end{figure}
\begin{example}\label{ex:Z2}
\cref{fig:Z2TreeOfAtoms}(a) shows the tree of atoms for $\Z^2$ as well as the horofunction boundary $\partial_h\Z^2$ with respect to the generating set $\{(1,0),(0,1)\}$.  The set $\A_0(\Z^2)$ consists of the single root atom $\Z^2$, which we have placed in the center of the figure. For $n\geq 1$, the atoms of $B_n$ are the sets $X\times Y$, where each of $X$ and $Y$ is one of
$\{k \mid k\leq -n\}$, $\{k \mid k\geq n\}$, or a singleton set $\{k\}$ for $-n<k<n$.  For example, the atoms for $B_3$ are shown in \cref{fig:Z2TreeOfAtoms}(b).  Note that $8n$ of the atoms for $B_n$ are infinite, and thus appear in the tree of atoms.  Of these $8n$, four of them have three children each, while the remaining atoms have one child each.  Note that the elements of $\A(\Z^2)$ happen to be in one-to-one correspondence with the elements of~$\Z^2$, but this is just an artifact of this example.

As shown in \cref{fig:Z2TreeOfAtoms}(a), the horofunction boundary in this case is homeomorphic to the complement of $\Z^2$ in $\widehat{\Z}^2$, where $\widehat{\Z}=\Z\cup\{\pm\infty\}$ is the two-point compactification of~$\Z$.  For example, each point $(+\infty,n)\in \partial_h\Z^2$ corresponds to the horofunction $(x,y) \mapsto -x+|y-n|$, and the point $(-\infty,+\infty)\in\partial_h\Z^2$ corresponds to the horofunction $(x,y)\mapsto x-y$.
\end{example}

\begin{example}
If $F_n$ is a free group with a basis as generating set, then the atoms for $F_n$ are precisely the same as the cones, the tree of atoms is isomorphic to the Cayley graph of $F_n$, and the horofunction boundary is homeomorphic to the Gromov boundary~$\partial F_n$, i.e.\ the space of ends of the Cayley graph. In general, Webster and Winchester have proven that if $G$ is a hyperbolic group then the Gromov boundary $\partial G$ is a quotient of $\partial_h G$, with the quotient map $\partial_h G\twoheadrightarrow \partial G$ being finite-to-one~\cite{WeWi}.  Note that $\partial_h G$ is always totally disconnected, whereas $\partial G$ is often a connected space such as a sphere.
\end{example}

\begin{remark}
Even though the group $G$ acts on $\partial_h G$ by homeomorphisms, there is no natural action of $G$ on the tree of atoms or on the atoms themselves; indeed, the image of an atom under translation by an element of $G$ might not even be an atom.
\end{remark}

\begin{remark}
Note that each function $d_x$ is $1$-Lipschitz, meaning that $d_x(y)-d_x(z) \in \{-1,0,1\}$ for every adjacent pair of vertices $y$ and $z$ in $B_n$.  As a result, we can visualize $\od_x$ as a \newword{vector field} on~$B_n$, i.e.\ an assignment of directions to some subset of the edges of~$B_n$.  In particular, if $e$ is an edge connecting $y$ and $z$, we direct $e$ from $y$ to~$z$ if $d_x(y)-d_x(z)=1$, and from $z$ to $y$ if $d_x(y)-d_x(z)=-1$, leaving $e$ undirected if $d_x(y)=d_x(z)$.  Intuitively, all of the edges of $B_n$ are directed to point towards $x$, whenever possible.  Note that two $1$-Lipschitz functions induce the same vector field if and only if they differ by a constant, so such a vector field precisely determines~$\od_x$.  Clearly there are finitely many such vector fields on $B_n$, which is why $B_n$ has only finitely many different atoms. We will not make use of vector fields in this paper, but see \cite{BelkBleakRIMS} for more on this viewpoint, including pictures.
\end{remark}

\subsection{Types of atoms}\label{ssec:TypesOfAtoms}
Let $G$ be a group with a fixed finite generating set and associated word metric $d$, and let $\A(G)$ be the resulting tree of atoms for $G$.  The following definition is taken from \cite[Definition~3.7]{BelkBleakMatucci}.

\begin{definition}[Morphisms of subtrees, same type]\label{def:morphisms}
Given two infinite atoms $A_1\in \A_m(G)$ and $A_2\in \A_n(G)$ we say that an element $g\in G$ is a \newword{morphism} from $A_1$ to $A_2$ if the following hold:
\begin{enumerate}
    \item $gA_1=A_2$.\smallskip
    \item $g(A_1\cap B_{m+k}) = A_2\cap B_{n+k}$ for all $k\geq 0$.\smallskip
    \item For each $k>0$ and each $A_1'\in \A_{m+k}(A_1)$ there exists $A_2'\in \A_{n+k}(A_2)$ such that $gA_1'=A_2'$.
\end{enumerate}
If such a morphism exists, we say that $A_1$ and $A_2$ have \newword{the same type}.
\end{definition}

Of the three conditions above, condition (i) is the most fundamental, and we are not aware of any groups where condition (ii) does not follow from condition (i).  Condition (iii) says that $g$ induces an isomorphism from the subtree of descendant atoms of $A_1$ to the subtree of descendant atoms of $A_2$.  There are examples where a group $G$ has two atoms $A\in \A_n(G)$ and $A'\in \A_{n+1}(G)$ that are equal as subsets of $G$, but these two atoms nonetheless have different types, for instance in the case when $A'$ is the only child of $A$, but $A'$ has multiple children.

\begin{remark}\label{rem:SelfSimilarTree}
Note that there can be only finitely many morphisms between a given pair of atoms, and the set of morphisms is closed under compositions, inverses, and restrictions to child atoms. In \cite{BelkBleakMatucci}, these facts were used to give the tree of atoms of a hyperbolic group the structure of a ``self-similar tree'' but we will not need that terminology here.
\end{remark}

Having the same type is an equivalence relation on atoms, and the corresponding equivalence classes are the \newword{types} of atoms for~$G$.  The following proposition is fundamental to our work on hyperbolic groups.

\begin{proposition}\cite[Corollary~3.28]{BelkBleakMatucci}\label{prop:fin_many_types}
If $G$ is a hyperbolic group, then $\A(G)$ has only finitely many different types of atoms.
\end{proposition}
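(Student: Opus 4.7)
My plan is to leverage Cannon's classical theorem that every hyperbolic group has only finitely many cone types. Recall that the cone type of $x\in G$ is $T(x)=\{g\in G \mid d(1,xg)=d(1,x)+d(1,g)\}$, and Cannon proved that cone types in fact encode the entire outward-directed geodesic subtree at $x$ up to translation. I will show that cone types also control the type of any atom deep enough to contain the corresponding element.

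First, I would prove a stabilization lemma: there is a constant $K$, depending only on $\delta$ and the chosen generating set, such that for any atom $A\in \A_n(G)$ and any $x\in A$ with $|x|>n+K$, the function $\od_x|_{B_n}$ (modulo additive constants) depends only on the cone type $T(x)$. The idea is that for such a deep $x$, thin triangles force all geodesics from points of $B_n$ to $x$ to fellow-travel through a uniformly bounded corridor near $x$, so the differences $d(x,y)-d(x,z)$ for $y,z\in B_n$ are dictated entirely by the combinatorial outgoing structure at $x$, i.e.\ by $T(x)$.

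Next, I would upgrade the lemma from atoms to types. Recall that the type of $A$ requires a morphism respecting the entire subtree of descendant atoms. Since Cannon's theorem yields morphisms between points of the same cone type that respect outward geodesic trees, and since descendant atoms of $A$ arise from configurations of $B_{n+k}$-geodesics into $A$ at successively greater depths, I would argue inductively on $k$ that the full rooted descendant-tree structure of $A$ is determined by the cone type of any sufficiently deep representative $x\in A$, and that an element realizing an equality of cone types gives rise to a morphism in the sense of \cref{def:morphisms}. In particular, this yields a well-defined assignment from cone types to types of deep atoms. By Cannon, there are only finitely many cone types, so only finitely many types occur among atoms having a representative at depth $>n+K$ in the atom. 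Every infinite atom eventually contains such representatives, so this covers all infinite atoms of $B_n$ for $n$ large; the atoms at small $n$ are finite in number and contribute only finitely many further types.

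The main obstacle will be the inductive step linking the cone type of $x\in A$ to the descendant-tree structure of $A$. One must verify that this assignment is functorial with respect to taking children, i.e.\ that the cone type of representatives of child atoms of $A$ is determined by the cone type of $x$ together with bounded combinatorial data recording where the child sits inside the parent. This requires a careful use of hyperbolicity to control how geodesics from $B_{n+1}$ refine those from $B_n$, and is where most of the geometric work of \cite{BelkBleakMatucci} would be invested before the finiteness conclusion can be harvested from Cannon's theorem.
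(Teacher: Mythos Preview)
The paper does not prove this proposition; it is quoted verbatim from \cite[Corollary~3.28]{BelkBleakMatucci}. However, the machinery of that proof is reproduced here in Propositions~\ref{prop:PropertiesSets} and~\ref{prop:MakeMorphisms}, so one can see what the actual argument is: to each atom $A\in\A_n(G)$ one associates the bounded-diameter set $\widehat{N}(A)\subseteq S_n$, the function $\od_A$ restricted to $\widehat{N}(A)$, and the cone types of the points of $\widehat{N}(A)$; \cref{prop:MakeMorphisms} then says that any $g\in G$ matching this package for two atoms is already a morphism. Finiteness follows because $\widehat{N}(A)$ has uniformly bounded diameter, so there are finitely many $G$-orbits of such packages, and Cannon's theorem bounds the cone-type decorations.

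Your proposal uses Cannon's theorem too, but the bridge you build from cone types to atom types is in the wrong place. Your ``stabilization lemma'' asserts that for $x\in A$ with $|x|>n+K$, the function $\od_x|_{B_n}$ depends only on the cone type $T(x)$. This is false already for the free group $F_2$: every non-identity element has the same cone type, yet the functions $\od_x|_{B_n}$ take as many distinct values as there are atoms of $B_n$. The cone type $T(x)$ records only the outward geodesic structure at $x$; it carries no information about where $x$ sits relative to the fixed ball $B_n$, and so cannot by itself pin down $\od_x|_{B_n}$, the atom $A$, or even the type of $A$ without further data. Your subsequent inductive upgrade inherits the same defect: matching cone types of deep representatives $x\in A$ and $y\in A'$ gives no candidate morphism $g$ (the natural guess $yx^{-1}$ does not respect the ball filtration).

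The correct fix is exactly what \cite{BelkBleakMatucci} does: look not at a deep point of $A$, but at the points of the sphere $S_n$ nearest to $A$. The cone types that matter are those of the points of $\widehat{N}(A)$, because these control how the descendant atoms of $A$ sit inside the cones $C(q)$ for $q\in\widehat{N}(A)$ (this is condition~(iii) of \cref{prop:MakeMorphisms}). So your instinct that Cannon is the engine is right, but the data it acts on must live on the sphere, not in the interior of the atom.
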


This phenomenon is not unique to hyperbolic groups.  For example, $\Z^2$ has exactly nine different types of infinite atoms with respect to the generating set $\{(1,0),(0,1)\}$, including the type of the root atom (see \cref{ex:Z2}). As with a group that has finitely many cone types, a group with finitely many different types of atoms must have a rational growth series. Indeed, it is conceivable that a group has finitely many types of atoms if and only if it has finitely many cone types, though neither direction is obvious.

\begin{definition}[Type graph]
If $G$ is a group with finitely many types of (infinite) atoms, the corresponding \newword{type graph} is the finite directed graph $\Gamma$ with one node for each type, and with $n$ directed edges from $v$ to $w$ if each atom of type $v$ has $n$ children of type $w$. The \newword{root node} of $\Gamma$ is the node corresponding to the type of the root atom $G\in \A_0(G)$.
\end{definition}

\begin{figure}
\centering
\quad\includegraphics{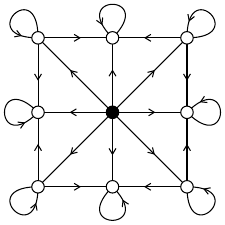}
\qquad\quad
\includegraphics{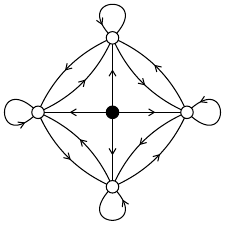}
\caption{Type graphs for $\Z^2$ and $F_2$ with respect to the usual generating sets, where the black dot is the root node.}
    \label{fig:TypeGraphs}
\end{figure}
For example, \cref{fig:TypeGraphs} shows the type graphs for $\Z^2$ and the free group $F_2$ with respect to the usual generating sets.

If $G$ has finitely many types of atoms and $\Gamma$ is the associated type graph, then it is possible to identify $\partial_h G$ with the cone $\C_v\subseteq \Sigma_\Gamma$, where $v$ is the root node of~$\Gamma$.  This identification takes the form of a homeomorphism $\varphi\colon \C_v\to \partial_h G$, which has the following properties:
\begin{enumerate}
    \item For each path $\alpha\in\Cones(v)$, the homeomorphism $\varphi$ maps the cone~$\C_\alpha$ to the shadow of some atom $A_\alpha\in \A_n(G)$ with type corresponding to the node $t(\alpha)$, where $n$ is equal to the length of~$\alpha$.  This determines an isomorphism of trees $\Cones(v)\to \A(G)$.\smallskip
    \item If $\alpha,\beta\in \Cones(v)$ and $t(\alpha)=t(\beta)$, then the canonical similarity $\C_\alpha\to \C_\beta$ induces a morphism from $A_\alpha$ to $A_\beta$.  That is, there exists a morphism $g\in G$ from $A_\alpha$ to $A_\beta$ so that
    \[
    \varphi(\beta\cdot\omega) = g\,\varphi(\alpha\cdot\omega)
    \]
    for all $\omega\in \C_{t(\alpha)}$.
\end{enumerate}
We will refer to such a $\varphi$ as a \newword{system of addresses} for~$\partial_h G$.  If $\varphi$ maps a sequence $\omega\in \C_v$ to a point $\oh\in\partial_h G$, we will say that $\omega$ is the \newword{address} of~$\oh$.  Similarly, if $\alpha\in \Cones(\C_v)$, we will call $\alpha$ the \newword{address} of the atom~$A_\alpha$. Note that $\C_v$ has no empty cones, since every infinite atom in $\mathcal{A}_n$ contains at least one infinite atom in $\mathcal{A}_{n+1}$.
 
\begin{remark}
There is not a single canonical choice for a system of addresses $\varphi$.  Instead, the construction of $\varphi$ involves the choice of a ``rigid structure'' on the tree $\A(G)$, as described in \cite[Section~2.3]{BelkBleakMatucci}.  (See also \cite[Proposition~2.21]{BelkBleakMatucci}, for which the construction of the isomorphism $\Phi$ does not actually require the tree to be ``branching''.)  However, it follows from the proof of \cite[Proposition~2.18]{BelkBleakMatucci} that there are only finitely many different choices for~$\varphi$.
\end{remark}

\begin{proposition}\label{prop:hyp_similarities}
Suppose $G$ has finitely many types of atoms, and let $\varphi\colon \C_v\to \partial_h G$ be a system of addresses as above, so $\varphi$ together with the action of $G$ on $\partial_h G$ induce an action of $G$ on $\C_v$. If this action is rational then the image of $G$ in $\R_{\Gamma,\C_v}$ is an RSG.
\end{proposition}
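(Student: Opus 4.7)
The plan is to verify the RSG condition from \cref{def:rsg} directly using property (ii) of the system of addresses; the work is essentially already done by that property, and the rationality hypothesis merely ensures that the image of $G$ lands in the correct ambient group $\R_{\Gamma,\C_v}$.

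More concretely, I would fix a pair of cones $\C_\alpha,\C_\beta\subsetneq \C_v$ with $t(\alpha)=t(\beta)$, and produce an element of the image of $G$ in $\R_{\Gamma,\C_v}$ that maps $\C_\alpha$ to $\C_\beta$ by the canonical similarity. Since $\C_\alpha$ and $\C_\beta$ are proper subcones of $\C_v$, the paths $\alpha$ and $\beta$ have positive length, so the atoms $A_\alpha$ and $A_\beta$ are non-root atoms for which property (ii) is non-trivial. Applying property (ii) to these paths gives an element $g\in G$ (the morphism from $A_\alpha$ to $A_\beta$) such that
\[
\varphi(\beta\cdot\omega) \;=\; g\,\varphi(\alpha\cdot\omega)
\qquad\text{for all } \omega\in \C_{t(\alpha)}.
\]
Applying $\varphi^{-1}$ to this identity, the induced action of $g$ on $\C_v$ sends $\alpha\cdot\omega$ to $\beta\cdot\omega$ for every $\omega\in\C_{t(\alpha)}$, so its restriction to $\C_\alpha$ is precisely the canonical similarity $\C_\alpha\to\C_\beta$. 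No further adjustment to $g$ is needed, since the RSG axiom only constrains behavior on $\C_\alpha$.

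By the rationality hypothesis, the induced action of every element of $G$ on $\C_v$ is rational, so the image of the above $g$ is a genuine element of $\R_{\Gamma,\C_v}$. Since such a $g$ exists for every admissible pair $(\alpha,\beta)$, the image of $G$ in $\R_{\Gamma,\C_v}$ satisfies the defining property of an RSG. There is essentially no obstacle here: the proposition is a clean packaging of property (ii) of a system of addresses, together with the rationality assumption needed to identify the image as a subgroup of $\R_{\Gamma,\C_v}$ rather than merely of $\Homeo(\C_v)$.
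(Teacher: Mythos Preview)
Your proof is correct and follows essentially the same approach as the paper's: both use property (ii) of the system of addresses to produce, for each pair $\C_\alpha,\C_\beta\subsetneq \C_v$ with $t(\alpha)=t(\beta)$, a morphism $g\in G$ whose induced action on $\C_v$ restricts to the canonical similarity $\C_\alpha\to\C_\beta$. The paper's version is terser, simply noting that canonical similarities correspond to morphisms on shadows of atoms, while you spell out the computation explicitly.
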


\begin{proof}
We just need to show that for any $\alpha,\beta\in\Cones(v)$ with $t(\alpha)=t(\beta)$, there exists an element of $G$ mapping $\C_\alpha$ to $\C_\beta$ by the canonical similarity. Indeed, this follows immediately from the fact that canonical similarities on cones in $\C_v$ correspond to morphisms on shadows of atoms in $\partial_h G$.
\end{proof}

The following theorem is a restatement of the main result from~\cite{BelkBleakMatucci}.

\begin{theorem}[\cite{BelkBleakMatucci}]\label{thrm:BBM}
Let $G$ be a hyperbolic group, and let $\varphi\colon \C_v\to \partial_h G$ be a system of addresses for $\partial_h G$.  If $\partial_h G$ has no isolated points, then the induced action of $G$ on\/ $\C_v$ is by rational homeomorphisms.
\end{theorem}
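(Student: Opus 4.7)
The plan is to show that for each fixed $g\in G$, only finitely many distinct local actions $g|_\alpha$ arise as $\alpha$ ranges over $\Cones(\C_v)$. The argument has two ingredients: a geometric stabilization statement using hyperbolicity, and a counting argument using \cref{prop:fin_many_types}.

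First I would set up the dictionary. The map $\varphi$ identifies $\Cones(\C_v)$ with the tree of atoms $\A(G)$, sending $\alpha$ to an atom $A_\alpha\in\A_{|\alpha|}(G)$ of type $t(\alpha)$. Canonical similarities on cones correspond, under $\varphi$, to morphisms of atoms as in \cref{def:morphisms}. Under this identification, the induced action of $g$ on $\C_v$ corresponds to the permutation action of $g$ on shadows $\partial A_\alpha\subseteq \partial_h G$ coming from left translation. The assumption that $\partial_h G$ has no isolated points guarantees that $\C_v$ has no isolated points either, so the notion of local action $g|_\alpha$ from \cref{ssec:rational} is defined on all cones $\C_\alpha$.

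The geometric heart of the proof is the following stabilization statement: for each $g\in G$ there exists $N=N(g)$ such that for every $\alpha\in\Cones(\C_v)$ with $|\alpha|\ge N$, the translate $g\cdot A_\alpha$ is itself an infinite atom, and left multiplication by $g$ realizes a morphism $g\colon A_\alpha\to g\cdot A_\alpha$ in the sense of \cref{def:morphisms}. The intuition is that points of $A_\alpha$ (for $|\alpha|$ large) all have very long initial geodesic segments from $1$ that share a long common prefix, and multiplication by a fixed $g$ only perturbs the initial part of these geodesics. Thin-triangle arguments in the $\delta$-hyperbolic Cayley graph then show that for large enough $|\alpha|$ the horofunctions of $g\cdot A_\alpha$ agree on an appropriately chosen ball, so $g\cdot A_\alpha$ is contained in (and equal to) a single atom, and moreover the subtree of descendants of $A_\alpha$ is carried isomorphically to that of $g\cdot A_\alpha$. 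This is precisely the content established in the main technical sections of \cite{BelkBleakMatucci}, so I would invoke it directly rather than reprove it.

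Granted the geometric step, the counting argument is quick. For $|\alpha|\ge N$, the equation $g(\alpha\cdot\omega)=\og(\alpha)\cdot g|_\alpha(\omega)$ together with the translation of canonical similarities into morphisms (under $\varphi$) shows that the local action $g|_\alpha\colon \C_{t(\alpha)}\to \C_{t(\og(\alpha))}$ corresponds to the morphism $A_{t(\alpha)}\to A_{t(\og(\alpha))}$ obtained by composing the morphism from $A_{t(\alpha)}$ to $A_\alpha$, left multiplication $g\colon A_\alpha\to g\cdot A_\alpha$, and the morphism from $g\cdot A_\alpha$ to $A_{t(\og(\alpha))}$. By \cref{prop:fin_many_types} there are only finitely many types, and as noted in \cref{rem:SelfSimilarTree} only finitely many morphisms between any two atoms; hence only finitely many local actions $g|_\alpha$ can appear for $|\alpha|\ge N$. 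For $|\alpha|<N$ there are only finitely many $\alpha$ outright, so $g$ admits only finitely many local actions in total, meaning $g$ acts rationally.

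The main obstacle is the geometric stabilization step. It is the step that genuinely uses hyperbolicity (and fails in general); once it is in place, the finiteness of types furnished by \cref{prop:fin_many_types} converts the geometric statement into the combinatorial conclusion that the action is rational.
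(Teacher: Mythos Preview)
The paper does not prove this theorem; it is stated as a restatement of the main result of \cite{BelkBleakMatucci} and simply cited. So there is no proof in the paper to compare against directly. That said, your sketch misstates the geometric input from \cite{BelkBleakMatucci}, and the error propagates.

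Your stabilization step claims that for $|\alpha|\ge N(g)$ the translate $g\cdot A_\alpha$ is itself an atom and $g$ is a morphism $A_\alpha\to g\cdot A_\alpha$. This is false in general: as the paper explicitly remarks after \cref{ex:Z2}, the image of an atom under translation by an element of $G$ need not be an atom, and there is no natural action of $G$ on the tree of atoms. If your claim held, every local action $g|_\alpha$ for large $|\alpha|$ would lie in one of the finite morphism groups $\mathrm{Mor}(v,G)$ of \cref{prop:MorphismGroups}, which is much stronger than rationality and simply does not hold for typical hyperbolic groups.

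What \cite{BelkBleakMatucci} actually proves, and what this paper recapitulates in \cref{ssec:contractinglemma} and \cref{ssec:contracting} for the sharper finite-nucleus statement, is weaker: for deep enough $A_\alpha$ one has $gA_\alpha\subseteq A_\beta$ for some atom $A_\beta$ whose nearest-neighbor set is controlled relative to $g\,N(A_\alpha)$ (the contracting lemma, \cref{lem:contractinglemma}). The local action $g|_\alpha$ is then determined, up to the finite ambiguity of morphism-equivalence, by a \emph{signature} built from $g\,\widehat N(A_\alpha)$, $\widehat N(A_\beta)$, the restricted distance functions, and the cone types at those points (see \cref{prop:MakeMorphisms} and the proof of \cref{thrm:contracting}). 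Finiteness of local actions then comes from the bounded diameter of these configurations together with the finiteness of cone types in a hyperbolic group, not from $g$ being a morphism on atoms. Your counting step is fine once the correct geometric input is in place, but you need to replace the ``$gA_\alpha$ is an atom'' claim with the containment-plus-signature machinery.
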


Between \cref{prop:hyp_similarities} and \cref{thrm:BBM}, so far the action of $G$ on $\partial_h G$ gives us a map from $G$ to an RSG, which constitutes progress toward \cref{thrm:hyp_to_contracting}, that every hyperbolic group embeds in a full, contracting RSG. Obviously embedding into a contracting RSG would be enough to embed into a full, contracting RSG, so achieving ``full'' is not an issue. However, there are the following obstacles to further progress:
\begin{enumerate}
    \item If $G$ has non-trivial finite normal subgroups, then the action of $G$ on $\partial_h G$ might not be faithful (see \cref{rmk:horofunction_problems}).\smallskip
    \item We do not know whether $\partial_h G$ has no isolated points, or whether the type graph $\Gamma$ for $G$ has an irreducible core.
\end{enumerate}
We conjecture that (ii) is never a problem if the hyperbolic group $G$ is non-elementary, i.e.\ not virtually cyclic.  In the present paper we will sidestep these difficulties by first embedding $G$ into the free product $G*\Z$.  We prove in the next subsection that for $G*\Z$, the obstacles (i) and (ii) are alleviated, and therefore $G*\Z$ is isomorphic to an RSG. Following this, it will just remain to prove the contracting property.

\newswitch{here}\setTrue{here}
\subsection{Atoms in \texorpdfstring{\except{toc}{$G*{\protect\fakebold{\Z}}$}\except{here}{$G*\Z$}}{G * Z}}\label{ssec:free_Z_factor}

In this subsection we consider the horofunction boundary for a free product $G*\Z$, where $G$ is any non-trivial group.  We always assume that the generating set for $G*\Z$ consists of the generators for $G$ together with a generator $t$ for the $\Z$ factor.  The main result is the following.

\begin{theorem}\label{thrm:FreeProductBoundary}
If $G$ is a non-trivial group, then $\partial_h(G*\Z)$ has no isolated points and $G*\Z$ acts faithfully on $\partial_h(G*\Z)$.  Furthermore, if $G*\Z$ has finitely many types of atoms, then the associated type graph has an irreducible core.
\end{theorem}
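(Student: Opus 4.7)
The plan is to address each of the three conclusions by exploiting the Bass--Serre tree $T$ of the free product $G*\Z$, on which $G*\Z$ acts with vertex stabilizers conjugate to $G$ or to $\Z$ and with trivial edge stabilizers. A key preliminary is a \emph{cone-extension property}: for $x \in G*\Z$ with $|x|$ sufficiently large compared to $n$, and any $w \in G*\Z$ with $|xw|=|x|+|w|$ whose first letter is not the inverse of the last letter of $x$, one has $d(xw,z) = |w|+d(x,z)$ for every $z\in B_n$. This follows from a direct normal-form computation of $z^{-1}xw$ in the free product: $z^{-1}x$ still ends with the last letter of $x$, so no cancellation occurs at the junction with $w$. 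As a consequence, $xw$ lies in the same atom of $B_n$ as $x$.

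For no isolated points, the plan is to show that every infinite atom $A \in \A_n(G*\Z)$ has $|\partial A| \ge 2$. Given such $A$, pick $x \in A$ with $|x|$ much larger than $n$, choose a sign $\varepsilon \in \{\pm 1\}$ so that $x$ does not end in $t^{-\varepsilon}$, and pick $a \in G \setminus \{1\}$ so that $x$ does not end in $a^{-1}$. By the cone-extension property, the sequences $(xt^{\varepsilon k})_k$ and $(x(at^{\varepsilon})^k)_k$ remain in $A$ and converge to horofunctions $\oh_1, \oh_2 \in \partial A$. A direct normal-form calculation yields $\oh_1(xt^{\varepsilon}) - \oh_1(x) = -1$ and $\oh_2(xt^{\varepsilon}) - \oh_2(x) = +1$, so $\oh_1 \ne \oh_2$.

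For faithful action, the plan is to case-split on the action of a nontrivial $g \in G*\Z$ on $T$. If $g$ acts hyperbolically on $T$, it has a translation axis and exactly two fixed ends, and moves every other end of $T$. If $g$ acts elliptically, fixing some vertex $v \in T$, then triviality of edge stabilizers forces $g$ to permute the edges at $v$ without any fixed points, so some end $\gamma$ issued from $v$ is moved. In either case I would lift a moved end $\gamma$ to a sequence $(x_n)_n \subset G*\Z$ going to infinity along $\gamma$; the horofunction $\oh_\gamma = \lim \od_{x_n}$ then satisfies $g\cdot\oh_\gamma = \lim \od_{gx_n}$, and evaluation at test points in $G*\Z$ that distinguish $\gamma$ from $g\cdot\gamma$ past their bifurcation in $T$ shows $g\cdot\oh_\gamma \ne \oh_\gamma$.

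For the irreducible core, assume $G*\Z$ has finitely many types of atoms, and let $\Gamma$ be its type graph. Since every infinite atom has at least one infinite child, every node of $\Gamma$ has an outgoing edge. I would take $\Gamma_0$ to be the terminal strongly connected component of $\Gamma$, and the central task is to show that this component is unique. The plan is to argue via the free-product structure that any deep atom $A$ has descendants realizing every other deep atom type, since the normal-form ``tail data'' that determines an atom type at deep levels can be freely prescribed when extending elements of $A$ along geodesics; uniqueness of the terminal SCC then follows. Conditions (2) and (3) of the irreducible-core definition are immediate from finiteness of $\Gamma$, while for condition (1), strong connectedness is built in and $\Gamma_0$ cannot be a single directed cycle: if it were, then each atom of type in $\Gamma_0$ would admit a unique infinite descending chain of descendants, producing singleton shadows and hence isolated points of $\partial_h(G*\Z)$, contradicting the first conclusion. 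The principal obstacle is thus proving uniqueness of the terminal SCC, which will require a precise description of how deep atom types are encoded by the tail data of normal forms in $G*\Z$.
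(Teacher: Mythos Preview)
Your approaches to the first two conclusions differ from the paper's but are plausible. The paper works throughout with one explicit family of atoms: for any $w$ ending in $t$ (or $t^{-1}$), the cone $C(w)$ is an atom at level $|w|$, and any two such cones have the same type via the morphism $w'w^{-1}$ (\cref{lem:ConesAreAtoms}). All three conclusions then flow from this. For no isolated points, every infinite atom is shown to contain some $C(w)$ of this form, and $C(w)$ visibly contains two disjoint infinite sub-atoms. For faithfulness, given nontrivial $w$ one picks $v$ ending in $t$ with $C(v)$ and $C(wv)$ disjoint, so $w$ moves the shadow $\partial C(v)$ off itself. Your horofunction-limit constructions and Bass--Serre case split could likely be completed, but are more elaborate and require care (different sequences along the same Bass--Serre end can yield different horofunctions, so you must check the particular one you build is actually moved).

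The genuine gap is in the irreducible-core argument. You correctly locate the crux --- all sufficiently deep atom types should lie in a single strongly connected component --- but ``normal-form tail data can be freely prescribed'' is an intuition, not a proof. The paper's mechanism is concrete (\cref{lem:atoms_in_atoms}): every atom $A\in\A_n(G*\Z)$ with $n\ge 1$ has a descendant of the same type as $C(t)$, and $C(t)$ has a descendant of the same type as $A$. The first direction is easy (find a cone $C(at)$ or $C(aft)$ inside $A$); the second requires showing that left-multiplication by $t$ is a \emph{morphism} in the sense of \cref{def:morphisms}, carrying the descendant-subtree of $A$ isomorphically and level-correctly onto that of $tA\subseteq C(t)$. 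That is where the actual work lies, and your sketch does not supply it. Once this hub lemma is in hand, the induced subgraph on all non-root nodes is strongly connected, so $\Gamma_0$ is everything except the root and conditions (2)--(3) hold trivially with $N=1$. Note also that your claim that condition~(3) is ``immediate from finiteness of $\Gamma$'' given a unique terminal SCC is false as stated: uniqueness of the terminal SCC does not rule out cycles elsewhere in $\Gamma$, which would violate~(3). You need precisely the stronger statement that every non-root node lies in $\Gamma_0$, which is what the $C(t)$-hub argument delivers.
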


\begin{remark}\label{rmk:horofunction_problems}
It is not true in general that the horofunction boundary $\partial_h G$ of an infinite group $G$ has no isolated points, or that $G$ acts faithfully on $\partial_h G$.  For example, the horofunction boundary $\partial_h \Z$ is a two-point space, and $\Z$ acts trivially on~$\partial_h\Z$.  More generally, for all $n\ge 1$ the horofunction boundary of $\Z^n$ has isolated points, though the action is faithful for $n\geq 2$.

There are also non-elementary hyperbolic groups $G$ for which the action of $G$ on $\partial_h G$ is not faithful.  For example, if $G$ has a non-trivial finite normal subgroup $N$ and $S$ is any generating set for $G$ that is closed under multiplication by elements of~$N$, then the corresponding horofunction boundary $\partial_h G$ is naturally homeomorphic to $\partial_h(G/N)$, and in particular $N$ acts trivially on $\partial_h G$.  Note that if no such $N$ exists then the action of $G$ on $\partial_h G$ is automatically faithful, since in this case $G$ acts faithfully on its Gromov boundary $\partial G$, which is a quotient of $\partial_h G$ (see \cite{WeWi}).

We do not know whether the horofunction boundary of a non-elementary hyperbolic group can have isolated points, nor do we know whether the subshift of finite type associated to a non-elementary hyperbolic group always has an irreducible core.
\end{remark}

The proof of \cref{thrm:FreeProductBoundary} occupies the remainder of this subsection.  Let $G$ be a non-trivial group, and for each $n$ let $B_n$ denote the $n$-ball in $G*\Z$. If $w$ is an element of $G*\Z$ that ends with $t$ (in the sense that any minimum-length word for $w$ ends with $t$), then the cone for $w$ is the set
\[
C(w) = \{wh \mid h\in G*\Z\text{ and $h$ does not begin with $t^{-1}$}\}.
\]
By symmetry, a similar description holds for $C(w)$ if $w$ ends with $t^{-1}$.

\begin{lemma}\label{lem:ConesAreAtoms}
If $w$ is any element of $G*\Z$ that ends in~$t$, then $C(w)$ is an atom in $\A_{|w|}(G*\Z)$.  Moreover, any two such atoms have the same type.
\end{lemma}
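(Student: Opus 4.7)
The plan is to reduce both assertions to a single distance formula, obtained by analyzing reduced syllable forms in $G*\Z$, using crucially that the last syllable of $w$ is a positive power of $t$. Write $w = v_1 \cdots v_q$ and $z^{-1} = u_1 \cdots u_r$ in reduced syllable form with $v_q = t^m$ for some $m \ge 1$. The key technical claim is the following: if $z \notin C(w)$ then the reduced form $u$ of $z^{-1} w$ is either empty or ends in a positive power of $t$, and hence, since $h$ does not start with $t^{-1}$, no cancellation occurs between $u$ and $h$, yielding
\[
d(z, wh) \;=\; d(z, w) + |h|.
\]
Letting $k$ denote the number of syllable pairs that fully cancel in the product $z^{-1}\cdot w$, the only potential obstructions to this claim are (a) $k = q$, which forces $z = w \cdot y^{-1}$ with $y^{-1}$ beginning in $G$, placing $z$ in $C(w)$; and (b) $k = q-1$ with the merging pair $u_{r-q+1} = t^{m'}$ combining with $v_q = t^m$ into $t^{m+m'}$ with $m+m' < 0$, in which case a direct computation gives $w^{-1} z = t^{-(m+m')} \cdot (\text{tail})$ beginning in a positive power of $t$, so again $z \in C(w)$. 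Both outcomes contradict the hypothesis $z \notin C(w)$.

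For Part 1, set $n = |w|$. Since $|z| \le n$ for $z \in B_n$, one has $C(w) \cap B_n = \{w\}$. The distance formula (applicable when $z = w$ or when $z \notin C(w)$) then gives $d_{wh}(z) - d_{wh'}(z) = |h| - |h'|$ for all $z \in B_n$ and all $wh, wh' \in C(w)$, showing that $C(w)$ lies in a single $\sim_n$-equivalence class. For the converse direction, I would observe that any $y = wh \in C(w)$ satisfies $d_y(1) - d_y(w) = |w|$, whereas for any $z' \notin C(w)$ the strict triangle inequality gives $d_{z'}(1) - d_{z'}(w) = |z'| - d(z', w) < |w|$, so the pair of test points $\{1, w\} \subseteq B_n$ separates any such $z'$ from $C(w)$. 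Infinitude of $C(w)$ is clear (it contains $wt^j$ for every $j \ge 0$), so $C(w)$ is a vertex of $\A_n(G*\Z)$.

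For Part 2, given atoms $A_1 = C(w_1)$ and $A_2 = C(w_2)$, the natural choice of morphism is $g = w_2 w_1^{-1}$, which implements the bijection $w_1 h \leftrightarrow w_2 h$ between $A_1$ and $A_2$. Conditions (i) and (ii) of \cref{def:morphisms} are immediate from this explicit description. For condition (iii), the crucial observation is that my case analysis above uses only the hypothesis $z \notin C(w_1)$ and no length bound, so the distance formula extends verbatim to every $z \in B_{|w_1| + k} \setminus C(w_1)$; meanwhile, for $z = w_1 j \in C(w_1) \cap B_{|w_1| + k}$ one computes directly that $d(z, w_1 h) = |j^{-1} h|$. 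Therefore $w_1 h \sim_{|w_1| + k} w_1 h'$ reduces to the purely combinatorial criterion
\[
|j^{-1} h| - |j^{-1} h'| \;=\; |h| - |h'|
\]
quantified over all $j$ with $|j| \le k$ not starting with $t^{-1}$, a condition that makes no reference to $w_1$. The identical criterion governs $\sim_{|w_2| + k}$-equivalence of $w_2 h$ and $w_2 h'$, so $g$ transports each descendant atom of $A_1$ onto a descendant atom of $A_2$ at matching depth, establishing (iii).

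The main obstacle is the careful syllable bookkeeping in case (b) of the key claim: one must verify that a merge producing a negative $t$-exponent forces the reduced form of $z^{-1}$ into a very specific shape, which in turn constrains $z$ to begin with $v_1 \cdots v_{q-1} \cdot t^{|m'|}$; since $|m'| > m$, the $t$-exponent of this prefix already exceeds that of $w$, and it is precisely this strict inequality that pushes $z$ into $C(w)$. Once this case is dispatched, both parts of the lemma follow cleanly from the distance formula combined with the explicit bijection $w_1 h \leftrightarrow w_2 h$.
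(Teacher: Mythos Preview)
Your proof is correct and follows essentially the same approach as the paper's. Both hinge on the cut-vertex property of $w$, which you phrase as the distance formula $d(z,wh)=d(z,w)+|h|$ for $z\notin C(w)$ and the paper phrases as ``any geodesic from a point in $C(w)$ to a point in $B_n$ must pass through $w$''; your syllable bookkeeping is just an explicit verification of what the paper asserts geometrically. The only cosmetic differences are that you separate $C(w)$ from its complement using the test pair $\{1,w\}$ where the paper uses $\{w,wt^{-1}\}$, and for condition~(iii) you spell out the combinatorial criterion on $h,h'$ explicitly while the paper summarizes it as ``$A$ is completely determined by the restriction of $\od_A$ to $B_{n+k}\cap C(w)$''---the content is identical.
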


\begin{proof}
Let $n=|w|$, and let $A$ be the (\textit{a priori} finite or infinite) atom for $B_n$ that contains~$w$.  We claim that $A=C(w)$.  First, observe from the geometry of $G*\Z$ that any geodesic from a point in $C(w)$ to a point in $B_n$ must pass through $w$.  It follows that $\od_x=\od_w=\od_A$ for all $x\in C(w)$, so $C(w)\subseteq A$.  Conversely, if $x\in A$, then since $w,wt^{-1}\in B_n$ and $\od_x$ agrees with $\od_w$ on~$B_n$, the vertex $x$ must be farther from $wt^{-1}$ than from $w$, and therefore $x\in C(w)$.  We conclude that $A=C(w)$, so $C(w)\in \A_n(G*\Z)$.

Now suppose $C(w)$ and $C(w')$ are two such atoms, with $n=|w|$ and $n'=|w'|$. We claim that $w'w^{-1}$ is a morphism from $C(w)$ to $C(w')$.  Clearly $w'w^{-1}$ maps $C(w)$ to $C(w')$, and indeed maps $C(w)\cap B_{n+k}$ to $C(w')\cap B_{n'+k}$ for all $k$.  We claim that $w'w^{-1}$ maps each infinite atom contained in $C(w)$ to an atom of the appropriate level contained in $w'$.  To see this, observe that if $A\in \A_{n+k}(G*\Z)$ is any atom contained in $C(w)$, then
\[
\od_A(p) = \od_A(w) + d(w,p)
\]
for all $p\in B_{n+k}\setminus C(w)$.  In particular, $A$ is completely determined by the restriction of $\od_A$ to $B_{n+k}\cap C(w)$.  Similarly, any atom of $\A_{n'+k}(G*\Z)$ contained in $C(w')$ is determined by the restriction of its distance function to $C(w')\cap B_{n'+k}$.  Thus $w'w^{-1}$ maps $A$ to the atom $A'\in \A_{n'+k}(G*\Z)$ that is contained in $C(w')$ and satisfies
\[
\od_{A'}(p) = \od_A\bigl(w(w')^{-1}p\bigr)
\]
for all $p\in C(w')\cap B_{n'+k}$. We conclude that $w'w^{-1}$ is a morphism from $C(w)$ to $C(w')$, so these two atoms have the same type.
\end{proof}

\begin{lemma}\label{lem:atoms_in_atoms}
Let $G$ be a non-trivial group, and let $A\in \A_{n}(G*\Z)$ for some $n\geq 1$, where $G*\Z=G*\langle t\rangle$.  Then $C(t)$ has a descendant of the same type as~$A$, and $A$ has a descendant of the same type as $C(t)$.
\end{lemma}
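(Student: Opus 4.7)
The plan is to prove the two statements separately, exploiting the free-product structure of $G*\Z$.

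For the claim that $A$ has a descendant of the same type as $C(t)$, the strategy is to find $w \in A$ of length at least $n$ whose reduced form ends in $t$. Start with any $w_0 \in A$ of length at least $n$ (which exists since $A$ is infinite while $A \cap B_n$ is finite). Extend if necessary to produce $w \in A$ ending in $t$: set $w = w_0$ if $w_0$ already ends in $t$; set $w = w_0 t$ if $w_0$ ends in a $G$-letter; set $w = w_0 g t$ for some $g \in G \setminus \{e\}$ (which exists by non-triviality of $G$) if $w_0$ ends in $t^{-1}$. In each case the appended suffix is reduced onto $w_0$ because its syllables alternate in type with the terminal syllable of $w_0$. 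Moreover $w \in A$: for any $p \in B_n$ the reduced form of $p^{-1}w_0$ cannot consume all of $w_0$ (since $|p| \leq n \leq |w_0|$), so its terminal syllable equals that of $w_0$, and the further appended letters reduce onto it without interaction. Hence $|p^{-1}w| - |p^{-1}w_0| = |w| - |w_0|$, independent of $p$, so $d_w - d_{w_0}$ is constant on $B_n$.

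With $w \in A$ ending in $t$ and of length $\geq n$, \cref{lem:ConesAreAtoms} identifies $C(w)$ as an atom in $\A_{|w|}$ of the same type as $C(t)$. A similar bottleneck argument, applied to any $x = wh \in C(w)$ (with $|x| = |w| + |h|$), yields $C(w) \subseteq A$: the reduction of $p^{-1}wh$ for $p \in B_n$ takes place entirely within $p^{-1}w$, since the terminal $t$ of $p^{-1}w$ prevents further cancellation with $h$ (whose first letter cannot be $t^{-1}$, as $wh$ is reduced). Thus $d_x(p) - d_w(p) = |h|$, a constant in $p$, so $x \in A$. Hence $C(w)$ is the required descendant of $A$ of the same type as $C(t)$.

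For the claim that $C(t)$ has a descendant of the same type as $A$, we use a translation argument. Since $n \geq 1$, every element of $A$ lies in a common ancestor atom at level $1$, and the horofunction on $B_1$ forces all elements of $A$ to share the same first-syllable type: either all elements have first syllable $t^k$ with $k \geq 1$, or all have first syllable $t^{-k}$ with $k \geq 1$, or all have a $G$-syllable first. Choose $u \in G*\Z$ correspondingly: $u = t$ in the $t^k$ and $G$-syllable cases, and $u = tg_0$ for some fixed $g_0 \in G \setminus \{e\}$ in the $t^{-k}$ case. In every case $u$ begins with $t$, and $ux$ is reduced with $|ux| = |u| + |x|$ for all $x \in A$, so $uA \subseteq C(t)$. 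We claim that $A' := uA$ is an atom in $\A_{n + |u|}$ and that left multiplication by $u$ is a morphism from $A$ to $A'$ in the sense of \cref{def:morphisms}, making $A'$ a descendant of $C(t)$ of the same type as $A$. The principal obstacle is verifying that $A'$ is an atom at level $n + |u|$: this reduces to showing $d_x(u^{-1}p) - d_{x'}(u^{-1}p)$ is independent of $p \in B_{n + |u|}$ for all $x, x' \in A$, which when $u^{-1}p \in B_n$ follows from the atom property of $A$, and when $u^{-1}p \notin B_n$ follows from a direct calculation of $|p^{-1}ux|$: in each of the three first-syllable cases the cancellation pattern at the $u/x$ interface is determined entirely by the uniform first-syllable structure of $x \in A$, so $|p^{-1}ux|$ equals some quantity depending only on $p$ and $u$, plus $|x|$, yielding the required constant difference $|x| - |x'|$. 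Condition (ii) of \cref{def:morphisms} follows from $|ux| = |u| + |x|$, and condition (iii) is verified by applying the same analysis to each descendant of $A$, which inherits the initial-syllable control.
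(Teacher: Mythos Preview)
Your approach is close to the paper's and largely sound, but there is a genuine gap in the argument that $C(t)$ has a descendant of the same type as $A$. You set $A' := uA$ and assert that verifying $A'$ is an atom at level $n+|u|$ ``reduces to showing $d_x(u^{-1}p) - d_{x'}(u^{-1}p)$ is independent of $p$.'' This only shows that all elements of $uA$ lie in a single level-$(n+|u|)$ equivalence class, i.e.\ $uA \subseteq A''$ for some atom $A''$; it does not show $uA = A''$. Without equality, $u$ fails condition (i) of \cref{def:morphisms} and you cannot conclude $A''$ has the same type as $A$. The paper handles this explicitly: after computing $\od_{A'}$, it checks that any $z$ with $\od_z = \od_{A'}$ on $B_{n+1}$ must lie in $C(t)$ and satisfy $t^{-1}z \in A$, forcing $z \in tA$. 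You need an analogous reverse inclusion; it follows easily once you note that $\od_{u^{-1}z}(q) = \od_z(uq)$ and $uq \in B_{n+|u|}$ for $q\in B_n$, so $\od_{u^{-1}z}=\od_A$ on $B_n$.

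A second, smaller issue: your description of the ``direct calculation of $|p^{-1}ux|$'' via ``the $u/x$ interface'' is misleading. The relevant interface is between $q^{-1} = (u^{-1}p)^{-1}$ and $x$, and $q^{-1}$ depends on $p$. What actually makes the calculation work is that the constraints $|uq| \leq n+|u|$ and $|q| > n$ force the first syllable of $q$: it must begin with $t^{-1}$ when $u=t$, and with a $G$-syllable when $u=tg_0$, and in each case this prevents any cancellation with $x$ (giving $d(q,x)=|q|+|x|$). Incidentally, the paper avoids this computation in the $t^{-1}$ case by invoking the morphism $C(t^{-1})\to C(tg_0t^{-1})$ from the symmetric version of \cref{lem:ConesAreAtoms}; that morphism is left multiplication by the very element $tg_0$ you use, so the two arguments coincide underneath.
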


\begin{proof}
Note first that if $w$ is any element of $G*\Z$ that ends in $t^{-1}$, then  $C(w)$ is an atom by the same argument as in \cref{lem:ConesAreAtoms}.  Moreover, any two such atoms have the same type.

For the first statement, observe that $C(t^{-1})\in \A_1(G*\Z)$, so every element of $\A_n(G*\Z)$ for $n\geq 1$ is either contained in $C(t^{-1})$ or disjoint from it.  Since $C(t)$ has a descendant of the same type as $C(t^{-1})$, namely $C(tgt^{-1})$ for any non-trivial~$g\in G$, any descendant of $C(t^{-1})$ has the same type as some descendant of $C(t)$.  Suppose then that $A\in\A_n(G*\Z)$ is an atom that is disjoint from $C(t^{-1})$, so $tA\subseteq C(t)$.  We claim that $tA$ is an atom with the same type as $A$.

To see this, note that $t$ maps the complement of $C(t^{-1})$ isometrically to $C(t)$.  Moreover, $t$ maps $B_n\setminus C(t^{-1})$ to $B_{n+1}\cap C(t)$ for each~$n$.  If $a\in A$, it follows that
\[
\od_{ta}(p) = \od_{a}(t^{-1}p)
\]
for all $p\in C(t)\cap B_{n+1}$.  Since every geodesic from $ta$ to $(G*\Z)\setminus C(t)$ passes through $t$, we also know that
\[
\od_{ta}(p) = \od_{ta}(t) + d(p,t)
\]
for all $p\in B_{n+1}\setminus C(t)$.  It follows that $tA$ is contained in a single atom $A'\in\A_{n+1}(G*\Z)$, with
\[
\od_{A'}(p)=\begin{cases}\od_A(t^{-1}p) & \text{if }p\in C(t)\cap B_{n+1}, \\[3pt] \od_A(1) + d(p,t) & \text{if }p\in B_{n+1}\setminus C(t).\end{cases}
\]
Furthermore, if $x$ is any point in $A'$, then $x$ must lie in $C(t)$ since $\od_{A'}(t)<\od_{A'}(1)$.  In this case, it is easy to see that $\od_{t^{-1}x}$ agrees with $\od_A$ on~$B_n$, and therefore $x\in tA$.  This proves that $tA$ is an atom.  Moreover, the same argument applies to any descendant of~$A$, so $t$ maps the descendants of $A$ to descendants of~$tA$.  Since $(tA)\cap B_{n+k+1} = t(A\cap B_{n+k})$ for all $k\geq 0$, we conclude that $t$ is a morphism from $A$ to $tA$, so $tA$ has the same type as~$A$.

For the other direction, let $A\in\A_n(G*\Z)$ for some $n\geq 1$, and let $a$ be a point of $A$.  If $a$ does not end with $t^{-1}$,  then any geodesic from any point in $C(at)$ to $B_n$ must pass through~$a$, and it follows easily that $C(at)\subseteq A$.  If $a$ ends in $t^{-1}$, we can fix a non-trivial element $f\in G$.  Then any geodesic from any point in $C(aft)$ to $B_n$ must pass through $a$, so again $C(aft)\subseteq A$.  In either case, this gives a descendant of $A$ with the same type as~$C(t)$.
\end{proof}

\begin{proof}[Proof of \cref{thrm:FreeProductBoundary}]
We claim first that $\partial_h(G*\Z)$ has no isolated points.  By \cref{lem:atoms_in_atoms}, every infinite atom in $G*\Z$ contains an atom with the same type as $C(t)$.  But $C(t)$ contains at least one pair of disjoint infinite atoms, e.g.\ $C(t^2)$ and $C(tgt)$ for some non-trivial element $g\in G$.  It follows that every infinite atom in $G*\Z$ contains at least one disjoint pair of infinite atoms.  In $\partial_h(G*\Z)$, this means that every basic open set contains at least one pair of disjoint basic open sets, which proves that $\partial_h (G*\Z)$ has no isolated points.

To prove $G*\Z$ acts faithfully on $\partial_h(G*\Z)$, let $w$ be any non-trivial element of $G*\Z$.  Let $v$ be any element of $G*\Z$ that ends with $t$, and does not have a minimum-length word that starts with the same generator (or inverse generator) as a minimum length word for $w$ or $w^{-1}$.  Then $v$ and $wv$ both end with $t$ and $C(v)$ and $C(wv)$ are disjoint.  Then $w$ maps the infinite atom $C(v)$ to the infinite atom $C(wv)$, which is disjoint from $C(v)$, so $w$ acts non-trivially on the horofunction boundary.

Finally, suppose $G*\Z$ has finitely many types of (infinite) atoms.  By \cref{lem:atoms_in_atoms}, every node of the type graph $\Gamma$ has a directed path to the type of $C(t)$, and there is a directed path from the type of $C(t)$ to every other node in $\Gamma$ except for the basepoint (i.e.\ the type of $G*\Z$ itself).  It follows that the induced subgraph $\Gamma_0$ on all nodes but the basepoint is strongly connected. Furthermore, $\Gamma_0$ cannot be a directed cycle since $\partial_h(G*\Z)$ is infinite, so we conclude that $\Sigma_\Gamma$ has an irreducible core.
\end{proof}

\subsection{The contracting lemma}\label{ssec:contractinglemma}

In this subsection we prove a geometric lemma about atoms in hyperbolic groups that is the main content of the proof that hyperbolic groups are contracting. Throughout this subsection, let $G$ be an infinite $\delta$-hyperbolic group, and let $B_n$ denote the $n$-ball in~$G$.

For any $x\in G\setminus B_{n-1}$, let $N(x,B_n)$ be the set of \newword{nearest neighbors} of $x$ in $B_n$.  This can be described in the following equivalent ways:
\begin{enumerate}
    \item It is the set of all $p\in B_n$ so that $d(x,p)\leq d(x,q)$ for all $q\in B_n$.\smallskip
    \item It is the set of all points at which geodesics from $1$ to $x$ intersect the $n$-sphere $S_n\coloneqq B_n\setminus B_{n-1}$.
    \smallskip
    \item It is the set of points in $B_n$ at which $\od_x$ obtains its minimum value.
\end{enumerate}
It follows from (iii) that $N(x,B_n)$ depends only on which atom for $B_n$ contains $x$. Thus, if $A\in\A_n(G)$, we can define
\[
N(A) \coloneqq N(x,B_n)
\]
where $x$ is any point in~$A$.

Our goal in this subsection is to prove the following lemma.

\begin{lemma}[The contracting lemma]\label{lem:contractinglemma}
Let $g\in G$, and let $A\in \A_n(G)$ with $n >  2|g|+39\delta+13$.  Then there exists $A'\in\A(G)$ such that $gA\subseteq A'$ and $N(A')$ lies within the $(18\delta+6)$-neighborhood of $g\,N(A)$.
\end{lemma}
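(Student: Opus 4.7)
The plan is to build $A'$ at a carefully chosen level $m \in [n - |g|,\ n + |g|]$ and then match up nearest neighbors via the $\delta$-thin triangle $(1, g, gx)$.  The key preliminary observation is that for $x \in A$, the quantity $|gx| - |x|$ depends only on $A$ and $g$, not on the choice of~$x$: by left-invariance of the metric, $|gx| = d(g^{-1}, x) = \od_x(g^{-1})$, and since $g^{-1} \in B_n$, the difference $\od_x(g^{-1}) - \od_x(1)$ is determined by the atom~$A$ alone.  So I would set $c_A := \od_A(g^{-1}) - \od_A(1)$, note that $|c_A| \leq |g|$, define $m := n + c_A$, and let $A' \in \A_m(G)$ be the atom containing $y := gx$.

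Verifying $gA \subseteq A'$ amounts to showing that $\od_{gx_1} - \od_{gx_2}$ is constant on $B_m$ for all $x_1, x_2 \in A$; by left-invariance this is the same as showing $\od_{x_1} - \od_{x_2}$ is constant on the ball $B(g^{-1}, m)$.  Constancy on $B(1, n)$ holds by definition of the atom, and the remaining part $B(g^{-1}, m) \setminus B_n$ would be handled by a nearest-point projection argument based on $\delta$-hyperbolicity: for $q \in B(g^{-1}, m) \setminus B_n$, a geodesic from $x_i$ to $q$ can be projected onto a geodesic entering $B_n$, and the $\delta$-thin-triangle property forces $d(x_1, q) - d(x_2, q)$ to equal the same constant value already prescribed on~$B_n$.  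This is where the quantitative hypothesis $n > 2|g| + 39\delta + 13$ plays its role, by making all relevant projections land in the desired place and all relevant Gromov products be large.

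Once $A'$ is in place, the closeness claim is a thin-triangle computation on the triangle $(1, g, y)$.  For any $q \in N(A')$, $q$ lies on some geodesic from $1$ to $y$ at distance $m$ from~$1$, hence at distance $|y| - m = |x| - n$ from~$y$.  The opposite side $[g, y]$ equals $g \cdot [1, x]$, so the point of $[g, y]$ at distance $|x| - n$ from~$y$ is precisely $gp$ for some $p \in N(A)$.  The Gromov product $(1 \mid g)_y = \tfrac{1}{2}(|y| + |x| - |g|) \geq |x| - |g| \geq |x| - n$ is large, so fellow-traveling for $\delta$-thin triangles gives $d(q, gp) = O(\delta)$.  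A careful accounting of the ambiguity in the choice of the two geodesics, plus integer rounding in the Cayley graph, widens this to the stated bound $18\delta + 6$.

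The main obstacle is the second step.  With only the obvious bound $m = n - |g|$, which follows trivially from $g^{-1}B_m \subseteq B_n$, the distance from $q$ to $y$ in the third step would be $|y| - (n - |g|)$ rather than $|x| - n$, and the two differ by up to $2|g|$; this would contribute a forbidden $2|g|$ term to the conclusion, which by hypothesis depends only on~$\delta$.  So the real content of the lemma is a hyperbolic extension statement about constancy of $\od_{x_1} - \od_{x_2}$ on the eccentrically placed ball $B(g^{-1}, m)$ rather than on $B(1, n)$, exploiting $n \gg |g|, \delta$ through repeated applications of the $\delta$-thin-triangle property.
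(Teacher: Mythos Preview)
Your overall architecture --- pick a level, show $gA$ lands in a single atom at that level, then compare nearest neighbors via thin triangles --- matches the paper's strategy, and the final thin-triangle step is essentially right. But the heart of the argument, showing $gA \subseteq A'$, has a real gap.

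You want $\od_{x_1} - \od_{x_2}$ to be \emph{exactly} constant on $B(g^{-1}, m)$, and you propose to extend constancy from $B_n$ to the extra region $B(g^{-1}, m) \setminus B_n$ by a nearest-point projection argument. The trouble is that in a $\delta$-hyperbolic group, thin triangles and projections give equalities only up to additive $O(\delta)$; they cannot by themselves force $d(x_1, q) - d(x_2, q)$ to hit the prescribed integer on the nose. Nothing in your sketch bridges that $O(\delta)$ gap, and it is not hard to produce $x_1, x_2$ in the same level-$n$ atom and a point $q$ with $|q| = n+1$ for which the difference jumps. Making ``all relevant Gromov products large'' controls approximation, not equality.

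The paper sidesteps this entirely. Rather than checking agreement on all of $B_k$, it invokes the visible-set criterion (\cref{prop:PropertiesSets}(\ref{part:JustCheckU})) saying that $x \in A'$ as soon as $\od_x$ agrees with $\od_{A'}$ on any set containing $V(x, B_k) \cup V(A')$. The paper then arranges a bounded-diameter neighborhood $U$ of $g\,N(A)$ so that (i) both visible sets sit inside $U \cap S_k$, and (ii) $U \cap S_k \subseteq gB_n$, where agreement is automatic by left-invariance. The level $k$ is not your $m = n + c_A$; it is extracted from a bound $\|\od_1 - \od_g\|_U \leq \epsilon$ supplied by a separate estimate (\cref{lem:DistanceFunctionNotChanging}) that uses the diameter of $U$ and the quantitative hypothesis on $n$. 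Your outline could be completed along these lines, but it needs the visible-set reduction; the bare projection argument is not enough.
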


The important thing here is that $18\delta+6$ is a constant.  We will show in \cref{ssec:contracting} that the local action $g|_A$ depends only on the $G$-orbit of the pair $\bigl(g\,N(A),N(A')\bigr)$
together with a finite amount of additional data, from which it will follow that $G$ has finite nucleus.

The proof of \cref{lem:contractinglemma} involves some technical results from \cite{BelkBleakMatucci}.  If $x\in G\setminus B_{n-1}$, the \newword{visible set} $V(x,B_n)$ is the set of all points $p\in B_n$ such that every geodesic $[p,x]$ intersects $B_n$ only at~$p$.  Again, it is possible to prove that $V(x,B_n)$ depends only on which atom for $B_n$ contains~$x$ \cite[Proposition~2.15]{BelkBleakMatucci}.  In particular, if $A\in\A_n(G)$ we can define
\[
V(A) \coloneqq V(x,B_n)
\]
where $x$ is any point of $A$.  The following proposition lists some further properties of $N(A)$ and $V(A)$.

\begin{proposition}\label{prop:PropertiesSets}
Let $x\in G\setminus B_{n-1}$. Then:
\begin{enumerate}
\item \label{part:DiameterN} $N(x,B_n)$ has diameter at most $2\delta$.\smallskip

\item \label{part:PNearN} $N(x,B_n)\subseteq V(x,B_n)\subseteq S_n$, and $V(x,B_n)$ is contained in a $(4\delta+2)$-neighborhood of each point in $N(x,B_n$).
\smallskip

\item \label{part:PropertyV} If $b\in B_n$, then there exists a geodesic $[b,x]$ that contains a point of~$V(x,B_n)$. \smallskip

\item \label{part:JustCheckU} If $A\in\A_n(G)$ and $U$ is a subset of $B_n$ that contains  $V(x,B_n)\cup V(A)$,   then $x\in A$ if and only if $\od_x$ agrees with $\od_A$ on~$U$.
\end{enumerate}
\end{proposition}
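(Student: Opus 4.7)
My plan is to handle the four parts in an order that allows the structural observations in~(i) and~(iii) to support the more substantive arguments in~(ii) and~(iv). For~(i), if $p,q\in N(x,B_n)$ then each lies on some geodesic from $1$ to $x$ at distance exactly $n$ from $1$; applying the thin bigon property in a $\delta$-hyperbolic space to the two geodesics from $1$ to $x$ through $p$ and through $q$ yields $d(p,q)\le 2\delta$. For~(iii), along any geodesic $\gamma$ from $b$ to $x$, let $v$ be the last point of $\gamma$ lying in $B_n$ (well-defined since $x\notin B_n$); then $[v,x]\subseteq\gamma$ meets $B_n$ only at $v$, so $v\in V(x,B_n)\cap\gamma$, as required.

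For~(ii), the two inclusions are structural: $N(x,B_n)\subseteq V(x,B_n)$ holds because any $p\in N(x,B_n)$ lies on some geodesic $[1,x]$ whose subgeodesic $[p,x]$ only meets $B_n$ at $p$; and $V(x,B_n)\subseteq S_n$ holds because a visible point with $d(1,p)<n$ would have every neighbor in $B_n$, so the first step of any geodesic $[p,x]$ would stay in $B_n$, contradicting visibility. The distance bound is the delicate part. Gromov's four-point condition applied to $(1,p,q,x)$, together with $d(1,p)=d(1,q)=n$ and the inequality $d(x,p)\le d(x,q)$ (forced by $p\in N(x,B_n)$), gives
\[
d(p,q)\le 2(1|x)_q+2\delta.
\]
I would then bound $(1|x)_q$ by examining the thin triangle $1,q,x$: the ``interior'' tripod point on $[q,x]$ at distance $(1|x)_q$ from $q$ lies $\delta$-close to the corresponding interior point on $[1,q]\subseteq B_n$, but $[q,x]\setminus\{q\}$ avoids $B_n$, forcing $(1|x)_q$ to be bounded by a small multiple of $\delta$; tracking constants carefully yields $d(p,q)\le 4\delta+2$.

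The conceptual heart is~(iv). The ``only if'' direction is immediate. For ``if'', fix a representative $d_A=d_{y_0}$ with $y_0\in A$, let $c$ be the constant value of $d_x(u)-d_A(u)$ for $u\in U$, and take any $b\in B_n$. Part~(iii) applied to $(b,x)$ produces a geodesic through some $v\in V(x,B_n)\subseteq U$, so $d_x(b)=d(b,v)+d_x(v)$; combined with the triangle inequality $d_A(b)\le d(b,v)+d_A(v)$ and the identity $d_x(v)-d_A(v)=c$, this gives $d_x(b)-d_A(b)\ge c$. Symmetrically, part~(iii) applied to $(b,y_0)$ yields $w\in V(A)\subseteq U$ with $d_A(b)=d(b,w)+d_A(w)$, and the same manipulation gives $d_x(b)-d_A(b)\le c$. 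Hence $d_x-d_A\equiv c$ on all of $B_n$, so $\od_x=\od_A$ there and $x\in A$.

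The main obstacle I anticipate is pinning down the precise constant $4\delta+2$ in~(ii): the overall strategy (four-point condition plus a thin-triangle bound on $(1|x)_q$) is clear, but thin-triangle arithmetic is notoriously sensitive to which version of the hyperbolicity axiom is invoked, so tracking constants needs to be done carefully. The truly load-bearing argument, however, is the telescoping triangle-inequality chase in~(iv), which is what allows bounded data supported on $V(x,B_n)\cup V(A)$ to certify full equality of the distance functions on $B_n$.
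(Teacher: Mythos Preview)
The paper itself does not prove this proposition; it cites Propositions~3.16, 3.18, 3.19, 3.21 of \cite{BelkBleakMatucci} for (ii)--(iv) and invokes the standard fellow-traveller property for~(i). So your attempt is more than the paper offers, and your argument for~(iv) is essentially complete and is exactly the right idea (just be sure to pick $y_0\in A\setminus B_{n-1}$, possible since $A$ is infinite, so that $V(y_0,B_n)=V(A)$ is defined).

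There is, however, a genuine gap in your proof of~(iii), and the same oversight appears in your justification of $N(x,B_n)\subseteq V(x,B_n)$. The definition of $V(x,B_n)$ requires that \emph{every} geodesic $[v,x]$ meets $B_n$ only at~$v$, not merely that one such geodesic exists. Your ``last point of $\gamma$ in $B_n$'' only witnesses a single geodesic (the tail of $\gamma$); nothing prevents some other geodesic from $v$ to $x$ from re-entering $B_n$. The fix is a minimality argument: among all points of $B_n$ lying on some geodesic from $b$ to $x$, choose $v$ with $d(v,x)$ minimal. If some geodesic $[v,x]$ contained a point $v'\in B_n\setminus\{v\}$, then $d(v',x)<d(v,x)$ and concatenating $[b,v]$ with $[v,v']\cup[v',x]$ places $v'$ on a geodesic $[b,x]$, contradicting minimality. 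The same minimality reasoning (now over all of $B_n$) shows $N(x,B_n)\subseteq V(x,B_n)$: if $p$ minimises $d(\cdot,x)$ over $B_n$, then no point of $B_n\setminus\{p\}$ can lie on any geodesic $[p,x]$.

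Finally, your treatment of the $4\delta+2$ bound in~(ii) is only a sketch: you correctly derive $d(p,q)\le 2(1\mid x)_q+2\delta$ from the four-point condition, but ``tracking constants carefully'' for the bound on $(1\mid x)_q$ is where all the work lies, and you have not done it. The argument in \cite{BelkBleakMatucci} takes the point on $[q,x]$ at distance $(1\mid x)_q$ from $q$, uses thinness to place it near $[1,q]\subseteq B_n$, and then invokes $q\in V(x,B_n)$ (so that $[q,x]\setminus\{q\}$ lies outside $B_n$) to force $(1\mid x)_q\le\delta+1$; you should carry this through rather than assert the outcome.
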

\begin{proof}
Statement (i) follows from the standard fact than any two geodesics with the same endpoints synchronously $2\delta$-fellow travel. 
 Statement (ii) follows from Propositions~3.19 and 3.21 of \cite{BelkBleakMatucci}, while statements (iii) and (iv) are Propositions 3.16 and 3.18 in \cite{BelkBleakMatucci}, respectively.
\end{proof}

We now prove two lemmas that we will need for the proof of \cref{lem:contractinglemma}.  For $\of\in\oF(G,\Z)$ and $S\subseteq G$, write
\[
\|\of\|_S \coloneqq \frac{1}{2}\sup\bigl\{|f(s)-f(s')| \;\bigr|\; s,s'\in S\bigr\} 
\]
where $f$ is any representative for $\of$ in $F(G,\Z)$.

\begin{lemma}\label{lem:MappingAtomsIntoAtoms}
Let $g\in G$, let $\epsilon\in\N$, and let $A\in \A_n(G)$, where
\begin{equation}\label{eq:n}
n\geq 2|g|+\delta+2\epsilon.
\end{equation}
Let $U$ be the $(6\delta+2+2\epsilon)$-neighborhood of $g\,N(A)$. If\/ $\bigl\|\od_{1}-\od_{g}\bigr\|_U \leq \epsilon$, then there exists $A'\in \A(G)$ such that $gA\subseteq A'$ and $N(A')$ is contained in $U$.
\end{lemma}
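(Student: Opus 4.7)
The plan is to construct $A'$ explicitly and verify both conclusions using the visibility criterion of \cref{prop:PropertiesSets}(iv). I would set $m = \min\{|gp| \mid p \in N(A)\}$ and let $A' \in \A_m(G)$ be the atom containing $ga_0$ for a fixed $a_0 \in A$. The hypothesis $\|\od_1 - \od_g\|_U \leq \epsilon$ forces the values $|gp| - n$ for $p \in N(A)$ to coincide up to $2\epsilon$, so write $m = n + c$ for their common value $c$, with $|c| \leq |g|$.

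To show $N(A') \subseteq U$, I would run a thin-triangle (tripod) argument on the triangle $1, g, ga$ for each $a \in A$. Because $p \in N(A)$ lies on some geodesic $[1, a]$, its translate $gp$ sits on $[g, ga]$ at distance $n$ from $g$, and the tripod identification pins $|ga| - |a|$ to within $\delta + 2\epsilon$ of $c$. The assumption $n \geq 2|g| + \delta + 2\epsilon$ ensures that $m$ exceeds the Gromov product $\alpha_1 = (g \cdot ga)_1 \leq |g|$, so any exit point of $[1, ga]$ from $B_m$ is $\delta$-close to a point on $[g, ga]$ at distance $n \pm (\delta + 2\epsilon)$ from $g$, and hence within $2\delta + 2\epsilon$ of $g\,N(A)$, so in $U$. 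By \cref{prop:PropertiesSets}(ii), the same bound propagates to give $V(ga, B_m)$ and $V(A')$ inside the $(4\delta + 2)$-enlargement, which is precisely the $(6\delta + 2 + 2\epsilon)$-neighborhood $U$.

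For $gA \subseteq A'$, I would apply \cref{prop:PropertiesSets}(iv): it suffices to show that $\od_{ga}$ and $\od_{ga_0}$ agree up to a constant on $V(ga, B_m) \cup V(A') \subseteq U$. Expanding via the Gromov-product identity
\[
d(ga, u) - d(ga_0, u) = (|a| - |a_0|) - 2\bigl[(ga \cdot u)_g - (ga_0 \cdot u)_g\bigr],
\]
the task becomes to show that $(ga \cdot u)_g - (ga_0 \cdot u)_g$ is constant in $u$ on the visible sets. The hypothesis forces every geodesic $[g, u]$ for $u$ in the visible sets to share a prefix of length $\approx n$ with both $[g, ga]$ and $[g, ga_0]$ (namely the segment through the appropriate translate $gp$), so both Gromov products equal $n$ up to the same slack, and the difference vanishes.

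The main obstacle I expect is upgrading the approximate Gromov-product equalities into the exact equality of horofunction values that \cref{prop:PropertiesSets}(iv) demands. The radii $6\delta + 2$ and $4\delta + 2$ appearing in the statement are tuned precisely to absorb the $\delta$-slack from the tripod and the $2\epsilon$-slack from the hypothesis; combined with the integer-valuedness of horofunctions and the sharpness of the numerical bounds, this should force the residual error to collapse into a single additive constant depending only on $a$ and $a_0$, as required.
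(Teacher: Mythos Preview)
Your thin-triangle argument for $N(A')\subseteq U$ and $V(ga,B_m)\subseteq U$ is broadly the same as the paper's, and that part is fine.  The gap is exactly where you flag it: the containment $gA\subseteq A'$.  Proposition~\ref{prop:PropertiesSets}(iv) requires \emph{exact} agreement of $\od_{ga}$ with $\od_{A'}$ on the visible sets, and your Gromov-product computation only gives agreement up to an additive error of order~$\delta$.  The hope that integer-valuedness and tight constants force this error to collapse is not justified: the constants $4\delta+2$ and $6\delta+2+2\epsilon$ are chosen so that the visible sets fit inside $U$, not so that hyperbolic slack rounds down to zero.  There is no mechanism here that converts a $\pm\delta$ estimate into an equality.

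The paper avoids this problem by a different and cleaner device.  Rather than comparing $\od_{ga}$ and $\od_{ga_0}$ via hyperbolic geometry, it uses the \emph{defining} property of atoms: since $a,a_0\in A\in\A_n(G)$, the functions $\od_a$ and $\od_{a_0}$ agree exactly on $B_n$, hence $\od_{ga}$ and $\od_{ga_0}$ agree exactly on $gB_n$.  The whole point is then to choose the level of $A'$ so that the visible sets lie inside $gB_n$.  The paper does not take $m=\min\{|gp|:p\in N(A)\}$ as you do; instead it extracts from the hypothesis $\|\od_1-\od_g\|_U\leq\epsilon$ a constant $C$ with $0\leq |u|-d(u,g)+C\leq 2\epsilon$ on $U$, sets $k=n-C$, and checks that $U\cap S_k\subseteq gB_n$.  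Since both $V(A')$ and $V(ga,B_k)$ lie in $U\cap S_k$, exact agreement there is now free, and Proposition~\ref{prop:PropertiesSets}(iv) applies directly.  Your choice of level does not obviously satisfy $U\cap S_m\subseteq gB_n$, and without that containment the argument cannot close.
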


\begin{proof}
Note that by \cref{prop:PropertiesSets}(\ref{part:PNearN}), the set $g\,V(A)$ is contained in the $(4\delta+2)$-neighborhood of $g\,N(A)$, which means that $g\,V(A)\subseteq U$.  Hence, $g\,N(A)\subseteq g\,V(A)\subseteq U\cap gS_n$.

Now, since $\bigl\|\od_{1}-\od_{g}\|_U\leq \epsilon$, there exists $C\in\Z$ so that
\begin{equation}\label{eq:dist}
0 \leq d(u,1)-d(u,g) + C\leq 2\epsilon
\end{equation}
for all $u\in U$. In particular,
\begin{equation}\label{eq:SubsetUnion}
U\cap g S_n\subseteq \bigcup_{k\leq i\leq k+2\epsilon} S_i\qquad\text{and}\qquad U\cap S_k\subseteq g B_n
\end{equation}
where $k\coloneqq n-C$. Note that our hypothesis on $n$ ensures that $k$ is a positive integer.  Indeed, since $|d(u,1)-d(u,g)|\leq |g|$, it follows from~(\ref{eq:dist}) that $C\leq |g|+2\epsilon$,
so by (\ref{eq:n}), 
\begin{equation}\label{eq:LowerBoundk}
k = n-C \ge n-|g|-2\epsilon\ge |g|+\delta
\end{equation}
and hence $k>0$.

Now consider a point $x\in gA$.  We claim that $x\notin B_{k-1}$ and $V(x,B_k)\subseteq U$.  For the first statement, we know from (\ref{eq:n}) that $|g|\leq n$, so  $g^{-1}\in B_n$. Since $g^{-1}x\in A$, it follows from \cref{prop:PropertiesSets}(\ref{part:PropertyV}) that some geodesic $[g^{-1},g^{-1}x]$ passes through $V(A)$.  Hence the geodesic $[1,x]\coloneqq g[g^{-1},g^{-1}x]$ passes through a point $y$ of $g\,V(A)$. Then $y\in U\cap gS_n$, so by (\ref{eq:SubsetUnion}) we have that $k\leq |y|\leq k+2\epsilon$.  It follows that $|x|\geq |y|\geq k$, so $x\notin B_{k-1}$.

To prove that $V(x,B_k)\subseteq U$, let $[g,x],[1,x],[1,g]$ be a geodesic triangle. Since $g^{-1}x\in A$, the geodesic $[1,g^{-1}x]\coloneqq g^{-1}[g,x]$ passes through $N(A)$, so $[g,x]$ passes through $g\,N(A)$ at some point~$q$.  Then $q\in  U\cap gS_n$, so by (\ref{eq:SubsetUnion}) we have
\begin{equation}\label{eq:Lengthq}
 k\leq |q|\leq k+2\epsilon.
\end{equation}
Combining this with~(\ref{eq:LowerBoundk}), we deduce that $|q|\geq |g|+\delta$.  In particular, $q$ does not lie within $\delta$ of any interior point of $[1,g]$. Also, $d(q,g)=n>\delta$, so by the thin triangle condition, it follows that $q$ lies within $\delta$ of some point $x'\in[1,x]$.  Let $q'$ be the point at which $[1,x]$ passes through $S_k$, so $q'\in N(x,B_k)$. Since $d(q,x')\leq \delta$ and $\bigl||q|-k\bigr|\leq 2\epsilon$ by (\ref{eq:Lengthq}), it follows from the triangle inequality that $\bigl||x'|-k\bigr|\leq \delta+2\epsilon$.  In particular, $d(x',q')\leq \delta+2\epsilon$, so $d(q,q')\leq 2\delta+2\epsilon$, again by the triangle inequality.  But \cref{prop:PropertiesSets}(\ref{part:PNearN}) tells us that $V(x,B_k)$ lies within a $(4\delta+2)$-neighborhood of $q'$, so $V(x,B_k)$ lies within a $(6\delta+2+2\epsilon)$-neighborhood of~$q$. Since $q\in g\,N(A)$, we conclude that $V(x,B_k)\subseteq U$.
 
We are now ready to prove the desired statements.  Since $A$ is infinite, there is an infinite atom $A'\in\A_k(G)$ that intersects~$gA$. Note that $V(A')\subseteq U$, since $A'$ contains a point of $gA$ and $V(x,B_k)\subseteq U$ for every $x\in gA$.  In particular, $N(A')\subseteq U$.  We claim that $gA\subseteq A'$. Let $x\in gA$, and fix a point $p\in A'\cap gA$. Since $x,p\in gA$, we know that $\od_x$ agrees with $\od_p$ on $gB_n$.
Since $p\in A'$, we know that $\od_p$ agrees with $\od_{A'}$ on $B_k$.  But $U\cap S_k$ is contained in $gB_n$ by (\ref{eq:SubsetUnion}) and is also contained in $B_k$, so $\od_x$ agrees with $\od_{A'}$ on $U\cap S_k$. 
 Since $V(A')\subseteq U\cap S_k$ and $V(x,B_k)\subseteq U\cap S_k$, it follows from \cref{prop:PropertiesSets}(\ref{part:JustCheckU}) that $x\in A'$, and therefore $gA\subseteq A'$.
\end{proof}

\begin{lemma}\label{lem:DistanceFunctionNotChanging}
Let $p\in B_n$, and let $S$ be a subset of $G\setminus B_n$.  If
\[
\mathrm{diam}(S) < 2n-2|p|-4\delta
\]
then $\bigl\|\od_p-\od_{1}\bigr\|_S\leq 6\delta+2$.
\end{lemma}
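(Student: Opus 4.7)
The plan is to apply the four-point condition for $\delta$-hyperbolicity to the quadruple $1, p, s, s'$. Recall that in any $\delta$-hyperbolic geodesic space, the three pairing sums
\[
\Sigma_1 \coloneqq d(1,p) + d(s,s'),\quad \Sigma_2 \coloneqq d(1,s) + d(p,s'),\quad \Sigma_3 \coloneqq d(1,s') + d(p,s)
\]
have the property that the two largest differ by at most a constant $C(\delta)$ that is linear in~$\delta$.

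First I would reformulate the target. Observe that for any $s, s'\in S$,
\[
\Sigma_2 - \Sigma_3 = \bigl(d(p,s') - d(1,s')\bigr) - \bigl(d(p,s) - d(1,s)\bigr),
\]
so $\bigl\|\od_p - \od_{1}\bigr\|_S = \tfrac{1}{2}\sup_{s,s'\in S}|\Sigma_2 - \Sigma_3|$, and it suffices to prove $|\Sigma_2 - \Sigma_3|\le 12\delta + 4$ for all $s,s'\in S$.

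Next I would use the hypotheses to rule out $\Sigma_1$ as one of the two largest pairing sums. Since $s, s'\in G\setminus B_n$ we have $|s|, |s'|\ge n+1$ (distances in the Cayley graph are integers), so $d(p,s)\ge |s|-|p|\ge n+1-|p|$ and likewise for $s'$, giving $\Sigma_2, \Sigma_3\ge 2n+2-|p|$. On the other hand, the diameter hypothesis gives
\[
\Sigma_1 = |p| + d(s,s') \le |p| + \mathrm{diam}(S) < 2n - |p| - 4\delta.
\]
Thus $\Sigma_2 - \Sigma_1 > 4\delta+2$ and $\Sigma_3 - \Sigma_1 > 4\delta+2$, so $\Sigma_1$ is not among the two largest. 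The four-point condition then bounds $|\Sigma_2 - \Sigma_3|$ by~$C(\delta)$.

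The one remaining point is to check that $C(\delta)\le 12\delta+4$ in the convention of $\delta$-hyperbolicity used in the paper; under the thin-triangle conventions of \cite{BrHa} the four-point constant is at most $4\delta$, comfortably within the budget. If a more hands-on argument is preferred (say to absorb vertex-to-vertex discretisation in the Cayley graph), the inequality $\Sigma_2,\Sigma_3 > \Sigma_1$ translates into the Gromov-product estimate $(s|s')_1 > |p| + 2\delta$, which forces geodesics from $1$ to $s$ and $s'$ to $2\delta$-fellow-travel past distance~$|p|$ from~$1$; tracking such fellow-travelers and applying the triangle inequality to $p$ then yields the stated bound directly. This calibration of constants is the only step with any real content; the rest of the argument is essentially arithmetic.
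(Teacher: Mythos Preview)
Your argument is correct and takes a genuinely different route from the paper's. The paper works directly with thin triangles: it chooses geodesics $[x,y]$, $[x,p]$, $[y,p]$, picks a near-centre point $m_{xy}$ on $[x,y]$, finds companion points $m_{xp},m_{yp}$ via the thin-triangle condition, and bounds $\bigl|(d(x,p)-d(y,p))-(d(x,m_{xy})-d(y,m_{xy}))\bigr|$ by $4\delta+2$; it then repeats the manoeuvre with the triangles $[1,p]\cup[x,p]\cup[x,1]$ and $[1,p]\cup[y,p]\cup[y,1]$ (using the diameter hypothesis to force $|m_{yp}|>|p|+\delta$ so that $m_{yp}$ is $\delta$-close to $[y,1]$ rather than $[1,p]$) to bound $\bigl|(|x|-|y|)-(d(x,m_{xy})-d(y,m_{xy}))\bigr|$ by $8\delta+2$, and combines the two.

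Your approach via the four-point inequality is cleaner and more conceptual: once you observe that $\Sigma_2-\Sigma_3$ is exactly the quantity to be bounded and that the diameter hypothesis forces $\Sigma_1$ to be strictly the smallest of the three pairing sums, the conclusion is immediate. The paper's hands-on argument has the minor advantage of being self-contained with explicit constants and not depending on the precise thin-triangles-to-four-point conversion factor, but your route makes transparent \emph{why} the diameter bound $2n-2|p|-4\delta$ is the right threshold: it is exactly what is needed to push $\Sigma_1$ below $\Sigma_2$ and $\Sigma_3$. Your hedge on the constant is appropriate; under the paper's convention (each side in the $\delta$-neighbourhood of the other two) the four-point inequality holds with Gromov-product constant at most a small multiple of~$\delta$, giving $|\Sigma_2-\Sigma_3|$ well under $12\delta+4$.
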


\begin{proof}
Note that $d_1$ is just word length.  It suffices to prove that
\[
\bigl|\bigl( d(x,p) - |x|\bigr) - \bigl(d(y,p)-|y|\bigr)\bigr| \leq 2(6\delta+2)
\]
for all $x,y\in S$.  Let $x,y\in S$, and fix geodesics $[x,y]$, $[x,p]$, and $[y,p]$. Let $m_{xy}$ be the point on the geodesic $[x,y]$ which is farthest from $x$ but still within $\delta$ of $[x,p]$.  By the thin triangle condition, $m_{xy}$ must also lie within $\delta+1$ of $[y,p]$,
so there exist points $m_{xp}\in [x,p]$ and $m_{yp}\in [y,p]$ so that $d(m_{xy},m_{xp})\leq \delta$ and $d(m_{xy},m_{yp})\leq \delta+1$. Then
\begin{multline*}
\bigl|\bigl(d(x,p)-d(y,p)\bigr) - \bigl(d(x,m_{xy}) - d(y,m_{xy})\bigr)\bigr| \\
= \bigl|d(x,m_{xp}) + d(m_{xp},p) - d(y,m_{yp}) - d(m_{yp},p) - d(x,m_{xy}) + d(y,m_{xy})\bigr| \\
\leq |d(x,m_{xp})-d(x,m_{xy})| + |d(y,m_{yp})-d(y,m_{xy})| +|d(m_{xp},p)-d(m_{yp},p)| \\
\leq d(m_{xp},m_{xy}) + d(m_{yp},m_{xy}) + d(m_{xp},m_{yp}) \\ \leq \delta + (\delta+1) + (2\delta+1) = 4\delta+2.
\end{multline*}
Now fix geodesics $[x,1]$, $[y,1]$, and $[1,p]$.  We know that either $d(x,m_{xy}) \le \frac{1}{2} d(x,y)$ or $d(y,m_{xy}) \le \frac{1}{2}d(x,y)$. Let us assume the first case, the other being analogous. We see that
\[
d(x,m_{xy})+|m_{xy}| \ge |x| \ge n+1
\]
from which it follows that
\[
|m_{xy}| \geq n+1-\frac{d(x,y)}{2} \geq n+1 - \frac{\mathrm{diam}(S)}{2}
\]
so we have
\[
d(m_{xy},m_{yp})+|m_{yp}| \ge |m_{xy}| \ge n+1 - \frac{\mathrm{diam}(S)}{2}
\]
and since $d(m_{xy},m_{yp}) \le \delta+1$ we get

\[
|m_{yp}| \geq n+1-\frac{\mathrm{diam}(S)}{2} - (\delta+1) > |p|+\delta
\]
and therefore $m_{yp}$ does not lie within $\delta$ of $[1,p]$.  Applying the thin triangle condition to $[1,p]\cup [y,p]\cup [y,1]$, there exists a point $m_{y}\in[y,1]$ such that $d(m_{yp},m_{y})\leq \delta$. A similar argument for the triangle $[1,p]\cup [x,p]\cup [x,1]$ yields a point $m_{x}\in [x,1]$ such that $d(m_{xp},m_{x})\leq \delta$.  Then $d(m_{xy},m_{x})\leq 2\delta$ and $d(m_{xy},m_{y})\leq 2\delta+1$, so by the same reasoning as above
\begin{multline*}
\bigl|\bigl(|x|-|y|\bigr) - \bigl(d(x,m_{xy}) - d(y,m_{xy})\bigr)\bigr| \\
\leq d(m_{x},m_{xy}) + d(m_{y},m_{xy}) + d(m_{x},m_{y}) \\
\leq 2\delta + (2\delta+1) + (4\delta+1) = 8\delta+2.
\end{multline*}
By the triangle inequality, it follows that
\begin{multline*}
\bigl|\bigl( d(x,p) - |x|\bigr) - \bigl(d(y,p)-|y|\bigr)\bigr| \\ =
\bigl|\bigl(d(x,p)-d(y,p)\bigr) - \bigl(|x| - |y|\bigr)\bigr| \\ \leq (4\delta+2)+ (8\delta + 2) = 12\delta + 4
\end{multline*}
as desired.
\end{proof}

We are now ready to prove the contracting lemma.

\begin{proof}[Proof of \cref{lem:contractinglemma}]
Let $g\in G$, let $A\in \A_n(G)$ with $n >  2|g|+39\delta+13$, and let $U$ be the $(18\delta+6)$-neighborhood of $g\,N(A)$.  We must prove that there exists $A'\in \A(G)$ such that $gA\subseteq A'$ and $N(A')\subseteq U$.

By \cref{prop:PropertiesSets}(\ref{part:DiameterN}), the diameter of~$g\,N(A)$ is at most $2\delta$, so 
\[
\mathrm{diam}(U)\leq  2\delta + 2(18\delta+6) = 38\delta+12.
 \]
Since $N(A)\subseteq S_n$, we know that $g\,N(A)$ is disjoint from $B_{n-|g|-1}$, so $U$ is disjoint from $B_{n'}$, where $n'=n-|g|-1-(18\delta+6)$. We now have
\begin{align*}
2n'-2|g|-4\delta &\geq 2\bigl(n-|g|-1-(18\delta+6)\bigr) -2|g| -4\delta \\
&= 2n-4|g| - 40\delta - 14
\\
&> 2\bigl(2|g|+39\delta+13\bigr) -4|g| - 40\delta - 14 \\
& =38\delta+12 \geq \mathrm{diam}(U).
\end{align*}
By \cref{lem:DistanceFunctionNotChanging}, it follows that $\bigl\|\od_{1}-\od_{g}\bigr\|_U\leq 6\delta+2$.  Set $\epsilon:=6\delta+2$.  Then $\bigl\|\od_{1}-\od_{g}\bigr\|_U\leq \epsilon$ and
\[
2|g|+\delta + 2\epsilon = 2|g|+13\delta +4 < n.
\]
Since $18\delta+6=6\delta+2+2\epsilon$, it follows from \cref{lem:MappingAtomsIntoAtoms} that there exists $A'\in \A(G)$ such that $gA\subseteq A'$ and $N(A')\subseteq U$.
\end{proof}

\subsection{Proof that \texorpdfstring{$G$}{\textit{G}} has finite nucleus}\label{ssec:contracting}

The goal of this subsection is to prove the following theorem.

\begin{theorem}\label{thrm:contracting}
Let $G$ be a hyperbolic group for which $\partial_h G$ has no isolated points, and let\/ $\Gamma$ be the type graph for $G$ with root node~$r$.  Then the image of $G$ in $\R_{\Gamma,\C_r}$ has finite nucleus.
\end{theorem}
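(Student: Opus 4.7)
The plan is to combine the contracting lemma (\cref{lem:contractinglemma}) with the finiteness of atom types (\cref{prop:fin_many_types}) to show that each local action $g|_\alpha$ at a sufficiently deep address $\alpha$ is determined by a uniformly bounded amount of data. Since this data takes only finitely many values overall, only finitely many local actions can arise, so $\Nuc_G$ will be finite.

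First I would translate the question into the language of atoms using the system of addresses $\varphi\colon \C_r \to \partial_h G$. Under $\varphi$, each address $\alpha$ corresponds to an atom $A = A_\alpha$, and the local action $g|_\alpha$ corresponds to the map $g\colon A \to A^*$, where $A^* \in \A(G)$ is the smallest atom containing $gA$ (which exists uniquely by the nested structure of the atom tree, and whose address is $\og(\alpha)$). The equivalence criterion for local actions then translates, via the correspondence between canonical similarities and morphisms, to: $g_1|_{\alpha_1} = g_2|_{\alpha_2}$ iff there exist morphisms (in the sense of \cref{def:morphisms}) $m_1\colon A_{\alpha_1} \to A_{\alpha_2}$ and $m_2\colon A^*_1 \to A^*_2$ satisfying $g_2 m_1 = m_2 g_1$ on $A_{\alpha_1}$.

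Next, I would fix $g \in G$ and an address $\alpha$ of depth $n > 2|g| + 39\delta + 13$, so that \cref{lem:contractinglemma} yields an atom $A'$ with $gA \subseteq A'$ and $N(A')$ contained in the $(18\delta+6)$-neighborhood of $g\,N(A)$. A further short geometric argument should show that $N(A^*)$ likewise lies in a bounded neighborhood of $g\,N(A)$, with bound depending only on $\delta$. Choosing $p \in N(A)$ and $q \in N(A^*)$, the element $h := q^{-1}gp$ then has word length bounded by a constant depending only on $\delta$, so $h$ ranges over a finite subset of $G$ as $g$ and $\alpha$ vary.

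The key claim is then that $g|_\alpha$ is determined by the tuple consisting of the type of $A$, the type of $A^*$, the element $h$, and a bounded amount of additional data specifying the positions of $p$ within $N(A)$ and $q$ within $N(A^*)$. Since each of these takes only finitely many values, only finitely many local actions can arise at depth past the threshold, so $\Nuc_G$ is finite. I expect the main obstacle to be verifying this determinacy claim: given two triples $(g_1, A_1, A_1^*)$ and $(g_2, A_2, A_2^*)$ with the same associated tuple, one must construct the required morphisms $m_1, m_2$ witnessing equality of local actions. The plan for this step is to exploit the rigidity of atoms provided by \cref{prop:PropertiesSets}(\ref{part:JustCheckU}) -- atoms are determined by their visible sets, and visible sets lie in bounded neighborhoods of the nearest-neighbor sets -- together with the fact that bounded-radius configurations in the hyperbolic group $G$ form only finitely many $G$-orbits.
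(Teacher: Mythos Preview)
Your overall strategy matches the paper's: use the contracting lemma to show that the local action at a deep enough atom is controlled by a bounded-diameter configuration, then invoke finiteness of such configurations up to the $G$-action. However, several technical points you have sketched over are where the real work lies, and your proposed invariant is not yet strong enough.

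First, your equivalence criterion is not quite right. The system of addresses $\varphi$ fixes \emph{one} morphism between any two atoms of the same type, and equality $g_1|_{\alpha_1}=g_2|_{\alpha_2}$ requires the commuting square with those specific morphisms. Your condition (arbitrary morphisms $m_1,m_2$) gives only what the paper calls \emph{morphism-equivalence}; this is good enough because the morphism groups $\mathrm{Mor}(v,G)$ are finite (\cref{prop:MorphismGroups}), but you should make that step explicit. Second, the paper does \emph{not} work with $A^*=A_{\og(\alpha)}$ in its mapping triples; it uses the atom $A'$ delivered by the contracting lemma and only passes to $A^*$ inside the proof of \cref{lem:mapping_triples_equiv}. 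Your claim that $N(A^*)$ lies in a bounded neighborhood of $gN(A)$ is not proved anywhere and is not obvious, since $A^*$ may sit much deeper in the tree than $A'$.

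Most importantly, your tuple $(\text{type of }A,\ \text{type of }A^*,\ h,\ \text{positions of }p,q)$ is not sufficient to pin down the morphism-equivalence class. Knowing that $A_1,A_2$ have the same type gives you \emph{some} morphism $m_1$, but you then need $g_2 m_1 g_1^{-1}$ to be a morphism $A_1^*\to A_2^*$, and nothing in your data forces this. The paper's invariant (the \emph{signature}) records, on the enlarged sets $\widehat N(A)$ and $\widehat N(A')$, both the restricted distance functions $\od_A,\od_{A'}$ \emph{and} the cones $C(x)$ at every point; matching signatures then feed directly into \cref{prop:MakeMorphisms}, which is the tool that actually certifies an element of $G$ is a morphism. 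The finiteness of signature classes relies in turn on the fact that hyperbolic groups have finitely many cone types---this is the precise form of ``bounded-radius configurations have finitely many $G$-orbits'' that is needed, and it does not follow from \cref{prop:PropertiesSets}(\ref{part:JustCheckU}) alone.
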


We will make use of the contracting lemma (\cref{lem:contractinglemma}) proven in the last subsection, and we will also need some additional technical results from \cite{BelkBleakMatucci}.

Recall the notation $B_n$ for the $n$-ball in $G$, $S_n$ for the $n$-sphere, and $N(A)$ for the set of nearest neighbors of $A$ in $B_n$ ($A\in \A_n(G)$). Now let
\[
\widehat{N}(A) \coloneqq S_n \cap B_{4\delta+2}(N(A))
\]
where $B_{4\delta+2}(N(A))$ denotes the $(4\delta+2)$-neighborhood of $N(A)$. The following proposition is a version of the main technical result in~\cite{BelkBleakMatucci}.

\begin{proposition}\label{prop:MakeMorphisms}
Let $A,A'\in\A(G)$, let $g\in G$, and suppose that
\begin{enumerate}
    \item $g\,\widehat{N}(A)=\widehat{N}(A')$,\smallskip
    \item $g\,\od_A$ agrees with $\od_{A'}$ on $\widehat{N}(A')$, and\smallskip
    \item $g\,C(x) = C(gx)$ for all $x\in \widehat{N}(A)$.
\end{enumerate}
Then $g$ is a morphism from $A$ to $A'$.
\end{proposition}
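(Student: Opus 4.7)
My plan is to verify the three conditions of \cref{def:morphisms} in turn, using \cref{prop:PropertiesSets}(\ref{part:JustCheckU}) as the principal tool. Let $n$ and $n'$ denote the levels of $A$ and $A'$.

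The first step is a \emph{level shift}: since $\widehat{N}(A)\subseteq S_n$, hypothesis~(i) yields $|p|=n$ and $|gp|=n'$ for every $p\in \widehat{N}(A)$. For $x\in A$ and $p\in N(A)$, the point $p$ lies on a geodesic $[1,x]$, so $x\in \C(p)$; hypothesis~(iii) then gives $gx\in g\C(p)=\C(gp)$, whence $|gx|=|gp|+d(gp,gx)=n'+|x|-n$. In particular, $gp$ is a nearest neighbor of $gx$ in $B_{n'}$.

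Next I would prove $gA\subseteq A'$. For $x\in A$, \cref{prop:PropertiesSets}(\ref{part:PNearN}) combined with $gp\in N(gx,B_{n'})\cap N(A')$ forces $V(gx,B_{n'})\subseteq B_{4\delta+2}(gp)\cap S_{n'}\subseteq \widehat{N}(A')$, and likewise $V(A')\subseteq \widehat{N}(A')$. Since $\od_x$ and $\od_A$ agree on $B_n$, the isometric action of $g$ yields $\od_{gx}=g\od_A$ on $gB_n\supseteq \widehat{N}(A')$, and hypothesis~(ii) gives $g\od_A=\od_{A'}$ on $\widehat{N}(A')$. \cref{prop:PropertiesSets}(\ref{part:JustCheckU}) with $U=\widehat{N}(A')$ then concludes $gx\in A'$. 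The reverse inclusion follows by the symmetric argument with $(A',A,g^{-1})$ in place of $(A,A',g)$, since all three hypotheses are preserved under this swap. Condition~(ii) of the morphism definition is now immediate from the level shift.

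Condition~(iii) is the main obstacle. The plan is to propagate the hypotheses from $A$ to any $A_1\in \A_{n+k}(A)$ and then reapply the argument above. For $p_0\in N(A)$ and $p_k\in N(A_1)$ lying on a common geodesic from $1$ to a point of $A_1$, one has $p_k\in \C(p_0)$, and $\C(p_k)$ coincides with the ``sub-cone at $p_k$ inside $\C(p_0)$''; restricting hypothesis~(iii) from $p_0$ to that sub-cone yields $g\C(p_k)=\C(gp_k)$. Setting $A_2$ to be the atom of $gx_0$ at level $n'+k$ for some fixed $x_0\in A_1$, one then has to verify the analogues of hypotheses~(i)--(iii) for $(A_1,A_2)$, and in particular the identification $g\widehat{N}(A_1)=\widehat{N}(A_2)$. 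The difficulty is that $\widehat{N}(A_1)$ may contain points not lying in any cone $\C(p_0)$ with $p_0\in N(A)$, so the cone-preservation must be extended to a thin-triangle neighborhood of such cones; this is where the precise constant $4\delta+2$ in the definition of $\widehat{N}$ is doing its geometric work, and where hyperbolicity is used most essentially.
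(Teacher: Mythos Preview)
The paper does not give a self-contained proof of this proposition; it simply observes that $\widehat{N}(A)$ contains the \emph{proximal set} of~$A$ (by \cite[Proposition~3.21]{BelkBleakMatucci}) and then invokes \cite[Proposition~3.27]{BelkBleakMatucci}, where all of the technical work is carried out in terms of proximal sets. So there is no detailed argument here to compare yours against: the substance lives in the cited paper.

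Your direct approach gets conditions~(i) and~(ii) of \cref{def:morphisms} essentially right, though one step deserves more care. You assert $gp\in N(A')$ for $p\in N(A)$, but at that moment you only know $gp\in g\widehat{N}(A)=\widehat{N}(A')$, and you have not yet shown $gx\in A'$, so you cannot identify $N(gx,B_{n'})$ with $N(A')$. The fix is to use hypothesis~(ii): $\od_{A'}$ achieves its minimum on $B_{n'}$ exactly on $N(A')\subseteq \widehat{N}(A')$, while $g\,\od_A$ achieves its minimum on $\widehat{N}(A')=g\widehat{N}(A)$ exactly on $gN(A)$; since these two functions agree on $\widehat{N}(A')$, one deduces $gN(A)=N(A')$. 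With this in hand your argument for $gA=A'$ and for the level shift goes through.

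The genuine gap is condition~(iii), and you correctly identify it. Propagating the hypotheses to a descendant $A_1\in\A_{n+k}(A)$ requires controlling $\widehat{N}(A_1)$, and points of $\widehat{N}(A_1)$ need not lie in any cone $C(p_0)$ with $p_0\in\widehat{N}(A)$, so hypothesis~(iii) does not directly apply to them. Extending the cone-preservation to a $\delta$-thickening is exactly the delicate step, and you have not carried it out; the phrase ``this is where the precise constant $4\delta+2$ is doing its geometric work'' is a description of the problem, not a solution. In \cite{BelkBleakMatucci} this is handled through the proximal-set machinery, which is designed precisely to make the inductive propagation go through. Without reproducing that argument (or an equivalent one), your proof of condition~(iii) is incomplete.
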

\begin{proof}
By \cite[Proposition~3.21]{BelkBleakMatucci}, the set $\widehat{N}(A)$ contains the proximal set for $A$ as defined in \cite[Definition~3.20]{BelkBleakMatucci}.  Thus the given proposition follows from \cite[Proposition~3.27]{BelkBleakMatucci}.
\end{proof}

Now fix a system of addresses $\varphi\colon \C_r\to \partial_h G$, and let $A_\alpha$ denote the atom associated to each $\alpha\in\Cones(\C_r)$.  For $g\in G$, let $g^\varphi$ denote the homeomorphism of $\C_r$ induced by $g$ via $\varphi$.  We write $g|_\alpha\coloneqq (g^\varphi)|_\alpha$ and $\og(\alpha)\coloneqq \overline{g^\varphi}(\alpha)$ for each $\alpha\in \Cones(\C_r)$. Set
\[
G^\varphi = \bigl\{g^\varphi \;\bigr|\; g\in G\bigr\}
\]
and let $\Nuc_G$ denote the nucleus of $G^\varphi$.  Note that $G^\varphi$ may be a proper quotient of~$G$ if the action of $G$ on $\partial_h G$ is not faithful. 

If $A_\alpha$ and $A_\beta$ are atoms with $t(\alpha)= t(\beta)$, then our system of addresses determines a canonical morphism $g$ from $A_\alpha$ to $A_\beta$, which has the property that $g|_\alpha$ is the identity on $\C_{t(\alpha)}$.  If $g$ is an arbitrary morphism from $A_\alpha$ to $A_\beta$, then $g|_{\alpha}$ need not be the identity, but it is still a self-homeomorphism of $\C_{t(\alpha)}$.

\begin{proposition}\label{prop:MorphismGroups}
For $v$ a node in\/ $\Gamma$, set
\[
\mathrm{Mor}(v,G) = \bigl\{g|_\alpha \;\bigr|\; g\text{ is a morphism from $A_\alpha$ to $A_\beta$}\bigr\}\text{,}
\]
for some choice of $A_\alpha,A_\beta\in\A(G)$ of type $v$. Then\/ $\mathrm{Mor}(v,G)$ does not depend on the choice of $A_\alpha$ and $A_\beta$, and is a finite subgroup of $\R_{\Gamma,\C_v}$.
\end{proposition}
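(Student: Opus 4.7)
My plan is to prove the three claims (independence of the cones, subgroup, finiteness) in sequence, using the lemmas on local actions from \cref{ssec:rational} together with the fact (essentially \cref{rem:SelfSimilarTree}) that morphisms between a fixed pair of atoms form a finite set closed under composition, inverses, and restrictions.

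For the independence of $\mathrm{Mor}(v,G)$ from the chosen pair of atoms, I would argue by comparison with any other pair $A_{\alpha'}, A_{\beta'}$ of type $v$. By property~(ii) of the system of addresses $\varphi$, there exist canonical morphisms $c_1$ from $A_\alpha$ to $A_{\alpha'}$ and $c_2$ from $A_\beta$ to $A_{\beta'}$ whose local actions satisfy $c_1|_\alpha = c_2|_\beta = \mathrm{id}_{\C_v}$ and whose prefix maps send $\alpha\mapsto \alpha'$ and $\beta\mapsto \beta'$ respectively. Since morphisms are closed under inversion and composition, for any morphism $g$ from $A_\alpha$ to $A_\beta$ the element $c_2 g c_1^{-1}$ is a morphism from $A_{\alpha'}$ to $A_{\beta'}$. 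Unwinding the definition of local actions (or applying \cref{lem:restrict_composition} twice) one checks directly that
\[
(c_2\,g\,c_1^{-1})(\alpha'\cdot\omega) = c_2\bigl(g(\alpha\cdot\omega)\bigr) = c_2\bigl(\beta\cdot g|_\alpha(\omega)\bigr) = \beta'\cdot g|_\alpha(\omega)
\]
so that $(c_2 g c_1^{-1})|_{\alpha'} = g|_\alpha$. The symmetric argument gives the reverse inclusion, so the two sets of local actions coincide.

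For the subgroup claim, by the independence just proved I may set $A_\beta = A_\alpha$, so $\mathrm{Mor}(v,G)$ is the image of the set $\mathrm{Aut}(A_\alpha)$ of self-morphisms of $A_\alpha$ under $g\mapsto g|_\alpha$. By \cref{rem:SelfSimilarTree}, $\mathrm{Aut}(A_\alpha)$ is a group under composition. Since any morphism from $A_\alpha$ to itself preserves the cone $\C_\alpha$, we have $\overline{g}(\alpha)=\alpha$ for every $g\in \mathrm{Aut}(A_\alpha)$, so \cref{lem:restrict_composition} reduces to $(g_1 g_2)|_\alpha = (g_1|_\alpha \circ g_2|_\alpha)|_{t(\alpha)} = g_1|_\alpha\circ g_2|_\alpha$, the last equality because $t(\alpha)=v$ is the origin of the domain $\C_v$ and a self-homeomorphism of $\C_v$ is its own local action at $v$. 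Thus $g\mapsto g|_\alpha$ is a group homomorphism from $\mathrm{Aut}(A_\alpha)$ to the group $\Homeo(\C_v)$, and its image lies in $\R_{\Gamma,\C_v}$ because each $g|_\alpha$ is a restriction of the rational map induced by $g$ (the local actions of $g|_\alpha$ are $g|_{\alpha\cdot\beta}$ by \cref{lem:restrict_twice}, hence a subset of the finite set of local actions of $g$). Therefore $\mathrm{Mor}(v,G)$ is a subgroup of $\R_{\Gamma,\C_v}$.

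Finally, finiteness is immediate from \cref{rem:SelfSimilarTree}: there are only finitely many morphisms from $A_\alpha$ to $A_\alpha$, so the image $\mathrm{Mor}(v,G)$ is finite. I do not anticipate a serious obstacle here; the only thing that requires care is bookkeeping with \cref{lem:restrict_composition}, \cref{lem:restrict_twice}, and the fact that canonical morphisms act trivially on their local action (as is built into the system-of-addresses formalism), which is what makes the conjugation trick in the first step preserve the local action on the nose rather than up to some nontrivial twist.
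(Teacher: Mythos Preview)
Your proof is correct and follows essentially the same approach as the paper's. The paper organizes the argument slightly differently: rather than first proving independence and then reducing to $A_\beta=A_\alpha$ for the subgroup claim, it establishes the composition formula $(hg)|_\alpha = h|_\beta\circ g|_\alpha$ directly for morphisms $g\colon A_\alpha\to A_\beta$ and $h\colon A_\beta\to A_\gamma$, which yields both independence and closure under products in one stroke; for finiteness it gives the concrete observation that any self-morphism of $A_\alpha$ permutes the finite set $A_\alpha\cap S_k$, rather than citing \cref{rem:SelfSimilarTree}.
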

\begin{proof}
First note that morphisms are closed under composition and inverses.  That is, if $g$ is a morphism from $A_{\alpha_1}$ to~$A_{\alpha_2}$ and $h$ is a morphism from $A_{\alpha_2}$ to $A_{\alpha_3}$, then $g^{-1}$ is a morphism from $A_{\alpha_2}$ to $A_{\alpha_1}$ and $hg$ is a morphism from $A_{\alpha_1}$ to $A_{\alpha_3}$.  Also note that $(hg)|_{\alpha_1} = h|_{\alpha_2}\circ g|_{\alpha_1}$ since these are homeomorphisms of $\C_v$, i.e., the formula in \cref{lem:restrict_composition} does not require the extra restriction to a local action.  It follows easily that $\mathrm{Mor}(v,G)$ is a subgroup of $\R_{\Gamma,\C_v}$ and it does not depend on the choice of $\alpha$ and $\beta$.

To prove that $\mathrm{Mor}(v,G)$ is finite, let $A_\alpha$ be an atom of type~$v$.   Since $A_\alpha\ne\emptyset$, there exists a $k\in \N$ so that $A_\alpha\cap S_k\ne \emptyset$.  Then any morphism from $A_\alpha$ to $A_\alpha$ must permute the points of $A_\alpha\cap S_k$. There are only finitely many elements of $G$ that permute these points, so there are only finitely many morphisms from $A_\alpha$ to $A_\alpha$, and hence $\mathrm{Mor}(v,G)$ is finite.
\end{proof}

Two local actions $g|_\alpha,h|_\beta\colon \C_v\to \C_w$ are \newword{morphism-equivalent} if there exist $k\in \mathrm{Mor}(v,G)$ and $\ell\in \mathrm{Mor}(w,G)$ such that 
\[
h|_\beta = \ell\circ (g|_\alpha)\circ k.
\]
Morphism-equivalence is the same as the notion of ``equivalent restrictions'' on the self-similar tree of atoms given in \cite{BelkBleakMatucci}.  Clearly each morphism-equivalence class is finite, so if we want to prove that the nucleus $\Nuc_G$ of $G^\varphi$ is finite it suffices to prove that $\Nuc_G$ has only finitely many morphism-equivalence classes.

The proof of rationality in \cite[Section~3.6]{BelkBleakMatucci} hinges on a notion of a ``mapping triple'', which is an ordered triple $(g,A,A')$ satisfying certain properties.  Our proof here will use a different notion of a mapping triple, which we now define.

\begin{definition}[Mapping triple]
A \newword{mapping triple} is an ordered triple $(g,A_\alpha,A_\beta)$ such that
\begin{enumerate}
    \item $g\in G$,\smallskip
    \item $A_\alpha\in \A_n(G)$ for some $n>2|g|+39\delta+13$, and\smallskip
    \item $A_\beta\in \A(G)$ is an atom for which $gA_\alpha\subseteq A_\beta$ and $N(A_\beta)$ is contained in the $(18\delta+6)$-neighborhood of $g\,N(A_\alpha)$.
\end{enumerate}
\end{definition}
By the contracting lemma (\cref{lem:contractinglemma}), for any $g\in G$ and any $A_\alpha\in \A_n(G)$ with $n>2|g|+39\delta+13$, there exists $A_\beta\in\A(G)$ such that $(g,A_\alpha,A_\beta)$ is a mapping triple. In particular, for all $g\in G$, for all but finitely many $\alpha$, there exists a mapping triple of the form $(g,A_\alpha,A_\beta)$.

We say that two mapping triples $(g,A_\alpha,A_\beta)$ and $(h,A_\zeta,A_\eta)$ are \newword{equivalent} if there exists a morphism $\ell$ from $A_\beta$ to $A_\eta$ such that $h^{-1}\ell g$ is a morphism from $A_\alpha$ to $A_\zeta$.

\begin{lemma}\label{lem:mapping_triples_equiv}
If the mapping triples $(g,A_\alpha,A_\beta)$ and $(h,A_\zeta,A_\eta)$ are equivalent then $g|_\alpha$ and $h|_\zeta$ are morphism-equivalent.
\end{lemma}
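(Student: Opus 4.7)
The plan is to exploit the identity $hm = \ell g$, where $m := h^{-1}\ell g$ is, by hypothesis, a morphism from $A_\alpha$ to $A_\zeta$, and to extract the local-action statement by substituting an arbitrary address $\alpha\cdot\omega$ into both sides. Set $v := t(\alpha) = t(\zeta)$ and $w := t(\beta) = t(\eta)$; these equalities are forced by the existence of the morphisms $m$ and $\ell$ together with \cref{def:morphisms}. Since $gA_\alpha \subseteq A_\beta$, I may write $\og(\alpha) = \beta\cdot\gamma$ for a (possibly trivial) path $\gamma$ starting at $w$, and analogously $\oh(\zeta) = \eta\cdot\gamma'$. Expanding both sides of $h(m(\alpha\cdot\omega)) = \ell(g(\alpha\cdot\omega))$ using the definition of local action yields
\[
\eta\cdot\gamma'\cdot h|_\zeta(m|_\alpha(\omega)) \;=\; \eta\cdot \overline{\ell|_\beta}(\gamma)\cdot (\ell|_\beta)|_\gamma(g|_\alpha(\omega)).
\]
Matching prefixes forces $\gamma' = \overline{\ell|_\beta}(\gamma)$ and, stripping them, produces the central identity
\[
h|_\zeta \;=\; (\ell|_\beta)|_\gamma \;\circ\; g|_\alpha \;\circ\; (m|_\alpha)^{-1}.
\]

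The remaining task is to recognize the outer factors as morphism local actions of the appropriate types. Since $m\colon A_\alpha \to A_\zeta$ is a morphism, $m|_\alpha$ lies in $\mathrm{Mor}(v,G)$, and as this set is a group by \cref{prop:MorphismGroups}, so does $(m|_\alpha)^{-1}$. By \cref{lem:restrict_twice}, $(\ell|_\beta)|_\gamma = \ell|_{\beta\cdot\gamma}$, which is the local action of $\ell$ at the sub-atom $A_{\beta\cdot\gamma}$; condition~(iii) of \cref{def:morphisms} guarantees that $\ell$ restricts to a morphism $A_{\beta\cdot\gamma}\to A_{\eta\cdot\gamma'}$ between atoms of a common type $w' := t(\gamma)$, so $(\ell|_\beta)|_\gamma \in \mathrm{Mor}(w',G)$. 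Taking $k := (m|_\alpha)^{-1}$ and $\ell' := (\ell|_\beta)|_\gamma$ exhibits $h|_\zeta = \ell' \circ g|_\alpha \circ k$, which is the definition of morphism-equivalence.

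The only real obstacle is bookkeeping: the mapping triple does not force $A_\beta$ to equal the minimal atom containing $g(A_\alpha)$, so one must explicitly carry along the extra path $\gamma$ between $\beta$ and $\og(\alpha)$ (and likewise $\gamma'$ for $h,\zeta$). Once those decompositions are set up, the argument is essentially forced by the composition rule \cref{lem:restrict_composition}, the restriction identity \cref{lem:restrict_twice}, and the structural properties of morphisms in \cref{def:morphisms}.
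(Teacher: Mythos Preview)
Your proof is correct and follows essentially the same route as the paper's: both hinge on the identity $hm=\ell g$ (your $m$ is the paper's $k$), expand it to obtain $h|_\zeta\circ m|_\alpha = \ell|_{\og(\alpha)}\circ g|_\alpha$, and then recognise the outer factors as lying in the appropriate $\mathrm{Mor}(\cdot,G)$. The only cosmetic difference is that the paper packages the expansion via \cref{lem:restrict_composition} after checking $\overline{\ell g}(\alpha)=\overline{\ell}(\og(\alpha))$ from the atom structure, whereas you unfold pointwise; your ``matching prefixes'' step is exactly that verification, and it is justified once one notes (as you do a line later) that $\ell$ restricts to a morphism on $A_{\beta\cdot\gamma}$ and hence $(\ell|_\beta)|_\gamma$ carries subcones to subcones.
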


\begin{proof}
Choose a morphism $\ell$ from $A_\beta$ to $A_\eta$ such that $k\coloneqq h^{-1}\ell g$ is a morphism from $A_\alpha$ to $A_\zeta$.  Since $hkA_\alpha = hA_\zeta$, we have that $\overline{hk}(\alpha) = \oh(\zeta) = \oh(\overline{k}(\alpha))$, so $(hk)|_\alpha = h|_\zeta \circ k|_\alpha$ by \cref{lem:restrict_composition}.

Let $\beta'=\og(\alpha)$ and $\eta'=\oh(\zeta)$, so $A_{\beta'}$ is the smallest atom in $A_\beta$ that contains $gA_\alpha$, and $A_{\eta'}$ is the smallest atom in $A_\eta$ that contains $hA_\zeta$.  Since $\ell$ is a morphism, it maps the tree of descendants of $A_\beta$ isomorphically to the tree of descendants of $A_\zeta$, and since $\ell g A_\alpha = hk A_\alpha = h A_\zeta$, it follows that $\ell A_{\beta'}=A_{\eta'}$.  Indeed, $\ell$ must be a morphism from $\beta'$ to $\eta'$.  Now we compute 
\[
\overline{\ell g}(\alpha) = \overline{hk}(\alpha) = 
\oh(\zeta) =
\eta' =
\overline{\ell}(\beta') =
\overline{\ell}(\og(\alpha)),
\]
so $(\ell g)|_\alpha = \ell|_{\beta'}\circ g|_\alpha$ by \cref{lem:restrict_composition}.  Since $hk=\ell g$, we conclude that  $h|_\zeta \circ k|_\alpha = \ell|_{\beta'}\circ g|_\alpha$. But $k|_\alpha \in \mathrm{Mor}(t(\alpha),G)$ and $\ell|_{\beta'}\in \mathrm{Mor}(t(\beta'),G)$, so $g|_\alpha$ and $h|_\zeta$ are morphism-equivalent.
\end{proof}

Now we are poised to prove \cref{thrm:contracting} that the image $G^\varphi$ of $G$ in $\R_{\Gamma,\C_r}$ has finite nucleus.

\begin{proof}[Proof of \cref{thrm:contracting}]
The proof closely follows the proof of Theorem~3.10 given in Subsection~3.6 of \cite{BelkBleakMatucci}. That proof established that each $g\in G$ individually has only finitely many local actions, and here we need to prove that all the $g\in G$ collectively have a finite set of local actions containing all but finitely many of each element's local actions.

Define the \newword{signature} of a mapping triple $(g,A_\alpha,A_\beta)$, to be the following information:
\begin{enumerate}
\item The sets $g\,\widehat{N}(A_\alpha)$ and $\widehat{N}(A_\beta)$,\smallskip
\item The functions $g\,\od_{A_\alpha}$ on~$g\,\widehat{N}(A_\alpha)$ and $\od_{A_\beta}$ on~$\widehat{N}(A_\beta)$, and\smallskip
\item The set $g\,C(g^{-1}p)$ for each $p\in g\,\widehat{N}(A_\alpha)$, and the cone $C(q)$ for each $q\in \widehat{N}(A_\beta)$.
\end{enumerate}

We say that two mapping triples $(g,A_\alpha,A_\beta)$ and $(h,A_\zeta,A_\eta)$ have \newword{equivalent signatures} if there exists $\ell\in G$ so that
\begin{enumerate}
\item
$\ell$ maps $g\,\widehat{N}(A_\alpha)$ to $h\,\widehat{N}(A_\zeta)$ and $\widehat{N}(A_\beta)$ to $\widehat{N}(A_\eta)$,\smallskip
\item $\ell g\,\od_{A_\alpha}$ agrees with $h\,\od_{A_\zeta}$ on~$h\,\widehat{N}(A_\zeta)$, and $\ell\,\od_{A_\beta}$ agrees with $\od_{A_\eta}$ on~$\widehat{N}(A_\eta)$, and\smallskip
\item $\ell g\,C(g^{-1}p) = h\,C(h^{-1}\ell p)$ for all $p\in g\,\widehat{N}(A_\alpha)$, and $\ell\,C(q) = C(\ell q)$ for all $q\in \widehat{N}(A_\beta)$.
\end{enumerate}
It follows immediately from \cref{prop:MakeMorphisms} that if two mapping triples $(g,A_\alpha,A_\beta)$ and $(h,A_\zeta,A_\eta)$ have equivalent signatures via an element $\ell\in G$, then $\ell$ is a morphism from $A_\beta$ to $A_\eta$ and $k\coloneqq h^{-1}\ell g$ is a morphism from $A_\alpha$ to~$A_\zeta$, so the two mapping triples are equivalent.  Then by \cref{lem:mapping_triples_equiv}, it follows that the local actions $g|_\alpha$ and $h|_\zeta$ are morphism-equivalent. To summarize, equivalent signatures implies equivalent mapping triples implies equivalent local actions.

As observed previously, the contracting lemma (\cref{lem:contractinglemma}) tells us that, for any $g\in G$ and any $A_\alpha\in \A_n(G)$ with $n>2|g|+39\delta+13$, there exists $A_\beta\in\A(G)$ such that $(g,A_\alpha,A_\beta)$ is a mapping triple. In particular, for every $p\in \Nuc_G$ there exists a mapping triple $(g,A_\alpha,A_\beta)$ such that $g|_\alpha = p$.  Therefore, to prove that $\Nuc_G$ is finite, it suffices to prove that there are only finitely many equivalence classes of signatures.

Consider a mapping triple $(g,A_\alpha,A_\beta)$. By \cref{prop:PropertiesSets}(\ref{part:DiameterN}), the sets $N(A_\alpha)$ and $N(A_\beta)$ have diameter at most $2\delta$, and the same holds for $g\,N(A_\alpha)$.  By the definition of a mapping triple, $N(A_\beta)$ is contained in the $(18\delta+6)$-neighborhood of $g\,N(A_\alpha)$, so it follows that $N(A_\beta)\cup g\,N(A_\alpha)$ has diameter at most $(18\delta+6)+2(2\delta) = 22\delta+6$.  Then $\widehat{N}(A_\beta)\cup g\,\widehat{N}(A_\alpha)$ has diameter at most $(22\delta+6)+2(4\delta+2)=30\delta +10$.  In particular, up to the action of $G$ there are only finitely many $G$\mbox{-}orbits of pairs $\bigl(g\,\widehat{N}(A_\alpha),\widehat{N}(A_\beta)\bigr)$. Since a hyperbolic group has only finitely many cone types~\cite{Cannon1}, every such $G$\mbox{-}orbit corresponds to finitely many equivalence classes of signatures, so we conclude that there are only finitely many equivalence classes of signatures.
\end{proof}

Finally, we can prove the main result of this section, that every hyperbolic group embeds into a full, contracting RSG.

\begin{proof}[Proof of \cref{thrm:hyp_to_contracting}]
Let $G$ be a non-trivial hyperbolic group. Since $G$ embeds into the hyperbolic group $G*\Z$, we can assume without loss of generality that our group $G$ has a proper $\Z$ free factor. By \cref{prop:fin_many_types}, the tree of atoms of any hyperbolic group has finitely many types, so by \cref{thrm:FreeProductBoundary} we conclude that $\partial_h(G)$ has no isolated points, the action of $G$ on $\partial_h(G)$ is faithful, and the associated type graph has an irreducible core. By \cref{thrm:BBM}, the action is also rational, i.e.\ there exists a subshift of finite type $\Sigma_\Gamma$ and a node $r$ of $\Gamma$ such that, identifying $\partial_h G$ with $\C_r$, we have that $G$ embeds as a subgroup of $\R_{\Gamma,\partial_h G}$. By \cref{prop:hyp_similarities} this embedded copy of $G$ is an RSG. It is also contracting, by \cref{thrm:contracting}. Now $G$ embeds into $[[\,G\mid \partial_h G\,]]$, and this is a full, contracting RSG by \cref{prop:NucleusHasProperties} and \cref{thrm:RSGCharacterization}.
\end{proof}

\section{Boone--Higman embeddings}\label{sec:ending}

Now we are poised to prove our main result, \cref{thrm:bh_hyp} from the introduction, that hyperbolic groups satisfy the Boone--Higman conjecture. The last remaining key step is the following.

\begin{proposition}\label{prop:contr_to_simple}
Every full, contracting RSG embeds into a finitely presented simple group.
\end{proposition}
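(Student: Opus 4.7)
The strategy is to combine \cref{thrm:fin_pres} with the embedding criterion \cref{thrm:action_to_simple} from \cite{zaremskyTaste}, which says that every finitely presented group admitting an oligomorphic action with finitely generated stabilizers of finite subsets embeds into a finitely presented simple twisted Brin--Thompson group. Let $G\leq \R_{\Gamma,E}$ be a full, contracting RSG. By \cref{thrm:fin_pres}, $G$ is finitely presented, so it remains to exhibit an action of $G$ with the required properties and then invoke the criterion.

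For the action I would take $\mathcal{O}$ to be a $G$-orbit in $E$. Oligomorphicity of the diagonal action of $G$ on $\mathcal{O}^n$ is essentially inherited from $V_{\Gamma,E}\leq G$ (\cref{prop:RSGsAndV}): given two $n$-tuples of distinct points in $\mathcal{O}$, separate them by pairwise disjoint cones of sufficient depth, record the combinatorial data of the surrounding partition (the finitely many nodes of $\Gamma_0$ at the termini of the chosen cones, together with the relevant class information), and use \cref{cor:MappingConesWithV} to match any two tuples of the same combinatorial type by a single element of $V_{\Gamma,E}$. Since only finitely many such types can occur for a given $n$, there are only finitely many $G$-orbits on~$\mathcal{O}^n$.

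The main obstacle will be showing that the stabilizer of each finite set $F\subseteq\mathcal{O}$ in $G$ is finitely generated. Inside $V_{\Gamma,E}$, the stabilizer of $F$ is essentially of the form $V_{\Gamma,E'}$ for a smaller clopen set $E'$ obtained by deleting suitably small cones around the points of $F$, and this is finitely generated (indeed finitely presented) by \cref{cor:V_fp}. For the larger group $G$ one has to also control the behaviour of stabilizer elements at the points of $F$. Here I would lean on the contracting property: every $g\in G$ has all but finitely many local actions in the finite nucleus $\Nuc_G$, and, by writing a stabilizer element in a normalish form (\cref{def:normalish}) whose nuclear code refines a fixed system of small cones around $F$, one sees that the behaviour of $g$ near $F$ is governed by only a bounded amount of data drawn from $\Nuc_G$. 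The plan is to show that these pieces of near-$F$ data generate a finitely generated complement to $\Stab_{V_{\Gamma,E}}(F)$ inside $\Stab_G(F)$, so that the latter is finitely generated as well.

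Having checked finite presentability, oligomorphicity of the action on $\mathcal{O}$, and finite generation of stabilizers of finite subsets of $\mathcal{O}$, \cref{thrm:action_to_simple} then immediately yields the desired embedding of $G$ into a finitely presented simple twisted Brin--Thompson group, completing the proof.
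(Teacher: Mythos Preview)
Your overall strategy is the same as the paper's: verify the hypotheses of \cref{thrm:action_to_simple} for a suitable orbit. But two parts of your outline do not hold up.

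First, your oligomorphicity argument is more work than needed and somewhat shaky as written (the ``combinatorial type'' of a tuple of points in $\mathcal{O}$ is not obviously a finite invariant, since arbitrarily deep cones may be needed to separate points). The paper simply observes that a \emph{full} group of homeomorphisms of a Cantor space acts highly transitively on each of its orbits (\cref{prop:HighlyTransitiveAction}); oligomorphicity is immediate.

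Second, and more seriously, your stabilizer sketch has a genuine gap. The assertion that $\Stab_{V_{\Gamma,E}}(F)$ is ``essentially of the form $V_{\Gamma,E'}$ for a smaller clopen set $E'$'' is false: the stabilizer of a point contains many elements that act nontrivially on every neighborhood of that point, so it is not the fixator of any clopen set. The paper handles this by choosing $\mathcal{O}$ to be the orbit of a \emph{rational} point $\omega=\sigma\cdot\tau^\infty$, and then proving two structural facts you are missing. One is \cref{prop:CyclicStabilizers}: at a rational point the group of germs $[G]_\omega$ is virtually infinite cyclic, which bounds the ``near-$F$ data'' you allude to in a precise way. The other is the key step of \cref{prop:fin_gen_stabs}: there is a single element $f\in V_{\Gamma,E}$ (the canonical similarity sending each $\C_{\sigma_i}$ to $\C_{\sigma_i\cdot\tau_i}$) such that $\Fix_G^0(F)=\bigcup_{k\geq 0} f^k\,\Fix_G(U)\,f^{-k}$ for a fixed clopen $U$, and $\Fix_G(U)$ is itself a full, contracting RSG on $E\setminus U$, hence finitely generated by \cref{thrm:fin_pres}. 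Both of these steps rely essentially on the eventually periodic structure of rational points; without specifying that $\mathcal{O}$ consists of rational points, and without the germ computation, your plan of exhibiting a ``finitely generated complement'' has no mechanism to succeed.
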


Outside the application to hyperbolic groups, \cref{prop:contr_to_simple} also yields the following, which is immediate from the fact that R\"over--Nekrashevych groups are full RSGs (see \cref{ex:rn}).

\begin{corollary}\label{cor:RN_BH}
Every contracting R\"over--Nekrashevych group embeds into a finitely presented simple group. Hence, all contracting self-similar groups and all contracting R\"over--Nekrashevych groups satisfy the Boone--Higman conjecture.\qed
\end{corollary}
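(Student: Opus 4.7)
The plan is to apply the criterion $\cref{thrm:action_to_simple}$ of the fourth author from \cite{zaremskyTaste}, which embeds any finitely presented group admitting an oligomorphic action on an infinite set with finitely generated stabilizers of finite subsets into a finitely presented simple (twisted Brin--Thompson) group. Let $G\leq\R_{\Gamma,E}$ be a full, contracting RSG. By $\cref{thrm:FPTheorem}$, $G$ is already finitely presented, so the task reduces to producing an appropriate action on an infinite set.

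The natural candidate is the action of $G$ on a single orbit $\mathcal{O}\subseteq E$. I would first verify oligomorphicity: since $G$ contains $V_{\Gamma,E}$ by $\cref{prop:RSGsAndV}$, and every canonical similarity between cones of the same type is available inside $V_{\Gamma,E}$, the $G$-orbit of an $n$-tuple $(x_1,\ldots,x_n)\in\mathcal{O}^n$ of distinct points is determined (after choosing a common refinement to a partition of $E$ into cones that separate the $x_i$) by the combinatorial type of the separating partition together with the sequence of terminal nodes of the separating cones in $\Gamma$. Since $\Gamma$ has finitely many nodes and, by $\cref{cor:MappingConesWithV}$ together with flexibility of $V_{\Gamma,E}$, there are only finitely many partition types once the terminal-node data are fixed, this yields finitely many $G$-orbits on $n$-tuples for each $n$.

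The main obstacle is verifying finite generation of the stabilizers $\Stab_G(F)$ for finite $F\subseteq\mathcal{O}$. The key leverage is that the nucleus $\Nuc$ is finite, so the germ group of $G$ at each $x\in E$ is finite (its elements are identified with the nuclear maps $p\in\Nuc$ fixing the address of $x$). Consequently $\Stab_G(F)$ maps with finite image into $\prod_{x\in F}\mathrm{Germ}_G(x)$, reducing the problem to finite generation of the kernel $K$, consisting of those elements of $G$ that act trivially on some clopen neighborhood of each $x\in F$. One then argues that, up to a further finite-index subgroup, $K$ coincides with the subgroup of $G$ supported on the complement $E'$ of fixed, sufficiently small cones $\C_{\alpha_i}$ around each $x\in F$: the contracting property combined with the finiteness of $\Nuc$ should give a uniform ``depth'' past which an element trivial in \emph{some} neighborhood of each $x$ is already trivial on the chosen $\C_{\alpha_i}$. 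The restriction of $G$ to $E'$ is itself a full RSG inside $\R_{\Gamma,E'}$ with nucleus contained in $\Nuc_G$, hence full and contracting, and is therefore finitely presented (in particular finitely generated) by another application of $\cref{thrm:FPTheorem}$. Establishing this uniform-depth statement is the crux of the argument, and it is here that the finiteness of $\Nuc$ must be used carefully, likely via an iterated application of $\cref{lem:restrict_twice}$ to propagate triviality down the tree of cones.

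Once both oligomorphicity and finite generation of finite-set stabilizers are in hand, $\cref{thrm:action_to_simple}$ delivers the embedding of $G$ into a finitely presented simple twisted Brin--Thompson group, completing the proof.
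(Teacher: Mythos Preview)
Your outline follows the same high-level strategy as the paper (apply \cref{thrm:action_to_simple} to the orbit of a point in $E$, with finite presentability coming from \cref{thrm:FPTheorem}), but there is a genuine error in the stabilizer analysis that would make the argument fail as written.

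You assert that the germ group of $G$ at each $x\in E$ is finite, identifying germs with nuclear maps fixing the address of $x$. This is false at the points that matter. The paper works with the orbit of a \emph{rational} point $\omega=\sigma\cdot\tau^\infty$, and there the germ group $[G]_\omega$ is virtually \emph{infinite} cyclic (\cref{prop:CyclicStabilizers}): the canonical similarity $f$ taking $\C_\sigma$ to $\C_{\sigma\cdot\tau}$ fixes $\omega$ and has infinite-order germ. Consequently your ``uniform depth'' claim cannot hold: the elements $f^n$ are trivial on arbitrarily small neighborhoods of $\omega$ but never on a fixed cone $\C_{\alpha}$, so $\Fix_G^0(S')$ is a strictly ascending union $\bigcup_{i\ge 0} f^i\,\Fix_G(U)\,f^{-i}$ rather than commensurable with a single $\Fix_G(U)$. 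No amount of iterating \cref{lem:restrict_twice} will produce a uniform bound here, because the obstruction is not about propagating triviality but about the genuinely infinite germ.

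The fix, which is what the paper does in \cref{prop:fin_gen_stabs}, is to exploit this infinite cyclic structure rather than avoid it: the image of $\Fix_G(S')$ in $\prod_{\omega\in S'}[G]_\omega$ is finitely generated because each factor is virtually cyclic, and the kernel $\Fix_G^0(S')$ is contained in the finitely generated group $\langle \Fix_G(U),\,f\rangle$, where $f$ is the shift element above and $\Fix_G(U)$ is itself a full, contracting RSG on $E\setminus U$ (hence finitely generated by another appeal to \cref{thrm:FPTheorem}). As a minor point, your oligomorphicity argument via partition types is more work than needed: fullness alone already gives high transitivity on each orbit (\cref{prop:HighlyTransitiveAction}). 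Finally, note that the corollary as stated is literally one line once \cref{prop:contr_to_simple} is available: R\"over--Nekrashevych groups of contracting self-similar groups are full, contracting RSGs by \cref{ex:rn}.
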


Before embarking on the proof of \cref{prop:contr_to_simple}, let us explain where our finitely presented simple groups come from. (First we should mention that full, contracting RSGs are not always themselves simple, even up to finite index; for example, one can check that $[[\,F_2\mid \partial_h F_2\,]]$ has abelianization $\Z^2\oplus (\Z/2\Z)^2$.)  The source of our examples is so called twisted Brin--Thompson groups, introduced by the first and fourth authors in \cite{BelkZaremskyTwisted}. Given a group $G$ acting faithfully on a set $S$, consider the Cantor space $\C^S$, where $\C$ is the usual Cantor set $\{0,1\}^\N$, and $\C^S$ is given the product topology. Thompson's group $V=V_2$ acts on~$\C$, so there is an action of the (restricted, permutational) wreath product $V \wr_S G$ on $\C^S$, with each copy of $V$ acting on the appropriate coordinate, and $G$ permuting the coordinates. Now the \newword{twisted Brin--Thompson group} $SV_G$ is the full group induced by $V\wr_S G$, i.e.
\[
SV_G \coloneqq [[\,V\wr_S G \mid \C^S\,]].
\]
The group $SV_G$ is always simple, but it may or may not have good finiteness properties. We should mention that twisted Brin--Thompson groups admit no isometric action with loxodromic elements on any hyperbolic metric space, by a result of Balasubramanya--Fournier-Facio--Genevois \cite{BFFG}, so it is interesting that in some sense our embedding of hyperbolic groups into finitely presented simple groups serves to completely eliminate this feature of hyperbolicity.

The following, from \cite{zaremskyTaste}, is an improved version of a result from~\cite{BelkZaremskyTwisted}, which essentially says that groups admitting certain actions satisfy the Boone--Higman conjecture. Following Cameron \cite{Cameron}, a group of permutations of a set $S$ is \newword{oligomorphic} if for every $n\ge 1$ the induced action on $S^n$ has finitely many orbits.

\begin{theorem}\label{thrm:action_to_simple}\cite[Theorem~4.2]{zaremskyTaste}
Let $G$ be a finitely presented, oligomorphic group of permutations of a set $S$ such that the stabilizer in $G$ of any finite subset of $S$ is finitely generated. Then the twisted Brin--Thompson group $SV_G$ is a finitely presented simple group into which $G$ embeds.
\end{theorem}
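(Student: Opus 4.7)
The plan is to establish the three claims—embedding, simplicity, finite presentability—in increasing order of difficulty, using the structure of $SV_G$ as a full group generated by the wreath product $V \wr_S G$ on the Cantor space $\C^S$.

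For the embedding, I would observe that $G$ sits inside $V \wr_S G$ as the ``top'' factor via $g \mapsto (1, g)$, where $1$ is the trivial element of $V^{\oplus S}$. Because $G$ acts faithfully on $S$ by hypothesis and $V$ acts nontrivially on $\C$, permuting coordinates of $\C^S$ recovers the $G$-action, so this composed embedding $G \hookrightarrow V \wr_S G \hookrightarrow SV_G$ is faithful.

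For simplicity, I would show $SV_G$ acts on $\C^S$ faithfully, flexibly (any two proper nonempty clopen sets can be mapped one into another by combining Thompson-like generators and coordinate permutations), and with finite germ stabilizers at most points. Matui's framework (as cited in the excerpt around \cref{ssec:full_irred}) then gives that the commutator subgroup is simple. A direct perfectness argument—showing every generator is a product of commutators by imitating Higman's classical argument for $V$, using that $SV_G$ is full and contains enough involutions with disjoint support—then upgrades this to simplicity of $SV_G$ itself.

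The hard part is finite presentability. My plan is to follow the Stein--Farley style approach standard for Thompson-like groups. I would build a poset $\mathcal{P}$ of ``dyadic bricks'' of $\C^S$: a brick is a clopen set cut out by specifying, at each of finitely many coordinates $s \in S$, a cone in $\C$, and declaring the other coordinates free; a labeled brick diagram for an element $f \in SV_G$ is a partition of $\C^S$ into such bricks on which $f$ acts by a single element of $V \wr_{S'} G$ for some finite $S' \subset S$. The geometric realization of $\mathcal{P}$ (or the associated Stein--Farley complex obtained by collapsing ``elementary'' intervals) carries a natural $SV_G$-action; I would prove it is simply connected via a discrete Morse function whose descending links are joins of simplices coming from refinement, an argument that reduces in the end to connectivity of standard matching complexes. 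I would then invoke Brown's finiteness criterion: oligomorphicity of $G$ on $S$ forces oligomorphicity on tuples, hence finitely many $SV_G$-orbits of cells in each dimension through~2; the hypothesis that stabilizers of finite subsets of $S$ in $G$ are finitely generated ensures stabilizers of $1$-cells are finitely generated; and the hypothesis that $G$ is finitely presented gives, via a direct semidirect-product-type argument with the wreath structure, that $0$-cell stabilizers (products of a copy of $G$-stabilizer with factors of $V$-like groups, which are themselves $\F_\infty$) are finitely presented. Brown's criterion then yields finite presentability.

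The main obstacle I expect is the last step—propagating finite presentability of $G$ through the wreath product to finite presentability of the $0$-cell stabilizers—because the ``finitely generated stabilizer'' hypothesis on finite subsets is weaker than what one might want; the delicate point is arguing that the relevant $0$-cell stabilizer in $\mathcal{P}$ is a virtually split extension whose kernel is a finite product of $V$-like groups (known to be $\F_\infty$) and whose quotient is an explicit finitely presented extension of a finite-subset stabilizer by something the wreath structure controls, so that finite presentability passes along the short exact sequence.
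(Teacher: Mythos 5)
This theorem is cited from \cite[Theorem~4.2]{zaremskyTaste}; the present paper does not reprove it, so I am comparing your sketch against what that reference (building on \cite{BelkZaremskyTwisted}) actually does. Your high-level framework---embedding $G$ via the top factor of the wreath product, simplicity via the full-group structure on $\C^S$, and finite presentability via a Stein--Farley complex and Brown's criterion---is the correct one and is what the source uses.

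The gap is in your $0$-cell stabilizer analysis, and it is exactly the point where the hypotheses have to be matched to Brown's criterion. You describe a $0$-cell stabilizer as a virtually split extension with kernel a ``finite product of $V$-like groups'' and quotient ``an explicit finitely presented extension of a finite-subset stabilizer.'' Brown's criterion for $F_2$ demands that the $0$-cell stabilizers be finitely presented, and an extension $1\to K\to E\to Q\to 1$ with $K$ of type $F_\infty$ is of type $F_2$ only if $Q$ is; in turn, if $Q$ is built over a finite-subset stabilizer of $G$, you would need that stabilizer to be finitely \emph{presented}. But the hypothesis only grants finite \emph{generation} of finite-subset stabilizers, so your proposed structure would leave the argument short. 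The actual resolution is that the relevant vertex stabilizer is (a conjugate of) the whole permutational wreath product $V\wr_S G = V^{(S)}\rtimes G$: its kernel is a \emph{restricted infinite} direct sum $V^{(S)}$, not a finite product, and its quotient is $G$ itself, not a finite-subset stabilizer. Finite presentability of $V\wr_S G$ is then exactly a Cornulier-type criterion for permutational wreath products: $V$ and $G$ finitely presented, finitely many $G$-orbits on $S$ and on $S^2$ (from oligomorphicity), and finitely generated point stabilizers (from the finite-subset hypothesis). The merely finitely generated finite-subset stabilizers are then only needed for cells of positive dimension, where $F_1$ suffices. Two smaller issues: ``finitely many $SV_G$-orbits of cells in each dimension through 2'' is not literally correct, since the action is not cocompact---you need the filtered version of Brown's criterion, which you do gesture at with the Morse function; and the Matui-plus-perfectness route to simplicity is a detour (and the paper's Matui machinery is stated only for subshifts of finite type, which $\C^S$ with $S$ infinite is not)---simplicity of all $SV_G$ is proven directly in \cite{BelkZaremskyTwisted} by an Epstein/Higman commutator argument, with no finiteness hypotheses needed.
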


Our goal is to verify the conditions of this theorem for full, contracting RSGs. We begin by observing that full groups often have oligomorphic actions. For the following proposition, an action of a group $G$ on a set $S$ is \newword{highly transitive} if $G$ acts transitively on $n$-tuples of distinct elements of $S$ for all $n\geq 1$.

\begin{proposition}\label{prop:HighlyTransitiveAction}
Let $X$ be a Hausdorff space that has a basis of clopen sets, and let $G$ be any full group of homeomorphisms of~$X$.  Then $G$ acts highly transitively (and hence oligomorphically) on each of its orbits.
\end{proposition}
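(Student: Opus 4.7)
The plan is to proceed by induction on $n$, the base case $n=1$ being immediate from the definition of an orbit. For the inductive step, given two $n$-tuples $(x_1,\dots,x_n)$ and $(y_1,\dots,y_n)$ of distinct points in the same orbit $\mathcal{O}$, I would first apply the inductive hypothesis to find some $g \in G$ with $g(x_i) = y_i$ for $i < n$, and then set $z = g(x_n)$. If $z = y_n$ we are done; otherwise, the problem reduces to finding $f \in G$ that fixes $y_1,\dots,y_{n-1}$ and sends $z$ to $y_n$, since then $fg$ is the desired element.

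To build such an $f$, I would pick any element $f_0 \in G$ with $f_0(z) = y_n$ (which exists because $z \in \mathcal{O}$). The key construction is then a ``localized swap'': choose a clopen neighborhood $W$ of $z$ small enough that
\begin{enumerate}
\item $W$ is disjoint from $\{y_1,\dots,y_{n-1}\}$,
\item $f_0(W)$ is disjoint from $\{y_1,\dots,y_{n-1}\}$, and
\item $W \cap f_0(W) = \emptyset$.
\end{enumerate}
Such a $W$ exists because $X$ is Hausdorff with a clopen basis (so we can separate the finitely many relevant distinct points $z, y_n, y_1, \dots, y_{n-1}$ by pairwise disjoint clopen sets) and because $f_0$ is continuous (so $W$ can be further shrunk inside a clopen set so that $f_0(W)$ lands in a prescribed clopen neighborhood of $y_n$).

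Now define $f$ to equal $f_0$ on $W$, to equal $f_0^{-1}$ on $f_0(W)$, and to be the identity on the complement $X \setminus (W \cup f_0(W))$. Since $W$ and $f_0(W)$ are disjoint clopen sets, this $f$ is a well-defined homeomorphism of $X$ (it swaps the clopen pieces $W$ and $f_0(W)$ via $f_0$ and fixes everything else). By construction $f$ locally agrees with $G$ at every point of $X$, so fullness of $G$ gives $f \in G$. Moreover $f(z) = y_n$ and $f(y_i) = y_i$ for $i < n$, completing the induction.

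The only place requiring real care is the simultaneous choice of the clopen set $W$ satisfying conditions (i)--(iii); this is the main (though mild) obstacle, and it is handled entirely by the Hausdorff plus clopen-basis hypothesis together with continuity of $f_0$. High transitivity immediately implies that for each $n$, the diagonal action on $\mathcal{O}^n$ has only finitely many orbits (indexed by equality patterns among the coordinates), so the action is oligomorphic.
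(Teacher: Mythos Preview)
Your proof is correct and rests on the same key idea as the paper's: use the Hausdorff clopen-basis hypothesis to build a ``swap'' homeomorphism supported on two disjoint clopen pieces, then invoke fullness to place it in $G$. The paper organizes this slightly differently: rather than inducting on $n$, it fixes the finite set $S'=\{s_1,\dots,s_n\}$ at once, chooses elements $g_i$ carrying $s_1$ to $s_i$, shrinks a single clopen neighborhood $E$ of $s_1$ until the translates $g_1(E),\dots,g_n(E)$ are pairwise disjoint, and then builds all the transpositions $g_{ij}$ simultaneously. This yields the full symmetric group on $S'$ in one stroke, whereas your induction produces one transposition per step; the underlying construction is the same.
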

\begin{proof}
Let $S$ be a $G$-orbit in $X$, and let $S'=\{s_1,\ldots,s_n\}$ be a finite subset of~$S$.   Choose $g_1,\ldots,g_n\in G$ so that $g_i(s_1)=s_i$ for each~$i$, where $g_1$ is the identity.  Since $X$ is Hausdorff and has a basis of clopen sets, we can choose a clopen neighborhoood $E$ of $s_1$ so that the clopen sets $g_1(E),\ldots,g_n(E)$ are pairwise disjoint.  For each $i\ne j$, let $g_{ij}$ be the homeomorphism that maps $g_i(E)$ to $g_j(E)$ by $g_jg_i^{-1}$, maps $g_j(E)$ to $g_i(E)$ by $g_ig_j^{-1}$, and is the identity elsewhere.  Since $G$ is full, each $g_{ij}$ lies in $G$, and these generate a subgroup of $G$ that can permute the elements of $S'$ in any desired fashion.  It follows easily that the action is highly transitive.
\end{proof}

Since every compact, totally disconnected metrizable space has a basis of clopen sets, it follows from \cref{prop:HighlyTransitiveAction} that every full RSG acts highly transitively on each of its orbits.

Our next goal is to explore point stabilizers. We say that a point $\omega$ in a subshift $\Sigma_{\Gamma}$ is \newword{rational} if it is eventually repeating, i.e.\ if there exist finite words $\sigma$ and $\tau$ such that $\omega=\sigma\cdot\tau\cdot\tau\cdot\cdots$, written $\omega=\sigma\cdot \tau^\infty$ (so $\sigma$ is some prefix and $\tau$ repeats forever). It is easy to see that the set of rational points in any clopen set $E\subseteq \Sigma_\Gamma$ is stabilized by the action of the rational group~$\R_{\Gamma,E}$.

If $G\leq \Homeo(X)$ is a group of homeomorphisms of a space $X$, the stabilizer $\mathrm{Stab}_G(x)$ of a point $x\in X$ has a normal subgroup $\mathrm{Fix}_G^0(x)$ consisting of all elements that are the identity in some neighborhood of~$x$.  The quotient
\[
[G]_x \coloneqq \mathrm{Stab}_G(x)\bigr/\mathrm{Fix}_G^0(x)
\]
is called the \newword{group of germs} of $G$ at $x$.  If $g\in \mathrm{Stab}_G(x)$, its image $[g]_x$ in $[G]_x$ is the \newword{germ} of $g$ at~$x$.

\begin{proposition}\label{prop:CyclicStabilizers}
Let $G\leq \R_{\Gamma,E}$ be an RSG with a finite nucleus, and let $\omega\in E$ be a rational point.  Then the group of germs\/ $[G]_\omega$ is virtually infinite cyclic.
\end{proposition}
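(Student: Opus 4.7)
The plan is to exhibit an infinite cyclic subgroup of $[G]_\omega$ of finite index, arising from a natural ``shift by $\tau$'' element. Write the rational point as $\omega = \sigma \cdot \tau^\infty$ with $v \coloneqq t(\sigma) = o(\tau) = t(\tau)$, and let $\omega_n \coloneqq \sigma\tau^n$, so $\{\C_{\omega_n}\}_{n\geq 0}$ is a descending neighborhood basis of $\omega$. For $n$ large enough that $\C_{\omega_n}$ and $\C_{\omega_{n+1}}$ are proper subcones of $E$, the RSG property yields $T \in G$ implementing the canonical similarity $\C_{\omega_n} \to \C_{\omega_{n+1}}$. Since canonical similarity sends $\omega_n\cdot\tau^\infty \mapsto \omega_{n+1}\cdot\tau^\infty = \omega$, we have $T \in \Stab_G(\omega)$, and for each $k\ne 0$, $T^k$ acts on $\C_{\omega_n}$ as the canonical similarity to $\C_{\omega_{n+k}}$, which is not the identity on any neighborhood of $\omega$. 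Hence $\langle [T]_\omega\rangle \cong \Z$ sits inside $[G]_\omega$.

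Next I analyze the germ of an arbitrary $g \in \Stab_G(\omega)$. Since $\og(\omega_n)$ is a prefix of $\omega$, write it uniquely as $\sigma\tau^{m_g(n)}\rho_g(n)$ with $\rho_g(n)$ a proper prefix of $\tau$. Because $\Nuc$ is finite and closed under local actions (property \LocNuc\ in the proof of \cref{prop:NucleusHasProperties}), and because $g|_{\omega_{n+1}} = (g|_{\omega_n})|_\tau$ by \cref{lem:restrict_twice}, the sequence $n \mapsto g|_{\omega_n}$ eventually lies in $\Nuc$ and is then determined by iterating $|_\tau$ in the finite set $\Nuc$, so it is eventually periodic. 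The recurrence $\tau^{m_g(n+1)-m_g(n)}\rho_g(n+1) = \rho_g(n)\cdot\overline{g|_{\omega_n}}(\tau)$ then forces $n \mapsto \rho_g(n)$ and the increments $n \mapsto m_g(n+1)-m_g(n)$ to be eventually periodic as well.

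The key claim is that the germ $[g]_\omega$ depends only on the eventual periodic orbit of the triple $\bigl(g|_{\omega_n},\rho_g(n),m_g(n)-n\bigr)$ up to shifting $n$, because two elements $g,g'$ represent the same germ iff $(g|_{\omega_n},\og(\omega_n)) = (g'|_{\omega_n},\og'(\omega_n))$ for some (hence all large) $n$. Moreover, multiplication on the left by $T^{\pm 1}$ preserves the cyclic $(p,\rho)$-pattern and simply shifts the index $n$ by $\mp 1$ while incrementing the offset $m(n)-n$ by $\pm 1$ (since $T$ has identity local action, $\rho_T(n)=\varnothing$, and $m_T(n)-n=1$). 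Thus each left coset of $\langle[T]_\omega\rangle$ in $[G]_\omega$ is determined by a shift-orbit of eventually-periodic triples valued in the finite set $\Nuc \times \{\text{proper prefixes of }\tau\} \times \Z$; the first two coordinates live in a finite set and the third is determined up to a single $\Z$-translation by the cycle data, so only finitely many cosets arise, giving a short exact sequence $1 \to K \to [G]_\omega \to \Z \to 1$ with $K$ finite.

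The main obstacle is the last step: verifying that, among germs with a fixed cyclic pattern of $(p,\rho)$-values, the offsets $m_g(n)-n$ occupy a single orbit of $\Z$. This reduces to showing that the ``slope'' at $\omega$ of any $g\in\Stab_G(\omega)$ — that is, the asymptotic ratio of increments $\bigl(m_g(n+T_g)-m_g(n)\bigr)/T_g$ for the period $T_g$ of the $(p,\rho)$-cycle — equals $1$, which requires exploiting the injectivity and rationality of the nucleus elements together with the recurrence above to rule out expanding or contracting behavior purely from the finite-state structure. Once this is established, $\langle[T]_\omega\rangle$ has finite index in $[G]_\omega$, proving that $[G]_\omega$ is virtually infinite cyclic.
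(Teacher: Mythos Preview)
Your setup is the same as the paper's: you correctly produce the shift element $T$ (the paper calls it $f$), you correctly observe that the sequence $n\mapsto g|_{\omega_n}$ is eventually periodic because it is obtained by iterating the self-map $p\mapsto p|_\tau$ of the finite set~$\Nuc$, and you correctly track the decomposition $\og(\omega_n)=\sigma\tau^{m_g(n)}\rho_g(n)$.  The proposal is not wrong, but the ``obstacle'' you name at the end is a red herring, and it is what prevents you from finishing.

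You do \emph{not} need to prove that the asymptotic slope equals~$1$.  What you need is the much weaker statement that two stabilizing elements with the same eventual $(p,\rho)$-data lie in the same coset of $\langle[T]_\omega\rangle$, and this follows immediately from your own recurrence.  Indeed, the recurrence $\tau^{m_g(n+1)-m_g(n)}\rho_g(n+1)=\rho_g(n)\cdot\overline{g|_{\omega_n}}(\tau)$ shows that the increment $m_g(n+1)-m_g(n)$ is completely determined by the pair $\bigl(g|_{\omega_n},\rho_g(n)\bigr)$.  Hence if $g$ and $g'$ have the same eventual sequence of pairs, the difference $m_g(n)-m_{g'}(n)$ is eventually a constant~$c$, and then $g$ and $T^cg'$ agree on a neighborhood of~$\omega$ (same local action, same image of $\omega_n$ for large~$n$).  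No statement about the slope is needed; the slope may well be $\neq 1$.

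The paper avoids even this bookkeeping with two simplifications worth noting.  First, it takes $\tau$ primitive (not a proper power), and second, it replaces the period by the lcm $M$ of all cycle lengths of $p\mapsto p|_\tau$ on~$\Nuc$, so that $i\mapsto h|_{\omega_{iM}}$ is eventually \emph{constant}, say equal to~$p$.  This defines a map $\lambda\colon[G]_\omega\to\Nuc$ with finite image.  The point is then that $p$ already determines $\rho$: since $p(\tau^\infty)$ is a suffix of $\omega=\sigma\tau^\infty$, one has $p(\tau^\infty)=\beta\tau^\infty$ for a unique suffix $\beta$ of~$\tau$ (uniqueness uses primitivity of~$\tau$), and then $\rho$ is the complementary prefix $\alpha$ with $\tau=\alpha\beta$.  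So if $\lambda(h)=\lambda(h')=p$, then at any common level $iM$ the elements $h$ and $h'$ have the same local action and the same remainder~$\rho$, hence $\og(\omega_{iM})$ and $\overline{g'}(\omega_{iM})$ differ only in the power of~$\tau$, and $h'$ agrees with $T^kh$ on $\C_{\omega_{iM}}$ for the appropriate~$k$.  Thus each fiber of $\lambda$ is a single right coset of $\langle[T]_\omega\rangle$, and finiteness of the index is immediate.  This is shorter than tracking the full triple $\bigl(p,\rho,m-n\bigr)$, because the second coordinate is redundant and the third is irrelevant once you compare at a common level.
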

\begin{proof}
Recall that $\omega=\sigma\cdot\tau^\infty$, and let us assume $\tau$ is chosen to not be a power of a shorter word. Clearly $t(\sigma)=t(\sigma\cdot \tau)$.  Since $G$ is an RSG,  there exists $f\in G$ that takes $\C_\sigma$ to $\C_{\sigma\cdot\tau}$ by the canonical similarity. Note that $f$ fixes $\omega$. Now we claim that the cyclic group $\langle [f]_\omega \rangle$ has finite index in~$[G]_\omega$. 

Consider the function $\Nuc\to\Nuc$ that sends $p$ to $p|_\tau$. Since $\Nuc$ is finite, every element of $\Nuc$ is either periodic or pre-periodic under this function. Let $\Nuc_\tau$ denote the set of periodic points, and let $M$ be the least common multiple of their periods. Now take an arbitrary $h\in \Stab_G(\omega)$. For all sufficiently large $i$, the local action of $h$ at $\sigma\cdot\tau^i$ lies in $\Nuc$. By \cref{lem:restrict_twice}, in fact for all sufficiently large $i$, the local action of $h$ at $\sigma\cdot\tau^i$ lies in~$\Nuc_\tau$. This implies that the sequence $i\mapsto h|_{\sigma\cdot\tau^{iM}}$ is eventually constant. We therefore get a well defined function $\lambda\colon \Stab_G(\omega)\to \Nuc_\tau$ sending $h$ to the eventually constant value of this sequence. Note that if $h$ and $h'$ agree in an open neighborhood of $\omega$ then they have the same $\lambda$ value, so $\lambda$ induces a well defined function, which we will also call $\lambda$, from $[G]_\omega$ to $\Nuc_\tau$.

We now claim that every fiber of $\lambda$ lies in a right coset of $\langle [f]_\omega\rangle$. Since $\Nuc_\tau$ is finite, this will prove that $\langle [f]_\omega\rangle$ has finite index in $[G]_\omega$ as desired. Let $p\in\Nuc_\tau$, and let $h,h'\in \Stab_G(\omega)$ with $\lambda(h)=\lambda(h')=p$. Then we can fix a sufficiently large $i$ so that $h|_{\sigma\cdot\tau^{iM}}=h'|_{\sigma\cdot\tau^{iM}}=p$. Let $\zeta\coloneqq\oh(\sigma\cdot\tau^{iM})$ and $\zeta'\coloneqq\overline{h'}(\sigma\cdot\tau^{iM})$. Thus we have $\omega=h(\omega)=h(\sigma\cdot\tau^{iM}\cdot\tau^\infty)=\zeta\cdot p(\tau^\infty)$ and $\omega=h'(\omega)=h'(\sigma\cdot\tau^{iM}\cdot\tau^\infty)=\zeta'\cdot p(\tau^\infty)$, so we conclude that $\omega=\zeta\cdot p(\tau^\infty)=\zeta'\cdot p(\tau^\infty)$. Since $\omega=\sigma\cdot\tau^\infty$ and $\zeta$ and $\zeta'$ are both prefixes of $\omega$, up to increasing $i$ as needed to ensure that $\sigma$ is a prefix of both $\zeta$ and $\zeta'$, this means that $\zeta=\sigma\cdot\tau^j\cdot \alpha$ and $\zeta'=\sigma\cdot\tau^{j'}\cdot \alpha'$ for some $j,j'\ge 0$ and some $\alpha,\alpha'$ proper prefixes of $\tau$. Writing $\tau=\alpha\cdot\beta=\alpha'\cdot\beta'$, so $\beta$ and $\beta'$ are nonempty suffixes of $\tau$, we see that $p(\tau^\infty)=\beta\cdot\tau^\infty=\beta'\cdot\tau^\infty$, and since $\tau$ is not a proper power this implies $\beta=\beta'$; this then implies that $\alpha=\alpha'$. Without loss of generality, say $j\le j'$. Then $h'$ agrees with $f^{j'-j}h$ on the cone $\C_{\sigma\cdot\tau^{iM}}$, so $[h]_\omega$ and $[h']_\omega$ differ by left multiplication by an element of $\langle [f]_\omega \rangle$, and we are done.
\end{proof}

\begin{remark}
If $G\leq \R_{\Gamma,E}$ is an RSG with finite nucleus, then a similar argument shows that the group of germs $[G]_\omega$ at any irrational point $\omega\in E$ is finite.
\end{remark}

\begin{remark}
If $G$ is a hyperbolic group, it is well-known that the stabilizer of any point in the Gromov boundary $\partial G$ is virtually cyclic, and it follows that the same holds for stabilizers in $\partial_h G$. By \cref{prop:CyclicStabilizers}, this was a necessary condition for $G$ to have finite nucleus.  Indeed, given a group $G$ acting on a Cantor space~$X$, if there exist points in $X$ whose stabilizers are not virtually cyclic, then by \cref{prop:CyclicStabilizers} there is no way to assign addresses to points in $X$ so that the corresponding action is rational with finite nucleus.
\end{remark}

The argument in the following proposition follows the arguments given by the first author, James Hyde, and the third author in~\cite{BelkHydeMatucciStabs}, as well as some unpublished simplifications partially due to James Hyde.

\begin{proposition}\label{prop:fin_gen_stabs}
Let $G\leq \R_{\Gamma,E}$ be a full, contracting RSG, and let $S'$ be a finite set of rational points in~$E$.  Then the stabilizer\/ $\mathrm{Stab}_G(S')$ is finitely generated.
\end{proposition}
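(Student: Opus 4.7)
The plan is to construct an explicit finite generating set for $\Stab_G(S')$. Write $S' = \{\omega_1, \ldots, \omega_n\}$; since each $\omega_i$ is rational we may write $\omega_i = \alpha_i \cdot \tau_i^\infty$ where $\tau_i$ is a loop at $t(\alpha_i)$. By extending each $\alpha_i$ by additional copies of $\tau_i$ if necessary, we may further assume the cones $\C_{\alpha_1},\dots,\C_{\alpha_n}$ are pairwise disjoint, that each $t(\alpha_i)$ lies in the irreducible core $\Gamma_0$, and that $\bigcup_i \C_{\alpha_i}$ is a proper subset of $E$. The key feature of this setup is that $\{\C_{\alpha_i\tau_i^k}\}_{k\ge 0}$ forms a neighborhood base of $\omega_i$ inside $\C_{\alpha_i}$.

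The generating set will consist of three pieces. First, a finite generating set for the base stabilizer $H \coloneqq \Fix_G\!\left(\bigcup_i \C_{\alpha_i}\right)$: restriction to $F \coloneqq E \setminus \bigcup_i \C_{\alpha_i}$ identifies $H$ with a full, contracting RSG on $F$ (the nucleus remains finite, and $\Gamma_0$ is still an irreducible core of the ambient graph), so by \cref{thrm:fin_pres} it is finitely presented and in particular finitely generated. Second, for each $i$, a \emph{shift element} $f_i \in G$ chosen using \cref{cor:MappingConesWithV} (noting that $V_{\Gamma,E}\le G$ by \cref{prop:RSGsAndV}) to restrict to the canonical similarity $\C_{\alpha_i}\to \C_{\alpha_i\tau_i}$ on $\C_{\alpha_i}$ and to the identity on $\C_{\alpha_j}$ for every $j\ne i$; then $f_i^k$ restricts to the canonical similarity $\C_{\alpha_i}\to\C_{\alpha_i\tau_i^k}$, and different $f_i$'s do not interfere on any $\C_{\alpha_j}$. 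Third, for each $i$, a finite set of \emph{germ lifts}: by \cref{prop:CyclicStabilizers}, $[G]_{\omega_i}$ is virtually infinite cyclic, hence finitely generated; I lift a finite generating set to elements $g_{i,j} \in \Stab_G(\omega_i)$, and using fullness of $G$ arrange $g_{i,j}$ to be the identity on $\C_{\alpha_\ell}$ for every $\ell\ne i$, so that $g_{i,j}\in\Stab_G(S')$ and has trivial germ at each $\omega_\ell$ with $\ell\ne i$.

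Let $K$ be the subgroup generated by these three pieces; it is finitely generated and $K\le\Stab_G(S')$. For the reverse inclusion take $g\in\Stab_G(S')$. Because each $g_{i,j}$ has trivial germ at every $\omega_\ell$ with $\ell\ne i$, I can find a product $h\in K$ of the $g_{i,j}^{\pm 1}$'s (one factor per index, realizing $[g]_{\omega_i}$ inside $[G]_{\omega_i}$) such that $gh^{-1}\in\Fix_G^0(S')$. Since the cones $\C_{\alpha_i\tau_i^{k}}$ form a neighborhood base of $\omega_i$, there exist $k_1,\dots,k_n\ge 0$ so that $gh^{-1}$ is the identity on $\bigcup_i \C_{\alpha_i\tau_i^{k_i}}$. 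The conjugate $(f_1^{k_1}\cdots f_n^{k_n})^{-1}(gh^{-1})(f_1^{k_1}\cdots f_n^{k_n})$ then acts on each $\C_{\alpha_j}$ as canonical similarity $\C_{\alpha_j}\to\C_{\alpha_j\tau_j^{k_j}}$, followed by the identity (from $gh^{-1}$), followed by the inverse canonical similarity, which composes to the identity on all of $\C_{\alpha_j}$; hence this conjugate lies in $H\le K$. Unconjugating and multiplying by $h$ gives $g\in K$, as desired.

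The principal delicacy is the final conjugation step, which relies on having arranged $f_i$ to be the identity on every $\C_{\alpha_j}$ with $j\ne i$: this ensures that on each $\C_{\alpha_j}$ the product $f_1^{k_1}\cdots f_n^{k_n}$ collapses to the single canonical similarity $\C_{\alpha_j}\to\C_{\alpha_j\tau_j^{k_j}}$, regardless of the order of multiplication. The conjugated element may behave wildly on $F$, but that wildness lives inside $H$, which was already handled by the first piece via \cref{thrm:fin_pres}.
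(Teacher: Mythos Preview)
Your approach is essentially the same as the paper's: both arguments rely on (i)~the observation that $\Fix_G\bigl(\bigcup_i \C_{\alpha_i}\bigr)$ is itself a full, contracting RSG and hence finitely generated by \cref{thrm:fin_pres}, (ii)~a shift element implementing the canonical similarities $\C_{\alpha_i}\to\C_{\alpha_i\tau_i}$, and (iii)~the fact from \cref{prop:CyclicStabilizers} that each germ group $[G]_{\omega_i}$ is virtually cyclic. The paper packages (ii) into a single element $f$ acting simultaneously on all the cones and handles (iii) via the short exact sequence $\Fix_G^0(S')\hookrightarrow\Fix_G(S')\to\prod_i[G]_{\omega_i}$, whereas you use separate $f_i$'s and explicit germ lifts; these are cosmetic differences.

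There is one genuine (though easily fixed) gap. All of your generators---the elements of $H$, the shifts $f_i$, and the germ lifts $g_{i,j}$---fix $S'$ \emph{pointwise}, so in fact $K\le\Fix_G(S')$. But $\Stab_G(S')$ is the setwise stabilizer, and since $G$ acts highly transitively on the orbit of any rational point (\cref{prop:HighlyTransitiveAction}), $\Stab_G(S')$ is strictly larger than $\Fix_G(S')$ whenever $|S'|\ge 2$ and the points lie in a common orbit. Your argument ``take $g\in\Stab_G(S')$ and realize $[g]_{\omega_i}$'' already presumes $g(\omega_i)=\omega_i$. The fix is immediate: $\Fix_G(S')$ has finite index in $\Stab_G(S')$, so either begin by reducing to $\Fix_G(S')$, or throw a finite set of coset representatives into your generating set.
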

\begin{proof}
Let $\Fix_G(S')$ be the subgroup of $\Stab_G(S')$ consisting of elements that fix $S'$ pointwise.  Since $\Fix_G(S')$ has finite index in $\Stab_G(S')$, it suffices to prove that $\Fix_G(S')$ is finitely generated.  This group fits into an exact sequence
\[
\Fix_G^0(S') \hookrightarrow \Fix_G(S') \to \prod_{\omega\in S'} [G]_\omega
\]
where $\Fix_G^0(S')$ is the group of elements that are the identity in some neighborhood of $S'$.  Each $[G]_\omega$ is virtually cyclic by \cref{prop:CyclicStabilizers}, so the product $\prod_{\omega\in S'} [G]_\omega$ is virtually free abelian, and hence the image of $\Fix_G(S')$ must be finitely generated.  Therefore, it suffices to prove that $\Fix_G^0(S')$ is contained in a finitely generated subgroup of $\Fix_G(S')$.
 
Let $S'=\{\omega_1,\ldots,\omega_n\}$, and write each $\omega_i$ as $\sigma_i\cdot \tau_i^\infty$, where the prefixes $\sigma_i$ are long enough so that the cones $\C_{\sigma_1},\ldots,\C_{\sigma_n}$ are pairwise disjoint subsets of $E$ whose union $U=\bigcup_{i=1}^n \C_{\sigma_i}$ is not all of~$E$.  The cones $\C_{\sigma_i\cdot\tau_i}$ are also pairwise disjoint and have proper union, call it $U'$, and clearly $t(\sigma_i\cdot\tau_i)=t(\sigma_i)$ for all $i$. Since $\Sigma_\Gamma$ has an irreducible core, by \cref{cor:MappingConesWithV} there exists $f\in V_{\Gamma,E}$ such that $f$ acts on each $\C_{\sigma_i}$ by the canonical similarity $\C_{\sigma_i}\mapsto \C_{\sigma_i\cdot\tau_i}$, for all $1\le i\le n$. In particular $f$ fixes $S'$. Also note that $U\supset f(U) \supset f^2(U)\supset\cdots$, and any open neighborhood of $S'$ contains $f^k(U)$ for all sufficiently large $k$. This implies that $\Fix_G^0(S')$ equals the union of the conjugates $f^i\, \Fix_G(U) f^{-i}$ for $i\ge 0$. Now it suffices to show that $\Fix_G(U)$ is finitely generated, since then $\Fix_G^0(S')$ will be contained in the finitely generated group generated by $\Fix_G(U)$ and $f$.  But $\Fix_G(U)$ is isomorphic to the full RSG in $\R_{\Gamma,E\setminus U}$ with the same nucleus as~$G$, as described in \cref{thrm:RSGCharacterization}. In particular $\Fix_G(U)$ is finitely generated by \cref{thrm:fin_pres}, so it follows that $\Stab_G(S')$ is finitely generated.
\end{proof}

Now we can prove the main result of this section.

\begin{proof}[Proof of \cref{prop:contr_to_simple}]
Let $G\le \R_{\Gamma,E}$ be a full, contracting RSG. Since $\Sigma_\Gamma$ has an irreducible core, we know that $G$ is finitely presented by \cref{thrm:fin_pres}. Let $S$ be the $G$-orbit of a rational point $\omega$ in $E$. Note that $S$ is dense in $E$, so $G$ acts faithfully on $S$. This action is oligomorphic by \cref{prop:HighlyTransitiveAction}, and the stabilizer in $G$ of any finite subset of $S$ is finitely generated by \cref{prop:fin_gen_stabs}. By \cref{thrm:action_to_simple}, the twisted Brin--Thompson group $SV_G$ is finitely presented, and it is simple, as all twisted Brin--Thompson groups are. Since $G$ embeds in $SV_G$, we are done.
\end{proof}

Finally we can prove \cref{thrm:bh_hyp}, that hyperbolic groups embed in finitely presented simple groups, and thus satisfy the Boone--Higman conjecture.

\begin{proof}[Proof of \cref{thrm:bh_hyp}]
Let $G$ be a hyperbolic group. By \cref{thrm:hyp_to_contracting} there exists a full, contracting RSG into which $G$ embeds as a subgroup. This embeds in a finitely presented simple group by \cref{prop:contr_to_simple}.
\end{proof}

\bibliography{bibHypEmbedding}
\newcommand{\etalchar}[1]{$^{#1}$}

\end{document}